\def\hlinewd#1{%
  \noalign{\ifnum0=`}\fi\hrule \@height #1 \futurelet
   \reserved@a\@xhline}
\newcommand{\mcrot}[4]{\multicolumn{#1}{#2}{\rlap{\rotatebox{#3}{#4}~}}} 
\newcommand*{\twoelementtable}[3][l]%
{%
    \renewcommand{\arraystretch}{0.8}%
    \begin{tabular}[t]{@{}#1@{}}%
        #2\tabularnewline
        #3%
    \end{tabular}%
}
\definecolor{mygreen1}{rgb}{0, .5, 0}
\definecolor{myred1}{rgb}{.5, 0, 0}
\begin{document}

\title{Finite Differences in Forward and Inverse Imaging Problems--MaxPol Design}
\author{Mahdi S. Hosseini and Konstantinos N. Plataniotis}
\maketitle
\begin{abstract}
A systematic and comprehensive framework for finite impulse response (FIR) lowpass/fullband derivative kernels is introduced in this paper. Closed form solutions of a number of derivative filters are obtained using the maximally flat technique to regulate the Fourier response of undetermined coefficients. The framework includes arbitrary parameter control methods that afford solutions for numerous differential orders, variable polynomial accuracy, centralized/staggered schemes, and arbitrary side-shift nodes for boundary formulation. Using the proposed framework four different derivative matrix operators are introduced and their numerical stability is analyzed by studying their eigenvalues distribution in the complex plane. Their utility is studied by considering two important image processing problems, namely gradient surface reconstruction and image stitching. Experimentation indicates that the new derivative matrices not only outperform commonly used method but provide useful insights to the numerical issues in these two applications.
\end{abstract}
\begin{keywords}
numerical differentiation, boundary condition, derivative matrix, fullband/lowpass FIR differentiation, maxflat technique, inverse Vandermonde, gradient surface recovery, image stitching 
\end{keywords}

\section{Introduction}
Finite difference (FD) methods are used extensively in variety of image processing tasks. For example, they are used in digital approximation of order moments such as gradients, Hessian, and high-order tensor. They find numerous applications in either forward imaging problems to encode certain features such as edge and corner \cite{Canny1986, MokhtarianMackworth1986, MarrHildreth1980, PeronaMalik1990, lindeberg1998feature, lowe2004distinctive, JalbaWilkinsonRoerdink2006}, or discretizing the differential operators used in inverse problems for image restoration \cite{buades2005review, 7776871, EstellersSoatto2016, Papafitsoros2014, mecca2016single, 5887417, harker2015regularized, agrawal2006range}. Despite extensive applications, the problem of numerical differentiation suffers from several deficiencies such as the lack of estimation accuracy, sensitivity over perturbing artifacts, and improper formulation of boundary condition (BC). Such deficiencies can easily make the overall imaging applications highly complicated (ill-posed) and sometimes turn the whole exercise to be useless. Recent works highlight the need for a systematic and comprehensive framework of numerical differentiation to overcome these issues by reducing the complexities of error propagation in the system design and furthermore prevent computational burdens. Applications can be found in gradient surface reconstruction \cite{harker2015regularized, gambaruto2015processing}, edge detection \cite{Belyaev2013, aprovitola2014edge, bustacara2016comparison, aprovitola2016knee}, edge synthesis \cite{6319316, 7744595, elad2016style}, feature extraction \cite{DelibasisKechriniotisMaglogiannis2013, DelibasisKechriniotis2014, gambaruto2015processing}, and many more. This article concerns with the main question of what is the best numerical approach to approximate derivatives in an image processing related task? With this question in mind we present a comprehensive framework for differentiation applied in forward and inverse imaging problems. The \textit{``forward''} here is referred to find a solution of direct approximation in a measurement system $A(X)=Y$ where $A$ is an operator and $X$ is a given signal/image to find $Y$. While, the \textit{``inverse''} is referred to inferring $X$ from a similar system where $A$ and $Y$ are known.

\subsection{Properties expected in inverse imaging problems}\label{subsec_inverse_properties}
Often in many inverse problems the image of interest is restored from a regulated partial differential equation (PDE) problem such as in photometric stereo \cite{agrawal2006range, harker2015regularized}, surface rendering \cite{EstellersSoatto2016}, magnetic resonance imaging (MRI) \cite{5887417}, phase unwrapping \cite{bioucas2007phase}, and variational regularization \cite{bredies2010total, Papafitsoros2014}. The discrete approximation of the related PDE problems is usually accommodated by a fullband FIR differentiation which needs to satisfy the following criteria:

\begin{itemize}[leftmargin=*]
\item The filter response of the derivative coefficients should remain close to the ideal derivative response. The well known centralized and staggered formulations preserve this condition for even and odd order of differentiations, respectively \cite{fornberg1988, S0036144596322507}. Any deviation (miss-match) between ideal and discrete filter response can lead in to a perturbation in recovery. Example of such deviation is shown in \ref{fig_staggered_vs_centralized} for the first order FIR derivative coefficient.
\item The accuracy of discrete differentiation should be high in both interior and boundary domains of the signal/image to preserve the derivative continuity and hence to avoid recovery artifacts. High accuracy within the interior domain can be achieved by increasing the filter polynomial degree. However, the relative accuracy on the boundaries downgrades as a side effect. To elaborate the point consider the reflective (Neumman) and anti-reflective BCs \cite{trefethen2005spectra}, where they  preserve up to zero $\mathcal{C}^{0}$ and first $\mathcal{C}^{1}$ order continuities, respectively. Meaning that none of them suit higher degree polynomial formulation. This has gained recent attention in finite difference formulations \cite{Li2005, o2008algebraic, o2012framework, HassanMohamadAtteia2012} to design a derivative matrix that embed high degree polynomial accuracies.
\item The derivative coefficients should be obtained from a closed mathematical formulation to minimize the discrete round-off errors. Methods such as recursive formulations \cite{fornberg1988, sadiq2014finite} or non-simplified formulations \cite{Li2005, o2008algebraic, o2012framework, HassanMohamadAtteia2012} lead into numerical error/instabilities.
\item The numerical design should be capable of addressing arbitrary order of derivatives (OD). This stems from the fact that many PDE models are not limited to the first OD but high order estimations such as in variational regularizers \cite{bredies2010total, Papafitsoros2014}.
\end{itemize}

\begin{figure}[htp]
\centerline{
\subfigure[impulse response]{\includegraphics[height=0.24\textwidth]{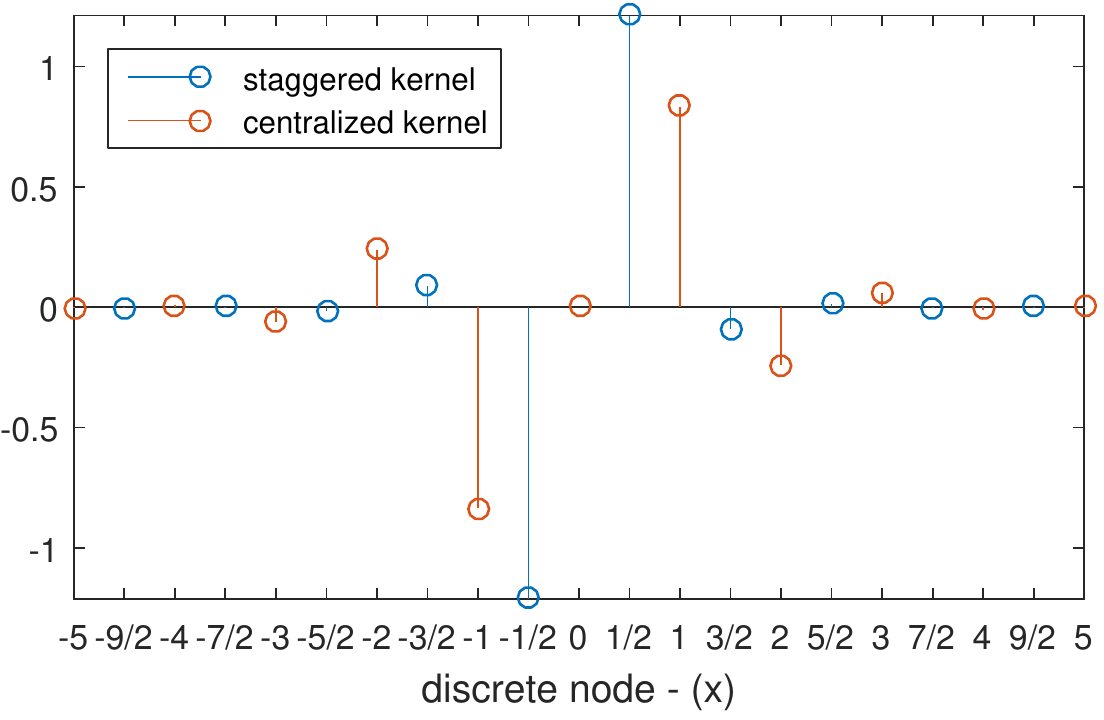}}
\subfigure[filter response]{\includegraphics[height=0.24\textwidth]{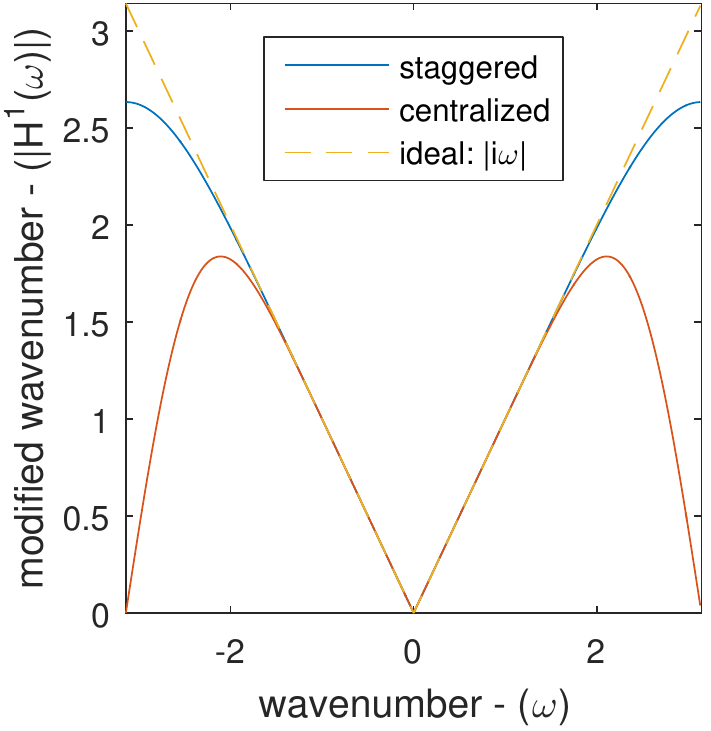}}
}
\caption{First order derivative fullband FIR coefficient: staggered and centralized schemes with high-order-accuracy polynomial of $10$. The courtesy of Fornberg derivative coefficients \cite{S0036144596322507}.}
\label{fig_staggered_vs_centralized}
\end{figure}

\subsection{Properties expected in forward imaging problems}\label{subsec_forward_properties}
The main purpose of image differentiation in forward problems is to approximate feature of interests from a given image. This is usually accommodated by convolution of an image with an FIR derivative kernel in a linear fashion. Examples include but not limited to gradient approximation for edge detection \cite{MarrHildreth1980, Canny1986, PeronaMalik1990} and local feature description \cite{lindeberg1998feature, lowe2004distinctive}, and Hessian approximation for curvature estimation \cite{MokhtarianMackworth1986, JalbaWilkinsonRoerdink2006}, optical flow \cite{UrasGirosiVerriTorre1988}, and image sharpening/enhancement \cite{PoleselRamponi2000,7563313}. A disadvantage common to all methods is the input image is contaminated with noise where a careful treatment is required to accurately estimate derivatives while suppressing the noise effect. In addition to the criteria listed in \ref{subsec_inverse_properties}, the following conditions are also necessary for proper lowpass differentiation:
\begin{itemize}[leftmargin=*]
\item A wide range of cutoff frequency delivers flexibility for costume design filter. Sampling rate, signal-to-noise-ratio (SNR), and frequency spectrum of the signal (assumed to be band-limited here) are the three main principles that define the reasonable cutoff range for differentiation. If the rate of sampling exceeds the maximum signal frequency, then proper cutoff should be set to avoid aliasing and noise amplification. Higher cutoffs are usually needed for image texture approximation that contains high frequency information.
\item Once a cutoff frequency is preset to design a lowpass differentiation, the filter's magnitude response should decay rapidly to mitigate the aliasing artifacts. 
\item The filter should not contain any perturbing residues after the cutoff level. However in most cases they are usually represented as ripples (side-lobes) such as in least square designs \cite{kaiser1977sharpening, McDevitt:2012, burch2005least, luo2005properties, SavitzkyGolay1964, gorry1990general, luo2005properties, schafer2011savitzky} or slow decay responses in smoothing kernels such as derivative-of-Gaussian (DoG) filters \cite{MarrHildreth1980, freeman1991design, deriche1993recursively}. Such residuals are shown in \ref{fig_lowpass_concept}.
\end{itemize}

\begin{figure}[htp]
\centerline{
\subfigure[Impulse Response]{\includegraphics[height=0.24\textwidth]{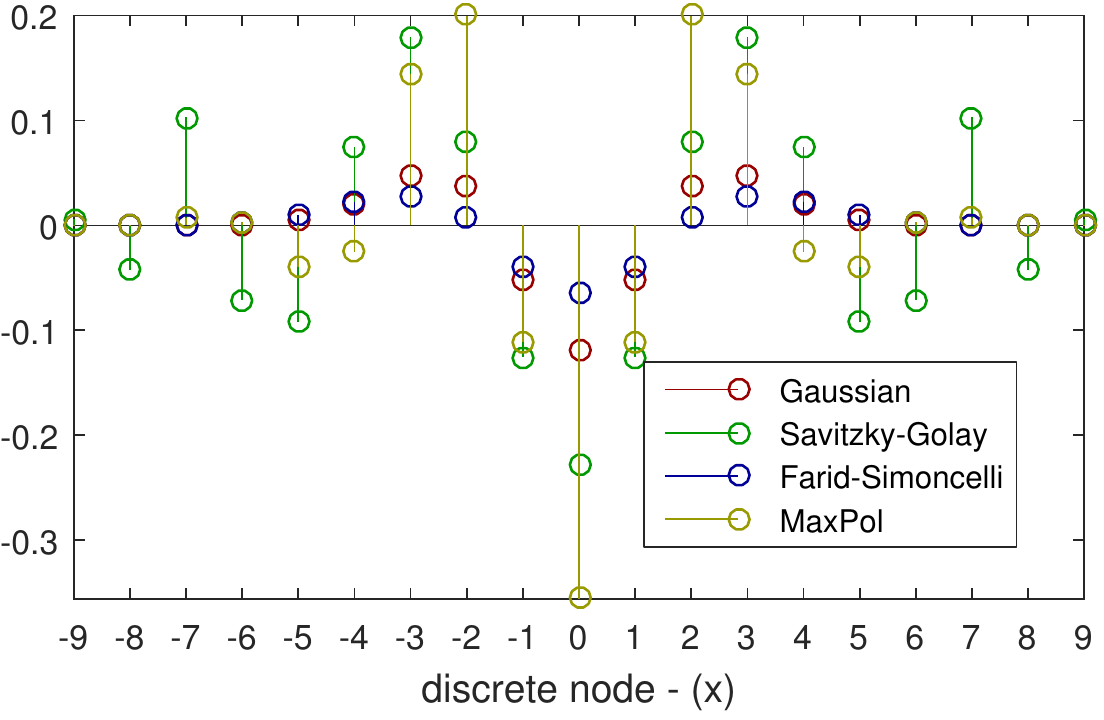}}
\subfigure[Filter response (linear)]{\includegraphics[height=0.24\textwidth]{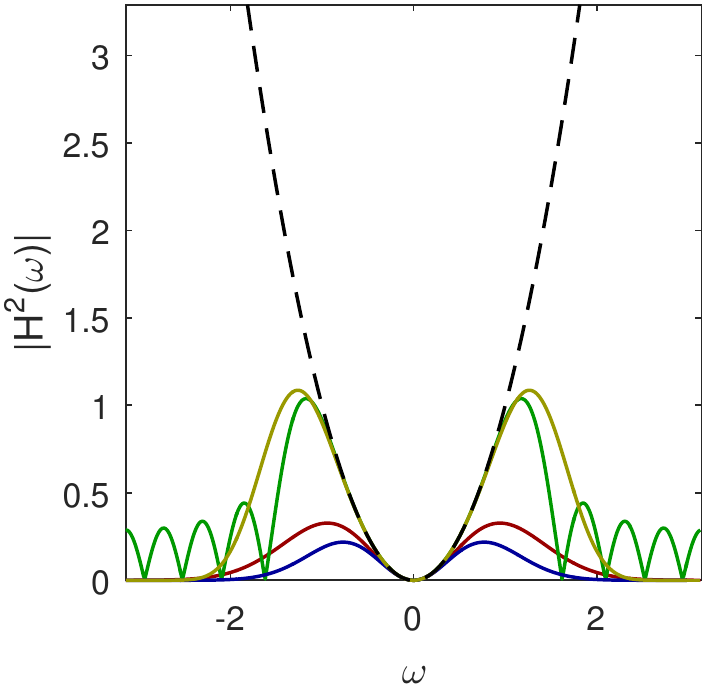}}
\subfigure[Filter response (log)]{\includegraphics[height=0.24\textwidth]{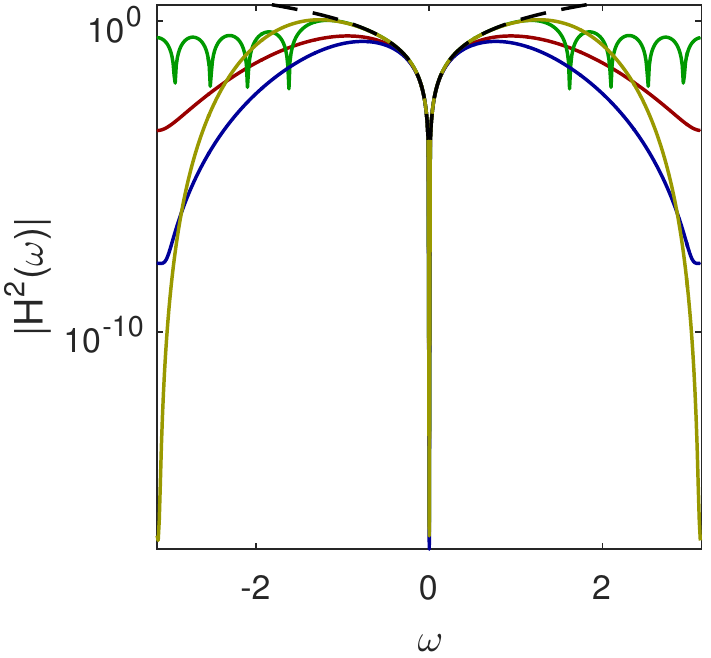}}
}
\caption{Second order lowpass FIR derivative of Gaussian, Savtizky-Golay \cite{luo2005properties}, Farid-Simoncelli \cite{FaridSimoncelli2004}, and MaxPol (a variant of proposed design). The tap-length of the filters is $19$. The Gaussian variance $\sigma=1.5$ and the cutoff frequency of SavitzkyGolay and MaxPol are set around half Nyquist band $\omega_c=\frac{\pi}{2}$.}
\label{fig_lowpass_concept}
\end{figure}



\subsection{Related works in explicit finite differentiation}\label{sec_related_works_FD}
Based on the two categories of forward and inverse problems discussed in previous sections, we divide the topic of numerical differentiation to fullband and lowpass filters as follows.

One of the early works of the finite difference method has been developed by Savitzky-Golay (SG) \cite{SavitzkyGolay1964} and it has been revised in several occasions \cite{gorry1990general, luo2005properties, schafer2011savitzky}. The design approach is based on the power polynomials regulated by the least square energy minimization where the solution fits a Vandermonde matrix system of equations. Associated by the highest accuracy design, one can extract the fullband response of the derivatives up to any order. However, the numerical solution of the SG filter is highly affected by the ill-conditioned property of the Vandermonde matrix that limits the degree of filter polynomial. A better resolution was made later by Fornberg \cite{fornberg1988, S0036144596322507} to find a recursive formulation using the Lagrangian interpolating polynomials. The latter methods are well  developed in literature that offer centralized and staggered schemes including the side-shift nodes to formulate the derivative at boundaries. Recent studies also address similar problem by introducing a closed form solutions such as the Lagrangian method in \cite{Li2005} and Taylor series expansion in \cite{HassanMohamadAtteia2012}. They propose more suitable computational approaches to overcome the problem of inverse Vandermonde calculation. Despite the effectiveness of the numerical solution, they are only limited to the centralized scheme. Another variant of such design is the orthogonal polynomial expansion in \cite{o2008algebraic, o2012framework} accommodated by QR factorization which is limited to only first order centralized differentiation. Khan et al \cite{RasoolKhan1999, RasoolKhan1999_2, RasoolKhan2000, RasoolKhan2003, RasoolKhan2003_2} proposed a comprehensive numerical solution for both centralized and staggered schemes based on the Taylor expansion. However, the method lacks of designing side-shift nodes for boundary derivative approximation and involves high computational complexity. Methods based on the MaxFlat technique also exist in \cite{Carlsson1991, KumarRoy1988, KumarRoy1989, KumarRoy1992, Selesnick2002} where they are mostly limited to the first order derivative design and do not formulate the boundary conditions.

The idea in lowpass differentiation is to allow certain range of frequency band to be transformed under the ideal derivative response and truncate the remaining frequencies by setting a cutoff threshold. The pioneering work done by Lanczos \cite{lanczosapplied, lanczos1988applied} designs lowpass responses by truncating the Fourier series coefficients and minimizing the energy of Gibbs oscillation in the least square sense. Different extensions of this work have been also reported in \cite{duchon1979lanczos, groetsch1998lanczo, burch2005least, McDevitt:2012} using Lagrange and Legendre polynomials. A more generalized design in least square sense would be the SG filters \cite{SavitzkyGolay1964, gorry1990general, meer1990smoothed, luo2005properties, schafer2011savitzky} that have been applied in many engineering problems. Though, variety of derivative orders with flexible cutoff ranges can be designed, they contain side-lobe artifacts on the stop-band. Less perturbing designs are also made by sinc interpolations \cite{kaiser1977sharpening, kaiser1977data} and Jacobi polynomials \cite{mboup2009numerical}. However they contain limited cutoff range and derivative orders. Wider range of derivative orders are also studied in \cite{UnserAldroubiEdenI1993, UnserAldroubiEdenII1993} using B-spline method, sinc interpolation methods in \cite{Simoncelli1994, FaridSimoncelli1997, FaridSimoncelli2004}, cubic interpolating polynomials in \cite{MollerMachiraju1997, moller1998design, moller1999spatial}, Gaussian based kernels in \cite{MarrHildreth1980, freeman1991design, deriche1993recursively}, and Legendre polynomial methods known as the chromatic derivatives \cite{ignjatovic2000signal, ignjatovic2001method, ignjatovic2009chromatic, ignjatovic2011multidimensional}. A common disadvantage to all of them is they provide limited cutoff range. Perhaps a more sophisticated approach can be addressed by Selesnick et al \cite{SelesnickBurrus1998, SelesnickBurrus1998TSP, SelesnickBurrus1999, Selesnick2002} which provides wide cutoff range with free-residual artifacts in the stop-band. The method is based on the maximally flat technique design and offers wide range of FIR filters up to only first order differentiation.

\subsection{Shortcomings and contributions}
The overview of our discussion on the finite difference methods is summarized in \ref{table_state_of_the_art_ND}. Despite various empirical methods developed for fullband differentiation, it appears that there is a lacking design of comprehensive derivative matrix that can handle both centralized and staggered schemes in arbitrary polynomial degree, different variations of derivative orders, and high accuracy BC formulation. The existing methods offer limited range of solutions such as centralized formulations \cite{Li2005, o2008algebraic, o2012framework, HassanMohamadAtteia2012} or pseudo-spectral approaches with limited accuracy designs \cite{welfert1997generation}. As we show later, the centralized scheme is numerically unstable for odd order differentiation. In contrast, odd order derivative matrices with staggered nodes are favorably applicable in problems of first order PDEs with much stable behaviors. Moreover, most lowpass differentiators are not fully guaranteed to satisfy all three main conditions together mentioned in \ref{subsec_forward_properties}. While many filters are incapable of offering flexible cutoff range such as in \cite{UnserAldroubiEdenI1993, UnserAldroubiEdenII1993, Simoncelli1994, FaridSimoncelli1997, FaridSimoncelli2004}, the existing ones with wide range solutions such as Savitzky-Golay and Lanczos contain residual artifacts that are closely tied with visual perception error in many imaging applications.

Motivated by the above discussions, our aim in this paper is to tackle the unresolved problems of finite difference method mainly from image processing perspective. Since both fullband and lowpass filters rise in different applications, it would be convenient to derive both filters from a generalized framework that can address diverse feature utilities similar to the SG approach. To this end, we propose a consensus model using the method of undetermined coefficients in conjunction with maximally flat technique to achieve a desired filter response. In a nutshell, the contributions made in this paper are as follows
\begin{itemize}[leftmargin=*]
\item A generalized framework called ``MaxPol'' is proposed to encode FIR coefficients of fullband/lowpass derivatives with comprehensive utilities including arbitrary degrees of polynomial, staggered/centralized stencil, wide cutoff frequency range, and arbitrary side-shift nodes. The derivation of fullband coefficients is accommodated by finding a closed form solution to the inverse of a Vandermonde matrix using the partial fraction technique. In the case of lowpass differentiation, the closed form is obtained by a symbolic calculation, providing a reasonable range of tap-length filters for computation.
\item Four different categories of derivative matrices are designed for tensor image decomposition that can be associated in many PDE problems for numerical calculations. The stability of the matrices is studied via the distribution of eigenvalues to provide a wide range of solutions with different parameter control design.
\item The utility of the MaxPol package is first studied on gradient surface recovery to validate the impact of different design matrices in inverse imaging problem. Image stitching is the second application studied in this paper to show the efficiency of the lowpass differentiation in conjunction with gradient surface problem for seamless image reconstruction. The numerical evaluations outrank the efficiency of the proposed method and highlight its versatility with respect to robustness and accuracy. 
\end{itemize}

Our early work on designing derivative kernels has been reported in \cite{hosseini2017derivative} to design lowpass filters based on the maximally flat technique with extended design to 2D kernels for image directional differentiation. This paper complements \cite{hosseini2017derivative} by further design of side-shift nodes and derivative matrices with high accuracy boundary formulation. \ref{table_state_of_the_art_ND} compares the additional numerical features added in this paper compared to \cite{hosseini2017derivative}. Also, from application point of view, the utility of the MaxPol design here mostly concentrates on the inverse imaging problems. Whereas, in  \cite{hosseini2017derivative} this was  mainly focused on the forward imaging problems. 

The remainder of this paper is organized as follows. The numerical foundations are introduced in \ref{sec_maxpol_numerics}. The construction of derivative matrices are explained in \ref{section_convolution_mtx} and the numerical validations are reported in \ref{sec_fullband_differentiation}.  Applications in the context of gradient surface reconstruction are given in \ref{sec_gradient_surface} . Finally, the paper is concluded in \ref{sec_conclusion}.

\begin{landscape}
\begin{table*}[htp]
\renewcommand{\arraystretch}{1.3}
\caption{List of numerical methods for explicit finite difference formulation with high degree polynomials}
\label{table_state_of_the_art_ND}
\centering
\scriptsize
\begin{tabular}{lccccccccccccccccccccccl}
\hlinewd{1pt}
Author & Year &
\mcrot{1}{c}{45}{Power polynomial} &
\mcrot{1}{c}{45}{Lagrange polynomial} &
\mcrot{1}{c}{45}{Legendre polynomial} &
\mcrot{1}{c}{45}{Jacobi polynomial} &
\mcrot{1}{c}{45}{Orthogonal polynomial} &
\mcrot{1}{c}{45}{Spline basis} &
\mcrot{1}{c}{45}{Sinc basis} &
\mcrot{1}{c}{45}{Undetermined coefficient}  &
\mcrot{1}{c}{45}{Taylor series expansion} &
\mcrot{1}{c}{45}{Least square minimization} &
\mcrot{1}{c}{45}{MaxFlat criteria} &
\mcrot{1}{c}{45}{Closed form solution} &
\mcrot{1}{c}{45}{Centralized node} &
\mcrot{1}{c}{45}{Staggered node} &
\mcrot{1}{c}{45}{Side-shift node} &
\mcrot{1}{c}{45}{Fullband design} &
\mcrot{1}{c}{45}{Lowpass design} &
\mcrot{1}{c}{45}{Lowpass: wide cutoff} &
\mcrot{1}{c}{45}{Lowpass: sharp rolloff} &
\mcrot{1}{c}{45}{Lowpass: residual--free} &
\mcrot{1}{c}{45}{Derivative matrix design} &
\mcrot{1}{c}{45}{Derivative order $(n)$} \\ \hlinewd{1pt}
Fornberg \cite{fornberg1988, S0036144596322507} & $1998$ &
&$\bullet$&&&&&&$\bullet$&&&&
$\textcolor{mygreen1}{\checkmark}$ &
$\textcolor{mygreen1}{\checkmark}$ &
$\textcolor{mygreen1}{\checkmark}$ &
$\textcolor{mygreen1}{\checkmark}$ &
$\textcolor{mygreen1}{\checkmark}$ &
$\textcolor{myred1}{\times}$ &
$\textcolor{myred1}{\times}$ &
$\textcolor{myred1}{\times}$ &
$\textcolor{myred1}{\times}$ &
$\textcolor{myred1}{\times}$ &
$n=\{0,1,\hdots\}$ \\ \hlinewd{.75pt}
Khan \cite{RasoolKhan1999, RasoolKhan1999_2, RasoolKhan2000, RasoolKhan2003, RasoolKhan2003_2} & $1999$ &
&&&&&&&$\bullet$&$\bullet$&&&
$\textcolor{mygreen1}{\checkmark}$ &
$\textcolor{mygreen1}{\checkmark}$ &
$\textcolor{mygreen1}{\checkmark}$ &
$\textcolor{myred1}{\times}$ &
$\textcolor{mygreen1}{\checkmark}$ &
$\textcolor{myred1}{\times}$ &
$\textcolor{myred1}{\times}$ &
$\textcolor{myred1}{\times}$ &
$\textcolor{myred1}{\times}$ &
$\textcolor{myred1}{\times}$ &
$n=\{0,1,\hdots\}$ \\ \hlinewd{.75pt}
Li \cite{Li2005} & $2005$ &
&$\bullet$&&&&&&$\bullet$&&&&
$\textcolor{mygreen1}{\checkmark}$ &
$\textcolor{mygreen1}{\checkmark}$ &
$\textcolor{myred1}{\times}$ &
$\textcolor{mygreen1}{\checkmark}$ &
$\textcolor{mygreen1}{\checkmark}$ &
$\textcolor{myred1}{\times}$ &
$\textcolor{myred1}{\times}$ &
$\textcolor{myred1}{\times}$ &
$\textcolor{myred1}{\times}$ &
$\textcolor{mygreen1}{\checkmark}$ &
$n=\{0,1,\hdots\}$ \\ \hlinewd{.75pt}
O'Leary \cite{o2008algebraic, o2012framework} & $2008$ &
&&&&$\bullet$&&&&&$\bullet$&&
$\textcolor{mygreen1}{\checkmark}$ &
$\textcolor{mygreen1}{\checkmark}$ &
$\textcolor{myred1}{\times}$ &
$\textcolor{mygreen1}{\checkmark}$ &
$\textcolor{mygreen1}{\checkmark}$ &
$\textcolor{myred1}{\times}$ &
$\textcolor{myred1}{\times}$ &
$\textcolor{myred1}{\times}$ &
$\textcolor{myred1}{\times}$ &
$\textcolor{mygreen1}{\checkmark}$ &
$n=1$ \\ \hlinewd{.75pt}
Hassan \cite{HassanMohamadAtteia2012} & $2012$ &
&&&&&&&$\bullet$&$\bullet$&&&
$\textcolor{mygreen1}{\checkmark}$ &
$\textcolor{mygreen1}{\checkmark}$ &
$\textcolor{myred1}{\times}$ &
$\textcolor{mygreen1}{\checkmark}$ &
$\textcolor{mygreen1}{\checkmark}$ &
$\textcolor{myred1}{\times}$ &
$\textcolor{myred1}{\times}$ &
$\textcolor{myred1}{\times}$ &
$\textcolor{myred1}{\times}$ &
$\textcolor{mygreen1}{\checkmark}$ &
$n=\{0,1,\hdots\}$ \\ \hlinewd{.75pt}
Carlsson \cite{Carlsson1991} & $1991$ &
&&&&&&&$\bullet$&&&$\bullet$&
$\textcolor{mygreen1}{\checkmark}$ &
$\textcolor{mygreen1}{\checkmark}$ &
$\textcolor{myred1}{\times}$ &
$\textcolor{myred1}{\times}$ &
$\textcolor{mygreen1}{\checkmark}$ &
$\textcolor{myred1}{\times}$ &
$\textcolor{myred1}{\times}$ &
$\textcolor{myred1}{\times}$ &
$\textcolor{myred1}{\times}$ &
$\textcolor{myred1}{\times}$ &
$n=1$ \\ \hlinewd{.75pt}
Kumar \cite{KumarRoy1988, KumarRoy1989, KumarRoy1992} & $1988$ &
&&&&&&&$\bullet$&&&$\bullet$&
$\textcolor{mygreen1}{\checkmark}$ &
$\textcolor{mygreen1}{\checkmark}$ &
$\textcolor{mygreen1}{\checkmark}$ &
$\textcolor{myred1}{\times}$ &
$\textcolor{mygreen1}{\checkmark}$ &
$\textcolor{myred1}{\times}$ &
$\textcolor{myred1}{\times}$ &
$\textcolor{myred1}{\times}$ &
$\textcolor{myred1}{\times}$ &
$\textcolor{myred1}{\times}$ &
$n=1$ \\ \hlinewd{.75pt}
Simoncelli \cite{Simoncelli1994, FaridSimoncelli1997, FaridSimoncelli2004} & $2004$ &
&&&&&&$\bullet$&&&$\bullet$&&
$\textcolor{myred1}{\times}$ &
$\textcolor{mygreen1}{\checkmark}$ &
$\textcolor{myred1}{\times}$ &
$\textcolor{myred1}{\times}$ &
$\textcolor{myred1}{\times}$ &
$\textcolor{mygreen1}{\checkmark}$ &
$\textcolor{myred1}{\times}$ &
$\textcolor{mygreen1}{\checkmark}$ &
$\textcolor{mygreen1}{\checkmark}$ &
$\textcolor{myred1}{\times}$ &
$n=\{0,1,\hdots\}$ \\ \hlinewd{.75pt}
Unser \cite{UnserAldroubiEdenI1993, UnserAldroubiEdenII1993} & $1993$ &
$\bullet$ &&&&&$\bullet$&&&&&&
$\textcolor{mygreen1}{\checkmark}$ &
$\textcolor{mygreen1}{\checkmark}$ &
$\textcolor{myred1}{\times}$ &
$\textcolor{myred1}{\times}$ &
$\textcolor{myred1}{\times}$ &
$\textcolor{mygreen1}{\checkmark}$ &
$\textcolor{myred1}{\times}$ &
$\textcolor{myred1}{\times}$ &
$\textcolor{myred1}{\times}$ &
$\textcolor{myred1}{\times}$ &
$n=\{0,1,\hdots\}$ \\ \hlinewd{.75pt}
Mboup \cite{mboup2009numerical} & $2009$ &
 &&&$\bullet$&&&&&&$\bullet$&&
$\textcolor{mygreen1}{\checkmark}$ &
$\textcolor{mygreen1}{\checkmark}$ &
$\textcolor{myred1}{\times}$ &
$\textcolor{myred1}{\times}$ &
$\textcolor{myred1}{\times}$ &
$\textcolor{mygreen1}{\checkmark}$ &
$\textcolor{myred1}{\times}$ &
$\textcolor{mygreen1}{\checkmark}$ &
$\textcolor{myred1}{\times}$ &
$\textcolor{myred1}{\times}$ &
$n=\{1,2,\hdots\}$ \\ \hlinewd{.75pt}
M{\"o}ller \cite{MollerMachiraju1997, moller1998design, moller1999spatial} & $1998$&
&&&&&$\bullet$&&&$\bullet$&$\bullet$&&
$\textcolor{mygreen1}{\checkmark}$ &
$\textcolor{myred1}{\times}$ &
$\textcolor{mygreen1}{\checkmark}$ &
$\textcolor{myred1}{\times}$ &
$\textcolor{myred1}{\times}$ &
$\textcolor{mygreen1}{\checkmark}$ &
$\textcolor{myred1}{\times}$ &
$\textcolor{mygreen1}{\checkmark}$ &
$\textcolor{myred1}{\times}$ &
$\textcolor{myred1}{\times}$ &
$n=\{0,1\}$ \\ \hlinewd{.75pt}
Gaussian \cite{MarrHildreth1980, freeman1991design, deriche1993recursively} & $1980$ &
 &&&&&&&&&&&
$\textcolor{mygreen1}{\checkmark}$ &
$\textcolor{mygreen1}{\checkmark}$ &
$\textcolor{mygreen1}{\checkmark}$ &
$\textcolor{myred1}{\times}$ &
$\textcolor{myred1}{\times}$ &
$\textcolor{mygreen1}{\checkmark}$ &
$\textcolor{mygreen1}{\checkmark}$ &
$\textcolor{myred1}{\times}$ &
$\textcolor{myred1}{\times}$ &
$\textcolor{myred1}{\times}$ &
$n=\{0,1,\hdots\}$ \\ \hlinewd{.75pt}
Kaiser \cite{kaiser1977sharpening, kaiser1977data} & $1977$ &
&&&&&&$\bullet$&&&$\bullet$&&
$\textcolor{mygreen1}{\checkmark}$ &
$\textcolor{myred1}{\times}$ &
$\textcolor{mygreen1}{\checkmark}$ &
$\textcolor{myred1}{\times}$ &
$\textcolor{myred1}{\times}$ &
$\textcolor{mygreen1}{\checkmark}$ &
$\textcolor{mygreen1}{\checkmark}$ &
$\textcolor{mygreen1}{\checkmark}$ &
$\textcolor{myred1}{\times}$ &
$\textcolor{myred1}{\times}$ &
$n=\{0,1\}$ \\ \hlinewd{.75pt}
Lanczos \cite{lanczosapplied, lanczos1988applied, duchon1979lanczos, groetsch1998lanczo, McDevitt:2012, burch2005least} & $1956$ &
$\bullet$ & $\bullet$ & $\bullet$ &&&&&$\bullet$&&$\bullet$&&
$\textcolor{mygreen1}{\checkmark}$ &
$\textcolor{myred1}{\times}$ &
$\textcolor{mygreen1}{\checkmark}$ &
$\textcolor{myred1}{\times}$ &
$\textcolor{myred1}{\times}$ &
$\textcolor{mygreen1}{\checkmark}$ &
$\textcolor{mygreen1}{\checkmark}$ &
$\textcolor{mygreen1}{\checkmark}$ &
$\textcolor{myred1}{\times}$ &
$\textcolor{myred1}{\times}$ &
$n=\{0,1,\hdots\}$ \\ \hlinewd{.75pt}
Savitzky-Golay \cite{SavitzkyGolay1964, gorry1990general, meer1990smoothed, luo2005properties, schafer2011savitzky} & $1964$ &
$\bullet$ &&&&&&&&&$\bullet$&&
$\textcolor{mygreen1}{\checkmark}$ &
$\textcolor{mygreen1}{\checkmark}$ &
$\textcolor{mygreen1}{\checkmark}$ &
$\textcolor{mygreen1}{\checkmark}$ &
$\textcolor{mygreen1}{\checkmark}$ &
$\textcolor{mygreen1}{\checkmark}$ &
$\textcolor{mygreen1}{\checkmark}$ &
$\textcolor{mygreen1}{\checkmark}$ &
$\textcolor{myred1}{\times}$ &
$\textcolor{myred1}{\times}$ &
$n=\{0,1,\hdots\}$ \\ \hlinewd{.75pt}
Ignjatovic \cite{ignjatovic2000signal, ignjatovic2001method, ignjatovic2009chromatic, ignjatovic2011multidimensional}& $2001$ & 
&&$\bullet$&&&&$\bullet$&&&&&
$\textcolor{mygreen1}{\checkmark}$ &
$\textcolor{mygreen1}{\checkmark}$ &
$\textcolor{myred1}{\times}$ &
$\textcolor{myred1}{\times}$ &
$\textcolor{myred1}{\times}$ &
$\textcolor{mygreen1}{\checkmark}$ &
$\textcolor{myred1}{\times}$ &
$\textcolor{mygreen1}{\checkmark}$ &
$\textcolor{mygreen1}{\checkmark}$ &
$\textcolor{myred1}{\times}$ &
$n=\{0,1,\hdots\}$ \\ \hlinewd{.75pt}
Selesnick \cite{SelesnickBurrus1998, SelesnickBurrus1998TSP, Selesnick2002}& $2002$ & 
$\bullet$ &&&&&&&$\bullet$&&&$\bullet$&
$\textcolor{mygreen1}{\checkmark}$ &
$\textcolor{mygreen1}{\checkmark}$ &
$\textcolor{mygreen1}{\checkmark}$ &
$\textcolor{myred1}{\times}$ &
$\textcolor{mygreen1}{\checkmark}$ &
$\textcolor{mygreen1}{\checkmark}$ &
$\textcolor{mygreen1}{\checkmark}$ &
$\textcolor{mygreen1}{\checkmark}$ &
$\textcolor{mygreen1}{\checkmark}$ &
$\textcolor{myred1}{\times}$ &
$n=\{0,1\}$ \\ \hlinewd{.75pt}
Hosseini-Plataniotis \cite{hosseini2017derivative} & $2017$ &
&&&&&&&$\bullet$&$\bullet$&&$\bullet$&
$\textcolor{mygreen1}{\checkmark}$ &
$\textcolor{mygreen1}{\checkmark}$ &
$\textcolor{mygreen1}{\checkmark}$ &
$\textcolor{myred1}{\times}$ &
$\textcolor{mygreen1}{\checkmark}$ &
$\textcolor{mygreen1}{\checkmark}$ &
$\textcolor{mygreen1}{\checkmark}$ &
$\textcolor{mygreen1}{\checkmark}$ &
$\textcolor{mygreen1}{\checkmark}$ &
$\textcolor{myred1}{\times}$ &
$n=\{0,1,\hdots\}$ \\ \hlinewd{.75pt}
MaxPol (Proposed) & $2017$ &
&&&&&&&$\bullet$&$\bullet$&&$\bullet$&
$\textcolor{mygreen1}{\checkmark}$ &
$\textcolor{mygreen1}{\checkmark}$ &
$\textcolor{mygreen1}{\checkmark}$ &
$\textcolor{mygreen1}{\checkmark}$ &
$\textcolor{mygreen1}{\checkmark}$ &
$\textcolor{mygreen1}{\checkmark}$ &
$\textcolor{mygreen1}{\checkmark}$ &
$\textcolor{mygreen1}{\checkmark}$ &
$\textcolor{mygreen1}{\checkmark}$ &
$\textcolor{mygreen1}{\checkmark}$ &
$n=\{0,1,\hdots\}$ \\ \hlinewd{.75pt}
\end{tabular}
\end{table*}
\end{landscape}

\section{Global finite difference model--numerical foundation}\label{sec_maxpol_numerics}
In this section we introduce the generalized numerical framework of finite difference calculation to render closed form solution to diverse parameter design of fullband/lowpass filters with arbitrary side-shift nodes.

\subsection{Maximally polynomial (MaxPol) design}\label{section_global_method}
Consider the global values of finite difference for both centralized and staggered nodes is defined by undetermined coefficients
\begin{align}
\text{centralized:~}f^{(n)}_{j+s} := \sum_{k=-l}^{l} c^n_{s}(k) f_{j+k},~~~ &
\text{staggered:~}f^{(n)}_{j+s-1/2}:= \sum_{k=-l+1}^{l} c^n_{s-\frac{1}{2}}(k) f_{j+k}
\label{eq1}
\end{align}
where, $c^n_{s}$ and $c^n_{s-\frac{1}{2}}$ stand for unknown coefficients of $n$th order derivative with side-shift $s$ either to left or right boundaries. From the definition \ref{eq1} the corresponding transform functions are obtained by the discrete Fourier transform (DFT)
\begin{subequations}
\begin{align}
H^{(n)}_{s}(i\omega) & = \frac{\mathfrak{F}\{f^{(n)}_{j+s}\}}{\mathfrak{F}\{f_{j+s}\}}=\sum_{k=-l}^{l} c^n_{s}(k) e^{i(k-s)\omega}  \label{eq2_1} \\
H^{(n)}_{s-\frac{1}{2}}(i\omega) & = \frac{\mathfrak{F}\{f^{(n)}_{j+s-\frac{1}{2}}\}}{\mathfrak{F}\{f_{j+s-\frac{1}{2}}\}}=\sum_{k=-l+1}^{l} c^n_{s-\frac{1}{2}}(k) e^{i(k-s+\frac{1}{2})\omega}.  \label{eq2_2}
\end{align}\label{eq2}
\end{subequations}
The exponential function at particular point $\omega_0$ can be expanded by Taylor series
\begin{align}
e^{\alpha\omega} =  e^{\alpha\omega_0}\sum\limits^{\infty}_{m=0}{\frac{\alpha^m\left(\omega-\omega_0\right)^m}{m!}}
\label{eq3_tyalor_expansion}
\end{align}
where $\alpha\in\mathbb{C}$ is an arbitrary complex number. By substituting \ref{eq3_tyalor_expansion} in \ref{eq2} yields
\begin{subequations}
\begin{align}
H^{(n)}_s(i\omega) & = \sum_{k=-l}^{l}{e^{i(k-s)\omega_0}c^n_{s}(k)}\sum_{m=0}^{\infty}\frac{\left(i(k-s)(\omega-\omega_0)\right)^m}{m!} \label{eq3_1} \\
H^{(n)}_{s-\frac{1}{2}}(i\omega) & = \sum_{k=-l+1}^{l}{e^{i(k-s+\frac{1}{2})\omega_0}c^n_{s-\frac{1}{2}}(k)}\sum_{m=0}^{\infty}\frac{\left(i(k-s+\frac{1}{2})(\omega-\omega_0)\right)^m}{m!} \label{eq3_2}
\end{align}\label{eq3}
\end{subequations}
The goal is to obtain lowpass FIR derivative coefficients where `by fixing $s$' different filter responses are modeled. Choosing positive and negative values for $s$ cause a shift towards right and left boundaries, respectively. Whereas $s=0$ implies zero-shift transformation. A plausible solution to this design is to regulate the transfer functions in \ref{eq3} using the maximally flat constraints \cite{Herrmann1970, Selesnick2002}. Specifically, we rename this design by \textit{``maximally polynomial (MaxPol)''} to resemble different polynomial orders ``$n$'' as of different derivative-order response $H^{(n)}_{c}(i\omega)=(i\omega)^n$ in frequency domain. The rationale of the MaxPol conditions is to regulate filter response to preserve the frequency information up to certain level. In other words, our desire is to truncate frequency residuals beyond a certain level which we refer to as `cut-off level'. This has huge advantage in forward imaging applications where texture details contain high-frequency information while they are perturbed by noise artifacts. For more information on the topic we refer the reader to \cite{hosseini2017derivative}. In particular, two main criteria should be satisfied
\begin{enumerate}
\item the discrete filter responses \ref{eq3} trace the continuous trajectory of the $n$th order derivative up to $P$th order polynomial at $\omega=0$ such that
\begin{align}\label{eq4_1}
\frac{\partial^p H^{(n)}_s(i\omega)}{\partial\omega^p}{\Big{\vert}}_{\omega=0} & = 
\frac{\partial^p H^{(n)}_c(i\omega)}{\partial\omega^p}{\Big{\vert}}_{\omega=0}, & p\in\{0,\cdots, P\}
\end{align}
where, $P$ controls the degree of differential accuracy.
\item An optional lowpass property is imposed on the discrete responses \ref{eq3} up to $Q$th order polynomial tangent to zero at $\omega=\pi$ to control the flatness degree of lowpass transition 
\begin{align}\label{eq4_2}
\frac{\partial^q H^{(n)}_s(i\omega)}{\partial\omega^q}{\Big{\vert}}_{\omega=\pi} & =0, & q\in\{0,\cdots, Q\}
\end{align}
\end{enumerate}
Combinations of $\{P,Q\}$ create different filter library for lowpass/fullband FIR design. Higher $P$ indicates fullband response by relaxing the cutoff frequency. Whereas higher $Q$ implies more emphasize on lowpass design by drifting the cutoff closer to zero. $P$ and $Q$ are linearly dependent, where $Q$ can be indirectly manipulated as function of $P$. To proceed with design, let first calculate the $p$th derivative of continuous response by
\begin{equation}\label{eq5}
\frac{\partial^p H^{(n)}_{c}(i\omega)}{{\partial\omega}^p}=\frac{i^n n!}{(n-p)!}\omega^{n-p}.
\end{equation}

Next, the $p$th derivative response of the transfer functions \ref{eq3} yields
\begin{subequations}
\begin{align}
\frac{\partial^p H^{(n)}_s(i\omega)}{\partial\omega^p} & =\sum_{k=-l}^{l}{e^{i(k-s)\omega_0}c^n_{s}(k)}\sum_{m=p}^{\infty}\frac{\left(i(k-s)\right)^m}{(m-p)!}(\omega-\omega_0)^{m-p} \label{eq6_1} \\
\frac{\partial^p H^{(n)}_{s-\frac{1}{2}}(i\omega)}{\partial\omega^p} & =\sum_{k=-l+1}^{l}{e^{i(k-s+\frac{1}{2})\omega_0}c^n_{s-\frac{1}{2}}(k)}\sum_{m=p}^{\infty}\frac{\left(i(k-s+\frac{1}{2})\right)^m}{(m-p)!}(\omega-\omega_0)^{m-p} \label{eq6_2}
\end{align}\label{eq6}
\end{subequations}
By substituting \ref{eq6_1} in MaxPol conditions \ref{eq4_1}-\ref{eq4_2} and evaluating the derivatives at $\omega=\{0,\pi\}$ the polynomials corresponding to $p,q\neq m$ will vanish. Hence, the remaining is given by
\begin{subequations}\label{eq7_centralized}
\begin{align}
\sum_{k=1 }^{2l+1} (a+k)^p c^n_{s}(k) =
\left\{\begin{array}{ll}
n!, & p=n \\
0, &\text{else}
\end{array}\right.,~~~p\in\mathcal{S}_P=\{0,\cdots,P\} \label{eq7_centralized_1}\\
\sum_{k=1 }^{2l+1} (-1)^{k}(a+k)^q c^n_{s}(k) =0,~~~q\in\mathcal{S}_Q=\{0,\cdots,Q\}.~~~~~~~~~ \label{eq7_centralized_2}
\end{align}
\end{subequations}
The centralized system of equations \ref{eq7_centralized} contains $2l+1$ unknowns and the uniqueness of the solution is guaranteed if $|\mathcal{S}_P|+|\mathcal{S}_Q|=2l+1$. Note the summation limits in \ref{eq7_centralized} are obtained by shift technique on sub-indexes. Similarly, by substituting the staggered transfer function \ref{eq6_2} in MaxPol conditions \ref{eq4_1}-\ref{eq4_2} and evaluating at points $\omega=\{0,\pi\}$
\begin{subequations}\label{eq7_staggered}
\begin{align}
\sum_{k=1}^{2l} (b+k)^p c^n_{s-\frac{1}{2}}(k) =
\left\{\begin{array}{ll}
n!, & p=n \\
0, &\text{else}
\end{array}\right.,~~~p\in\mathcal{S}_P=\{0,\cdots,P\} \label{eq8_staggered_1}\\
\sum_{k=1}^{2l} (-1)^{k}(b+k)^q c^n_{s-\frac{1}{2}}(k) =0,~~~q\in\mathcal{S}_Q=\{0,\cdots,Q\}.~~~~~~~~~\label{eq8_staggered_2}
\end{align}
\end{subequations}
The uniqueness of the solution to the staggered equations \ref{eq7_staggered} is guaranteed if $|\mathcal{S}_P|+|\mathcal{S}_Q|=2l$.

\begin{table}[htp]
\renewcommand{\arraystretch}{1.5}
\caption{Generalization of polynomial term for different node design}
\label{table_BC_model}
\centering
\scriptsize
\begin{tabular}{ccc} 
\hlinewd{1pt}
 & \text{Centralized Node:} \ref{eq7_centralized} & \text{Staggered Node:} \ref{eq7_staggered} \\
\hlinewd{1pt}
\text{zero-shift ($s=0$)} & $a=-l-1$ & $b=-l+1/2$ \\
\hlinewd{.75pt}  
\text{Left Boundary} & $a=-l+s-1$ & $b=-l+s-1/2$ \\
\hlinewd{.75pt} 
\text{Right Boundary} & $a=-l-s-1$ & $b=-l-s+1/2$ \\
\hlinewd{.75pt} 
\end{tabular}
\end{table}

The system of equations \ref{eq7_centralized} and \ref{eq7_staggered} are comprised of different polynomials $k+a$ and $k+b$ where the terms $a$ and $b$ can be generalized for positive or negative shifting index $s$ as discussed earlier. \ref{table_BC_model} elaborates on all possible selections for such generalization.


\subsection{Matrix-vector representation}\label{subsec:matrix_vector}
The matrix-vector formulation analogous to the centralized system of equations \ref{eq7_centralized} can be constructed by
\begin{align}
\begin{bmatrix}
V_{a,P}\\
V_{a,Q}\Lambda_s
\end{bmatrix}
c^n_{s} = 
b_n,~~~\text{where}~~~
V_{a,J} = 
\left[\hspace{-.075in}
\begin{array}{c@{\hspace{.5em}}c@{\hspace{.5em}}c@{\hspace{.5em}}c@{\hspace{.5em}}}
1 & 1 & \hdots & 1 \\
(a+1)^1 & (a+2)^1 & \hdots & (a+2l+1)^1 \\
\vdots  & \vdots  & \ddots & \vdots     \\
(a+1)^J & (a+2)^J & \hdots & (a+2l+1)^J
\end{array}
\hspace{-.075in}\right].
\label{eq9_centralized}
\end{align}
The multiplying matrix in \ref{eq9_centralized} is composed of partitioned Vandermonde blocks where $\Lambda_s = \diag{\left((-1)^1,(-1)^2,\ldots,(-1)^{2l+1}\right)}$ is a diagonal matrix, $c^n_s=\left[c^n_{s}(1),c^n_{s}(2),\ldots,c^n_{s}(2l+1)\right]^T$ is the undetermined vector coefficients, and $b_{n}=\left[0,\ldots,n!,\ldots,0\right]^T$ containing  one non-zero element at $n+1$th row. With similar notations, the analogous matrix-vector formulation to the staggered system of equations \ref{eq7_staggered} can be formulated by
\begin{align}
\begin{bmatrix}
V_{b,P}\\
V_{b,Q}\Lambda_{s-\frac{1}{2}}
\end{bmatrix}
c^n_{s-\frac{1}{2}} = 
b_n,~~~\text{where}~~~
V_{b,J} = 
\left[\hspace{-.075in}
\begin{array}{c@{\hspace{.5em}}c@{\hspace{.5em}}c@{\hspace{.5em}}c@{\hspace{.5em}}}
1 & 1 & \hdots & 1 \\
(b+1)^1 & (b+2)^1 & \hdots & (b+2l)^1 \\
\vdots  & \vdots  & \ddots & \vdots     \\
(b+1)^J & (b+2)^J & \hdots & (b+2l)^J
\end{array}
\hspace{-.075in}\right].
\label{eq9_staggered}
\end{align}
Here, $\Lambda_{s-\frac{1}{2}} = \diag{\left((-1)^1,(-1)^2,\ldots,(-1)^{2l}\right)}$ and $c^n_{s-\frac{1}{2}}=\left[c^n_{s-\frac{1}{2}}(1),c^n_{s-\frac{1}{2}}(2),\ldots,c^n_{s-\frac{1}{2}}(2l)\right]^T$.

The partitioned-block Vandermonde matrices in \ref{eq9_centralized} and \ref{eq9_staggered} are non-singular. This can be shown by decomposing the matrix in \ref{eq9_centralized} to a $2\times 2$  block matrix
\begin{align}
\begin{bmatrix}
V_{a,P}\\
V_{a,Q}\Lambda_{s}
\end{bmatrix} =
\begin{bmatrix}
V^L_{a,P} & V^R_{a,P}\\
V^L_{a,Q}\Lambda^U_{s} & V^R_{a,Q}\Lambda^B_{s}
\end{bmatrix},\text{where}
\begin{array}{l}
\left[V^L_{a,P}\right]_{P+1\times P+1},~
\left[V^R_{a,P}\right]_{P+1\times Q+1},~
\left[V^L_{a,Q}\right]_{Q+1\times P+1},\nonumber \\
\left[V^R_{a,Q}\right]_{Q+1\times Q+1},~
\left[\Lambda^U_{s}\right]_{P+1\times P+1},~
\left[\Lambda^B_{s}\right]_{Q+1\times Q+1},\nonumber
\end{array}
\end{align}
where, $\Lambda_{s}$ is separated to low $\Lambda^U_{s}$ and up $\Lambda^B_{s}$ block-diagonals. The block elements $V^L_{a,P}$ and $V^R_{a,Q}\Lambda^B_{s}$ are non-singular square Vandermonde matrices with following determinants \cite{fiedler2008special}
\begin{align}
\det V^L_{a,P} =\prod\limits_{i=1}^{P+1} (i-1)!
~\text{and}~
\det{V^R_{a,Q}\Lambda^B_{s}} = 
\det V^R_{a,Q}\det\Lambda^B_{s} =(-1)^P\prod\limits_{i=1}^{Q+1} (i-1)!
\nonumber
\end{align}
which are non-zero. Similarly the determinant of partitioned matrix \ref{eq9_staggered} yields non-zero.

The solutions to \ref{eq9_centralized} and \ref{eq9_staggered} require inverse calculation of partitioned-block Vandermonde matrix which is highly ill-conditioned. Accordingly, the numerical precision of inverse Vandermonde is highly perturbed in digital computation due to limited floating points for numerical computation. Nevertheless, both latter matrices are non-singular and hence there exist a closed form solution. In fact the remaining problem of this section reduces to find a proper closed form solutions to \ref{eq9_centralized} and \ref{eq9_staggered}.

\subsection{Exact solution via symbolic calculation}
One easy way to solve \ref{eq9_centralized} and \ref{eq9_staggered} is to perform symbolic computation in computer programming such as MATLAB/MATHEMATICA. With existing digital computing powers one can easily solve an acceptable range of matrix size up to $l<50$ in a reasonable time frame. For instance to compute the second derivative coefficients for parameters $l=15$, $P=8$, and $s=0$ it takes only $0.3$ seconds to run on a Windows 7 HP Z440 Workstation desktop computer with an Intel(R) Xeon(R) CPU. Often in many forward imaging applications the tap-length of the lowpass filter is fixed and therefore it can be pre-computed and restored in a digital memory for processing. \ref{fig_lowpass} displays the filter response of few centralized lowpass kernels generated by two methods of MaxPol (proposed) and Savitzky-Golay filters \cite{SavitzkyGolay1964, luo2005properties, schafer2011savitzky}. The derivative orders here are set to $n=\{0,1,2,3,4\}$ where $n=0$ corresponds to a lowpass (interpolation) filter. The residual free of the MaxPol response is highly evident compared to the side-lobes generated by SG filters.

\begin{figure}[htp]
\centerline{
\subfigure[MaxPol ($n=0$)]{\includegraphics[height=0.18\textwidth]{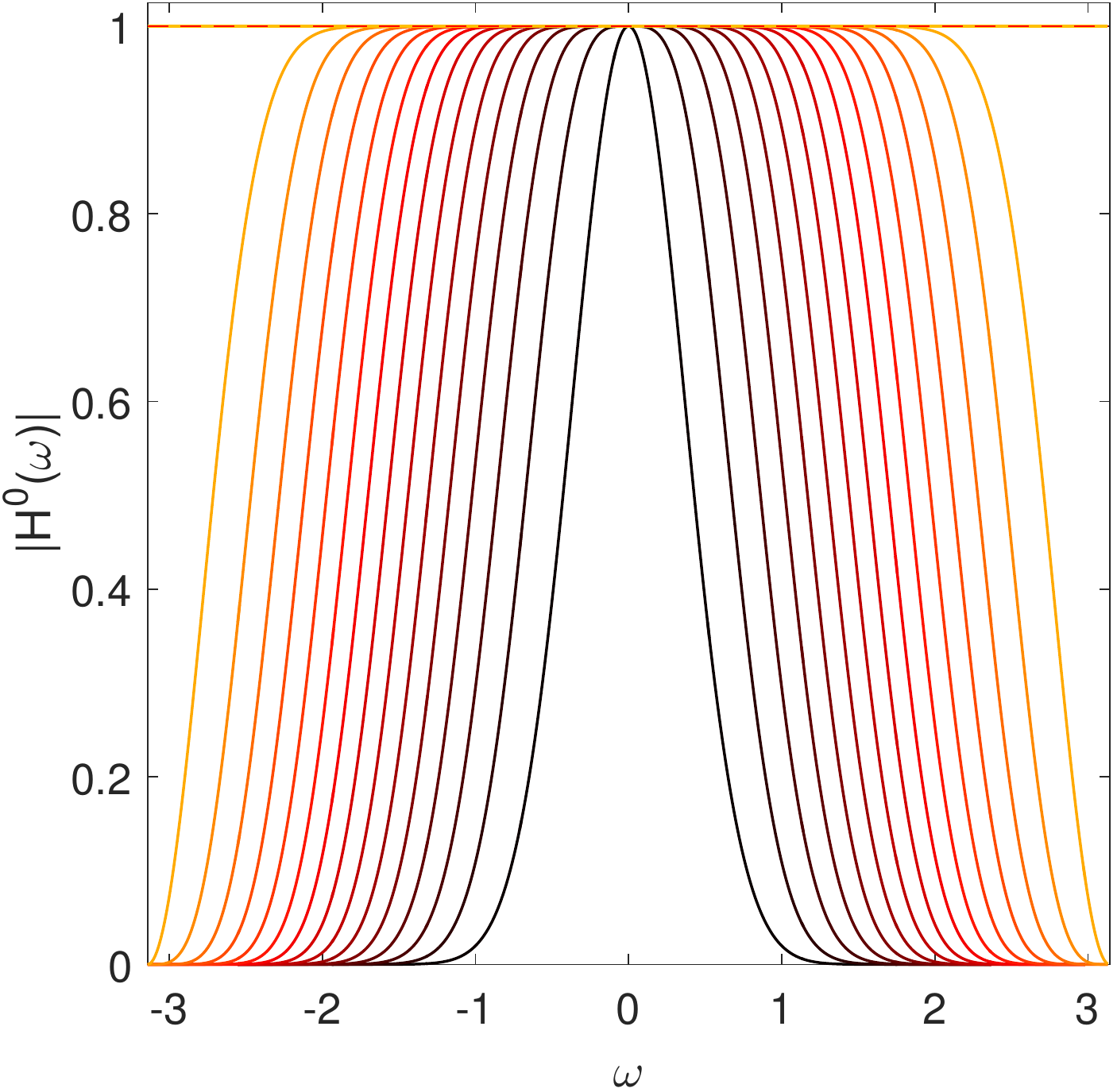}\label{fig_lowpass_centralized_maxpol_n_ord_0}}
\subfigure[MaxPol ($n=1$)]{\includegraphics[height=0.18\textwidth]{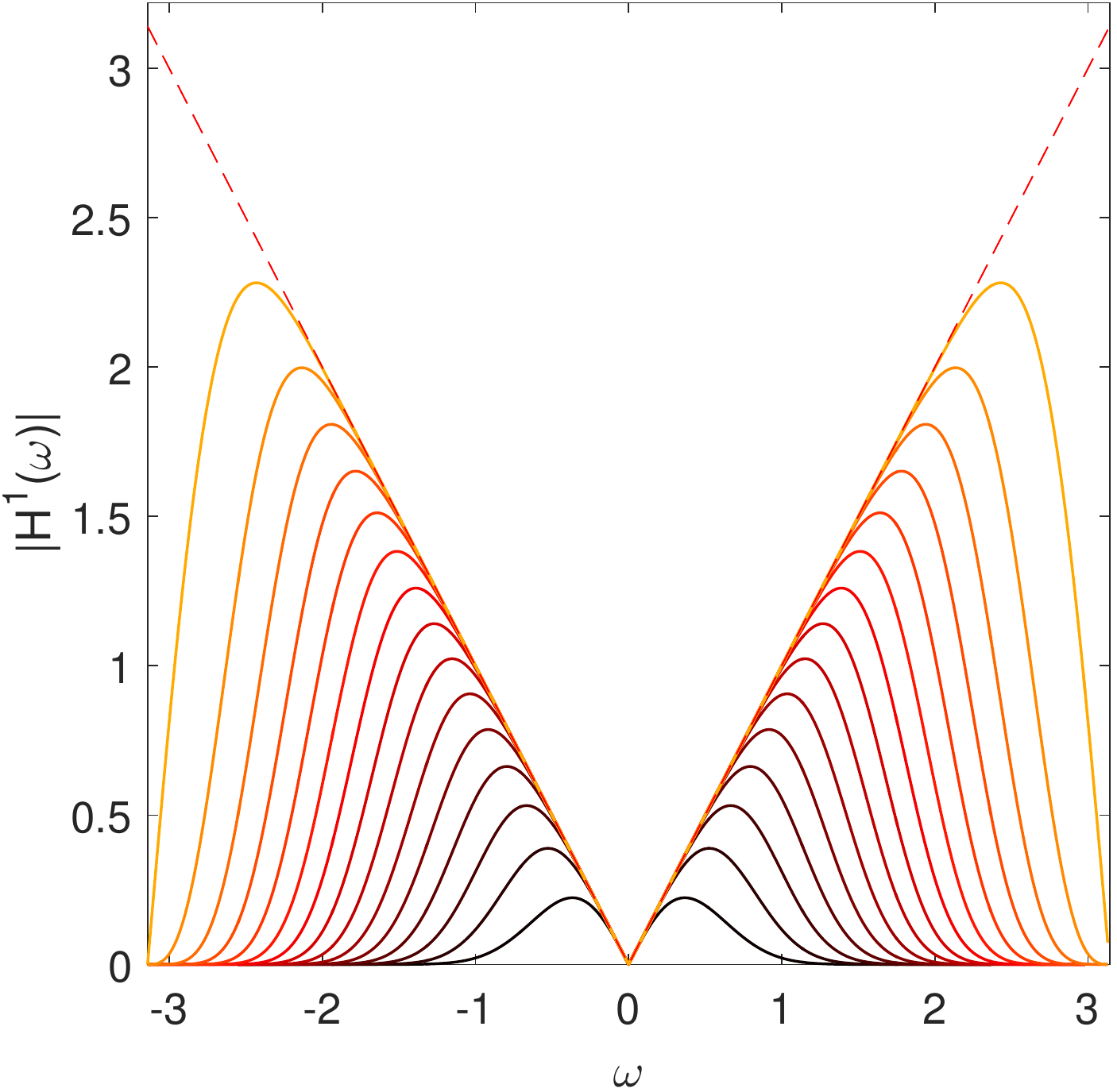}\label{fig_lowpass_centralized_maxpol_n_ord_1}}
\subfigure[MaxPol ($n=2$)]{\includegraphics[height=0.18\textwidth]{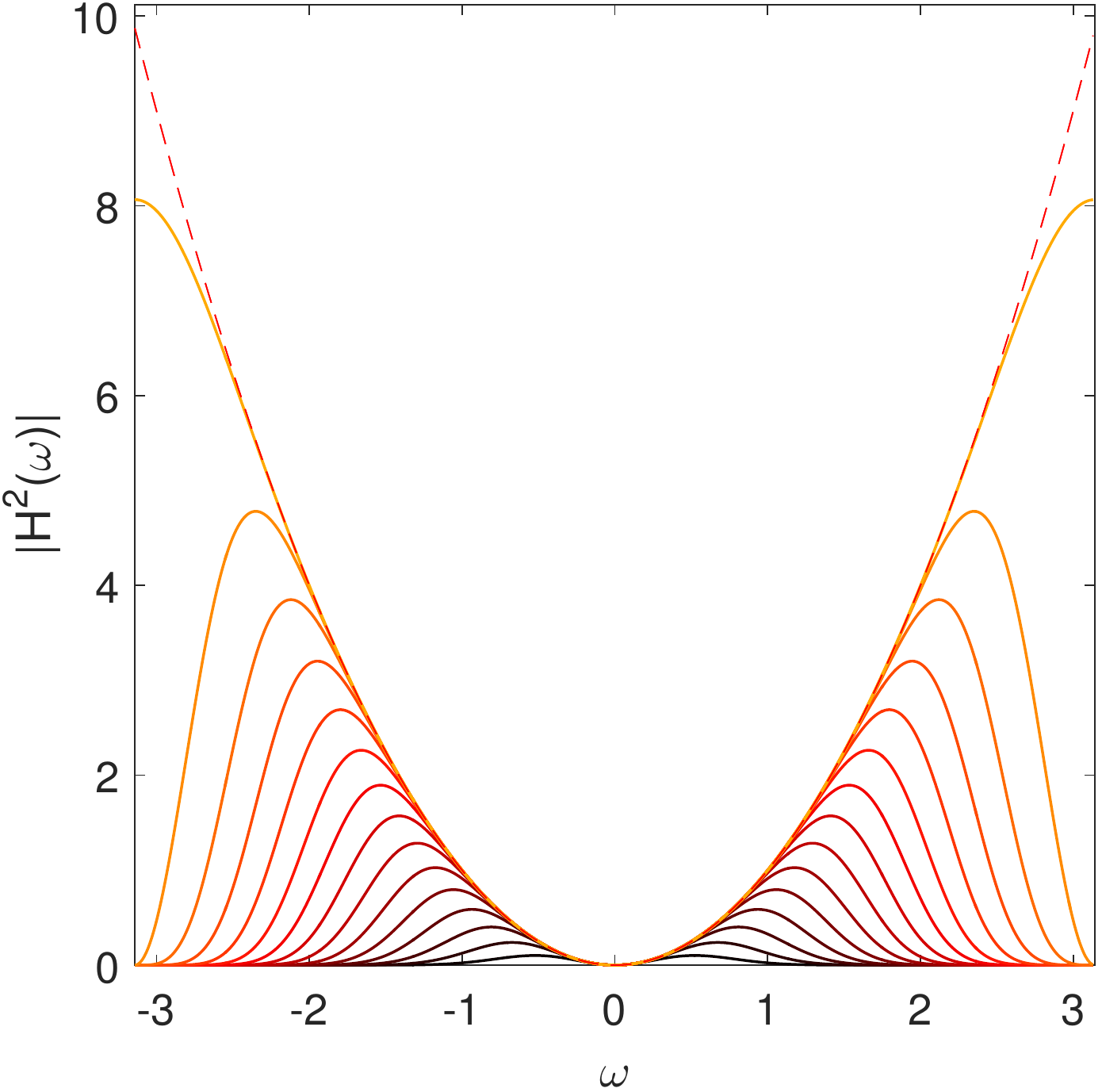}\label{fig_lowpass_centralized_maxpol_n_ord_2}}
\subfigure[MaxPol ($n=3$)]{\includegraphics[height=0.18\textwidth]{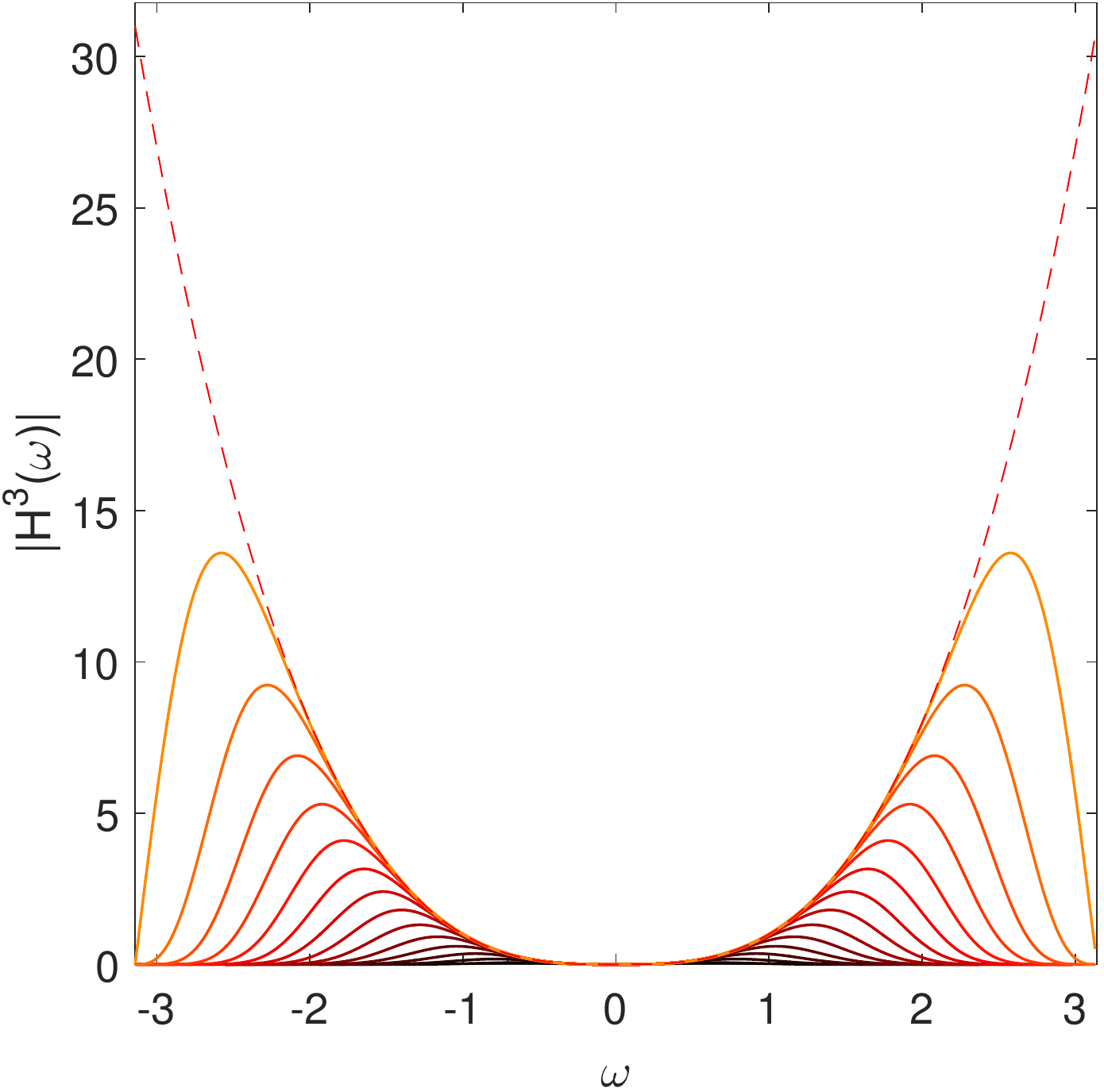}\label{fig_lowpass_centralized_maxpol_n_ord_3}}
\subfigure[MaxPol ($n=4$)]{\includegraphics[height=0.18\textwidth]{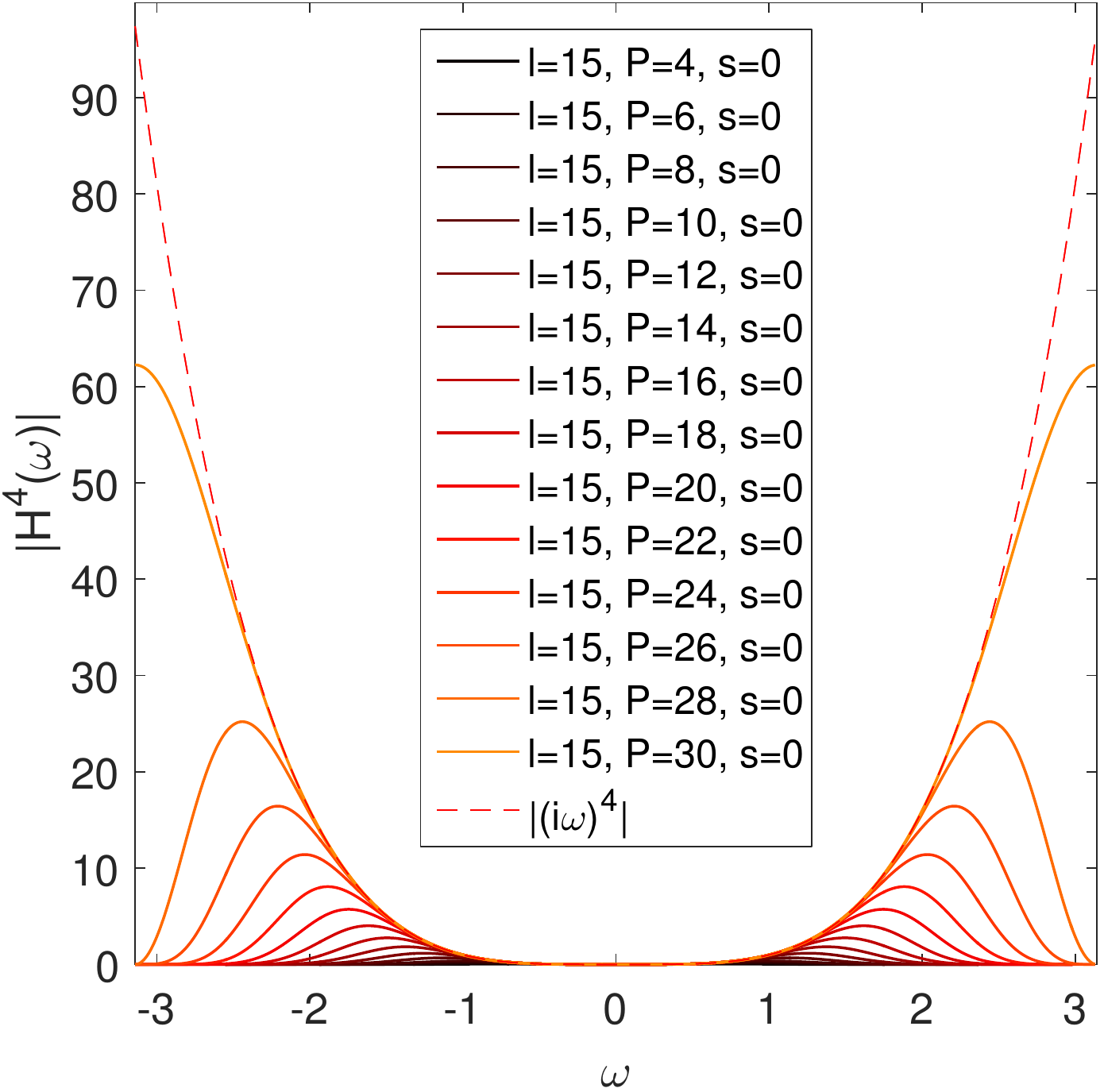}\label{fig_lowpass_centralized_maxpol_n_ord_4}}
}
\centerline{
\subfigure[SG ($n=0$)]{\includegraphics[height=0.18\textwidth]{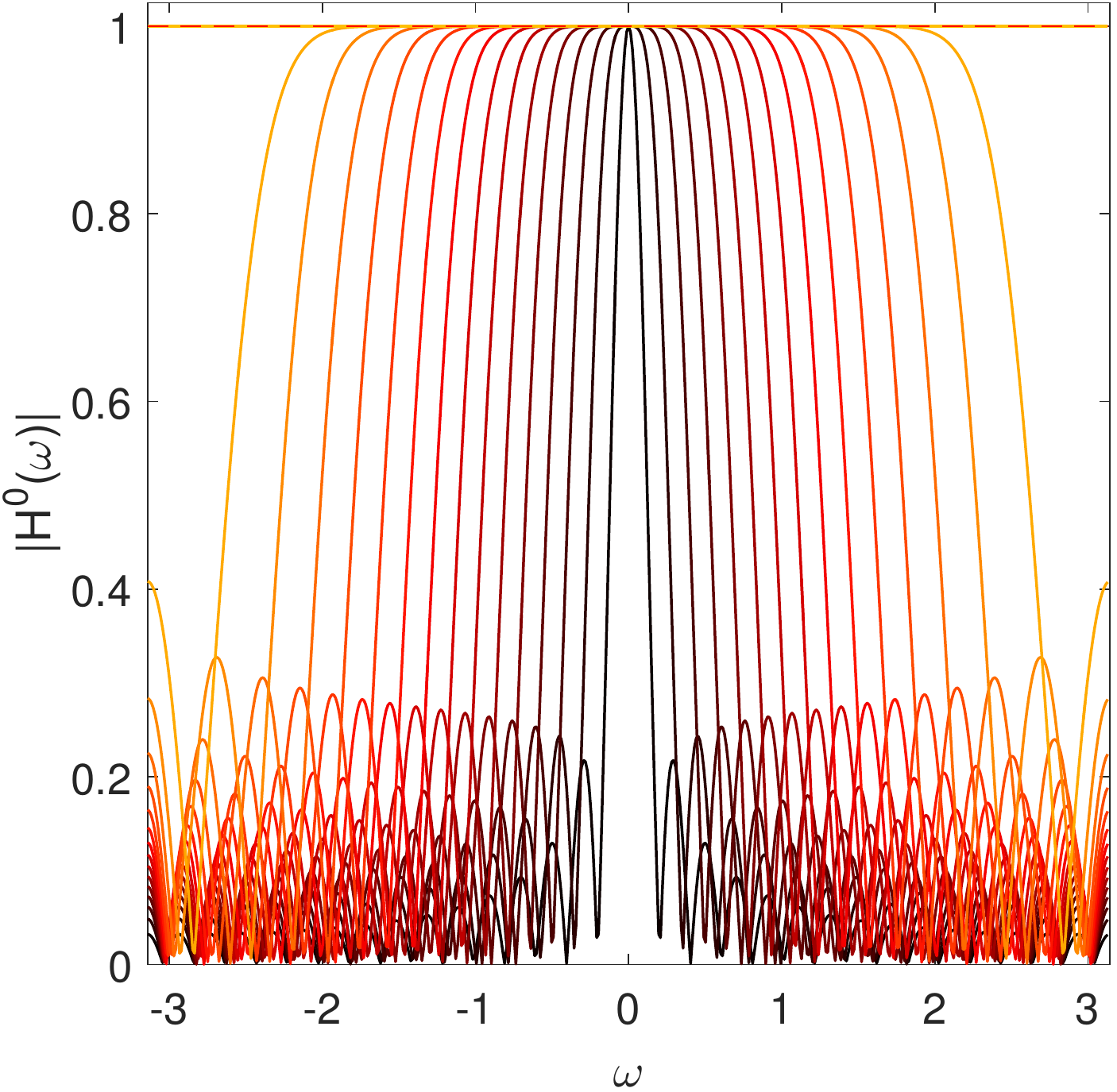}\label{fig_lowpass_centralized_savitzkygolay_n_ord_0}}
\subfigure[SG ($n=1$)]{\includegraphics[height=0.18\textwidth]{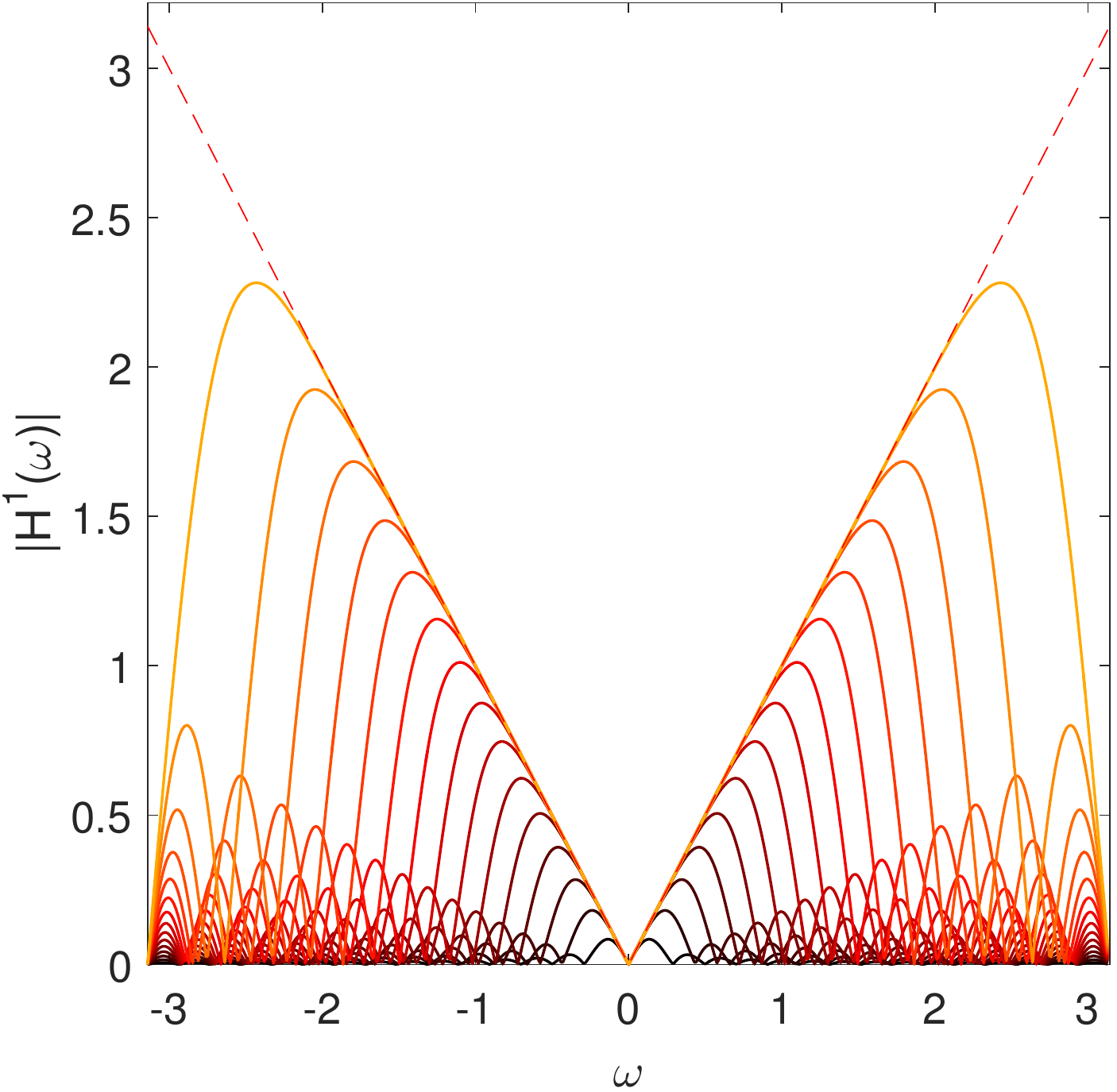}\label{fig_lowpass_centralized_savitzkygolay_n_ord_1}}
\subfigure[SG ($n=2$)]{\includegraphics[height=0.18\textwidth]{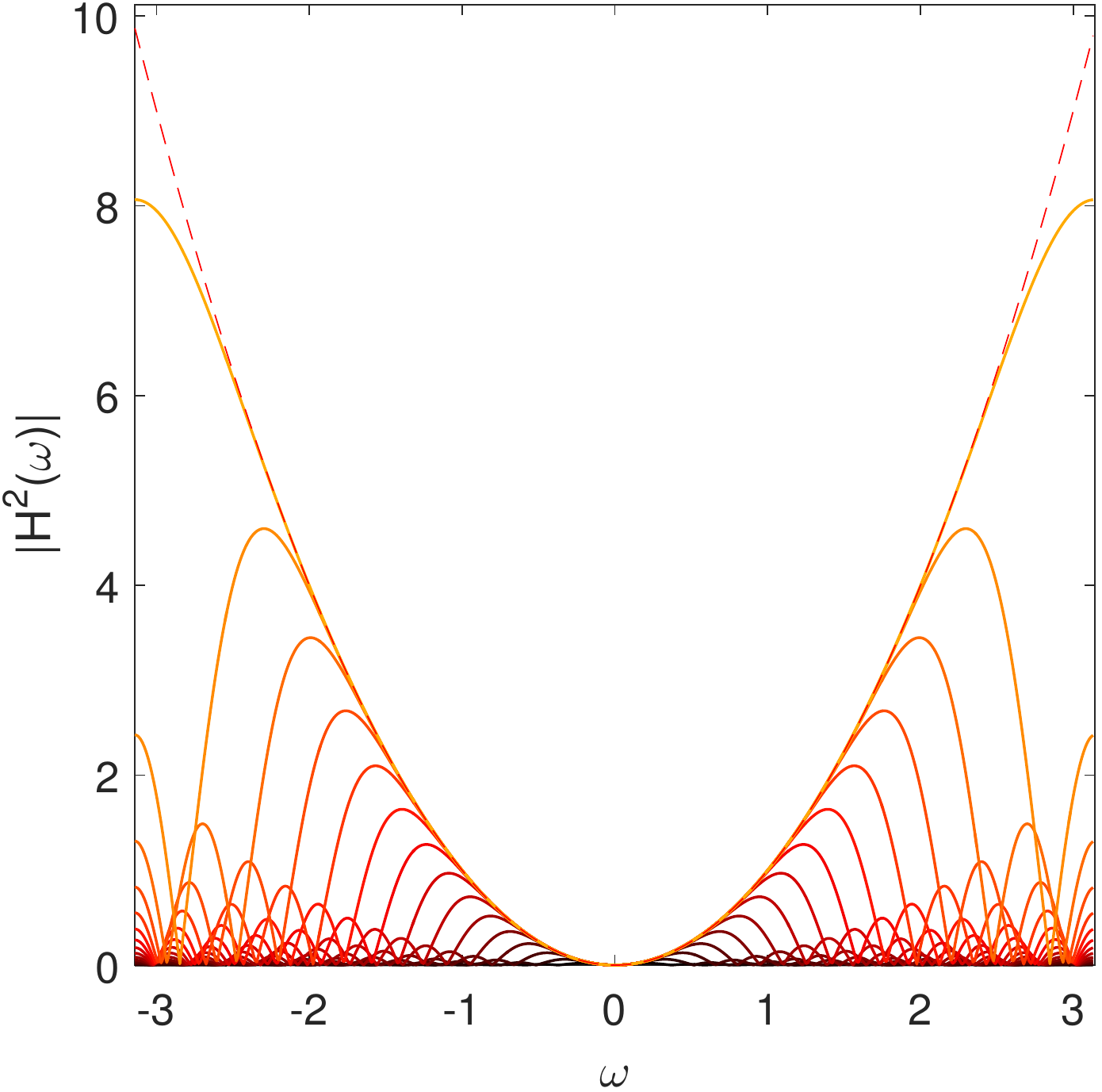}\label{fig_lowpass_centralized_savitzkygolay_n_ord_2}}
\subfigure[SG ($n=3$)]{\includegraphics[height=0.18\textwidth]{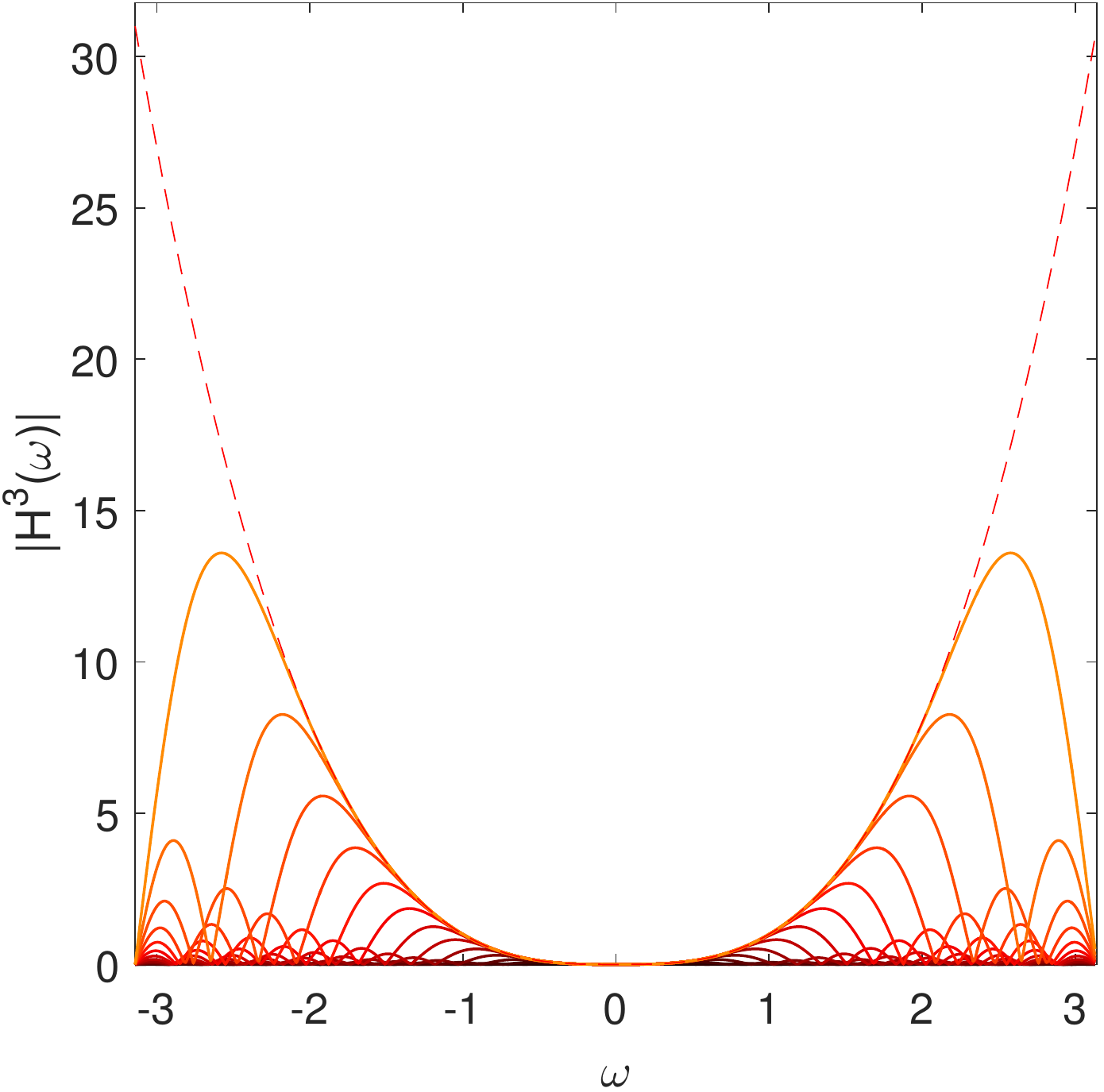}\label{fig_lowpass_centralized_savitzkygolay_n_ord_3}}
\subfigure[SG ($n=4$)]{\includegraphics[height=0.18\textwidth]{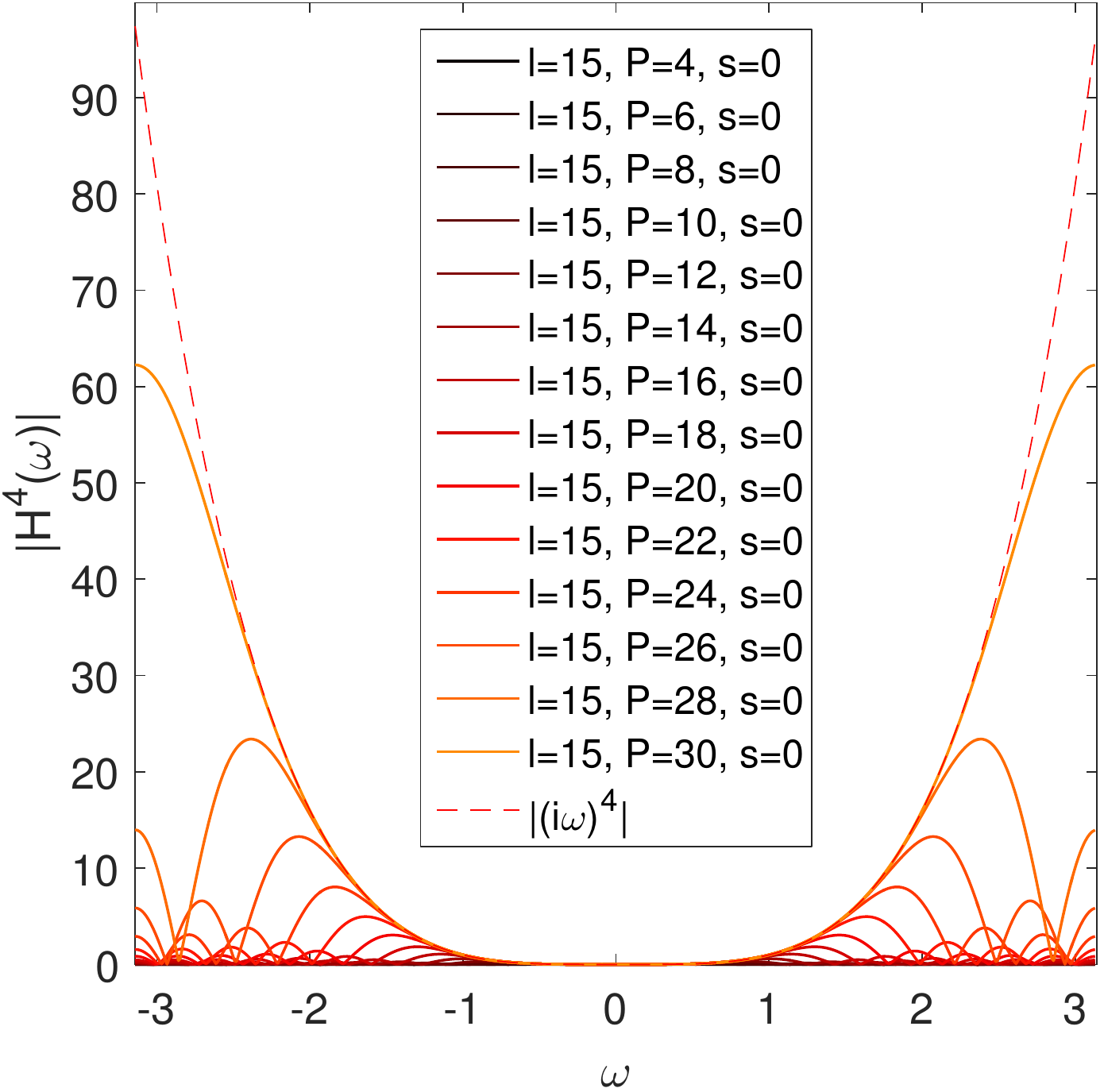}\label{fig_lowpass_centralized_savitzkygolay_n_ord_4}}
}
\caption{Transfer functions of FIR lowpass differentiation with centralized scheme using MaxPol (Proposed) and Savitzky-Golay (SG) design methods \cite{SavitzkyGolay1964, luo2005properties, schafer2011savitzky}.}
\label{fig_lowpass}
\end{figure}

It should be noted that the above filters are shown for zero shift design $s=0$. Without the loss of generality, one can associate non-zero side shifts to generate one-sided kernels that comply with causal filters. This can be favorably applied in dynamic applications such as system controls to estimate feature characteristics of a noisy signal in real-time processing. 

\subsection{Closed form solution to fullband design}\label{section_closed_form}
In the case of fullband design where the cutoff is relaxed, i.e. $P=2l-1$ for staggered and $P=2l$ for centralized schemes, the matrix system of equations \ref{eq9_centralized} and \ref{eq9_staggered} are simplified to square Vandermonde systems
\begin{align}
\text{staggered:}~V_{b,P} c^n_{s-\frac{1}{2}} = b_n,~~~ &
\text{centralzied:}~V_{a,P} c^n_{s} = b_n.
\label{eq_fullband_1}
\end{align}
We define the general notation $V=V_{v,N}$ where $v\in\mathbb{R}$ and $N$ is the number of equidistant sampling grids for discrete polynomial interpolation. While algorithmic solutions such as \cite{EisinbergFedele2006} exist for inverse Vandermonde calculation of \ref{eq_fullband_1} with suboptimal accuracies, recent analysis show the partial fraction expansion of a high degree polynomial of rational function can be associated to decompose the inverse Vandermonde matrix into multiplication of two non-singular matrices, namely $V^{-1}=WX$ \cite{man2014inversion, chen1981new}. Here, $W$ is a square matrix
\begin{equation}\label{eq12}
W=\left[\begin{array}{c@{\hspace{.5em}}c@{\hspace{.5em}}c@{\hspace{.5em}}c@{\hspace{.5em}}}
\lambda(1) & 0 & \cdots & 0 \\
0 & \lambda(2) & \cdots & 0 \\
\vdots & \vdots & \ddots & \vdots \\
0 & 0 & \cdots & \lambda(N)
\end{array}\right]^{-1}
\left[
\begin{array}{l@{\hspace{.5em}}l@{\hspace{.5em}}l@{\hspace{.5em}}l@{\hspace{.5em}}}
(v+1)^{N-1} & (v+1)^{N-2} & \cdots & 1 \\
(v+2)^{N-1} & (v+2)^{N-2} & \cdots & 1 \\
\vdots & \vdots & \ddots & \vdots \\
(v+N)^{N-1} & (v+N)^{N-2} & \cdots & 1
\end{array}
\right].
\end{equation}
The pivot elements of the diagonal matrix in LHS of \ref{eq12} are computed and simplified to
\begin{equation}\label{eq13}
\lambda(k)=\prod\limits_{\substack{j=1\\ j\neq k}}^{N}(k-j)=(-1)^{k}(k-1)! (N-k)!.
\end{equation}
Moreover, the lower triangular matrix $X$ known as the Stanely matrix \cite{stanley1964time} is defined by 
\begin{equation}\label{eq14}
X=\left[
\begin{array}{l@{\hspace{.45em}}l@{\hspace{.45em}}l@{\hspace{.45em}}l@{\hspace{.45em}}}
1 & 0  & \cdots & 0 \\
x_1 & 1 & \cdots & 0 \\
x_2 & x_1 & \cdots & 0 \\
\vdots & \vdots & \ddots & \vdots \\
x_{N-1} & x_{N-2} & \cdots & 1
\end{array}
\right],~\text{where}~
x_j=(-1)^j\sum\limits_{r_1\neq r_2\neq\cdots\neq r_j}{(v+r_1)(v+r_2)\cdots(v+r_j)}.
\end{equation}
The coefficients $x_j$ in $X$ are the coefficients of denominator of a rational function where $r_j\in\{1,\ldots,N\}$ is a positive integer. The computational complexity of the coefficient $x_j$ in \ref{eq14} requires choosing $j$ out of $N$ combinations $\binom {N} {j}$ with no repetitions. Here we find the closed form expression of such combinatorial problem.

\begin{proposition}\label{theorem_hypercube_summation}
Consider a combinatorial summation on an $n$-dimensional hypercube
\begin{equation}
S = \sum\limits_{\mathcal{S}}{C(r_1)C(r_2)\cdots C(r_n)}
\label{eq_hypercube_1}
\end{equation}
where, $C(\cdot)$ is a function of a variable from a set of real numbers $r_j\in\{R_1,R_2,\ldots,R_N\}$ and $\mathcal{S}=\{r_1\neq r_2\neq\cdots\neq r_{n}\}$ is a set of real numbers with no repetition. The $n$ recursive formulation with initialized parameters $i=0$ and $C^{(0)}=C$
\begin{equation}
\left\{
\begin{array}{l}
C^{(i)}(r) = C(r)\left[S^{(i-1)} - (n-i)C^{(i-1)}(r)\right] \\
S^{(i)} = \sum\limits_{r}{C^{(i)}(r)} \\
i\leftarrow i+1
\end{array}
\right.
\label{eq_hypercube_2}
\end{equation}
provides the closed form expression to the summation \ref{eq_hypercube_1} where $S = S^{(n-1)}/n!$.
\end{proposition}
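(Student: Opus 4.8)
The plan is to prove the claim by induction on $i$, showing that after the $i$-th iteration the quantity $S^{(i)}$ equals (up to a combinatorial prefactor) the sum of products $C(r_1)\cdots C(r_{i+1})$ over all $(i+1)$-tuples of distinct indices drawn from $\{R_1,\ldots,R_N\}$, counted with multiplicity. Concretely, I would introduce the ``ordered'' power sum
\begin{equation}
T_m \;=\; \sum_{r_1\neq r_2\neq\cdots\neq r_m} C(r_1)C(r_2)\cdots C(r_m),
\label{eq_Tm}
\end{equation}
so that the target sum in \eqref{eq_hypercube_1} is $S=T_n/n!$ (each unordered $n$-subset is counted $n!$ times). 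The induction hypothesis would be the pair of identities $S^{(i-1)} = T_i/(n-i)!$ \emph{wait} --- more precisely I expect the natural invariant to be that $S^{(i)}$ equals $T_{i+1}$ divided by $(n-i-1)!$, i.e. $S^{(i)} = T_{i+1}/(n-1-i)!$, which at $i=n-1$ gives $S^{(n-1)}=T_n/0!=T_n$ and hence $S = S^{(n-1)}/n!$ as stated. One should also track an auxiliary ``partial'' quantity, namely $C^{(i)}(r)$ should equal $C(r)$ times the sum over all distinct $i$-tuples $r_1\neq\cdots\neq r_i$ that also avoid $r$, again with an appropriate falling-factorial prefactor.

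The key algebraic step is the recursion for $C^{(i)}$. First I would establish the base case: $C^{(0)}=C$ and $S^{(0)}=\sum_r C(r)=T_1$, matching $T_{1}/(n-1)!$ only if we absorb the prefactor correctly — so I would calibrate the exact prefactors at this point before running the induction. Then, assuming $C^{(i-1)}(r)$ represents (a scalar multiple of) the sum of products over $(i-1)$-tuples of distinct indices all different from $r$, I would expand
\begin{equation}
S^{(i-1)} \;=\; \sum_{r'} C^{(i-1)}(r') \;=\; \sum_{r'\text{ including } r'=r} C^{(i-1)}(r') + (\text{terms with } r'=r),
\end{equation}
so that $S^{(i-1)} - (n-i)\,C^{(i-1)}(r)$ removes exactly the over-counted contributions in which the new index coincides with $r$ or with one of the already-chosen indices. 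Multiplying by $C(r)$ then extends an $(i-1)$-tuple (avoiding $r$) to an $i$-tuple (avoiding $r$) in the bookkeeping, and summing over $r$ produces $S^{(i)}$ as the power sum over $(i+1)$-tuples. The subtracted multiple $(n-i)$ is the crux: it must be exactly the number of ``forbidden'' coincidences at stage $i$, and verifying that the inclusion–exclusion count works out to $(n-i)$ rather than, say, $i$ or $n-i+1$, is where the argument genuinely has to be pinned down.

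I expect the main obstacle to be precisely this combinatorial calibration — getting the falling-factorial prefactors and the coincidence count $(n-i)$ exactly right so that no tuple is under- or over-counted. Rather than argue purely verbally, I would make the invariant fully explicit, e.g. conjecture
\begin{equation}
C^{(i)}(r) \;=\; C(r)\!\!\sum_{\substack{r_1\neq\cdots\neq r_i\\ r_j\neq r\ \forall j}}\!\! C(r_1)\cdots C(r_i), \qquad
S^{(i)} \;=\; \sum_{r_1\neq\cdots\neq r_{i+1}} C(r_1)\cdots C(r_{i+1}),
\end{equation}
(checking whether extra integer prefactors are needed when I do the base case), and then verify the recursion \eqref{eq_hypercube_2} preserves both. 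The passage from $C^{(i-1)}$ to $C^{(i)}$ is a one-line inclusion–exclusion once the invariant is written this way; the passage from $S^{(i)}$ back to the claimed closed form $S=S^{(n-1)}/n!$ is then just the observation that \eqref{eq_hypercube_1} sums over unordered $n$-subsets while $S^{(n-1)}$ as written sums over ordered $n$-tuples of distinct indices, a ratio of exactly $n!$. A small amount of care is also needed to confirm the recursion terminates correctly at $i=n-1$ (i.e. that $C^{(n-1)}$ and $S^{(n-1)}$ are both well-defined and that the coincidence multiplier $(n-i)=1$ at the final step does the right thing).
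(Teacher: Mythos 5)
There is a genuine gap: the invariant you propose to carry through the induction is false, and it is incompatible with the recursion as stated. Write $T_m=\sum_{r_1\neq\cdots\neq r_m}C(r_1)\cdots C(r_m)$ for the ordered sum over distinct tuples and $U_i(r)$ for the same sum restricted to tuples avoiding $r$. Your conjectured invariant is $C^{(i)}(r)=C(r)U_i(r)$ and $S^{(i)}=T_{i+1}$. But counting the tuples of $T_i$ that contain $r$ (exactly once, in one of $i$ positions) gives $U_i(r)=T_i-i\,C(r)U_{i-1}(r)$, so preserving your invariant would require the multiplier in the recursion to be $i$, not $n-i$; the two agree only when $n=2i$. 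Concretely, for $n=3$ the stated recursion gives $S^{(1)}=\bigl(\sum_r C(r)\bigr)^2-2\sum_r C(r)^2=T_2-\sum_r C(r)^2\neq T_2$, so $S^{(1)}$ is not the ordered pair sum, and the discrepancy is not a constant prefactor --- it involves the power sum $\sum_r C(r)^2$, so no recalibration of falling factorials can repair it. Your induction would therefore break at the very first inductive step, which is exactly the coincidence-count issue you yourself flagged as the crux but did not resolve.

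The paper's proof runs in the opposite direction, and this is essential. It writes $n!\,S$ as the full $n$-fold nested sum over distinct ordered tuples and peels the constraint off the \emph{innermost} index: for fixed distinct $(r_1,\dots,r_{m-1})$ one has $\sum_{r_m\notin\{r_1,\dots,r_{m-1}\}}C^{(i-1)}(r_m)=S^{(i-1)}-\sum_{j=1}^{m-1}C^{(i-1)}(r_j)$, and because the outer sum ranges symmetrically over distinct ordered $(m-1)$-tuples with the symmetric weight $C(r_1)\cdots C(r_{m-1})$, each of the $m-1=n-i$ subtracted terms contributes equally; that is where the factor $(n-i)$ comes from. The price is that $C^{(i)}(r)$ has no pointwise combinatorial meaning (it is not $C(r)$ times a sum over $i$-tuples avoiding $r$); the identity holds only in aggregate, after symmetrization by the remaining outer sum. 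The correct invariant is $T_n=\sum_{r_1\neq\cdots\neq r_{n-i}}C(r_1)\cdots C(r_{n-i-1})\,C^{(i)}(r_{n-i})$, which at $i=n-1$ yields $S^{(n-1)}=T_n=n!\,S$. If you wish to keep a bottom-up argument you would have to change the recursion (replace $n-i$ by $i$), which proves a related but different statement from the one in the proposition.
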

\begin{proof}
The conditions of the summation set $\mathcal{S}$ in \ref{eq_hypercube_1} can be reformed by
\begin{equation}
S =
{\sum\limits_{\substack{r_1}}}
{\sum\limits_{\substack{r_2\\ r_2\neq r_1}}}
\ldots
{\sum\limits_{\substack{r_n\\ r_n\neq r_{n-1}\\ \vdots \\ r_n\neq r_{1}}}}
{C(r_1)C(r_2)\cdots C(r_n)}.
\label{eq_hypercube_proof_1}
\end{equation}
An equivalent interpretation of the sum of combinations \ref{eq_hypercube_proof_1} is to accumulate the multiplying functions of discrete coordinates $r_i$ of an $n$ dimensional hypercube except on its diagonals. Since the repetitions are not allowed, the summation is only valid on one corner of the hypercube out of $n!$ possible corners. The combinatorial summations \ref{eq_hypercube_proof_1} can be decomposed by relaxing the conditions of the last sum and written as
\begin{multline} \label{eq_hypercube_proof_3}
S =
{\sum\limits_{\substack{r_1}}}
{\sum\limits_{\substack{r_2\\ r_2\neq r_1}}}
\ldots
{\sum\limits_{\substack{r_{n-1}\\ r_{n-1}\neq r_{n-2}\\ \vdots \\ r_{n-1}\neq r_{1}}}}
{\sum\limits_{r_n}}
{C(r_1)C(r_2)\cdots C(r_n)} \\
-(n-1)
{\sum\limits_{\substack{r_1}}}
{\sum\limits_{\substack{r_2\\ r_2\neq r_1}}}
\ldots
{\sum\limits_{\substack{r_{n-1}\\ r_{n-1}\neq r_{n-2}\\ \vdots \\ r_{n-1}\neq r_{1}}}}
{C(r_1)C(r_2)\cdots {C(r_{n-1})}^2}~~~~~~~~~~~~~~~~
\end{multline}
The common terms in \ref{eq_hypercube_proof_3} can be factorized and written as
\begin{equation} 
S =
{\sum\limits_{\substack{r_1}}}
{\sum\limits_{\substack{r_2\\ r_2\neq r_1}}}
\ldots
{\sum\limits_{\substack{r_{n-1}\\ r_{n-1}\neq r_{n-2}\\ \vdots \\ r_{n-1}\neq r_{1}}}}
{C(r_1)C(r_2)\cdots C(r_{n-1})}
\left[
{\sum\limits_{r_n}C(r_n)-(n-1)C(r_{n-1})}
\right]
\label{eq_hypercube_proof_4}
\end{equation}
By renaming $C^{(1)}(r_{n-1})=C(r_{n-1})[\sum\limits_{r_n}C(r_n)-(n-1)C(r_{n-1})]$ the combinatorial sum \ref{eq_hypercube_proof_4} is reduced to $n-1$ summations. This reduction can be recursively repeated for $n$ times until no summation is remained and hence $S$ can be calculated by recursive formulations in \ref{eq_hypercube_2}.
\end{proof}

Next we find the closed form solutions to the matrix equations defined in \ref{eq_fullband_1}. The main result are provided in \ref{lemma_staggered} and \ref{lemma_centralized}.
\begin{theorem}\label{lemma_staggered}
Let integers $l\geq 1$, $-l\leq s \leq l$ and $1\leq k \leq 2l$ be given. The fullband FIR coefficients of $n$th order derivative at staggered nodes shifted by $s-\frac{1}{2}$ is given by
\begin{equation}
c^n_{s-\frac{1}{2}}(k) =\frac{(-1)^{n+1}n!}{\lambda(k)}
\left[\prod\limits_{\substack{j=1 \\ j\neq k}}^{2l}{(b+j)}\right]
\sum\limits_{\mathcal{S}}{\frac{1}{(b+r_1)(b+r_2)\cdots(b+r_{n})}}
\label{eq19}
\end{equation}
where, $\mathcal{S}=\{r_1\neq r_2\neq\cdots\neq r_{n}\neq k\}$ is a non-repetitive positive integer set. The combinatorial summations, $\lambda(k)$, and `$b$' in \ref{eq19} are obtained from \ref{theorem_hypercube_summation}, \ref{eq13}, and \ref{table_BC_model}, respectively.
\end{theorem}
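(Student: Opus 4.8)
The plan is to read off the $k$th component of $c^{n}_{s-\frac{1}{2}}=V_{b,P}^{-1}b_{n}$ directly from the factorization $V^{-1}=WX$ of \ref{eq12}--\ref{eq14}, specialized to $v=b$ and $N=2l$ (in the fullband staggered case $P=2l-1$, so $V_{b,P}$ is exactly the square Vandermonde of \ref{eq_fullband_1}). Since $b_{n}$ has its single nonzero entry, $n!$, in row $n+1$, and $W_{k,j}=(b+k)^{2l-j}/\lambda(k)$ while $X$ is lower-triangular Toeplitz with $X_{j,n+1}=x_{j-n-1}$ (set $x_{0}=1$ and $x_{r}=0$ for $r<0$), the substitution $t=j-n-1$ gives
\[
c^{n}_{s-\frac{1}{2}}(k)=n!\,(V_{b,P}^{-1})_{k,n+1}
=\frac{n!}{\lambda(k)}\sum_{t=0}^{2l-1-n}(b+k)^{\,2l-1-n-t}\,x_{t}.
\]

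The substantive step is to collapse this truncated sum to a single symmetric-function coefficient. Let $\pi(z)=\prod_{j=1}^{2l}(z-(b+j))=\sum_{t\ge 0}x_{t}z^{2l-t}$ and, for fixed $k$, the deflated polynomial $\pi_{k}(z)=\pi(z)/(z-(b+k))=\prod_{j\neq k}(z-(b+j))=\sum_{t\ge 0}x^{(k)}_{t}z^{2l-1-t}$, where $x^{(k)}_{t}=(-1)^{t}e_{t}(\{b+j:j\neq k\})$ and $e_{t}$ denotes the $t$th elementary symmetric polynomial. Matching coefficients in $\pi(z)=(z-(b+k))\pi_{k}(z)$ yields the recurrence $x_{t}=x^{(k)}_{t}-(b+k)x^{(k)}_{t-1}$; inserting it into the sum above makes it telescope to
\[
\sum_{t=0}^{m}(b+k)^{m-t}x_{t}=x^{(k)}_{m},\qquad m:=2l-1-n,
\]
so that $c^{n}_{s-\frac{1}{2}}(k)=\frac{n!}{\lambda(k)}x^{(k)}_{2l-1-n}=\frac{(-1)^{2l-1-n}n!}{\lambda(k)}\,e_{2l-1-n}(\{b+j:j\neq k\})$. (Alternatively one can bypass the telescoping: by non-singularity of $V_{b,P}$ the unique solution of \ref{eq_fullband_1} is $c^{n}_{s-\frac{1}{2}}(k)=n!\,[z^{n}]L_{k}(z)$, where $L_{k}(z)=\pi_{k}(z)/\lambda(k)$ is the Lagrange cardinal polynomial on the nodes $b+1,\dots,b+2l$, because $\sum_{k}(b+k)^{p}L_{k}(z)=z^{p}$ for every $p\le 2l-1$; reading off the coefficient of $z^{n}$ gives the same expression.)

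It remains to recast this as \ref{eq19}. Since $2l$ is even, $(-1)^{2l-1-n}=(-1)^{n+1}$. The reciprocal identity $e_{M-n}(a_{1},\dots,a_{M})=\big(\prod_{i}a_{i}\big)e_{n}(1/a_{1},\dots,1/a_{M})$ with $M=2l-1$ and $\{a_{i}\}=\{b+j:j\neq k\}$ turns $e_{2l-1-n}(\{b+j:j\neq k\})$ into $\big[\prod_{j\neq k}(b+j)\big]\,e_{n}(\{1/(b+j):j\neq k\})$, and $e_{n}(\{1/(b+j):j\neq k\})=\sum_{\mathcal{S}}\frac{1}{(b+r_{1})\cdots(b+r_{n})}$ over $\mathcal{S}=\{r_{1}\neq\cdots\neq r_{n}\neq k\}$. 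Applying \ref{theorem_hypercube_summation} with $C(r)=1/(b+r)$ supplies the promised closed-form recursion for this combinatorial sum, and assembling the factors produces \ref{eq19}. The companion formula for the centralized coefficients $c^{n}_{s}$ in \ref{lemma_centralized} then follows from the identical computation applied to $V_{a,P}$ with $a$ in place of $b$ and $N=2l+1$ (the sign being governed by the parity of $N$).

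I expect the telescoping identity — together with the attendant bookkeeping, namely the coefficient ``flip'' $z^{n}\leftrightarrow z^{2l-1-n}$, the off-by-one indexing of the nonzero row of $b_{n}$, and the Toeplitz offset in $X$ — to be the main place where care is needed; everything afterward is routine symmetric-function algebra. A secondary subtlety is to reconcile the combinatorial normalization in \ref{theorem_hypercube_summation} (ordered versus unordered $n$-tuples and the accompanying factor $1/n!$) with the elementary symmetric polynomial $e_{n}$ arising here, so that the constant $(-1)^{n+1}n!$ in \ref{eq19} comes out exactly.
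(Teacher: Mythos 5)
Your proposal is correct, and its skeleton coincides with the paper's: both start from the factorization $V^{-1}=WX$ of \ref{eq12}--\ref{eq14} applied to the square staggered system in \ref{eq_fullband_1}, read off $c^n_{s-\frac{1}{2}}(k)=\frac{n!}{\lambda(k)}\sum_{t=0}^{2l-1-n}(b+k)^{2l-1-n-t}x_t$ (the paper's \ref{eq16}), and then must collapse that truncated sum. Where you differ is precisely at that collapse, and your version is the stronger one. The paper substitutes the explicit combinatorial expressions for the $x_t$, splits each sum into terms that do and do not contain the index $k$ (the display after \ref{eq17}), and asserts ``by induction'' that the cross terms cancel down to \ref{eq18}; that inductive cancellation is never actually carried out. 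You instead observe that $x_t$ and $x^{(k)}_t$ are the coefficients of $\pi(z)=\prod_j\bigl(z-(b+j)\bigr)$ and of its deflation $\pi_k(z)=\pi(z)/(z-(b+k))$, so the recurrence $x_t=x^{(k)}_t-(b+k)x^{(k)}_{t-1}$ telescopes the sum to $x^{(k)}_{2l-1-n}$ in one line --- a complete and verifiable replacement for the paper's induction --- and your parenthetical Lagrange-cardinal argument ($c^n_{s-\frac{1}{2}}(k)=n!\,[z^n]\,\pi_k(z)/\lambda(k)$) even bypasses the $WX$ machinery entirely. The final conversion of $e_{2l-1-n}(\{b+j:j\neq k\})$ into $\bigl[\prod_{j\neq k}(b+j)\bigr]e_n(\{1/(b+j):j\neq k\})$ is exactly what the paper means by ``factoring out the terms inside the summation,'' and your sign bookkeeping $(-1)^{2l-1-n}=(-1)^{n+1}$ (using that $2l$ is even) matches \ref{eq19}. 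The one point you rightly flag and should pin down in a write-up is the normalization of $\sum_{r_1\neq\cdots\neq r_j}$ in \ref{eq14}: it must be read as an unordered sum, i.e.\ $x_j=(-1)^je_j$, which the paper's remark about $\binom{N}{j}$ combinations and the $1/n!$ in \ref{theorem_hypercube_summation} confirm; under the ordered reading everything would be off by $j!$.
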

\begin{proof}
By substituting the decomposing matrices $V^{-1}=WX$ defined in \ref{eq12} and \ref{eq13} to the staggered form in \ref{eq_fullband_1}, the solution to the $k$th coefficient is obtained by (Here $N=2l$)
\begin{align}
c^n_{s-\frac{1}{2}}(k)&=W[k\text{th row}] \times X[n+1\text{th column}] \times {n!}\nonumber\\
 &=\frac{{n!}}{\lambda(k)}\left[(b+k)^{2l-n-1}+(b+k)^{2l-n-2}x_1+(b+k)^{2l-n-3}x_2+\cdots+x_{2l-n-1}\right]. \label{eq16}
\end{align}
Substituting the coefficients $x_i$ defined in \ref{eq14} to \ref{eq16} and expanding the terms yields
\begin{align}
c^n_{s-\frac{1}{2}}(k) &=\left[(b+k)^{2l-n-1}-(b+k)^{2l-n-2}\sum\limits_{r_1}(b+r_1)+ (b+k)^{2l-n-3}\sum\limits_{r_1\neq r_2}(b+r_1)(b+r_2)
\right. \nonumber\\
& + \cdots +\left.(-1)^{2l-n-1}\sum\limits_{r_1\neq r_2\neq\cdots\neq r_{2l-n-1}}{(b+r_1)(b+r_2)\cdots(b+r_{2l-n-1})}\right]\times\frac{{n!}}{\lambda(k)} \label{eq17}
\end{align}
The summand in \ref{eq17} can be written as 
\begin{align}
c^n_{s-\frac{1}{2}}(k) &=\left[(b+k)^{2l-n-1}-(b+k)^{2l-n-2}\left((b+k)+\sum\limits_{r_1\neq k}(b+r_1)\right)+ \right. \nonumber \\
& (b+k)^{2l-n-3}\left((b+k)\sum\limits_{r_1\neq k}(b+r_1) + \sum\limits_{r_1\neq r_2\neq k}(b+r_1)(b+r_2)\right)+ \nonumber\\
& \cdots +\left.(-1)^{2l-n-1}\sum\limits_{r_1\neq r_2\neq\cdots\neq r_{2l-n-1}\neq k}{(b+r_1)(b+r_2)\cdots(b+r_{2l-n-1})}\right]\times \frac{{n!}}{\lambda(k)} \nonumber
\end{align}
where by induction the remaining terms are simplified to
\begin{equation}
c^n_{s-\frac{1}{2}}(k) =\frac{(-1)^{n+1}{n!}}{\lambda(k)}\sum\limits_{r_1\neq r_2\neq\cdots\neq r_{2l-n-1}\neq k}{(b+r_1)(b+r_2)\cdots(b+r_{2l-n-1})} \label{eq18}
\end{equation}
which gives \ref{eq19} after factoring out the terms inside the summation.
\end{proof}
\begin{theorem}\label{lemma_centralized}
Let integers $l\geq 1$, $-l\leq s \leq l$ and $k\in\{1,\ldots,2l+1\}$ be given. The fullband FIR coefficients of $n$th order derivative at centralized nodes shifted by $s$ is given by
\begin{equation}
c^n_{s}(k) =\frac{(-1)^{n} n!}{\lambda(k)}
\left[\prod\limits_{\substack{j=1 \\ j\neq -a \\ j\neq k}}^{2l+1}{(a+j)}\right]
\sum\limits_{\mathcal{S}}{\frac{1}{(a+r_1)(a+r_2)\cdots(a+r_{n-1})}}, \label{eq20}
\end{equation}
where, $\mathcal{S}=\{r_1\neq r_2\neq\cdots\neq r_{n-1}\neq k\}$ is a set of positive integers with no repetition. Here, the combinatorial summations, $\lambda(k)$ and `$a$' are obtained by \ref{theorem_hypercube_summation}, \ref{eq13} and \ref{table_BC_model}, respectively. Furthermore, the coefficient at node $-a$ is determined by $c^n_{s}(-a)=-\sum\limits_{k\neq -a} c^n_{s}(k)$.
\end{theorem}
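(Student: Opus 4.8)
The plan is to transcribe the proof of \ref{lemma_staggered} almost line for line, now with an odd number $N=2l+1$ of interpolation nodes and base offset $v=a$ taken from \ref{table_BC_model}, and then to insert two extra observations that are specific to the centralized stencil: exactly one of the abscissae $a+j$ vanishes, and the coefficient sitting on that node must be pinned down by the consistency row of the linear system rather than by the elementary–symmetric formula.

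First I would apply the factorization $V_{a,P}^{-1}=WX$ of \ref{eq12}--\ref{eq14} with $N=2l+1$, so that the unique solution of the fullband centralized system $V_{a,P}\,c^n_{s}=b_n$ is $c^n_{s}=n!\,W\,X[(n{+}1)\text{th column}]$. Reading off the $k$th entry exactly as in \ref{eq16} gives
\[
c^n_{s}(k)=\frac{n!}{\lambda(k)}\sum_{j=0}^{2l-n}(a+k)^{\,2l-n-j}\,x_j,\qquad x_0=1,
\]
where the $x_j$ are the denominator coefficients of \ref{eq14} built from the abscissae $\{a+1,\dots,a+2l+1\}$. Substituting $x_j=(-1)^j\sum_{r_1\neq\cdots\neq r_j}(a+r_1)\cdots(a+r_j)$ and peeling off, term by term, the contribution in which $k$ occurs among the $r_i$, the alternating sum telescopes; the induction that takes \ref{eq16} to \ref{eq18} carries over unchanged, only with the exponent $2l-n$ in place of $2l-n-1$. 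The outcome is
\[
c^n_{s}(k)=\frac{(-1)^{\,2l-n}\,n!}{\lambda(k)}\sum_{r_1\neq\cdots\neq r_{2l-n}\neq k}(a+r_1)\cdots(a+r_{2l-n}),
\]
and $(-1)^{\,2l-n}=(-1)^{n}$.

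What is genuinely new is the treatment of the vanishing abscissa. Because the centralized abscissae are $2l+1$ consecutive integers and $-l\le s\le l$ forces $-a\in\{1,\dots,2l+1\}$ by \ref{table_BC_model}, exactly one abscissa equals $a+(-a)=0$. Hence every tuple in the last sum containing $r_i=-a$ contributes zero, so the effective index pool shrinks to $\{1,\dots,2l+1\}\setminus\{k,-a\}$, a set of cardinality $2l-1$. For $k\neq-a$, each admissible $(2l-n)$-subset of this pool is the complement of a unique $(n{-}1)$-subset, so pulling the common product $\prod_{j\neq-a,\,j\neq k}(a+j)$ out of the sum turns products of $(a+r_i)$ into reciprocals; this is precisely \ref{eq20}, and the remaining combinatorial sum over $\mathcal S=\{r_1\neq\cdots\neq r_{n-1}\neq k\}$ (with $r_i$ ranging over that pool, hence automatically $r_i\neq-a$) is evaluated via \ref{theorem_hypercube_summation} with $C(r)=1/(a+r)$. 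The one node left out, $k=-a$, is then recovered from the $p=0$ row of $V_{a,P}\,c^n_{s}=b_n$, which for $n\ge1$ reads $\sum_{k=1}^{2l+1}c^n_{s}(k)=0$ and rearranges to $c^n_{s}(-a)=-\sum_{k\neq-a}c^n_{s}(k)$.

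The only place that demands care is the bookkeeping: one must check that the degree counts match ($2l-n$ factors in the product, $n-1$ in its complement, summing to the pool size $2l-1$) and that discarding the vanishing abscissa leaves the degree-$(2l-n)$ elementary–symmetric expression unchanged, since the monomials that use the zero factor are exactly the ones that drop out. Beyond that I expect no essential difficulty — the core telescoping is inherited verbatim from \ref{lemma_staggered}, and the only new ingredient is the singular-node handling sketched above.
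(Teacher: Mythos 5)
Your proposal follows the paper's proof essentially verbatim: the same $V^{-1}=WX$ factorization with $N=2l+1$, the same telescoping of the $x_j$ substitution inherited from the staggered case (yielding the intermediate form with the sign $(-1)^{2l-n}=(-1)^n$), and the same final factoring-out step, with the node $k=-a$ pinned down by the $p=0$ consistency row. You actually supply more detail than the paper on the one delicate point --- the vanishing abscissa $a+(-a)=0$ that kills every tuple containing $r_i=-a$, shrinks the index pool to $2l-1$ elements, and explains why the reciprocal sum in \ref{eq20} runs over $n-1$ indices rather than the $n$ of the staggered case --- which the paper compresses into the phrase ``after factoring out the terms inside the summation''; your bookkeeping there is correct.
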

\begin{proof}
By substituting the decomposing matrices $V^{-1}=WX$ defined in \ref{eq12} and \ref{eq13} to the centralized form in \ref{eq_fullband_1}, the solution to the $k$th coefficient is obtained by (Here $N=2l+1$)
\begin{align}
c^n_{s}(k)&= W[k\text{th row}] \times X[n+1\text{th column}]\times {n!} \nonumber\\
 &=\frac{{n!}}{\lambda(k)}\left[(a+k)^{2l-n}+(a+k)^{2l-n-1}x_1+(a+k)^{2l-n-2}x_2+\cdots+x_{2l-n}\right] \label{eq21}
\end{align}
By substituting the terms $x_i$ defined in \ref{eq14} into \ref{eq21} and canceling the mutual terms, similar to the derivations done for staggered formulation, the remaining terms are simplified to
\begin{equation}
c^n_{s}(k) =\frac{(-1)^{n}{n!}}{\lambda(k)}\sum\limits_{r_1\neq r_2\neq\cdots\neq r_{2l-n}\neq k}{(a+r_1)(a+r_2)\cdots(a+r_{2l-n})} \label{eq22}
\end{equation}
which gives \ref{eq20} after factoring out the terms inside the summation.  
\end{proof}

The complexities of \ref{eq19} and \ref{eq20} require $n$ and $n-1$ flops for computation and hence are of orders $\mathcal{O}(n)$ and $\mathcal{O}(n-1)$, respectively. The closed form expressions in \ref{lemma_staggered} and \ref{lemma_centralized} are favorably comparable to the Lagrangian approach in \cite{fornberg1988, S0036144596322507}. Specifically, the MaxPol method maintains much lower computational complexity and higher numerical precision as opposed to \cite{fornberg1988, S0036144596322507}. This will be further discussed in \ref{sec_fullband_differentiation}. 
\begin{figure}[htp]
\centerline{
\subfigure[centr. ($n=1$)]{\includegraphics[height=0.18\textwidth]{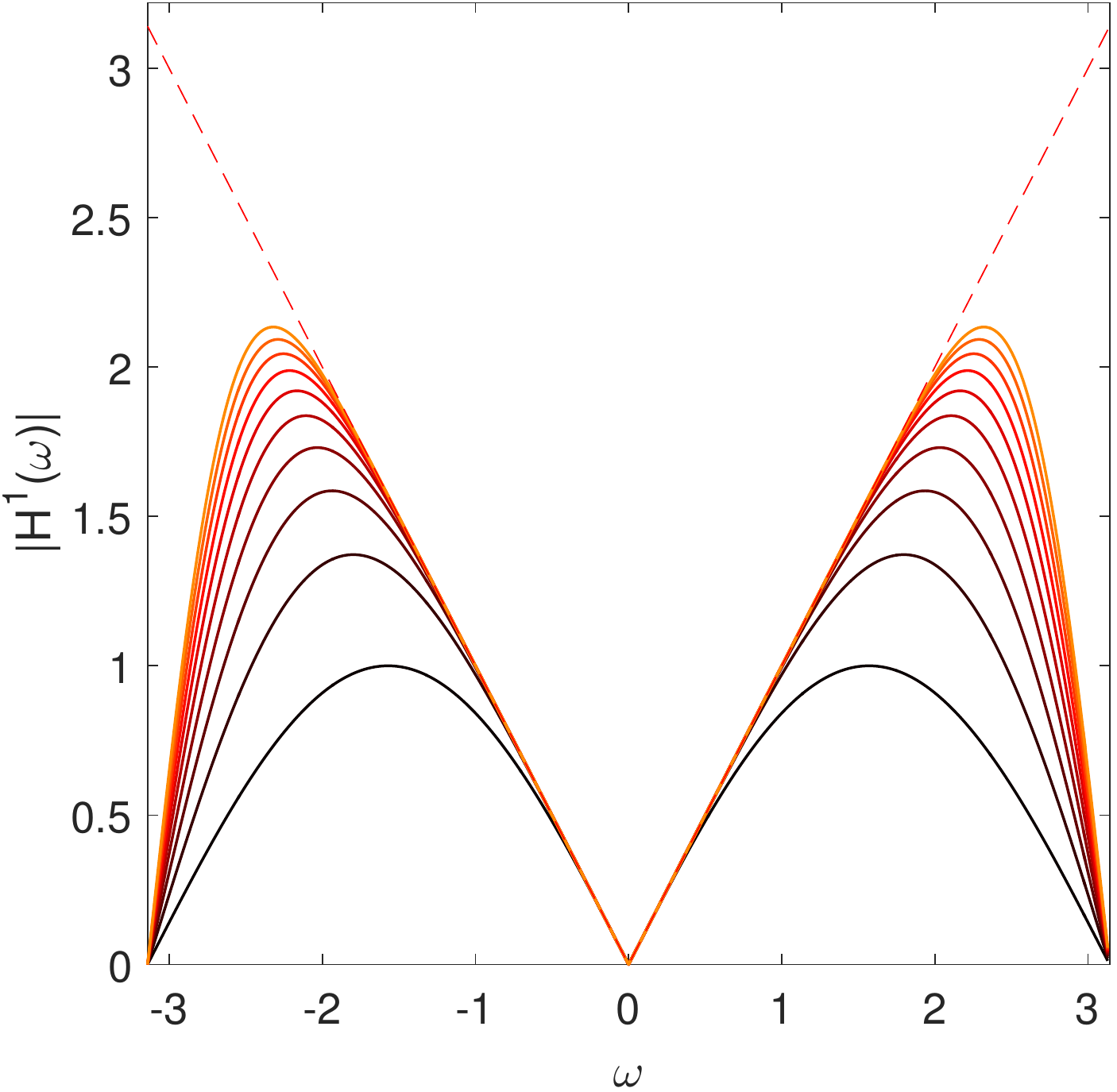}\label{fig_fullband_centralized_maxpol_n_ord_1}}
\subfigure[centr. ($n=2$)]{\includegraphics[height=0.18\textwidth]{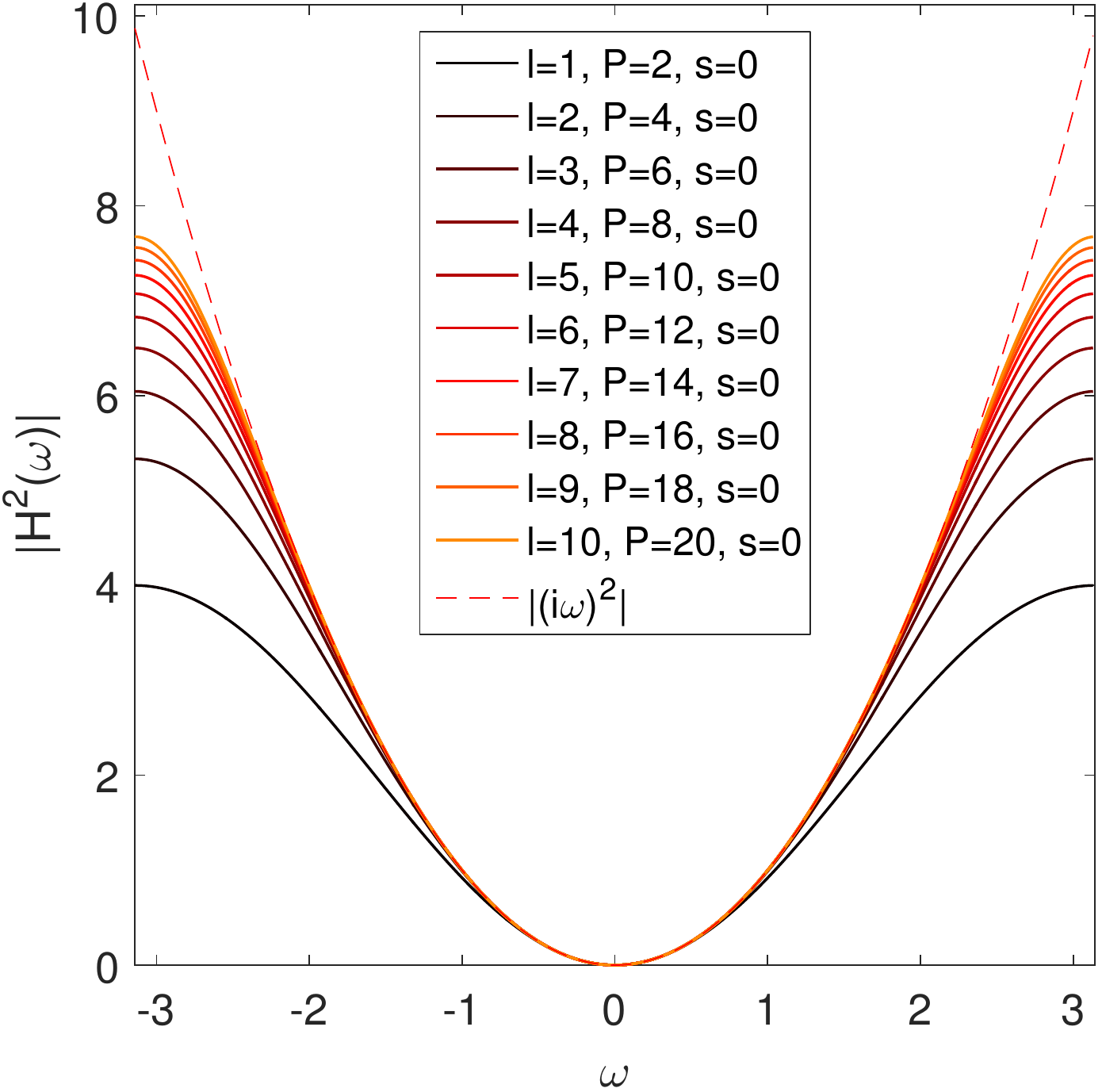}\label{fig_fullband_centralized_maxpol_n_ord_2}}
\subfigure[stagg. ($n=1$)]{\includegraphics[height=0.18\textwidth]{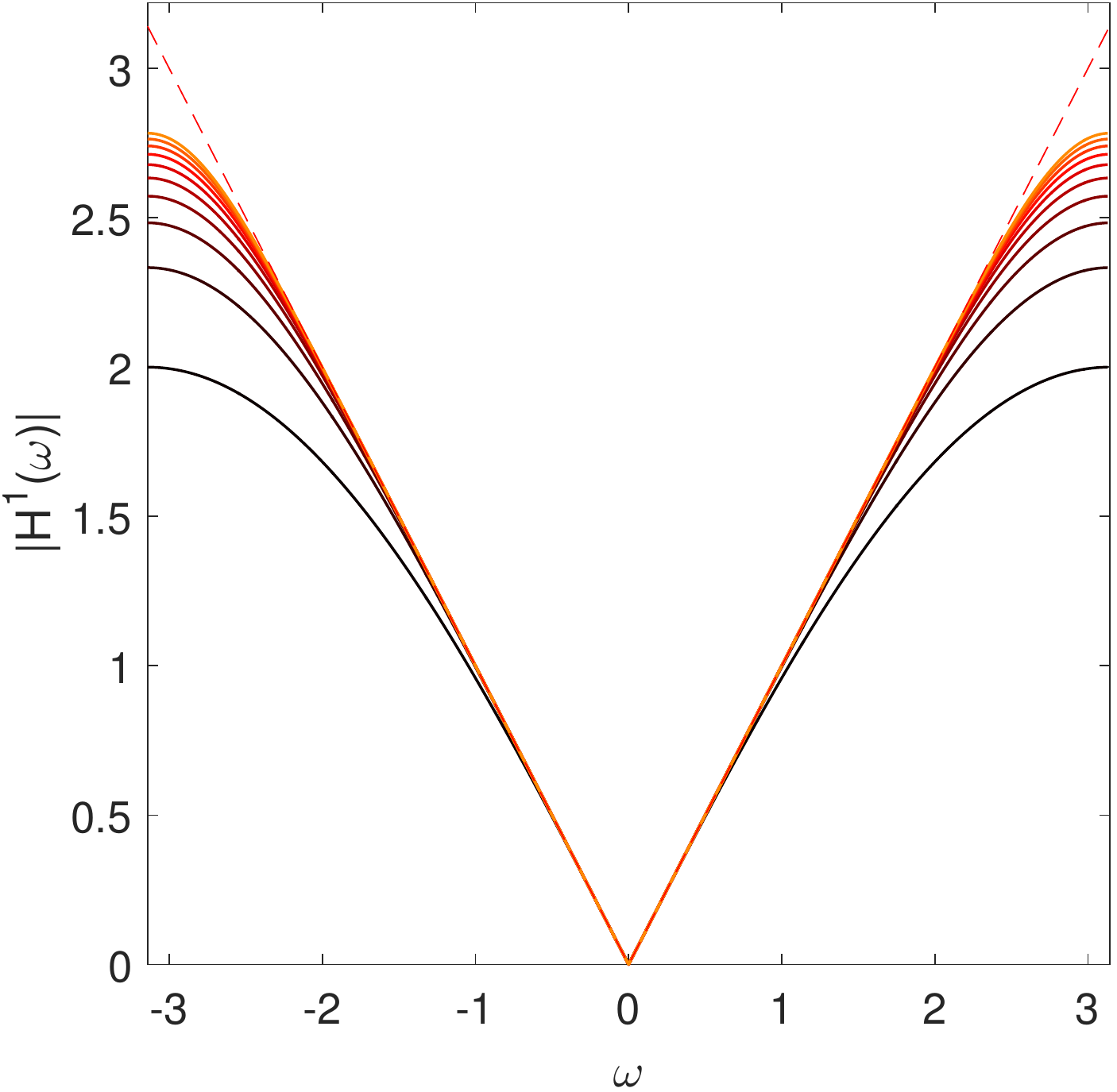}\label{fig_fullband_staggered_maxpol_n_ord_1}}
\subfigure[stagg. ($n=1$)]{\includegraphics[height=0.18\textwidth]{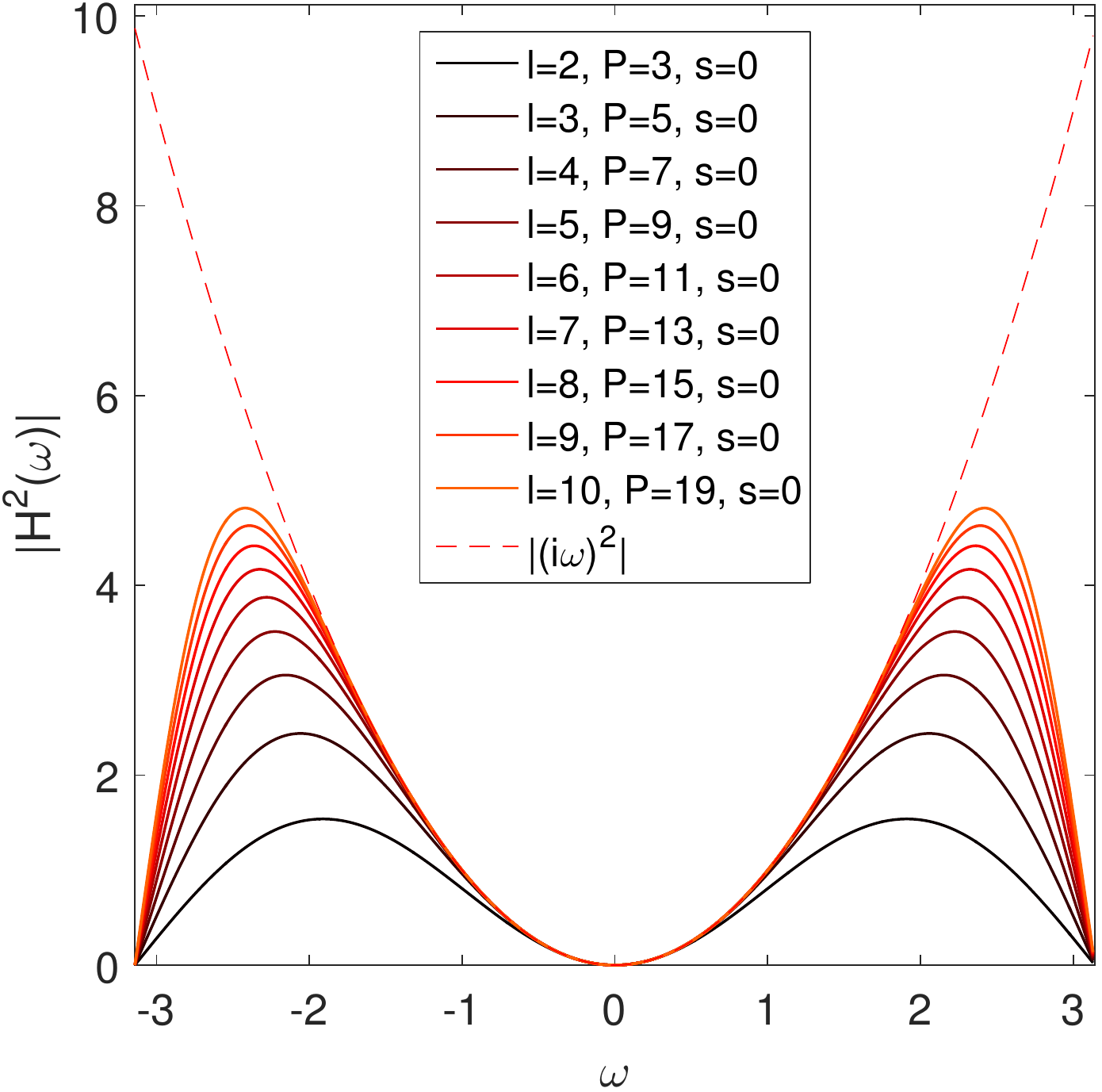}\label{fig_fullband_staggered_maxpol_n_ord_2}}
}
\centerline{
\subfigure[centr. ($n=1$)]{\includegraphics[height=0.18\textwidth]{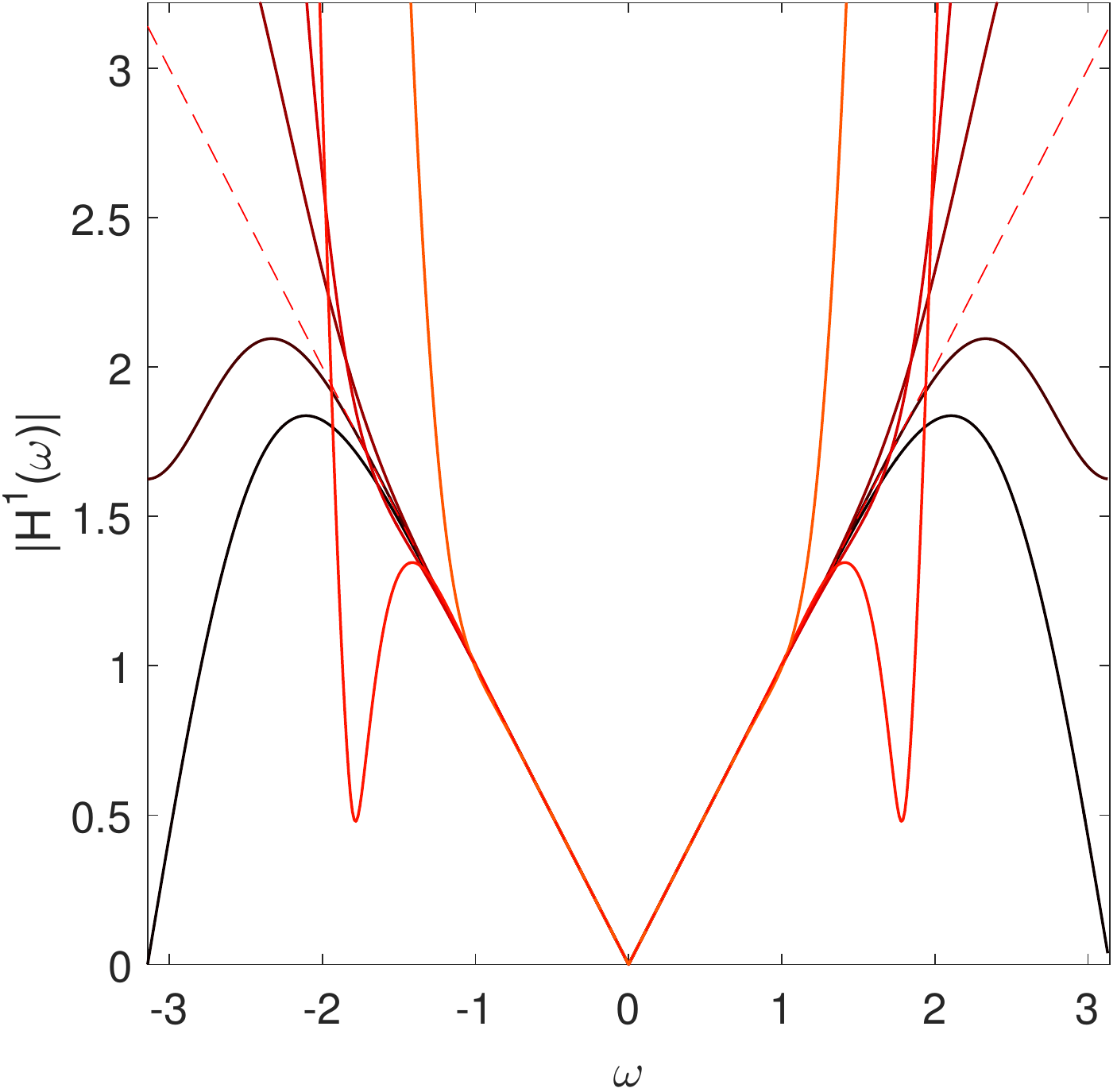}\label{fig_sideshift_centralized_maxpol_n_ord_1}}
\subfigure[centr. ($n=2$)]{\includegraphics[height=0.18\textwidth]{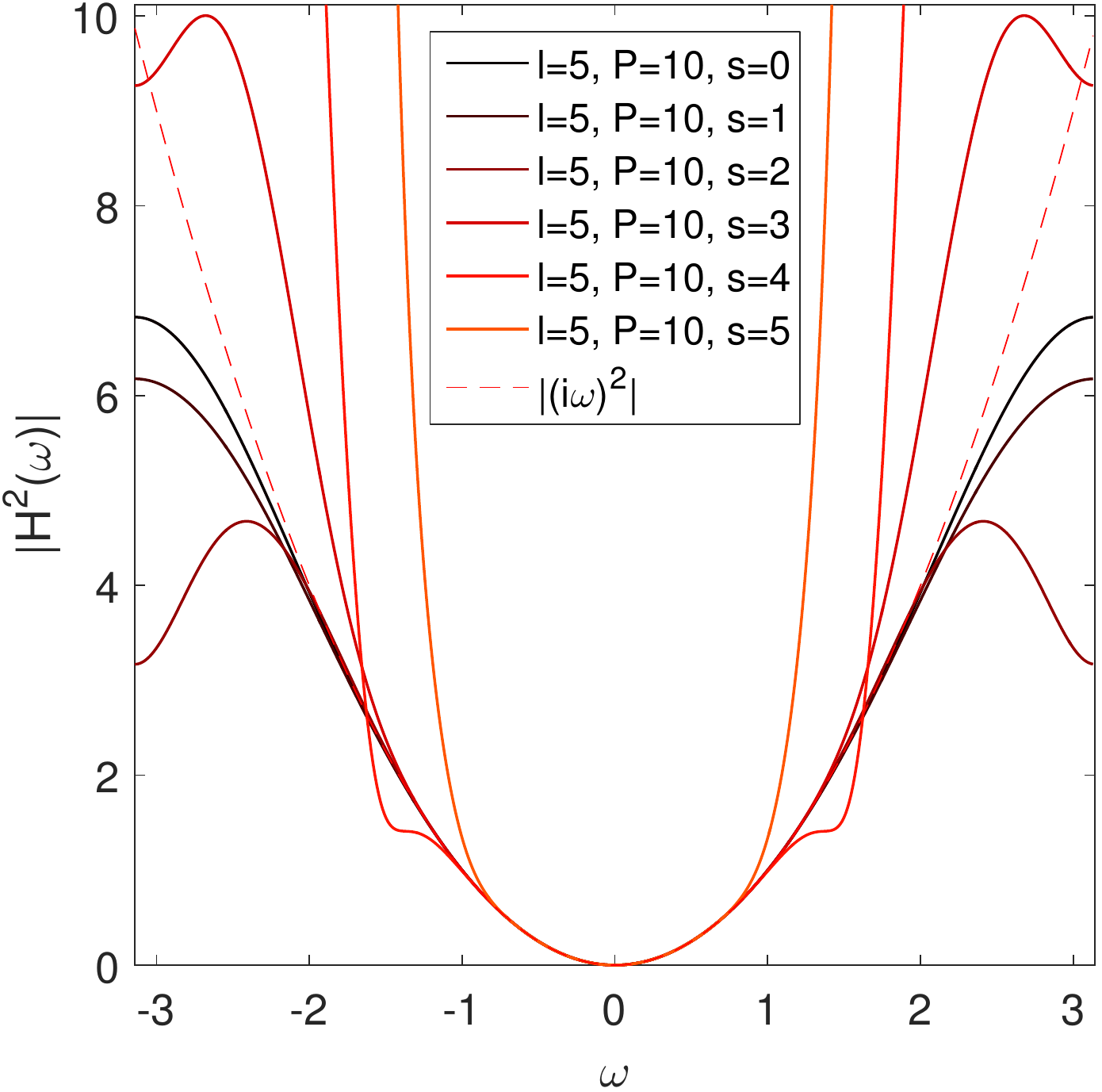}\label{fig_sideshift_centralized_maxpol_n_ord_2}}
\subfigure[stagg. ($n=1$)]{\includegraphics[height=0.18\textwidth]{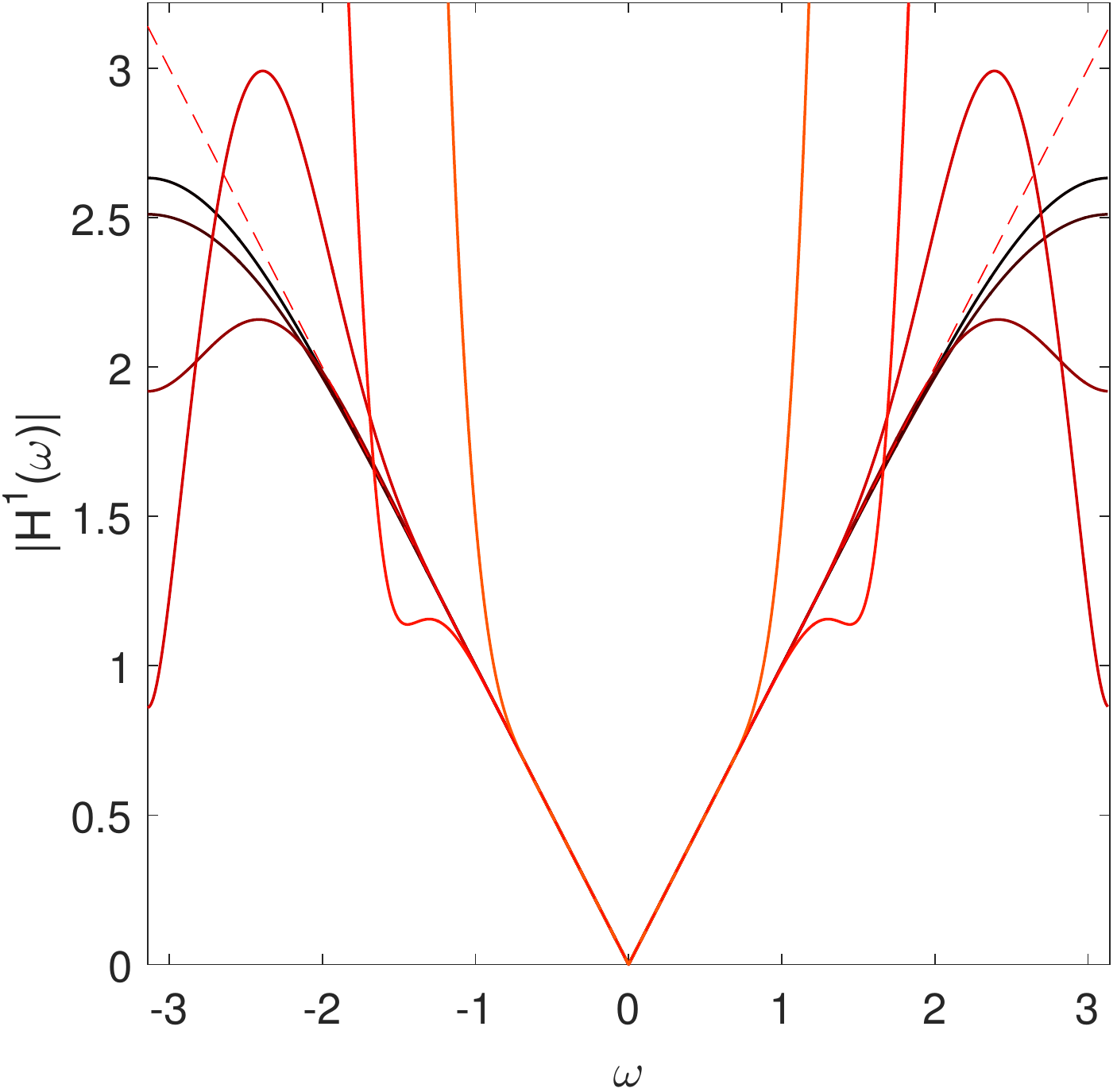}\label{fig_sideshift_staggered_maxpol_n_ord_1}}
\subfigure[stagg. ($n=2$)]{\includegraphics[height=0.18\textwidth]{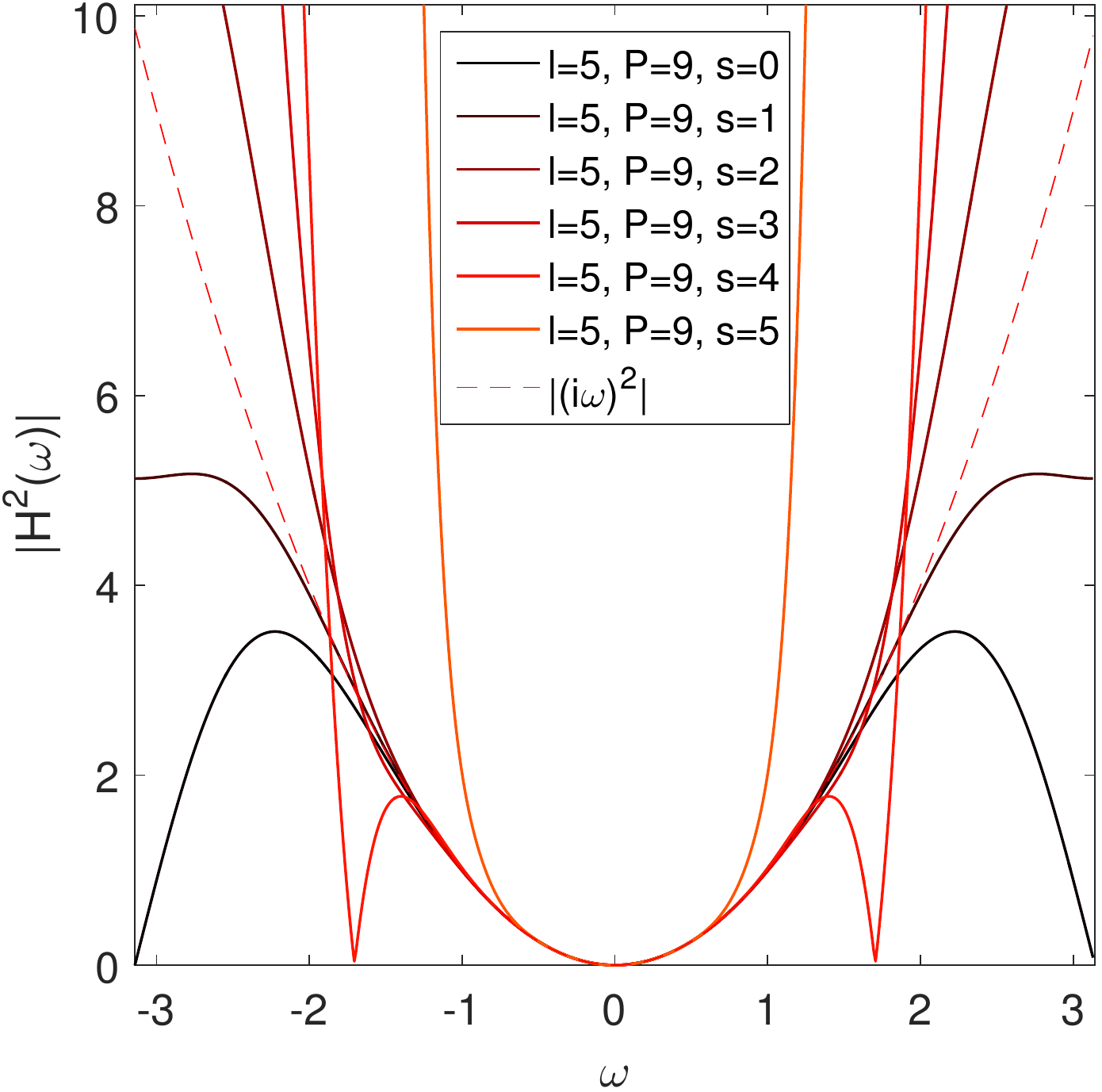}\label{fig_sideshift_staggered_maxpol_n_ord_2}}
}
\caption{Transfer functions (filter response) of fullband differentiators derived from MaxPol method. First and second rows correspond to zero-side shift $s=0$ and non-zero side-shift kernels, respectively.}
\label{fig_fullband_maxpol}
\end{figure}

\ref{fig_fullband_maxpol} displays few filter responses derived from the latter theorems for both staggered and centralized derivatives on both zero and non-zero side-shift nodes. As it shown, the staggered model accurately approximates odd order of derivatives (first order in this example) such that the deviation of the filter response from the ideal derivative is minimal using high order of accuracies i.e. high order polynomials. This is valid for both cases of zero/non-zero side shift nodes. Similarly, the even order of derivatives are well approximated by centralized schemes using high order of accuracy filters. As mentioned earlier, this is of paramount importance in inverse problems to establish complete and accurate frequency transforms between derivative and the recovery domains. For instance, the centralized scheme on first order of differentiation plagues the frequencies and cause perturbation in the recover stage of inverse problems. We will discuss this further in application studies.

In next section we employ the above kernels to design derivative matrices. It is worth noting that for non-zero shift the filter response is plagued by Runge phenomenon for either case of staggered and centralized formulation. We observe that $l<8$ provides a reasonable range of numerical accuracy for filter-tap length selection.

\subsection{Non-uniformly spaced sampling}
The definition of MaxPol differentiators are based on the undetermined coefficients defined in \ref{eq1} for both centralized and staggered schemes. The corresponding polynomial function value $f_{j+k}$ is defined for evenly-spaced samples i.e. $k\in\{-l,-l+1,\hdots, l\}$ in this paper. In the case of non-uniformly spaced data, the sample domain can be defined on a set such that $k\in\{\Omega=k_1,k_2,\hdots,k_N\}$ where $k_i\in\mathbb{Z}$ is an arbitrary integer value. This assumption extends the MaxPol design in \ref{eq9_centralized} and \ref{eq9_staggered} to non-uniform sampling case, which is a more generalized version of the current setup. Without loss of generality, our derivations remain valid for non-uniform case. This includes the closed form solutions we have found for Vandermonde matrix inversion utilized in \ref{lemma_staggered} and \ref{lemma_centralized}.

While the extension of evenly-spaced formulae to non-even setup seems to be straight forward by such minor revision, however, this requires more investigation on the stability analysis of the filter solutions. A direct relation of such non-uniformity case can be named under different notions such as under-sampling/up-sampling in digital signal processing, partial sampling in compressed sensing problems, 3D reconstruction in computer vision problems, etc.

\section{Derivative matrix design}\label{section_convolution_mtx}
Throughout this section we introduce the discrete differential operators designed in matrix form to approximate the $n$th-order derivative of a vector-valued function $f=[f_0,f_1,\ldots,f_{N-1}]\in\mathbb{R}^N$ in a matrix-vector multiplication form
\begin{equation}
\frac{d^nf}{dx^n}\approx D_{n} f,~~~\text{where}~\mathcal{N}\left(D_{n}\right)=\left\{x: D_{n} \left(\sum\limits_{p=0}^{n-1}\alpha_p x^p\right)=0, \alpha_p\in\mathbb{R}\right\}.
\label{eq23}
\end{equation}
Here, $D_{n}\in\mathbb{R}^{N\times N}$ denotes the $n$th-order derivative matrix. The operator is $n$-rank deficient due to non-empty nullspace of $n-1$ order polynomials. For instance, the constant, linear and quadratic vector-valued functions lie in the nullspaces of first, second, and third order derivatives, respectively. A non-empty nullspace implies that the matrix is not invertible. Therefore the reverse operation to recover vector values from approximated derivatives returns to be an inverse problem. The design of centralized nodes has been introduced in \cite{Li2005, o2008algebraic, o2012framework, HassanMohamadAtteia2012}. Here we take a further step to generalize this design for staggered/centralized formulations.  

\subsection{Building blocks}\label{sec_matrix_design}
The discrete numerical design of derivative matrix $D_{n}$ is demonstrated in \ref{fig_bc_model_1}. Every row of the matrix is associated with the $n$th-order derivative coefficients to approximate the numerical derivatives of a vector-valued function $f\in\mathbb{R}^N$ at particular node $0\leq j\leq N-1$. Two types of staggered and centralized matrices are constructed. We refer the reader to previous section for numerical computation of the derivative coefficients. The centralized type matrices approximate the derivatives at exact nodes, while the staggered type shift the approximation by half-a-grid sample. Specifically, the matrix $\left[D^n\right]_{N\times N}$ in \ref{fig_bc_model_1} is structured by three building blocks:
\begin{itemize}[leftmargin=*]
\item \textit{left-block:} formulates the left boundary derivatives using the positive side-shifts $s>0$
\item \textit{middle-block:} formulates the interior domain differentiation with zero side-shift $s=0$
\item \textit{right-block:} formulates the right boundary derivatives using the negative side-shifts $s<0$
\end{itemize}
The coefficients of left and right blocks are stacked vertically which are composed of incremental side-shifts towards the boundaries. While the coefficients in middle block are diagonally shifted in a Toeplitz structure and estimates the derivatives within the interior domain.

\begin{figure}[htp]
\centerline{
\subfigure[stagg. left BC]{\includegraphics[width=0.37\textwidth]{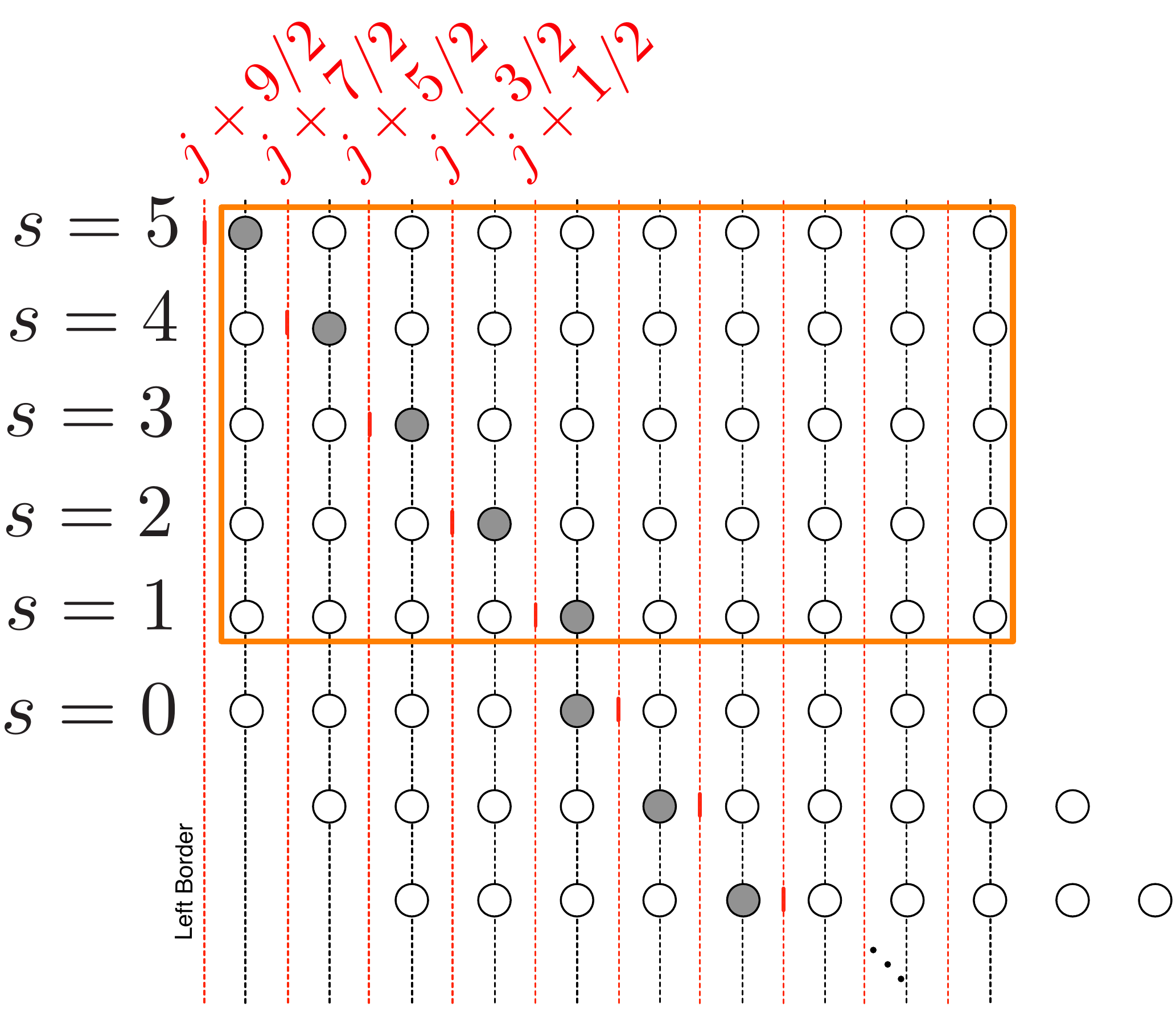}\label{fig_left_bc_model}}
\subfigure[stagg. right BC]{\includegraphics[width=0.37\textwidth]{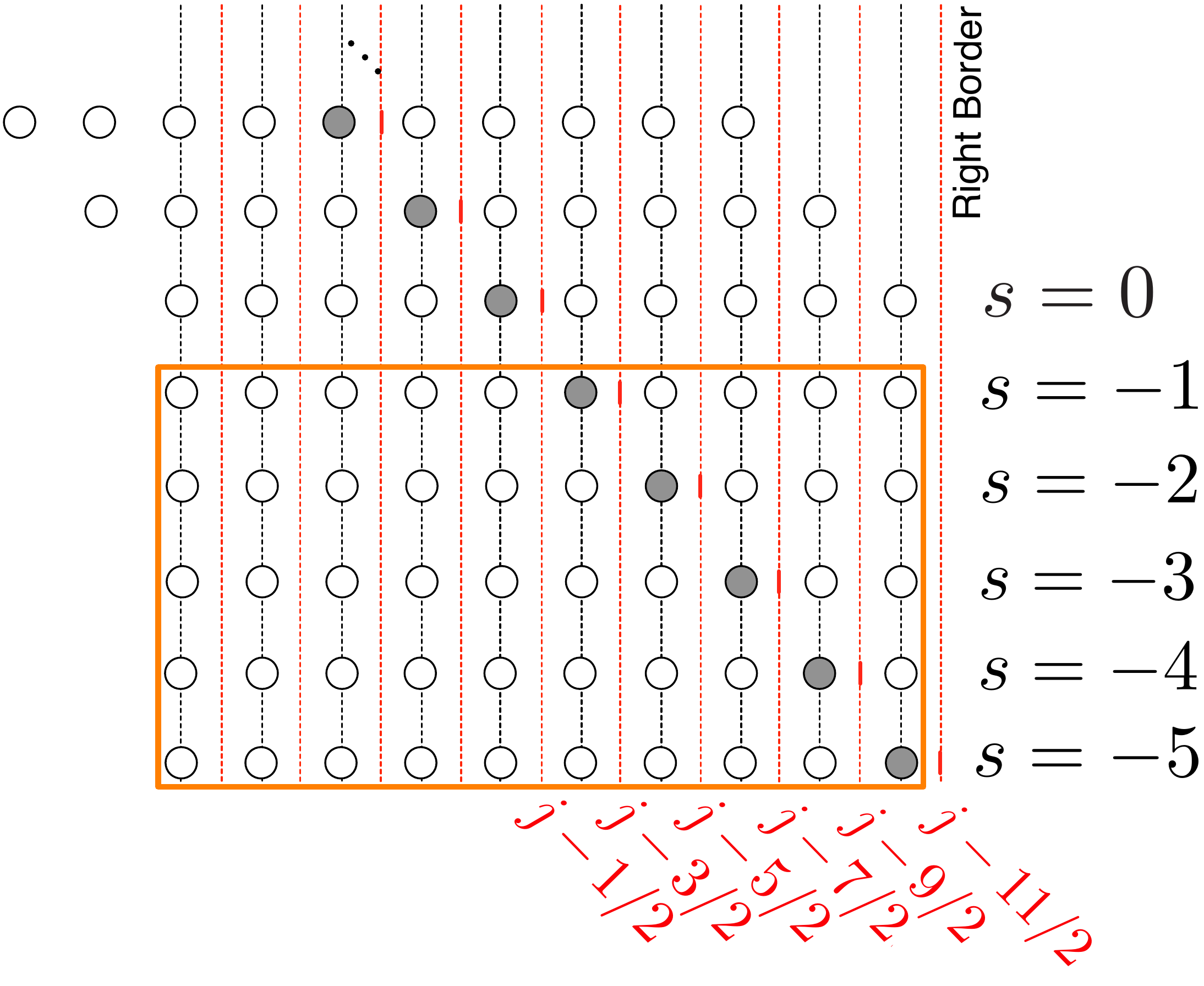}\label{fig_right_bc_model}}
}
\centerline{
\subfigure[centr. left BC]{\includegraphics[width=0.37\textwidth]{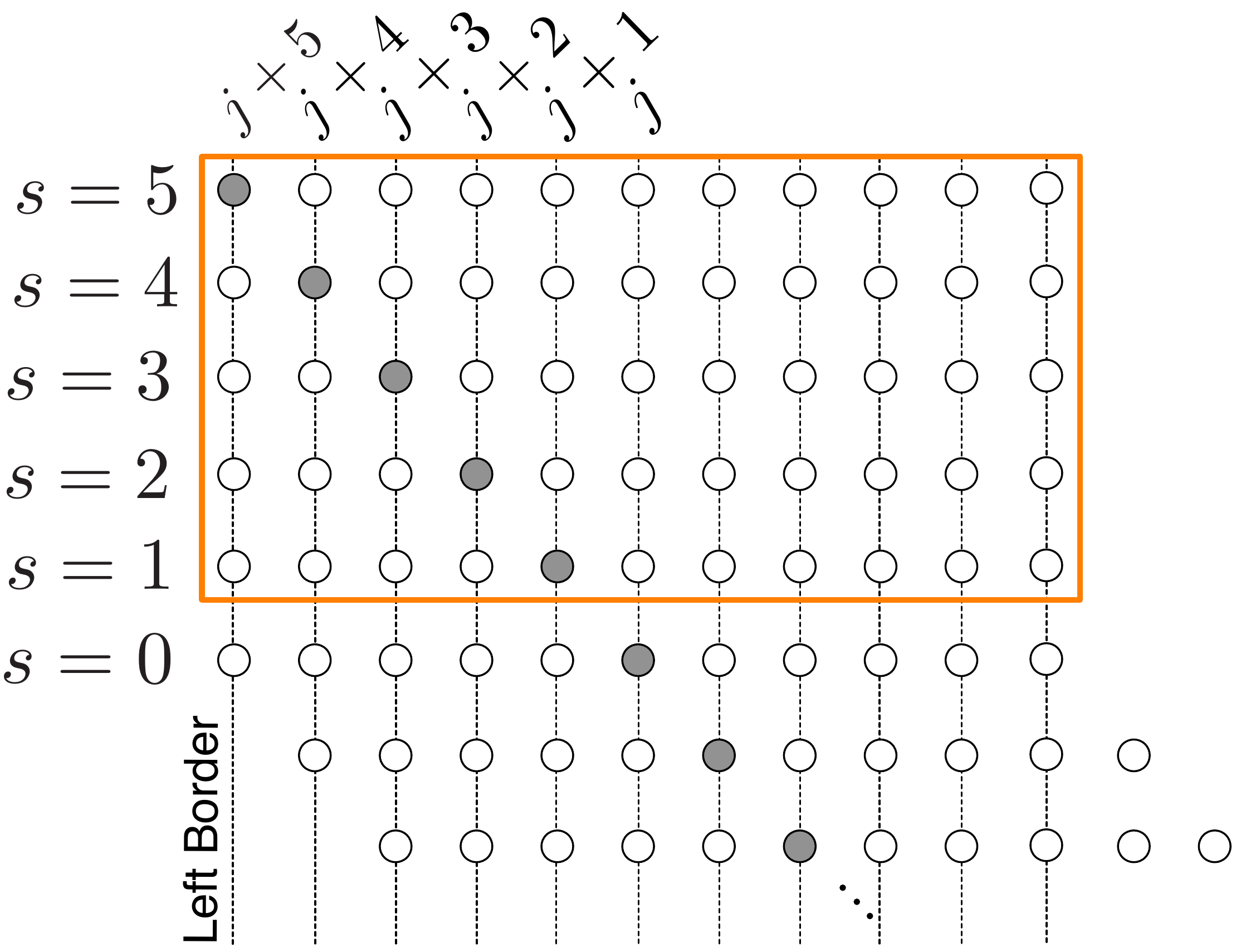}\label{fig_left__central_bc_model}}
\subfigure[centr. right BC]{\includegraphics[width=0.37\textwidth]{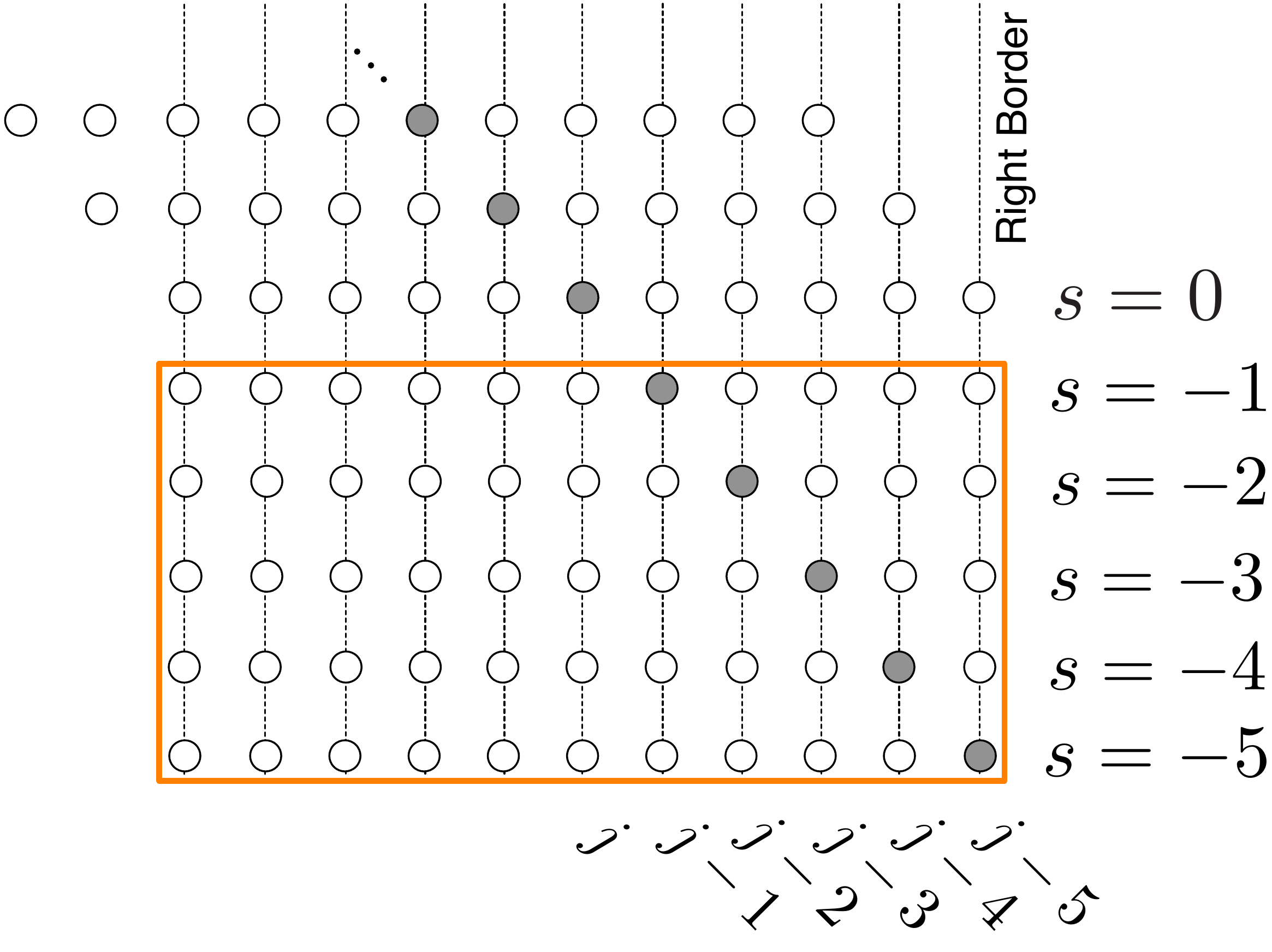}\label{fig_right_central_bc_model}}
\subfigure[$D_n$]{\includegraphics[width=0.25\textwidth]{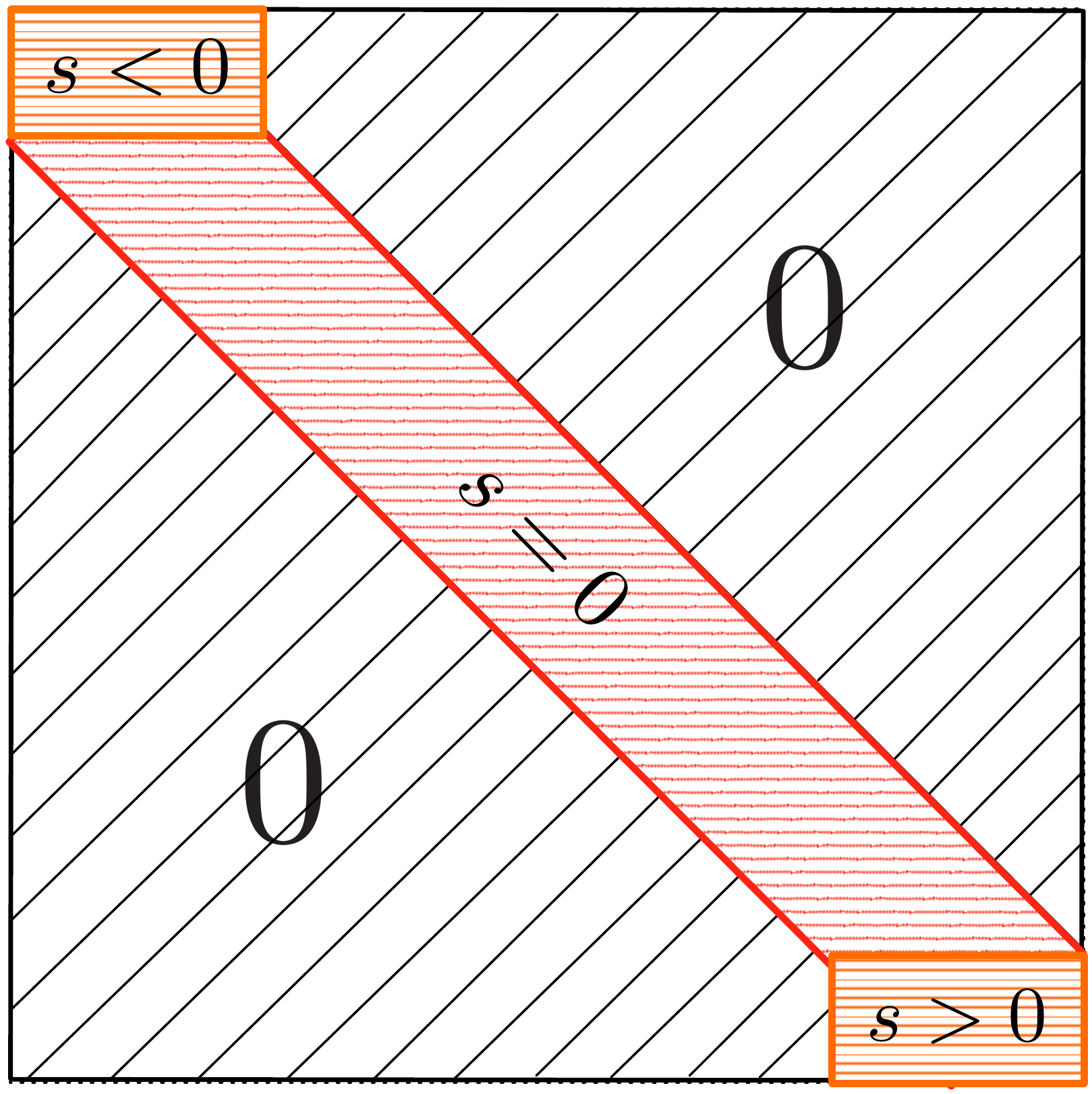}\label{fig_fig_diffmtx}}
}
\caption{Stencil model of staggered/centralized derivative matrix $D_{n}$ with tap-length polynomial $l=5$}
\label{fig_bc_model_1}
\end{figure}

\begin{figure}[htp]
\centerline{
\subfigure[$l=5$, $n=1$]{\includegraphics[height=0.35\textwidth]{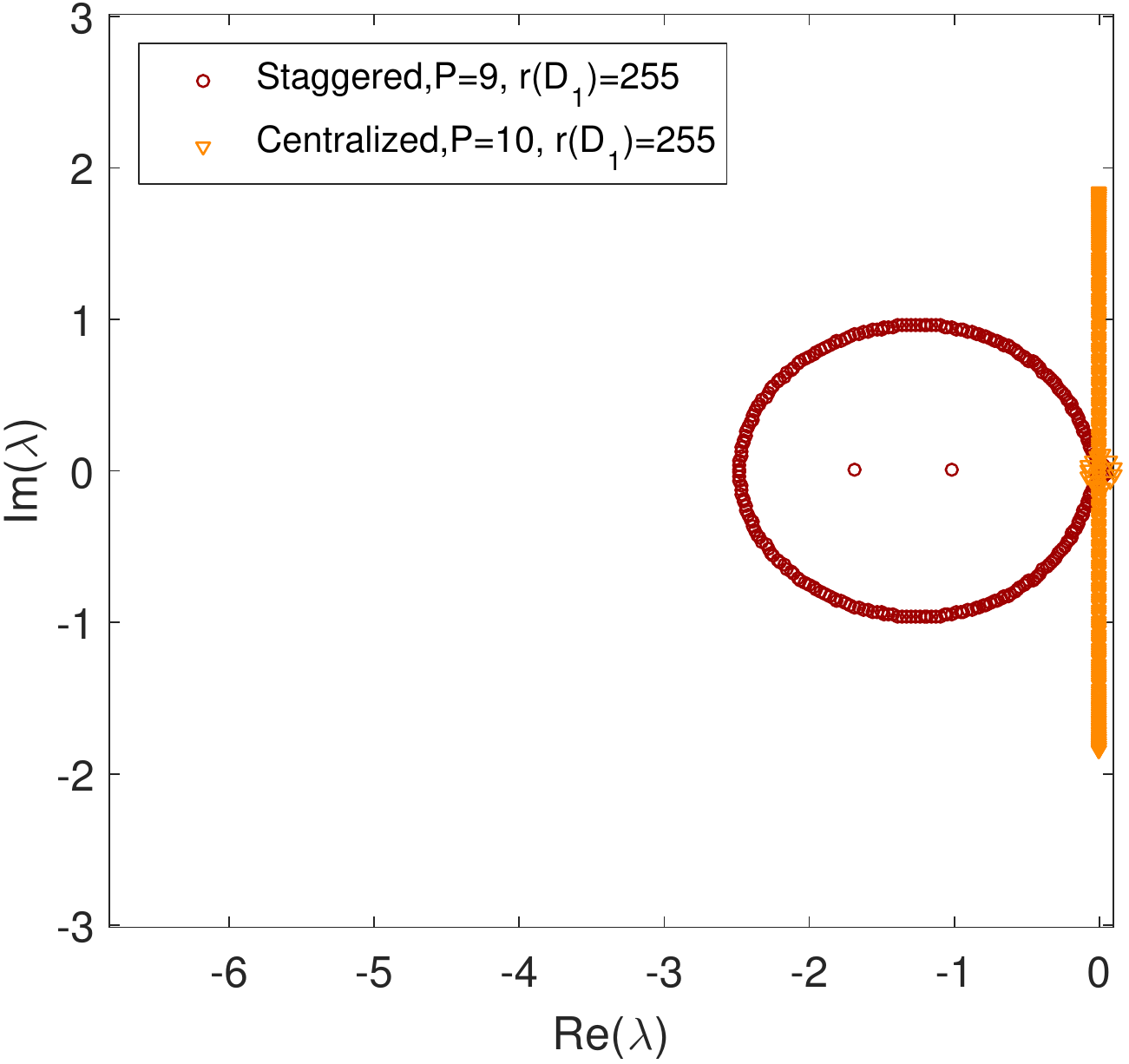}\label{fig_Eigenvalue_dstribution_staggered_vs_centralized_fullband_n1}}
\subfigure[$l=5$, $n=2$]{\includegraphics[height=0.35\textwidth]{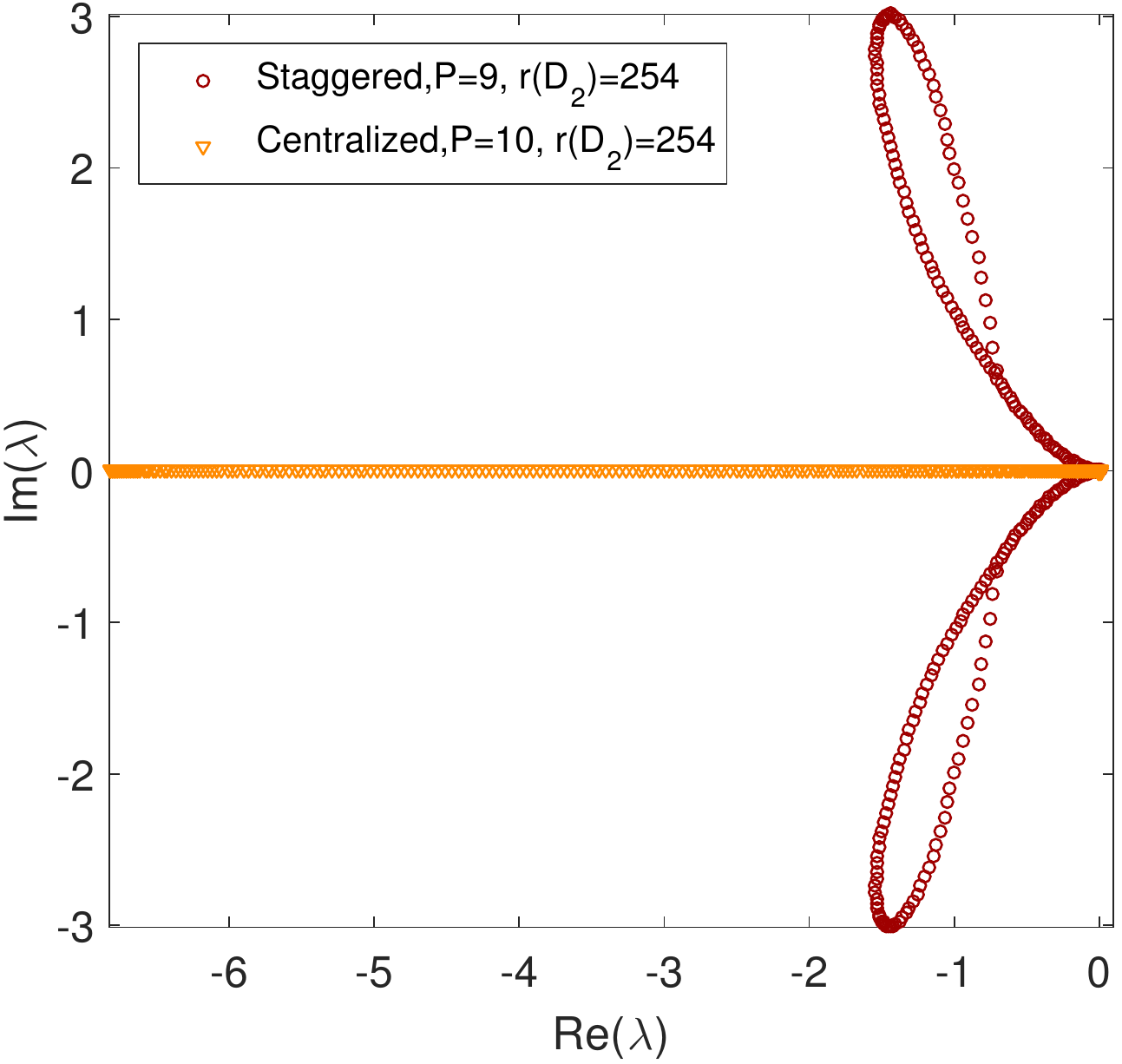}\label{fig_Eigenvalue_dstribution_staggered_vs_centralized_fullband_n2}}
}
\centerline{
\subfigure[Staggered $n=1$]{\includegraphics[height=0.4\textwidth]{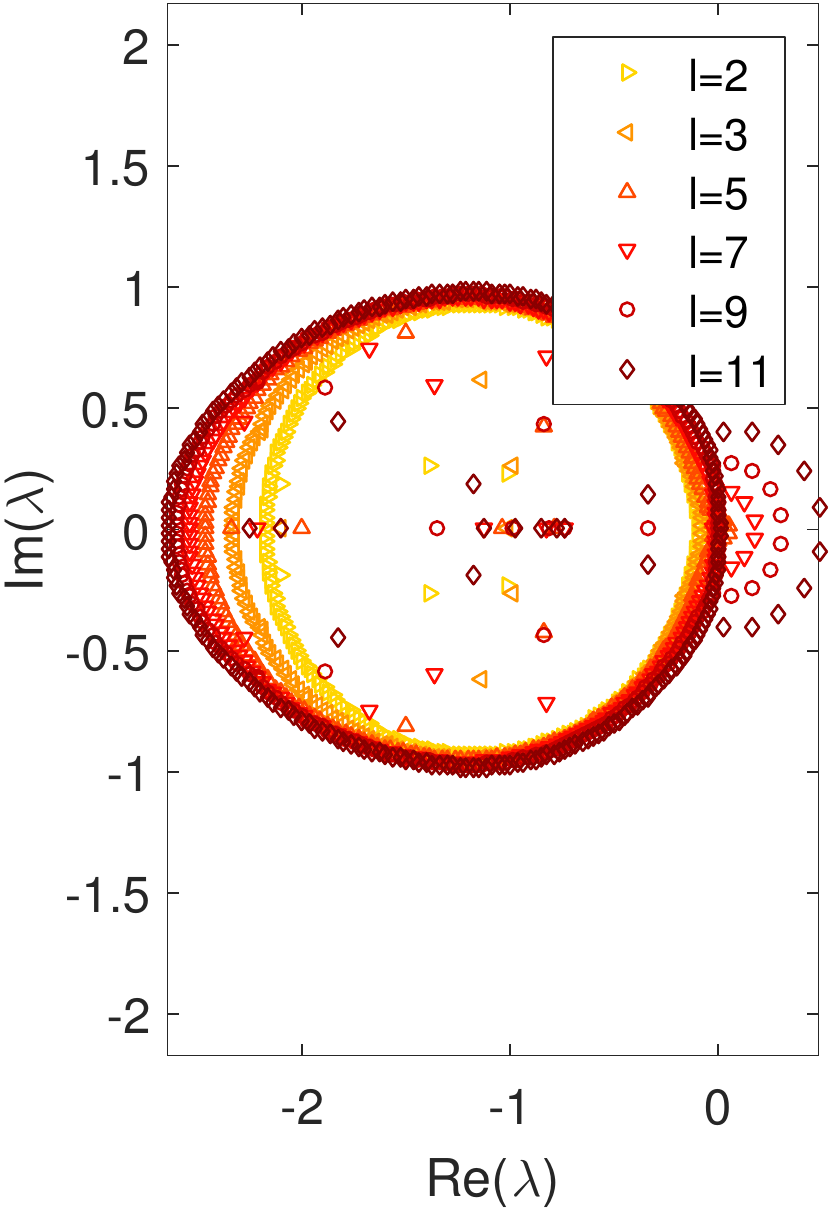}\label{fig_Eig_dist_on_different_polynomial_full_stag}}
\subfigure[Centralized $n=1$]{\includegraphics[height=0.4\textwidth]{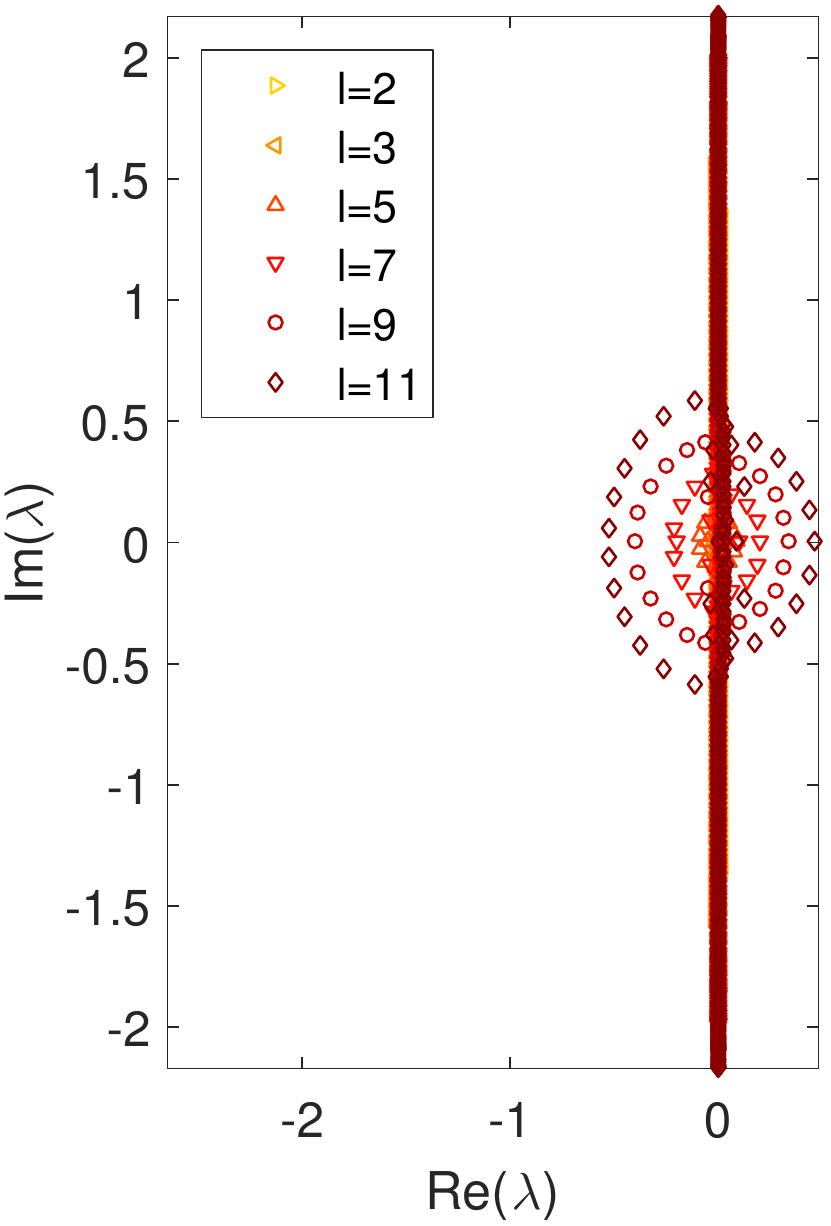}\label{fig_Eig_dist_on_different_polynomial_full_cent}}
\subfigure[Centralized $n=1$]{\includegraphics[height=0.4\textwidth]{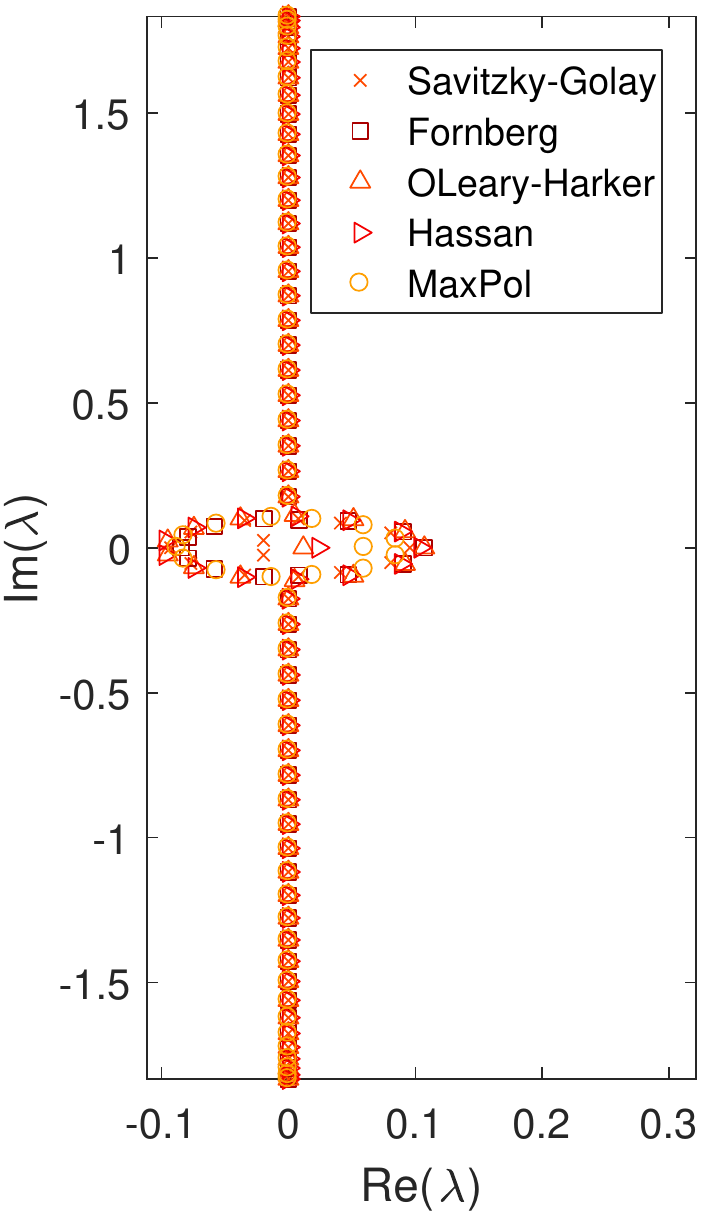}\label{fig_Eigenvalue_dstribution_centralized_fullband_n1_comparision_methods}}
\subfigure[Centralized $n=3$]{\includegraphics[height=0.4\textwidth]{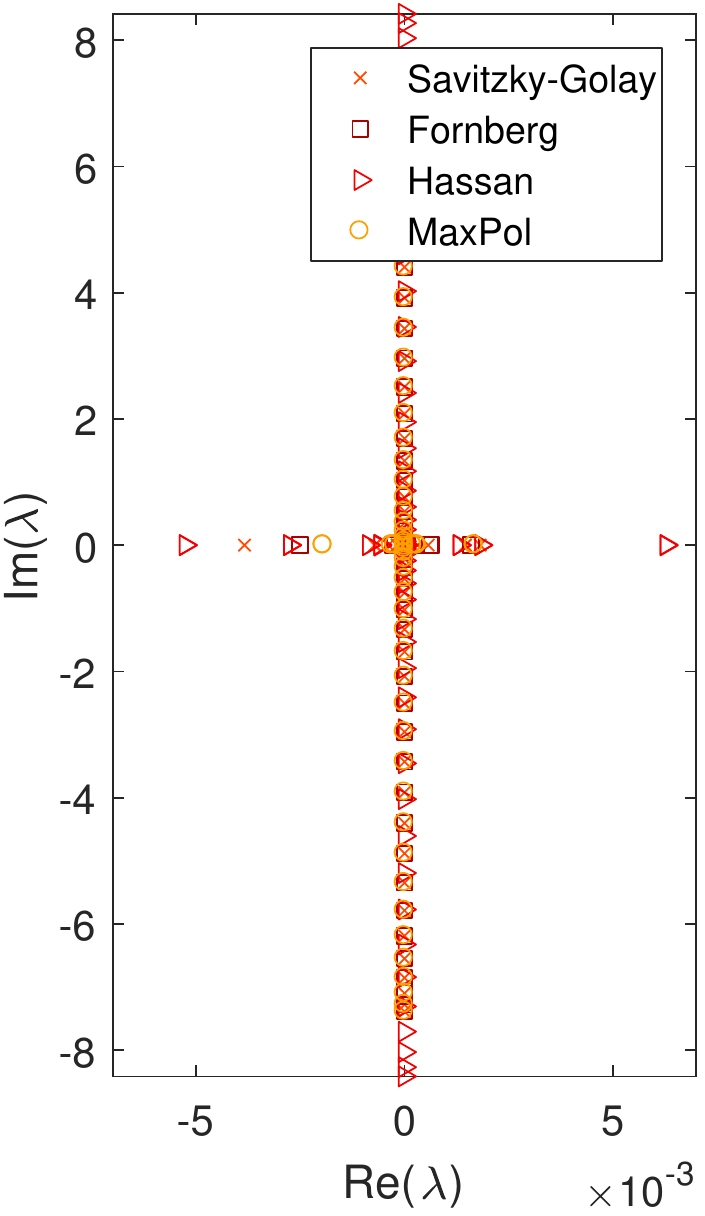}\label{fig_Eigenvalue_dstribution_centralized_fullband_n3_comparision_methods}}
}
\caption{(a)-(d) Distribution of eigenvalues of staggered/centralized derivative matrices ($N=256$) proposed by MaxPol method, (e)-(f) Comparison of the spectral decomposition of centralized derivative matrix ($N=64$ and $l=5$) constructed by different methods. In all setups, the fullband coefficients are used with no-cutoff. Eigenvalues of different parameter setup are overlaid on the same canvas.}
\label{fig_bc_model_2}
\end{figure}

\subsection{Stability via location of eigenvalues}\label{sec_stability_eigenvlaues}
The numerical stability of the derivative matrices are studied here by means of spectral analysis \cite{trefethen2005spectra}. In particular we consider the scalar linear ODE model problem $f^{(n)}=\lambda f$ where the solution is obtained by solving an eigenvalue problem $D_{n}f=\lambda f$. The numerical stability of the problem is validated through locating the eigenvalues in complex plane where they should reside on the left-half plane. A better understanding of the problem can be also made by Gersgorin theorem  \cite{horn2012matrix}. It indicates the eigenvalues of a matrix which is compactly localized around its main diagonal are in the union of Gresgorin discs prescribed around each diagonal entries. This provides a means to determine the closeness of the eigenvalues to the diagonal entries by measuring the norms of off-diagonals at every row of the matrix. Given the odd/even order of differentiation and centralized/staggered schemes, four possible cases are defined:
\begin{enumerate}[leftmargin=*]
\item \textbf{Case--I: staggered odd derivative matrix.} contains two dominant coefficients around the center of the even tap-length filter. The absolute values of these coefficients are close to the off-diagonals indicating that the eigenvalues contain real components. \ref{fig_Eigenvalue_dstribution_staggered_vs_centralized_fullband_n1} demonstrates the distribution of eigenvalues of first derivative matrix where they are concentrated in the left-half of the complex plane bounded by a closed loop.
\item \textbf{Case--II: centralized odd derivative matrix.} The tap-length filter is odd where the diagonal entries are mainly zeros (except left and right blocks). This implies the eigenvalues are far away from zero. \ref{fig_Eigenvalue_dstribution_staggered_vs_centralized_fullband_n1} demonstrates the distribution of the eigenvalues for centralized first derivative matrix, where the majority of the eigenvalues are purely imaginary. Similar designs are also introduced in \cite{Li2005, o2008algebraic, o2012framework, HassanMohamadAtteia2012}. The distribution of the eigenvalues of the comparison methods are also shown in \ref{fig_Eigenvalue_dstribution_centralized_fullband_n1_comparision_methods} and \ref{fig_Eigenvalue_dstribution_centralized_fullband_n3_comparision_methods} for numerical construction of the first and third order order derivative matrices, respectively. Here, we have also generated the Fornberg's and Savitzky-Golay's version of the derivative matrices by replacing the coefficients of our design case using \cite{fornberg1988, S0036144596322507, SavitzkyGolay1964, gorry1990general, meer1990smoothed, luo2005properties, schafer2011savitzky}. As it shown, the imaginary eigenvalues of the proposed MaxPol derivative matrix are less perturbed compared to all comparison methods. This stems from the fact that the numerical calculation of the coefficients in MaxPol are accommodated by the closed form solutions introduced in \ref{lemma_centralized} with less computational complexity. Such improvement is more noticeable on higher order derivatives.
\item \textbf{Case--III: staggered even derivative matrix.} similar to Case--I, the eigenvalues have non-zero real parts. The distribution of eigenvalues are shown in \ref{fig_Eigenvalue_dstribution_staggered_vs_centralized_fullband_n2} with complex structure, where they reside in the left-plane circling around a butterfly curve.
\item \textbf{Case--IV: centralized even derivative matrix.} With odd tap-length filter, the diagonal entries are mainly dominated by non-zero coefficients. So, the eigenvalues are close to this real value based on the Gersgorin theorem. The distribution of eigenvalues is shown in \ref{fig_Eigenvalue_dstribution_staggered_vs_centralized_fullband_n2} for second derivative matrix where all the eigenvalues are purely real and reside on the left-plane. Similar design case is introduced in \cite{Li2005, HassanMohamadAtteia2012}.
\end{enumerate}
The pertinent eigenvalues of left/right blocks in all type of matrices are mainly concentrated around zero plane looping around a small circle. As per the order of tap-length polynomial $l$ increases, the radius of this circle increases towards positive plane according to the Runge phenomenon. \ref{fig_Eig_dist_on_different_polynomial_full_cent} and \ref{fig_Eig_dist_on_different_polynomial_full_stag} elaborate on this issue where a careful selection of the polynomial is necessary to avoid numerical divergence. Case--I and Case--IV are highly stable since their eigenvalues contain non-zero real parts and mainly reside on the left-half plane. However, Case--II contain mostly pure imaginary eigenvalues and cause oscillations in recovery. This design case has been vastly used in inverse imaging problems in \cite{o2008algebraic, o2012framework, xie2014surface, harker2015regularized, queaunormal2016, EstellersSoatto2016}. We show in Section \ref{sec_gradient_surface}, the oscillatory characteristics of such design case is the main leading artifacts in recovery that can be greatly improved by an alternative design Case--I. 

We conduct our next stability analysis by observing the expansion of eigenvalues in complex plane in terms of matrix size $N$. \ref{fig_Eigenvalue_range_matrix_size_N_staggered_fullband_n1} and \ref{fig_Eigenvalue_range_matrix_size_N_centralized_fullband_n1} show the upper and lower bounds of the eigenvalues of first derivative matrix. As per the matrix size increases, both imaginary and real components converge. However, the relative comparison of imaginary and real eigenvalues in centralized scheme yields an unbalanced ratio with dominant imaginary components. While this ratio is much balanced in staggered scheme.
\begin{figure}[htp]
\centerline{
\subfigure[Staggered, $P=9$]{\includegraphics[height=0.23\textwidth]{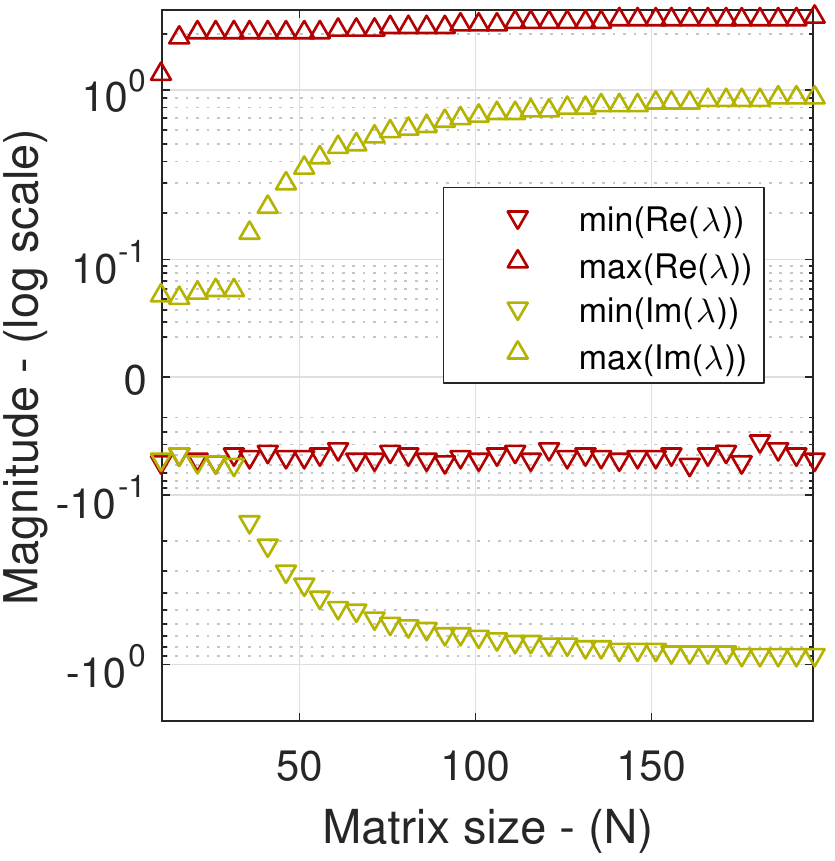}\label{fig_Eigenvalue_range_matrix_size_N_staggered_fullband_n1}}
\subfigure[Centralized, $P=10$]{\includegraphics[height=0.23\textwidth]{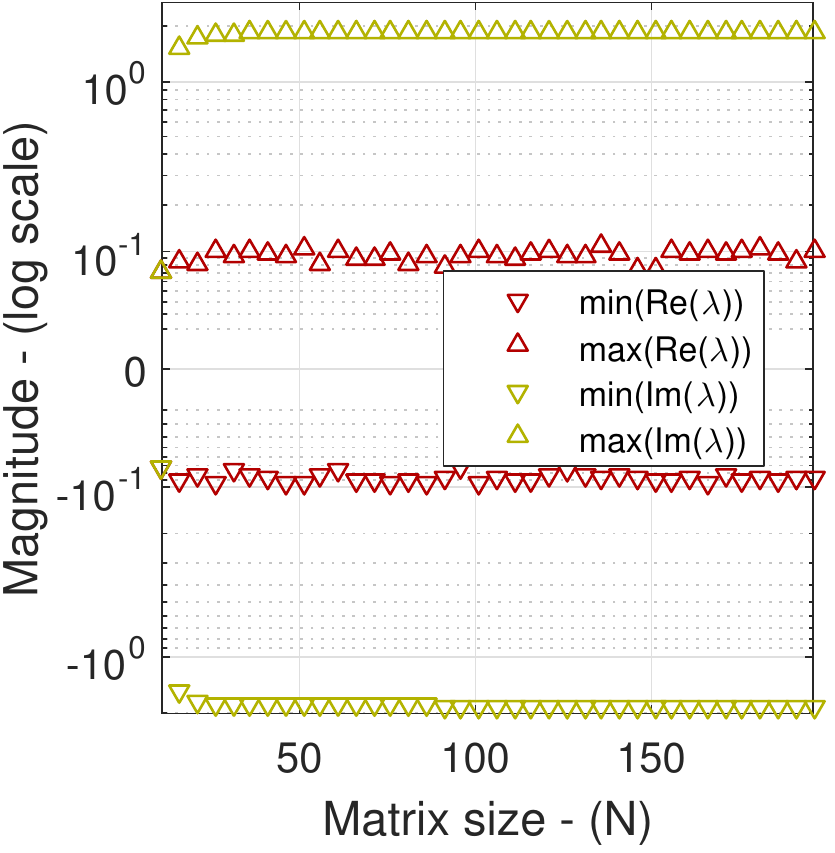}\label{fig_Eigenvalue_range_matrix_size_N_centralized_fullband_n1}}
\subfigure[Savitzky-Golay \cite{luo2005properties, schafer2011savitzky}]{\includegraphics[height=0.23\textwidth]{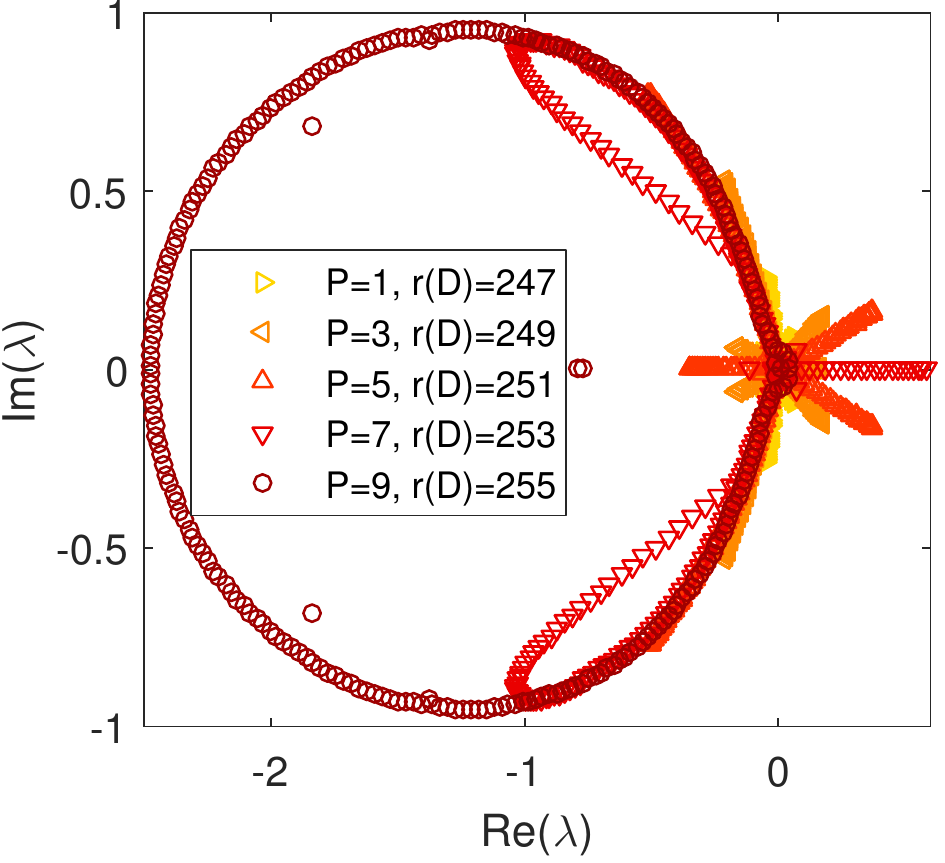}\label{fig_savitzkygolay_eigenvalue_dist_lowpass_to_fullband}}
\subfigure[MaxPol (Proposed)]{\includegraphics[height=0.23\textwidth]{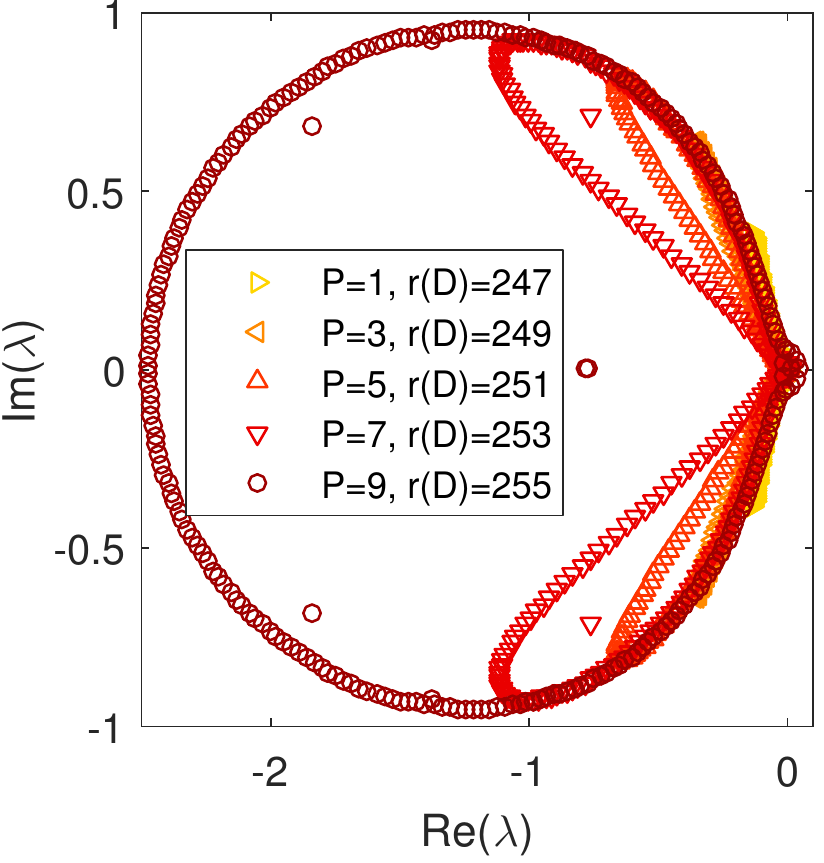}\label{fig_maxpol_eigenvalue_dist_lowpass_to_fullband}}
}
\caption{(a)-(b) Evolution of upper and lower bounds of eigenvalues for fullband first order derivative matrix as a function of matrix size ($N$), (c)-(d) Distribution of eigenvalues of first derivative matrix with staggered nodes ($N=256$, $l=5)$. The cutoff ranges from highest $P=1$ to no-cutoff (fullband) $P=9$.}
\label{fig_Eigenvalue_range_matrix_size_N_fullband_n1}
\end{figure}

Finally we check the stability of the design Case--I by varying the polynomial degree $P$ to set different cutoff parameters. Here we embed the first derivative coefficients from (a) Savitzky-Golay filters \cite{luo2005properties, schafer2011savitzky}, and (b) MaxPol. The distribution of the eigenvalues are shown in \ref{fig_savitzkygolay_eigenvalue_dist_lowpass_to_fullband} and \ref{fig_maxpol_eigenvalue_dist_lowpass_to_fullband}. The positive real eigenvalues for Savitzky-Golay is highly evident due to the rippling artifacts in stopband. While this is resolved in MaxPol design. 

\subsection{Tensor decomposition}\label{sec_tensor_decomposition}
In this section we generalize the separate mode decomposition of discrete data in higher dimensions such as 2D image and 3D volumetric spaces using the matrix framework proposed in \ref{sec_matrix_design}. Recall the matrix-vector multiplication $f^{(n)}=D_n f$ where $f\in\mathbf{R}^N$ is the vector-valued function and $D_n\in M_N$ is a square size derivative matrix associated by centralized/staggered scheme. We add the size of matrix to the notation as another subscript denoted by ${D}_{n,N}$ for centralized, $\overrightarrow{{D}}_{n,N}$ for staggered-forward, and $\overleftarrow{{D}}_{n,N}$ for staggered-backward. Here, the staggered coefficients are embedded either in backward or forward schemes according to the half-sample shift to left and right, respectively. The backward scheme can be obtained by forward scheme $\overleftarrow{{D}}_{n,N}=-J\overrightarrow{{D}}_{n,N} J$ (and vice versa) where $J$ is the permutation matrix. Accordingly, the decomposition framework for 2D images using the above notations is carried as follows. Assuming the orientation of the coordinates $x$ and $y$ in 2D domain corresponds to columns and rows of an image ${X}\in\mathbb{R}^{N_1\times N_2}$, respectively, the image partial derivatives are estimated using the algebraic scheme in \ref{table_OD_scheme_gradien_hessian}.
\begin{table}[htp]
\renewcommand{\arraystretch}{1.65}
\caption{2D algebraic scheme for partial differentiation}
\label{table_OD_scheme_gradien_hessian}
\centering
\scriptsize
\begin{tabular}{cccc} 
\hlinewd{1pt}
\multicolumn{1}{c}{} & \text{Backward} & \text{Forward} & \text{Central} \\ \hlinewd{1pt}
$\frac{\partial^n{X}}{\partial x^n}$ & ${X}{\overleftarrow{D}^T_{n,N_2}}$ & ${X}{\overrightarrow{D}^T_{n,N_2}}$ & ${X}{D}^T_{n,N_2}$ \\ \cline{1-4}
$\frac{\partial^n{X}}{\partial y^n}$ & ${\overleftarrow{D}_{n,N_1}}{X}$ & ${\overrightarrow{D}_{n,N_1}}{X}$ & ${D}_{n,N_1}{X}$ \\ \cline{1-4}
$\frac{\partial^{n_1+n_2}{X}}{{\partial x^{n_1}}{\partial y^{n_2}}}$ & ${\overleftarrow{D}_{n_1,N_1}}{X}{\overleftarrow{D}^T_{n_2,N_2}}$ & ${\overrightarrow{D}_{n_1,N_1}}{X}{\overrightarrow{D}^T_{n_2,N_2}}$ & ${D}_{n_1,N_1}{X}{D}^T_{n_2,N_2}$\\ \hlinewd{1pt} 
\end{tabular}
\end{table}

A practical guideline for such implementation in computer programming is the derivative matrices are sparse and hence the computations can be facilitated by \texttt{sparse} mode operation such as in MATLAB. Moreover, to avoid memory allocation for high pixel image resolutions, one can break down the differential operation to three steps according to the blocks defined in \ref{sec_matrix_design}. To generalize the image differentiation, we  extend this process to $m$-dimensional data, e.g. volumetric 3D images such as ultrasound and MRI, using the tensor decomposition approach \cite{7038247}. According to the Kronecker decomposition of multidimensional signals, the tensor ${X}\in\mathbb{R}^{N_1\times N_2\times\cdots\times N_m}$ can be decomposed in each dimension by means of separate factor bases using vectorized representation
\begin{align}
\text{vec}\left(\frac{\partial^{n_m}\cdots\partial^{n_2}\partial^{n_1}X}{\partial x_m^{n_m}\cdots\partial x_2^{n_2}\partial x_1^{n_1}}\right)= \left({D}_{n_m}\otimes\cdots\otimes {D}_{n_2} \otimes {D}_{n_1}\right)\text{vec}\left({X}\right),
\label{eq_25}
\end{align}
where, $\otimes$ stands for Kronecker product and $\text{vec}(\cdot)$ stands for vectorized representation of the input tensor which unfolds the tensor along the first dimension. It is also known as mode-$1$ vector of the tensor which is obtained by fixing all modes in the tensor except mode-$1$ \cite{7038247}. Here, ${D}_{n_j}\in M_{N_j}$ is the derivative matrix defined for mode-$j$ decomposition basis. The model presented in \ref{eq_25} decomposes all possible dimensions, where every corresponding matrix can be replaced by an identity to relax the decomposition at a particular mode. For instance, to approximate the first order derivative along the second dimension of a 3D volume with staggered-forward coefficients, the operator for decomposition is defined by $I_{N_3}\otimes {\overrightarrow{D}_{1,N_2}} \otimes I_{N_1}$ where $I_{N_1}$ is an identity matrix of size $N_1$. For more information on Kronecker decomposition we refer the readers to \cite{7038247, 6784037} and references therein.

\begin{table}[htp]
\renewcommand{\arraystretch}{1.5}
\caption{List of numerical methods as variants of MaxPol method}
\label{table_maxpol_generalizes}
\centering
\scriptsize
\begin{tabular}{ll}
\hlinewd{1pt}
& ~~~~~~~~~~~~~~~~~~~Selected parameters in MaxPol \\ \hlinewd{.75pt}
\multirow{2}{*}{Fornberg \cite{fornberg1988, S0036144596322507}}
& {node={`centralized'}}, {$l\geq 0$}, {$-l\leq s\leq l$}, {$P=2l$}, {$n=\{0,1,\hdots\}$} \\
& {node={`staggered'}}~~,~{$l\geq 0$}, {$-l\leq s\leq l$}, {$P=2l-1$}, {$n=\{0,1,\hdots\}$} \\ \hlinewd{.75pt}
\multirow{2}{*}{Savitzky-Golay (fullband) \cite{SavitzkyGolay1964, gorry1990general, meer1990smoothed, luo2005properties, schafer2011savitzky}}
& {node=\text{`centralized'}}, {$l\geq 0$}, {$-l\leq s\leq l$}, {$P=2l$}, {$n=\{0,1,\hdots\}$} \\
& {node=\text{`staggered'}}~~, {$l\geq 0$}, {$-l\leq s\leq l$}, {$P=2l-1$}, {$n=\{0,1,\hdots\}$} \\ \hlinewd{.75pt}
\multirow{2}{*}{Khan \cite{RasoolKhan1999, RasoolKhan1999_2, RasoolKhan2000, RasoolKhan2003, RasoolKhan2003_2}} & {node=\text{`centralized'}}, {$l\geq 0$}, {$s=0$}, {$P=2l$}, {$n=\{0,1,\hdots\}$} \\
& {node=\text{`staggered'}}~~, {$l\geq 0$}, {$s=0$}, {$P=2l-1$}, {$n=\{0,1,\hdots\}$} \\ \hlinewd{.75pt}
Li \cite{Li2005} & {node=\text{`centralized'}}, {$l\geq 0$}, {$-l\leq s\leq l$}, {$P=2l$}, {$n=\{0,1,\hdots\}$}, $D_n$ \\ \hlinewd{.75pt}
O'Leary \cite{o2008algebraic, o2012framework} & {node=\text{`centralized'}}, {$l\geq 0$}, {$-l\leq s\leq l$}, {$P=2l$}, {$n=1$}, $D_n$ \\ \hlinewd{.75pt}
Hassan \cite{HassanMohamadAtteia2012} & {node=\text{`centralized'}}, {$l\geq 0$}, {$-l\leq s\leq l$}, {$P=2l$}, {$n=\{0,1,\hdots\}$}, $D_n$ \\ \hlinewd{.75pt}
Carlsson \cite{Carlsson1991} & {node=\text{`centralized'}}, {$l\geq 0$}, {$s=0$}, {$P=2l$}, {$n=1$} \\ \hlinewd{.75pt}
\multirow{2}{*}{Kumar \cite{KumarRoy1988, KumarRoy1989, KumarRoy1992}}& {node=\text{`centralized'}}, {$l\geq 0$}, {$s=0$}, {$P=2l$}, {$n=1$} \\
& {node=\text{`staggered'}}~~, {$s=0$}, {$n=1$}, {$l\geq 0$}, {$P=2l-1$} \\ \hlinewd{.75pt}
\multirow{2}{*}{Selesnick \cite{SelesnickBurrus1998, SelesnickBurrus1998TSP, Selesnick2002}} & {node=\text{`centralized'}}, {$l\geq 0$}, {$s=0$}, {$n\leq P\leq 2l$}, {$n=\{0,1\}$} \\
& {node=\text{`staggered'}}~~, {$s=0$}, {$0\leq n\leq 1$}, {$l\geq 0$}, {$n\leq P\leq 2l-1$} \\ \hlinewd{.75pt}
\multirow{2}{*}{Hosseini-Plataniotis \cite{hosseini2017derivative}} & node=\text{`centralized'}, $l\geq 0$, $s=0$, $n\leq P\leq 2l$, {$n=\{0,1,\hdots\}$} \\
& node=\text{`staggered'}~~,~$l\geq 0$, $s=0$, $n\leq P\leq 2l-1$, {$n=\{0,1,\hdots\}$} \\ \hlinewd{.75pt}
\end{tabular}
\end{table}

\subsection{Generalization of MaxPol library package}
In this section we provide a summary on how \texttt{MaxPol} generalizes many methods in the literature. With retrospect to the finite difference methods listed in \ref{table_state_of_the_art_ND}, the cross parameter selected for \texttt{MaxPol} are demonstrated for equivalent representations in \ref{table_maxpol_generalizes}. For more information on different methods and how they compare to each other in terms of feature utilities please refer to \ref{table_state_of_the_art_ND}.

A comprehensive library code written in MATLAB called \texttt{MaxPol} is delivered along the publication of this paper which includes derivation of lowpass/fullband derivative coefficients, derivative matrices of all four kinds, and tensor decomposition utilities. For more information on this library please refer to the user's guidelines of \texttt{MaxPol} and provided demo examples.

\section{Numerical validation}\label{sec_fullband_differentiation}
Throughout this section we numerically validate the newly designed derivative matrices in terms of approximation accuracy. The application of these matrices and their usefulness will be fully discussed later in \ref{sec_gradient_surface}. An analytical 2D sinusoidal zone plate $f(x,y)=\sin\left((\omega_x x)^2+(\omega_y y)^2\right)$ with a single focus point is considered similar to the one in \cite{freeman1991design}. The function consists of rotation invariant sinusoidal grating patterns of multiple frequencies within $(x,y)\in [-1,1]\times [-1,1]$. The harmonic parameter for both axes are chosen  $\omega_x=\omega_y=1.6\pi$ and the function is uniformly discretized on a $128\times 128$ meshgrid. \ref{fig_2D_sinusoidal_grating} displays the discrete representation of this function and its partial derivatives, where by moving towards the image boundaries higher frequencies are observed. This is the main rationale of choosing this plate for our experiment to determine the accuracy of differentiation on both interior and exterior boundaries with respect to different harmonies.

\begin{figure}[htp]
\centerline{
\subfigure[$f(x,y)$]{\includegraphics[height=0.15\textwidth]{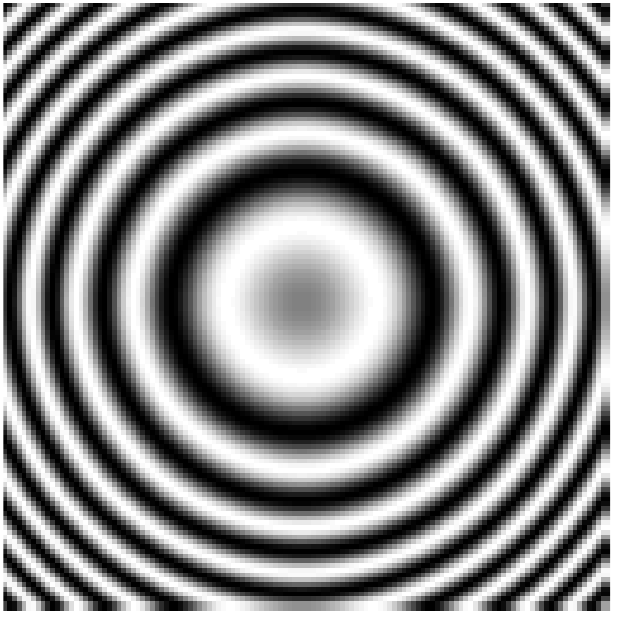}\label{fig_2D_sinusoidal_grating}}
\subfigure[$\frac{\partial f}{\partial x}$]{\includegraphics[height=0.15\textwidth]{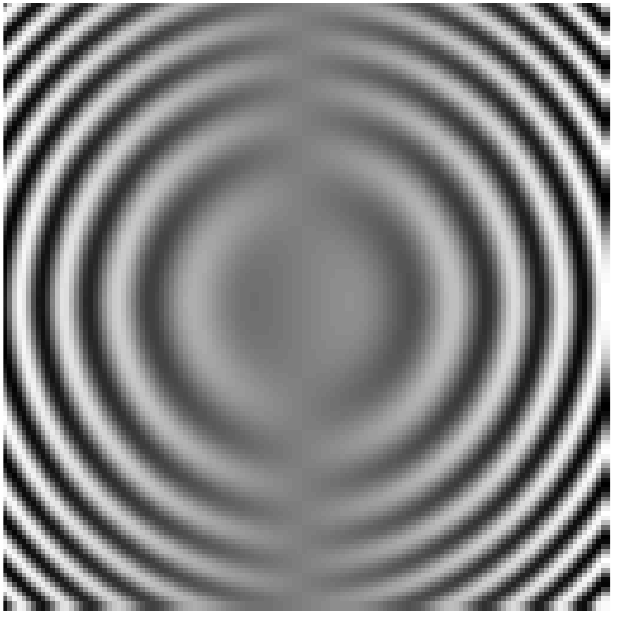}\label{fig_2D_sinusoidal_grating_x}}
\subfigure[$\frac{\partial^2 f}{\partial x \partial y}$]{\includegraphics[height=0.15\textwidth]{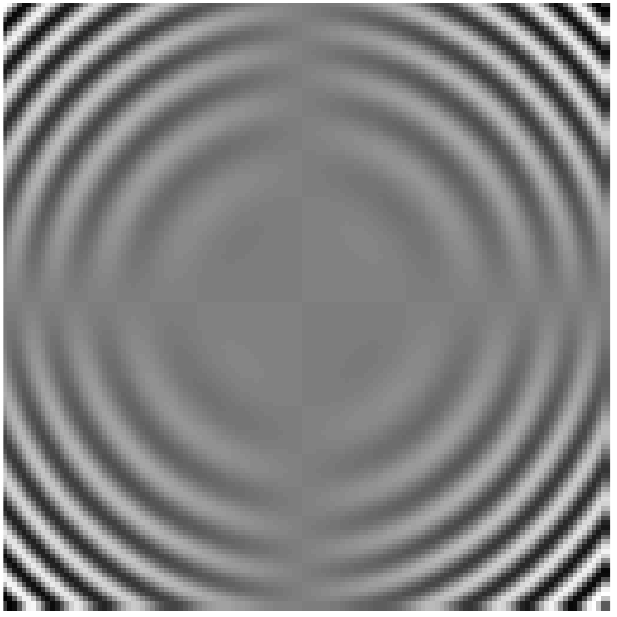}\label{fig_2D_sinusoidal_grating_xy}}
\subfigure[$\frac{\partial^2 f}{\partial x^2}$]{\includegraphics[height=0.15\textwidth]{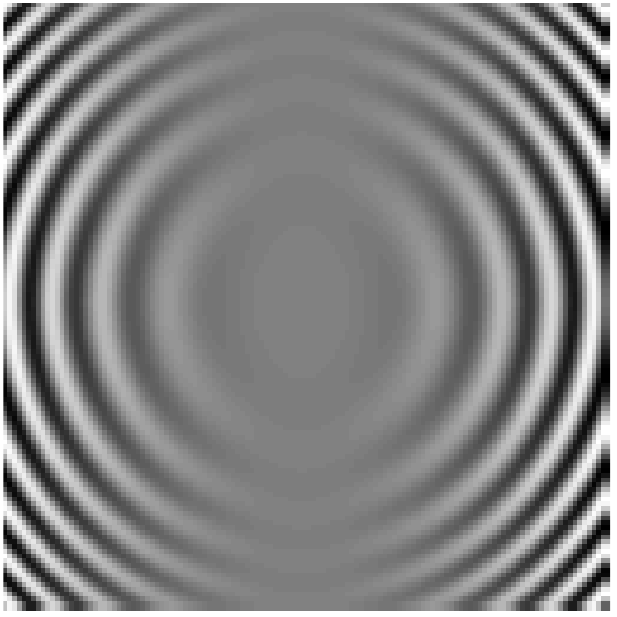}\label{fig_2D_sinusoidal_grating_xx}}
}
\caption{Analytical representation of 2D-sinusoidal grating and its first and second partial deriavtives}
\label{fig_analytical_2D_sinusoidal_grating}
\end{figure}

The empirical analysis is performed here to approximate the partial derivatives of the discrete image $f$ by means of staggered-forward third order derivative matrix $\partial^3 f/\partial x^3\approx {f\overrightarrow{D}^T_{3,128}}$ and centralized fourth order derivative matrix $\partial^4 f/\partial x^4 \approx {f}{D}^T_{4,128}$. The tap-length polynomial to construct the fullband differential matrices is $l=11$. The associated coefficients are numerically calculated by three different methods of Fornberg \cite{S0036144596322507}, Savitzky-Golay \cite{luo2005properties, schafer2011savitzky}, and our MaxPol formulations proposed in \ref{lemma_staggered} and \ref{lemma_centralized}. Also, different harmonic frequencies are considered to generate the discrete zone plate images varying from small to higher frequencies. The means of evaluation is performed by normalized-means-square-error (NMSE) between approximated and analytical derivative images. The evolution of the error approximation are demonstrated in \ref{fig:contour_error}. The domain of recovery is separate to interior and boundary domains for approximation according to zero and non-zero side shift nodes, respectively. The rank observation of MaxPol differentiation is consistent through different harmonic levels. Due to the lack of closed form solution to calculate the inverse Vandermonde in Savitzky-Golay, the numerical accuracy of FIR coefficients are plagued for high order polynomial, which we have selected $l=11$ in this experiment. Accordingly, the approximation error is deviated for higher order of harmonics due to the filter deviation of numerical differentiator from its ideal response, explained in previous section. With respect to the Fornberg's performance, the error levels maintain their robustness towards approximating high frequency information similar to the MaxPol. However, for high order of differentiation, the method loses its robustness particularly on approximating low frequency information using high order polynomials. This is due to the error propagation caused by rapid increase of the numerical iterations in generating high order derivative coefficients within the Lagrangian algorithm introduced in \cite{S0036144596322507}. Whereas, the closed form solution via MaxPol is scalable towards high order derivatives with more robust approximation.

\begin{figure}[htp]
\centerline{
\subfigure[interior ($n=3$)]{\includegraphics[width=0.19\textwidth]{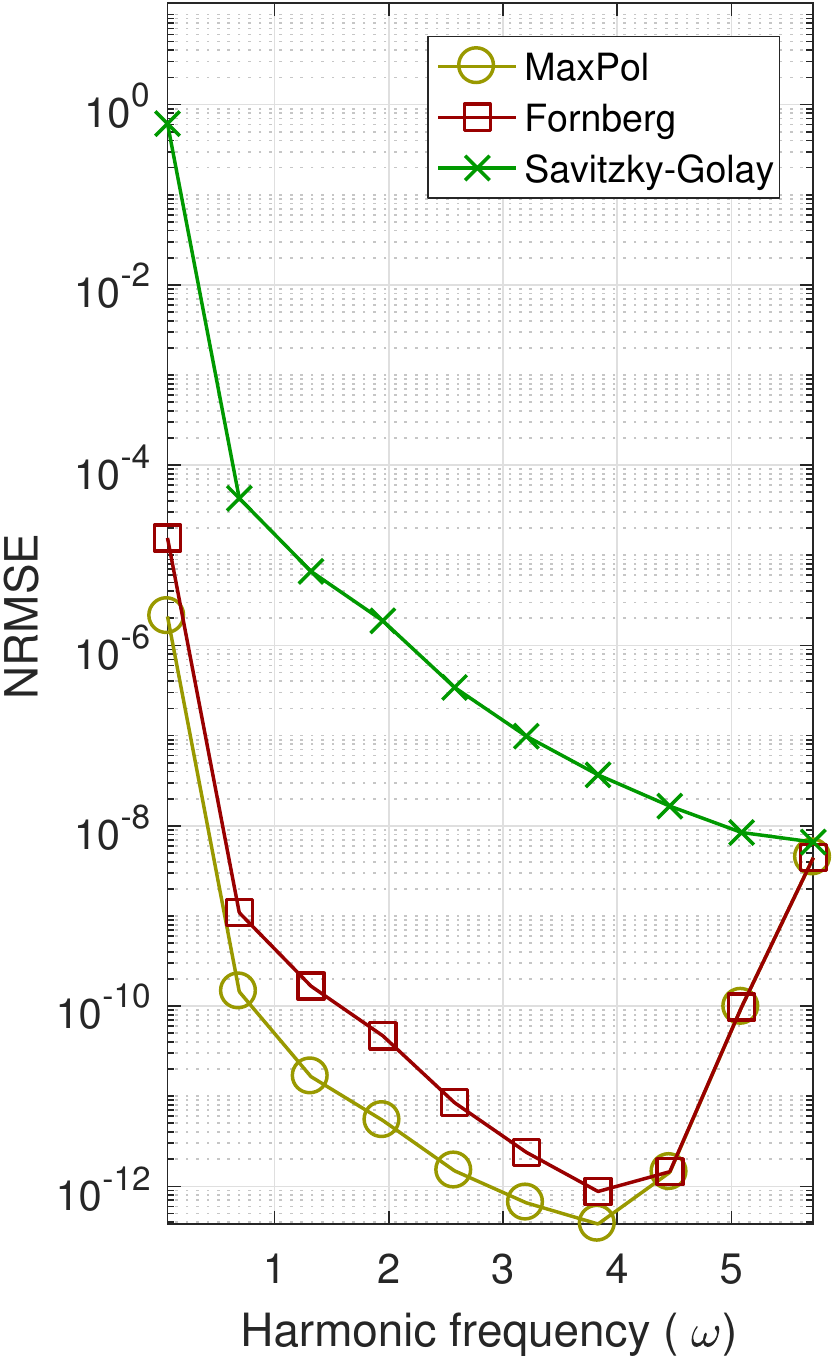}\label{fig:Numerical_error_2D_Zone_Plate_l_11_n_ord_3_staggered_interior}}
\subfigure[interior ($n=4$)]{\includegraphics[width=0.19\textwidth]{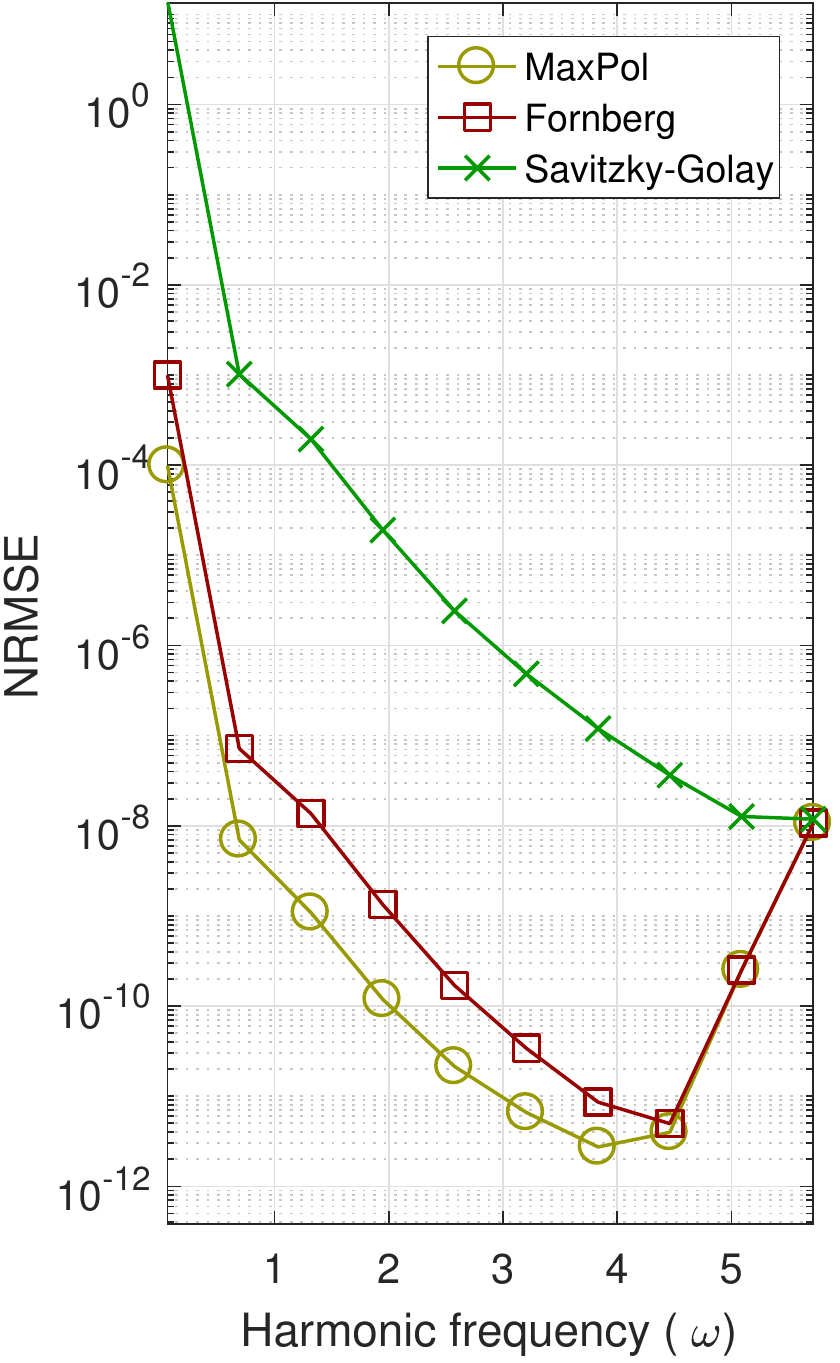}\label{fig:Numerical_error_2D_Zone_Plate_l_11_n_ord_4_centralized_interior}}
\subfigure[boundary ($n=3$)]{\includegraphics[width=0.19\textwidth]{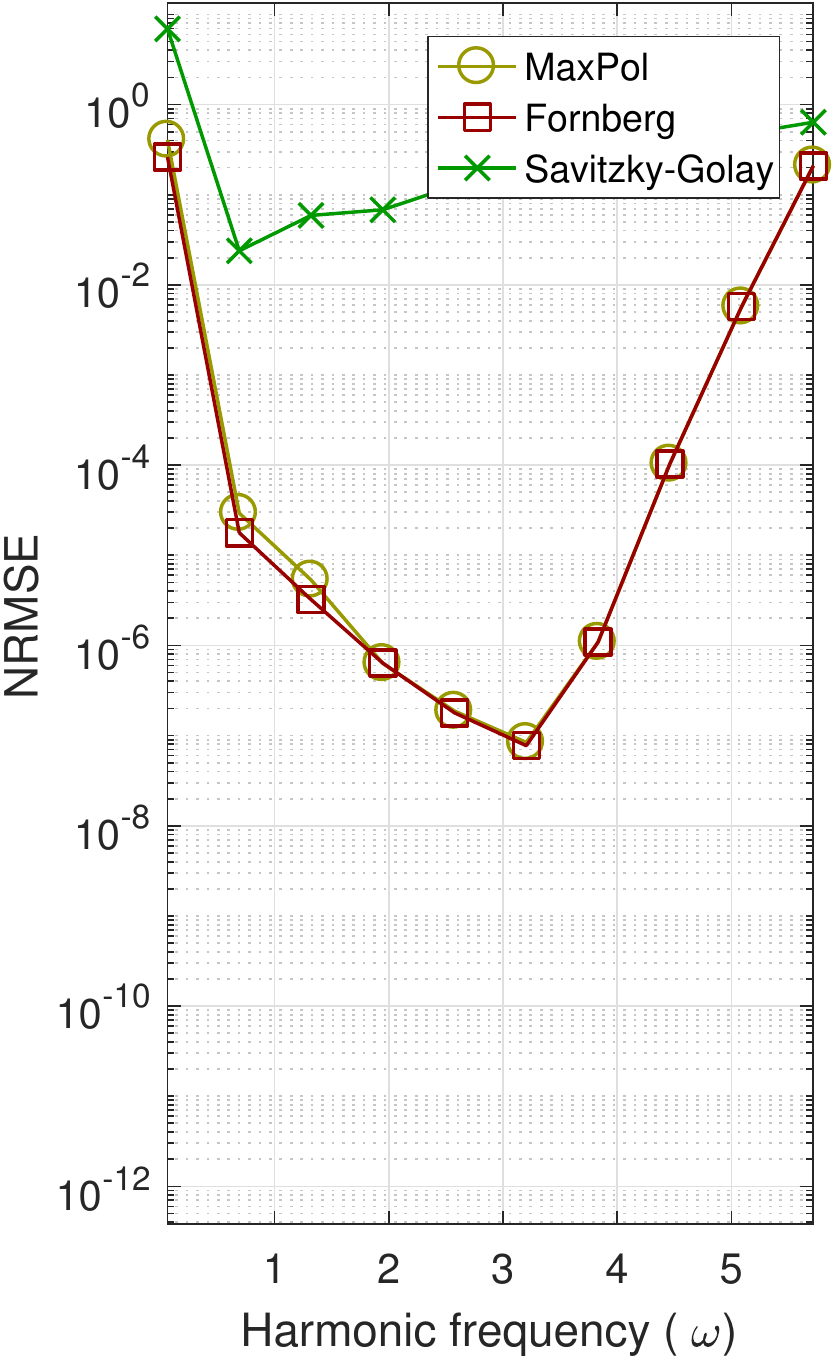}\label{fig:Numerical_error_2D_Zone_Plate_l_11_n_ord_3_staggered_boundary}}
\subfigure[boundary ($n=4$)]{\includegraphics[width=0.19\textwidth]{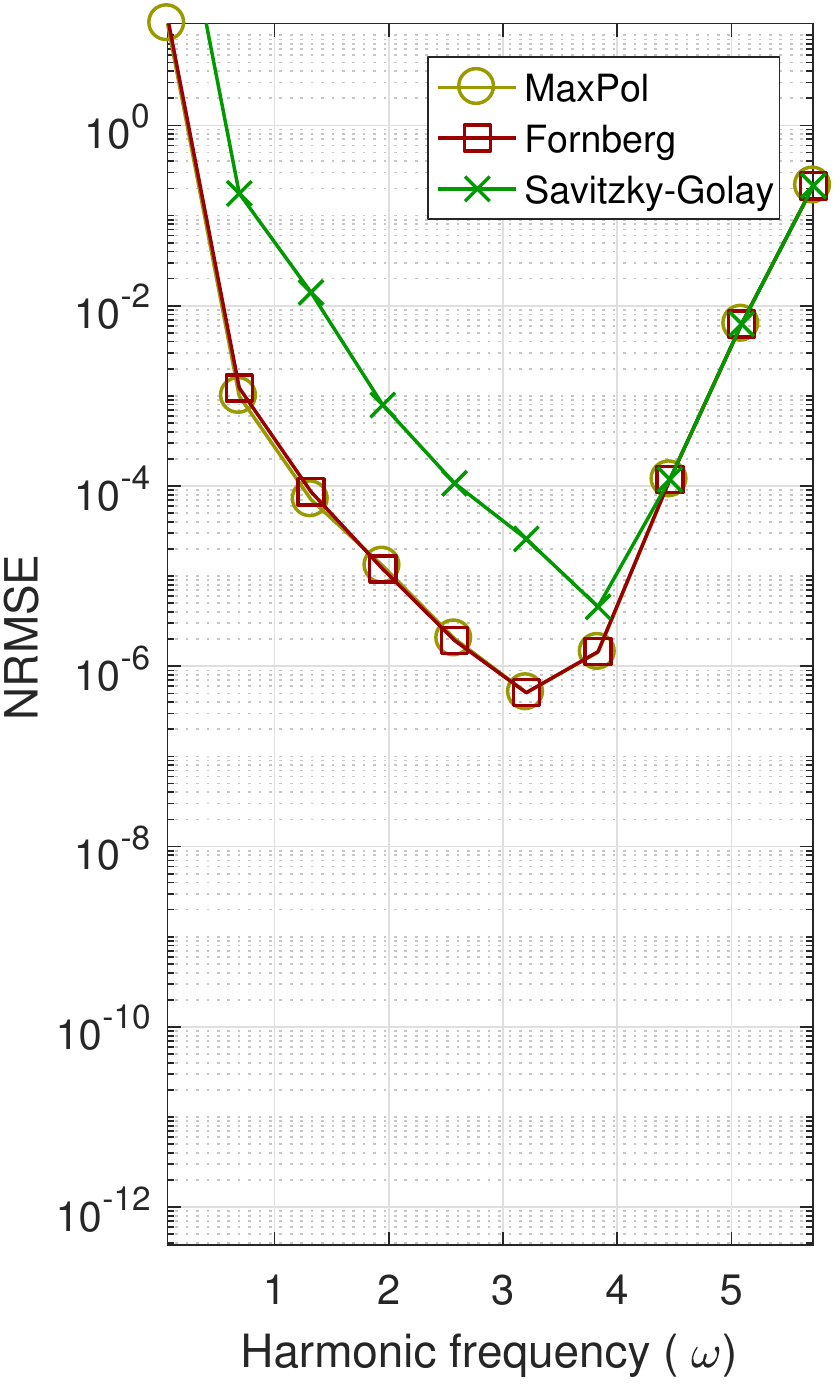}\label{fig:Numerical_error_2D_Zone_Plate_l_11_n_ord_4_centralized_boundary}}
\subfigure[Elapsed time]{\includegraphics[width=0.193\textwidth]{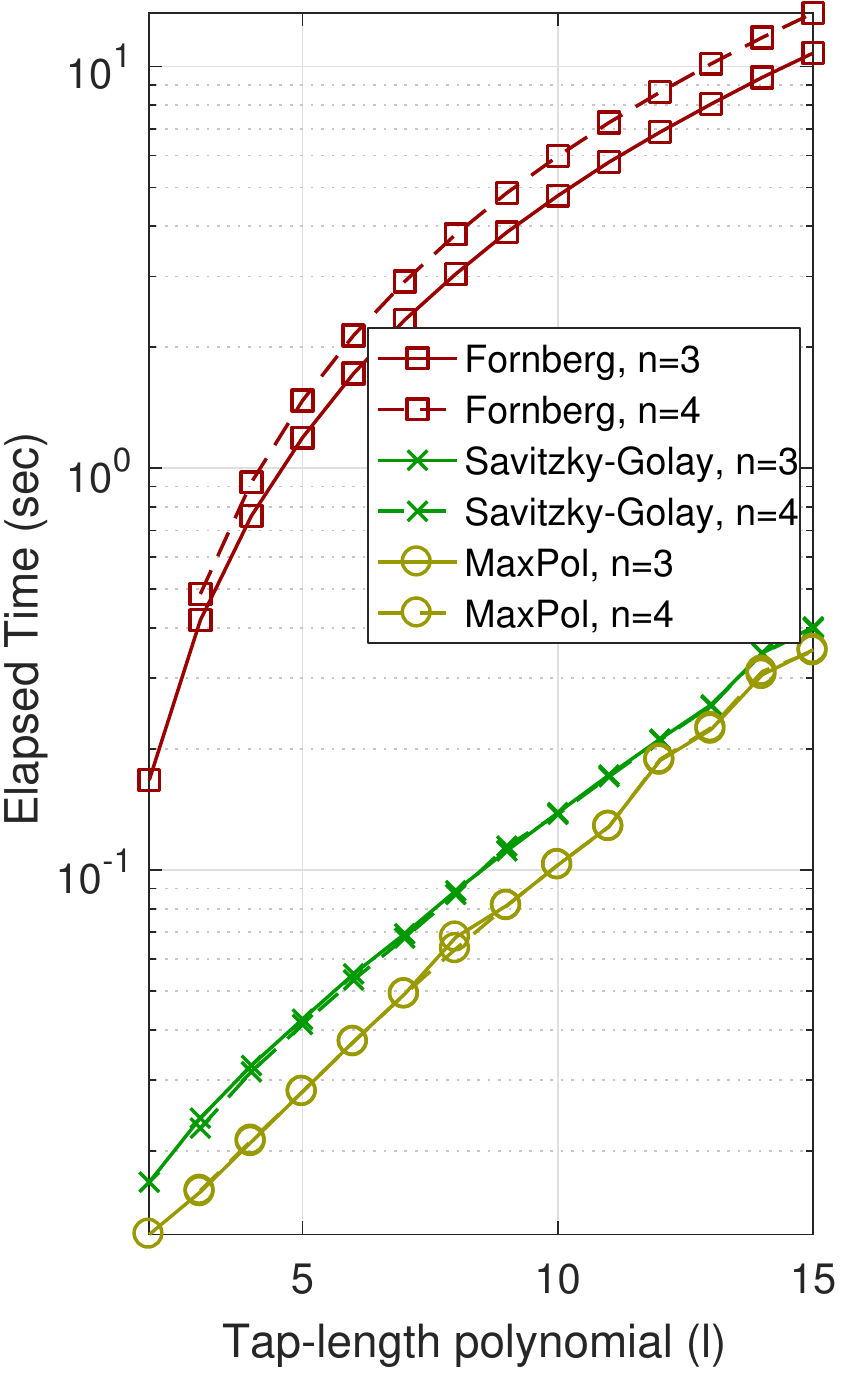}\label{fig:run_time_comparision_symbolic}}
}
\caption{(a)-(d) Approximation error of third and fourth order partial derivatives of discrete 2D zone plate on interior and boundary domains. The associated derivative matrices are constructed by means of numerical calculation of staggered and centralized schemes for third and fourth order FIR coefficients. (e) run time for symbolic calculation of coefficients by three different methods.}
\label{fig:contour_error}
\end{figure}

One alternative solution to avoid such inconsistencies is to deploy symbolic calculation of FIR coefficients over numerical computations. Though, the calculated coefficients will be the same under all three aforementioned methods, the computational complexities as per individual method will be different. \ref{fig:run_time_comparision_symbolic} elaborates on the time elapsed for symbolic calculation of the fullband FIR coefficients. As it is shown, the rank observation of the MaxPol is still consistent towards different tap-length polynomials and derivative orders.

\section{Applications in gradient surface reconstruction}\label{sec_gradient_surface}
The problem of recovering image surfaces from gradient measurements is of broad interest in computer vision tasks such as photometric stereo \cite{harker2015regularized}, surface rendering \cite{EstellersSoatto2016, mecca2016single}, and recovery in incomplete Fourier measurements \cite{5887417}. In general one needs to infer an underlying solution from the given gradient measurements. In a nutshell, the PDE problem is determined by $\nabla\phi=\overline{\partial\phi}$ where $\overline{\partial\phi}$ is the gradient measurement and $\phi\in\mathbb{R}^{N_1\times N_2}$ is the image surface to be recovered. One easy approach to solve this problem is to render the global solution by means of a quadratic minimization
\begin{align}
\min\limits_{\phi}{\|\frac{\partial\phi}{\partial x} - \overline{\partial_x\phi}\|^2_2 + \|\frac{\partial\phi}{\partial y} - \overline{\partial_y\phi}\|^2_2}
\label{Poisson_eq1}
\end{align}
which is known as the Poisson equation \cite{horn1986variational, simchony1990direct}. By discretizing the operators ${\partial\phi}/{\partial x}:=\phi D^T_{1,N_2}$ and ${\partial\phi}/{\partial y}:=D_{1,N_1}\phi$ using the matrix notations introduced in \ref{table_OD_scheme_gradien_hessian}, the global minimum of Poisson equation \ref{Poisson_eq1} in $\ell_2$-norm space fits a matrix Lyapunov (Sylvester) equation \cite{o2008algebraic, harker2015regularized}
\begin{align}
\left(D^T_{1,N_2}D_{1,N_2}\otimes I_{N_1} + I_{N_2}\otimes D^T_{1,N_1}D_{1,N_1}\right)\phi = 
\overline{\partial_x\phi}D_{1,N_2} + D^T_{1,N_1}\overline{\partial_y\phi}.
\label{Poisson_eq2}
\end{align}
The solution to the Sylvester equation can be accommodated by the well known Schur decomposition algorithms \cite{bartels1972solution, stewart2001matrix}. Since the differentiation matrix $D_1$ is rank-one deficient, the solution to \ref{Poisson_eq2} is not unique with arbitrary constant level. Therefore, a prior information is needed from the underlying image to tune the recovery constant level. The differential operator here is embedded as a correspondence between the measurements and regularizing variables $\nabla:\phi\mapsto\partial\phi$, where a fullband differentiation is required to establish a complete transformation of the frequency range. 

\subsection{Impact of derivative matrix accuracy: noise-free case}\label{sec_Sylvester_noise_free}
Our contribution to the Sylvester equation \ref{Poisson_eq2} is to deploy high accuracy derivative matrices with staggered schemes (case--I) to increase the stability of the recovery compared to its counterpart solutions in the literature. We shall elaborate on this utilization with more experimental detail as follows.
\begin{figure}[htp]
\centerline{
\subfigure[Interior recovery]{\includegraphics[height=0.45\textwidth]{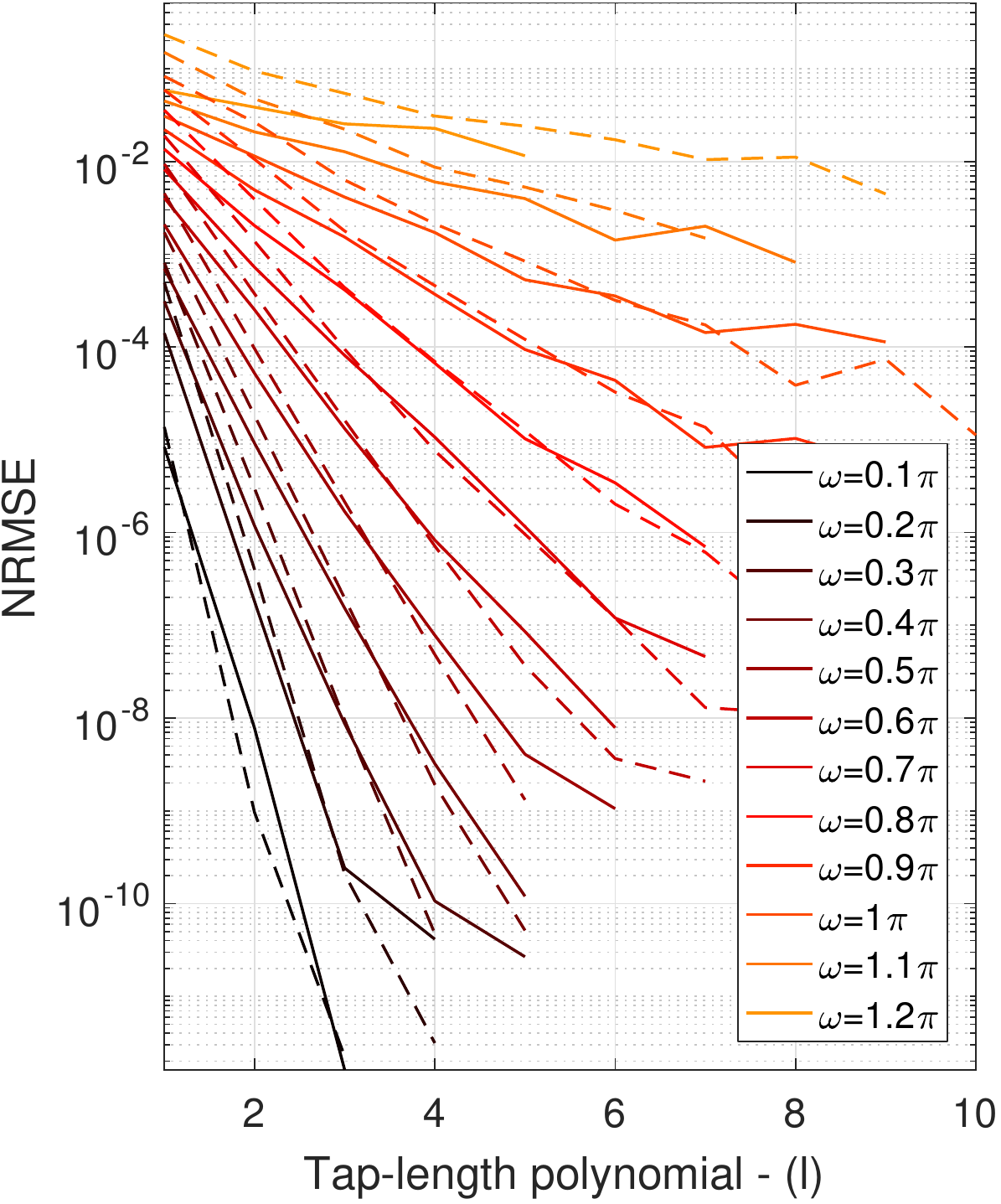}\label{fig_error_evolution_gradient_recovery_noise_free_interior}}
\subfigure[Boundary recovery]{\includegraphics[height=0.45\textwidth]{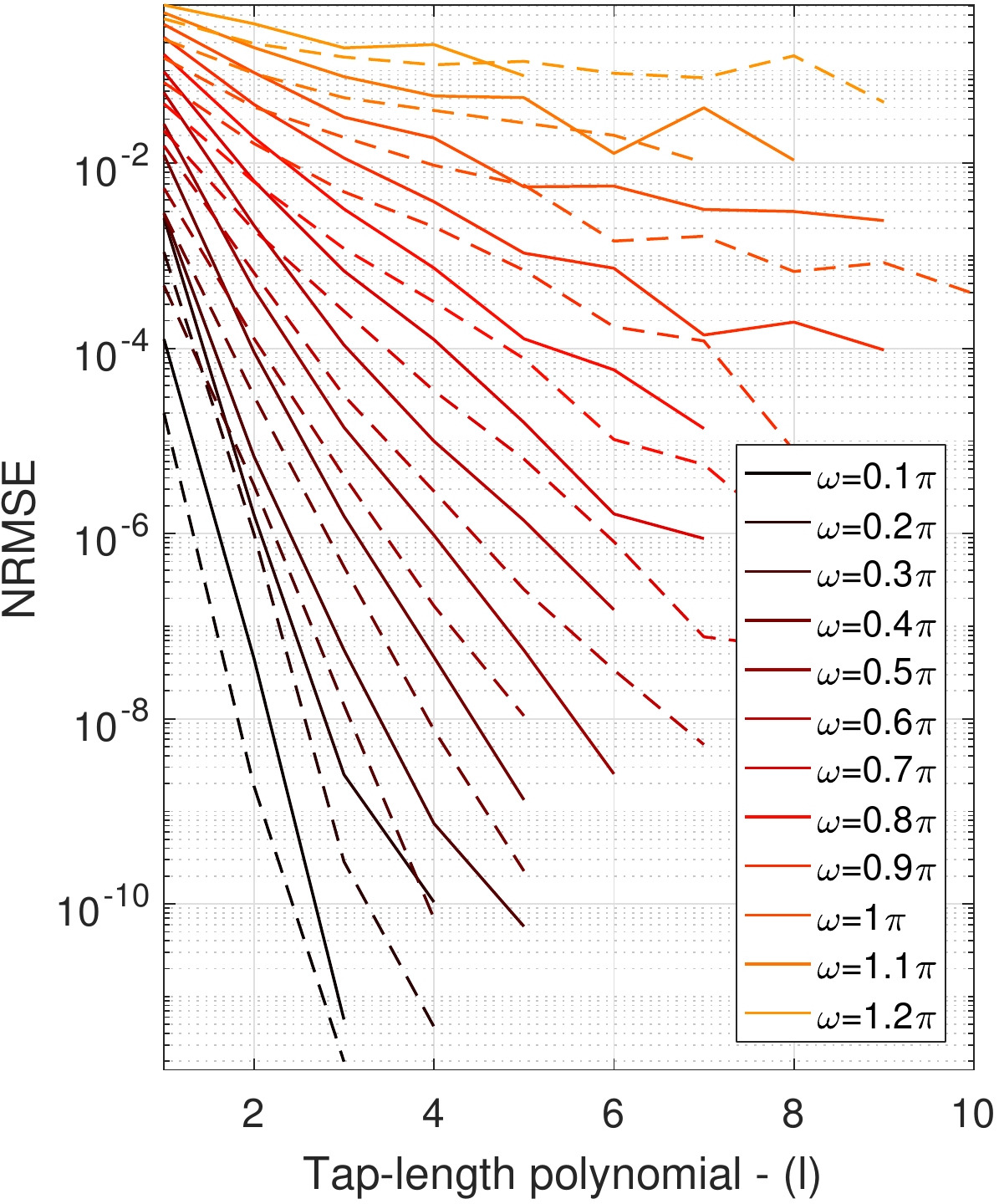}\label{fig_error_evolution_gradient_recovery_noise_free_boundary}}
}
\caption{Reconstruction error obtained from solving Sylvester equation associated by case--I (solid lines) and case--II (dashed lines) derivative matrices. Different tap-length polynomials are used in corporation of derivative matrices.}
\label{fig_error_evolution_gradient_recovery_noise_free}
\end{figure}
The applied image here is considered to be the same zone plate 2D sinusoid studied in \ref{sec_fullband_differentiation}. The gradient measurements $\overline{\partial_x\phi}$ and $\overline{\partial_y\phi}$ are obtained by sampling the gradient function with different harmonic parameters $\omega_x=\omega_y\in\{0.2\pi,0.4\pi,\ldots,2\pi\}$. The sampled gradients are then fed to Sylvester equation \ref{Poisson_eq2} to recover the underlying surface image $\phi$. Both case--I (staggered) and case--II (centralized) of fullband first order derivative matrices, defined in \ref{sec_stability_eigenvlaues}, are considered to be the candid operators of the Sylvester equation. The tap-length polynomial of the matrices are selected from a lowest to a highest accuracy $l\in\{1,\ldots,5\}$. It worth noting that the analogous design of $l=1$ of case--I has been extensively studied in \cite{agrawal2006range, durou2007integration}. Moreover, similar design of case--II of polynomial order $l\geq 1$ is also studied in \cite{o2008algebraic, harker2015regularized}. The terms of evaluation is performed here by measuring the normalized root mean square error $\text{NRMSE}=\|\phi - \phi_A\|_2/\|\phi_A\|_2$ between the reconstructed surface $\phi$ and its analytical representation (ground truth) $\phi_A$. The results of reconstructions are demonstrated in \ref{fig_error_evolution_gradient_recovery_noise_free}. To better interpret the recovery results, we have separated the recovery error of interior and boundary approximation. Note that we have omitted the grid points for deviated performances i.e. the NRMSE after certain polynomial size start to raise for both cases, therefore we have ignored them to avoid visual miss interpretation. The overall performance indicates that reconstruction error monotonically reduces by increasing the tap-length polynomial $l$. In particular, Case-I mainly out performs Case-II inside interior domain for higher frequency harmonics using wide range of tap length polynomials. However, this range is limited for low frequency zone plate recovery where Case-II outperforms Case-I using higher tap lengths. The root cause of this issue backs to the formation of the staggered derivative matrices as opposed to the centralized case. The last row of the derivative matrix in staggered form, in fact, extrapolates the derivative value outside the range of tap nodes about half-grid shifted to right. See \ref{fig_right_bc_model} on the right BC construction for clarification. This particular row in derivative matrix contains obliterated coefficients and, as a response, it perturbs the solution of the Sylvester Equation in \ref{Poisson_eq2}. This is further noticeable in recovering the boundary domain shown in \ref{fig_error_evolution_gradient_recovery_noise_free_boundary}. Nevertheless, the energy of such perturbation is much lower than the effect of non-conservative gradient sampling due to noise contamination. This is confirmed in the next experiment conducted in \ref{sec_noisy_gradient_surface_recovery}.
\begin{figure}[htp]
\centerline{
\subfigure[Interior ($l=1$)]{\includegraphics[height=0.425\textwidth]{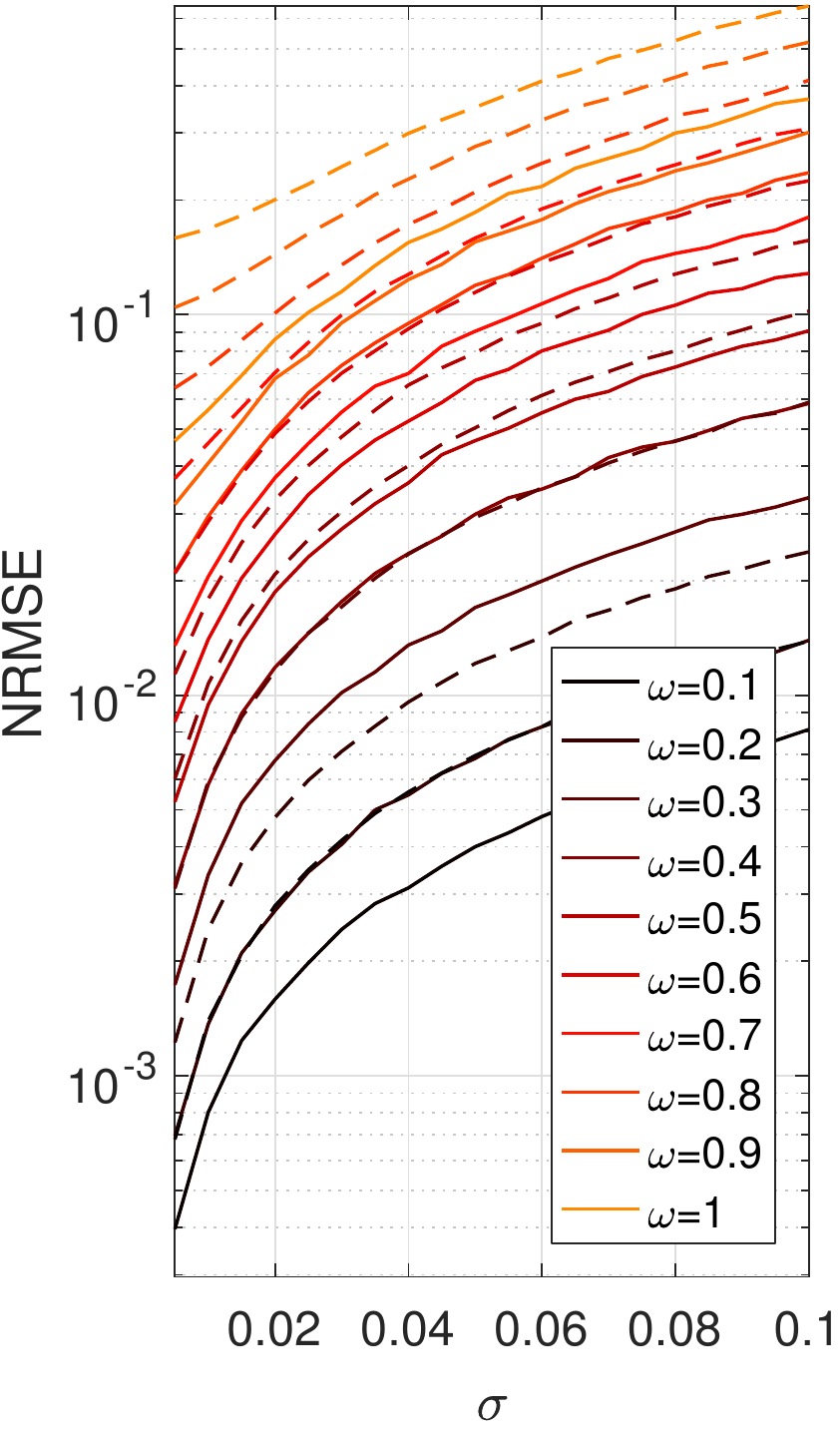}\label{fig_error_evolution_gradient_recovery_noise_sensitivity_interior_l1}}\hspace{-.05in}
\subfigure[Interior ($l=5$)]{\includegraphics[height=0.425\textwidth]{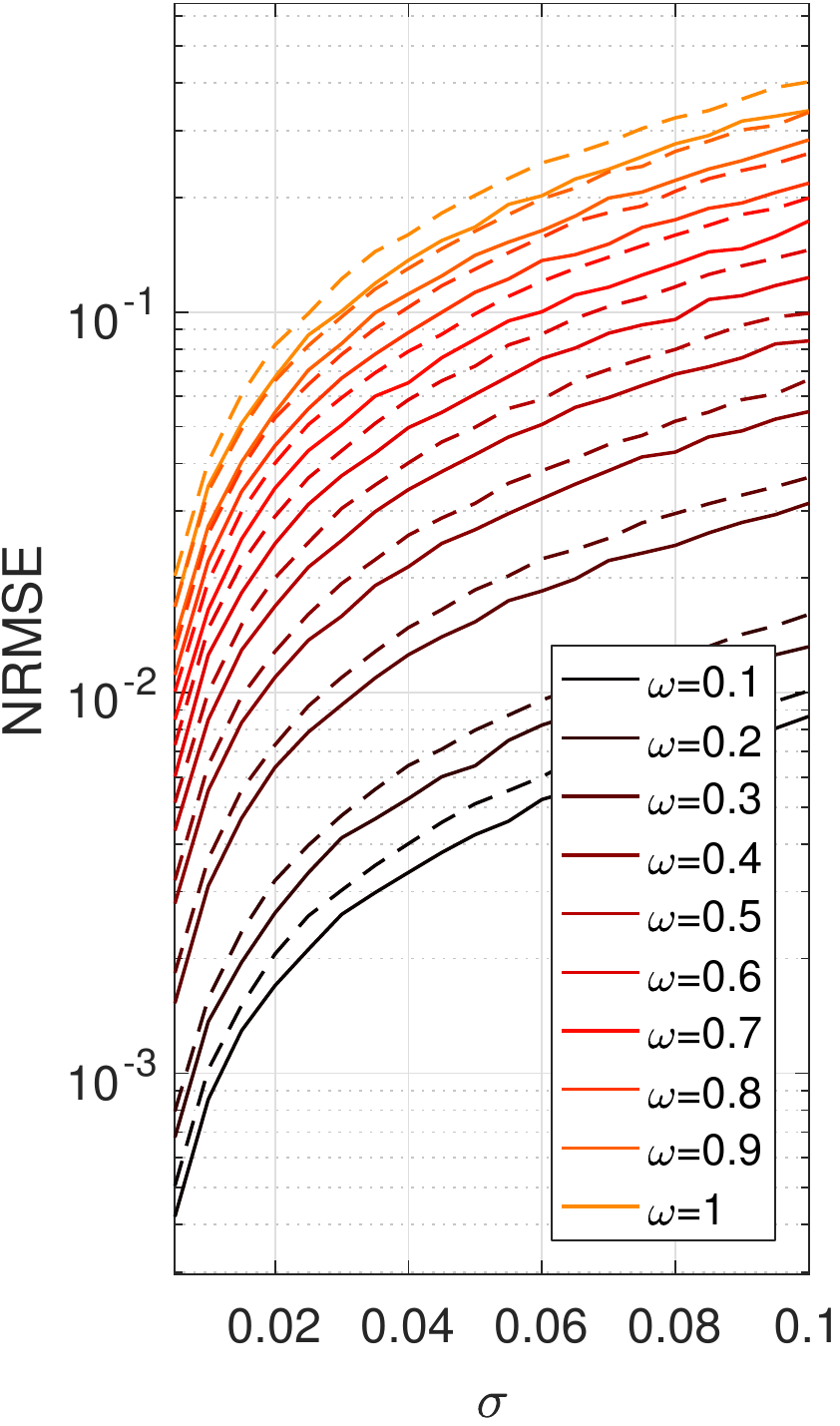}\label{fig_error_evolution_gradient_recovery_noise_sensitivity_interior_l5}}\hspace{-.05in}
\subfigure[Boundary ($l=1$)]{\includegraphics[height=0.425\textwidth]{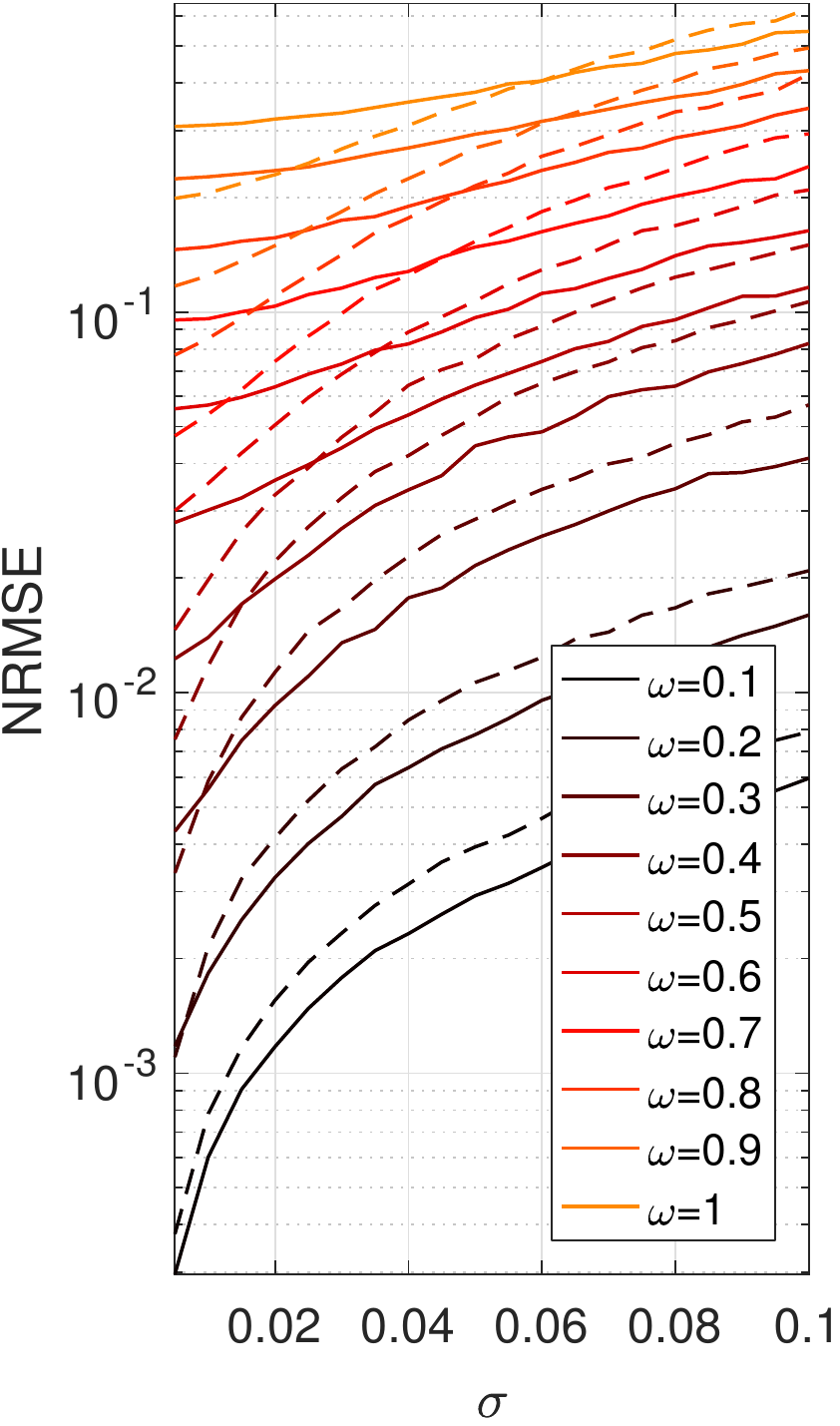}\label{fig_error_evolution_gradient_recovery_noise_sensitivity_boundary_l1}}\hspace{-.05in}
\subfigure[Boundary ($l=5$)]{\includegraphics[height=0.425\textwidth]{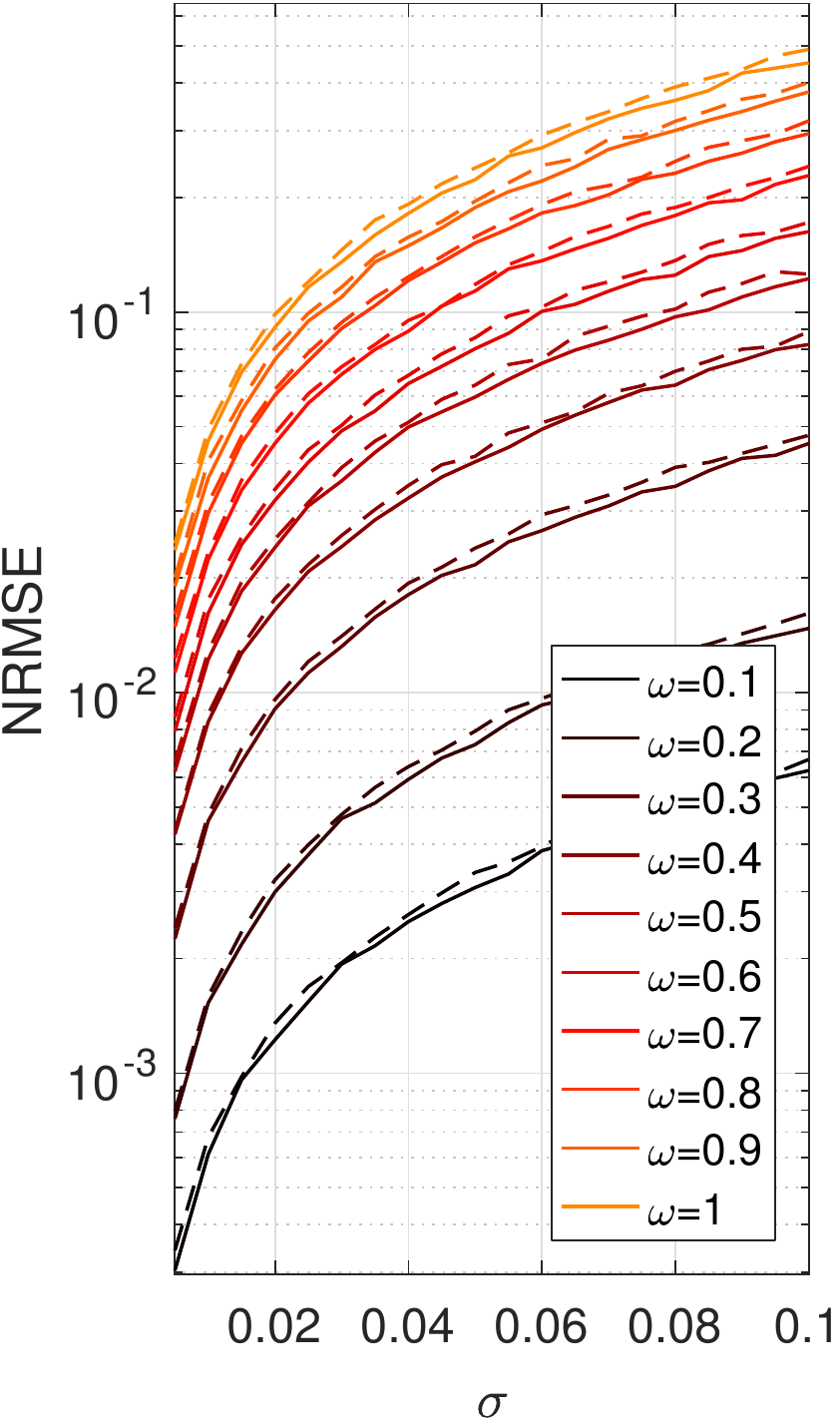}\label{fig_error_evolution_gradient_recovery_noise_sensitivity_boundary_l5}}
}
\caption{Reconstruction error obtained from solving noisy Sylvester equation associated by case--I (solid lines) and case--II (dashed lines) derivative matrices.}
\label{fig_error_evolution_gradient_recovery_noise_sensitivity}
\end{figure}
\subsection{Impact of noisy gradient-measurement}\label{sec_noisy_gradient_surface_recovery}
The results obtained in \ref{sec_Sylvester_noise_free} simulates the Sylvester reconstruction under the noise-free case. However, in real application samples are provided under noisy measurements. In this section we study the impact of noise contamination in recovering the same analytical 2D image used in previous section with the same selected parameters. We consider independent and identically distributed (i.i.d) Gaussian noise for the gradient field contamination since the maximum likelihood reconstruction fits the solution to the Sylvester equation \ref{Poisson_eq2}. The real noise application in many natural imaging problems is also modeled by i.i.d Gaussian random variables \cite{harker2015regularized}. The assumption of noisy gradients implies a non-conservative field meaning the curl of gradient is not zero. This creates residual perturbation in the recovery stage according to the noise energy. The Monte-Carlo simulation is done here for $100$ average simulations on generating random perturbations with different standard deviation magnitudes $\sigma\in\{0.01,0.02,\ldots,0.1\}$. These perturbations are added to the ground-truth gradients obtained from the analytical function $\overline{\partial_y\phi}_{\eta} \leftarrow \overline{\partial_y\phi}+\eta$ where $\eta\in\mathcal{N}(0,\sigma)$. \ref{fig_error_evolution_gradient_recovery_noise_sensitivity} presents the monte-carlo simulation on 2D zone plate recovery in terms of different noise magnitudes. Similar to previous simulations the error recovery is separated to interior and boundary recovery to identify the strength of the associated derivative schemes on different canvas area. The interior is the main canvas after the reconstruction which we expect the recovery should be minimized for presentation. The boundary is also important to keep the full-size image recovery if it is needed. Though, one may exclude these area after reconstruction if not required. The rank observation of staggered scheme (solid-lines) within the interior domains is highly evident across varying all three parameters of noise magnitudes, sinusoid frequencies, and tap-length polynomials.

\begin{table}[htp]
\renewcommand{\arraystretch}{1.3}
\caption{Recovery of zone plate from noisy gradients ($\sigma=0.01$) using different derivative matrices}
\label{fig_zone_plate_recovery_sigma_2}
\centering
\scriptsize
\begin{tabular}{|l|c|c|c|c|}
\cline{2-5}
\multicolumn{1}{c}{} & \multicolumn{2}{|c}{case--II (centralized scheme)} & \multicolumn{2}{|c|}{case--I (staggered scheme)} \\ \cline{2-5}
\multicolumn{1}{c|}{} & $l=1$ & $l=5$ & $l=1$ & $l=5$ \\ \cline{1-5}
{\hspace{-.05in}\begin{sideways} \hspace{.45in} $\tilde{f}$ \end{sideways}\hspace{-.05in}}   & 
{\includegraphics[height=0.18\textwidth]{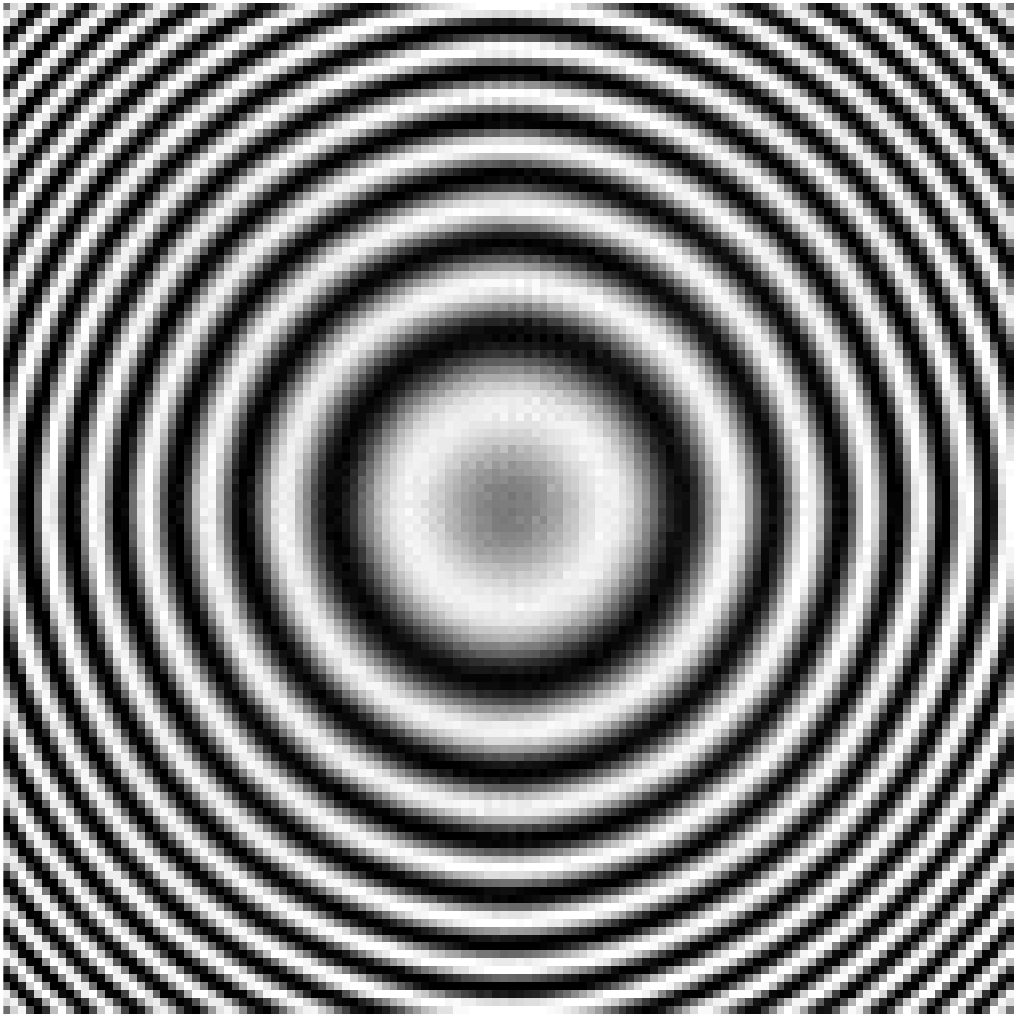}} &
{\includegraphics[height=0.18\textwidth]{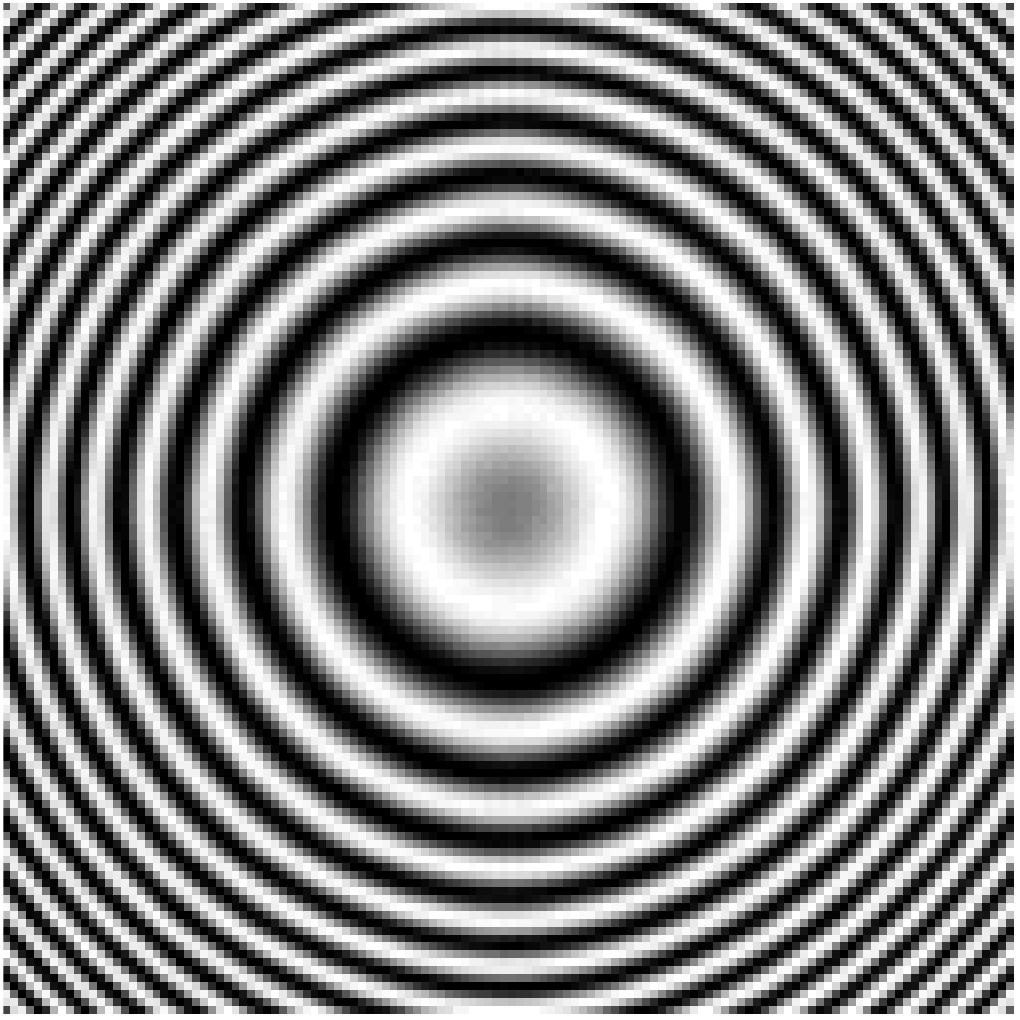}} &
{\includegraphics[height=0.18\textwidth]{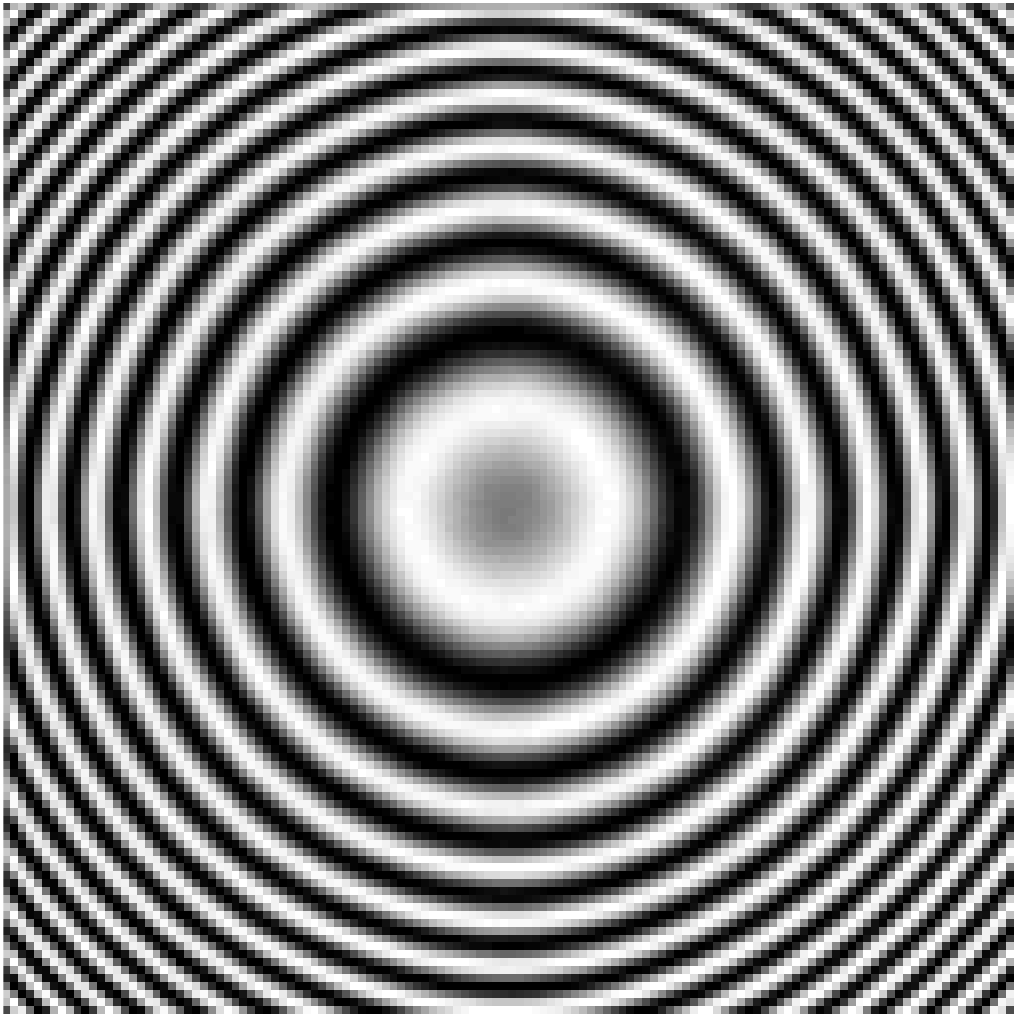}} &
{\includegraphics[height=0.18\textwidth]{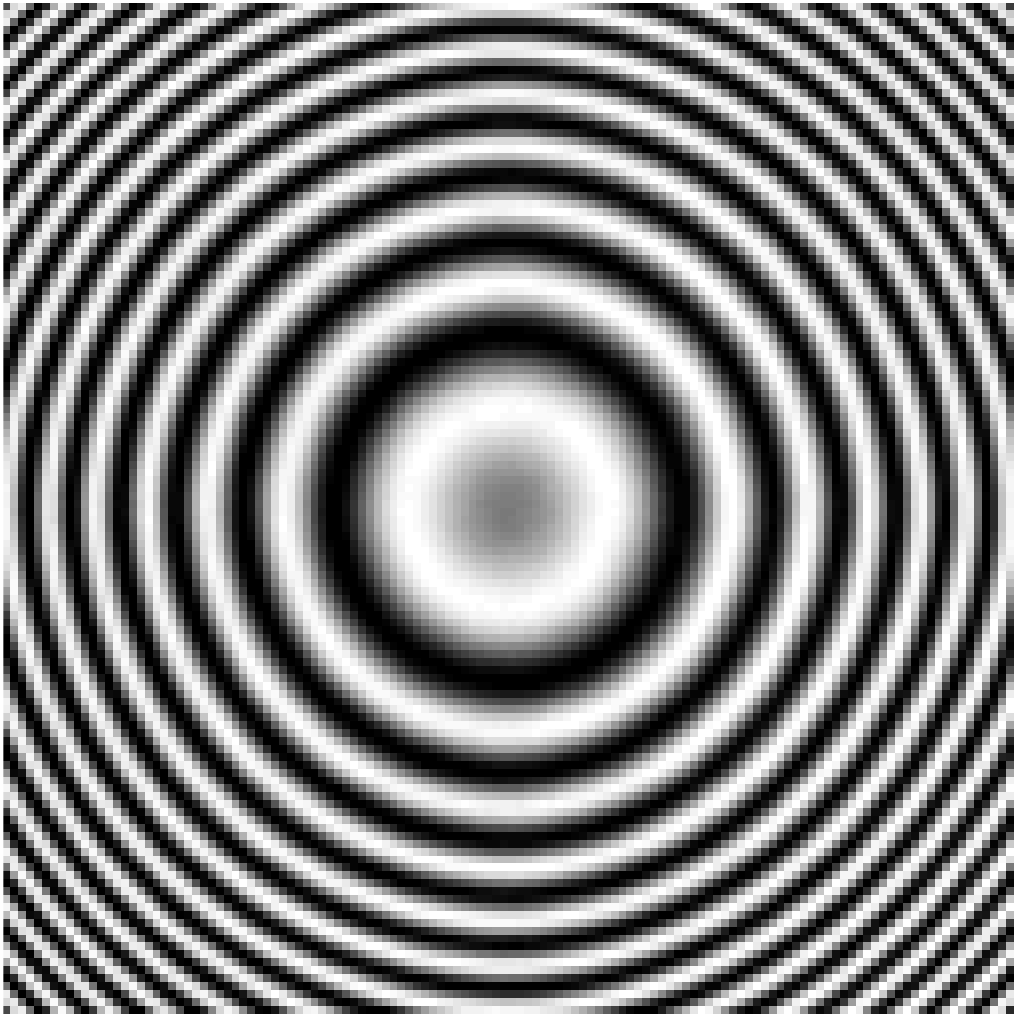}} \\ \cline{1-5}
{\hspace{-.05in}\begin{sideways} \hspace{.35in} $|\tilde{f}-f|$ \end{sideways}\hspace{-.05in}}   & 
{\includegraphics[height=0.18\textwidth]{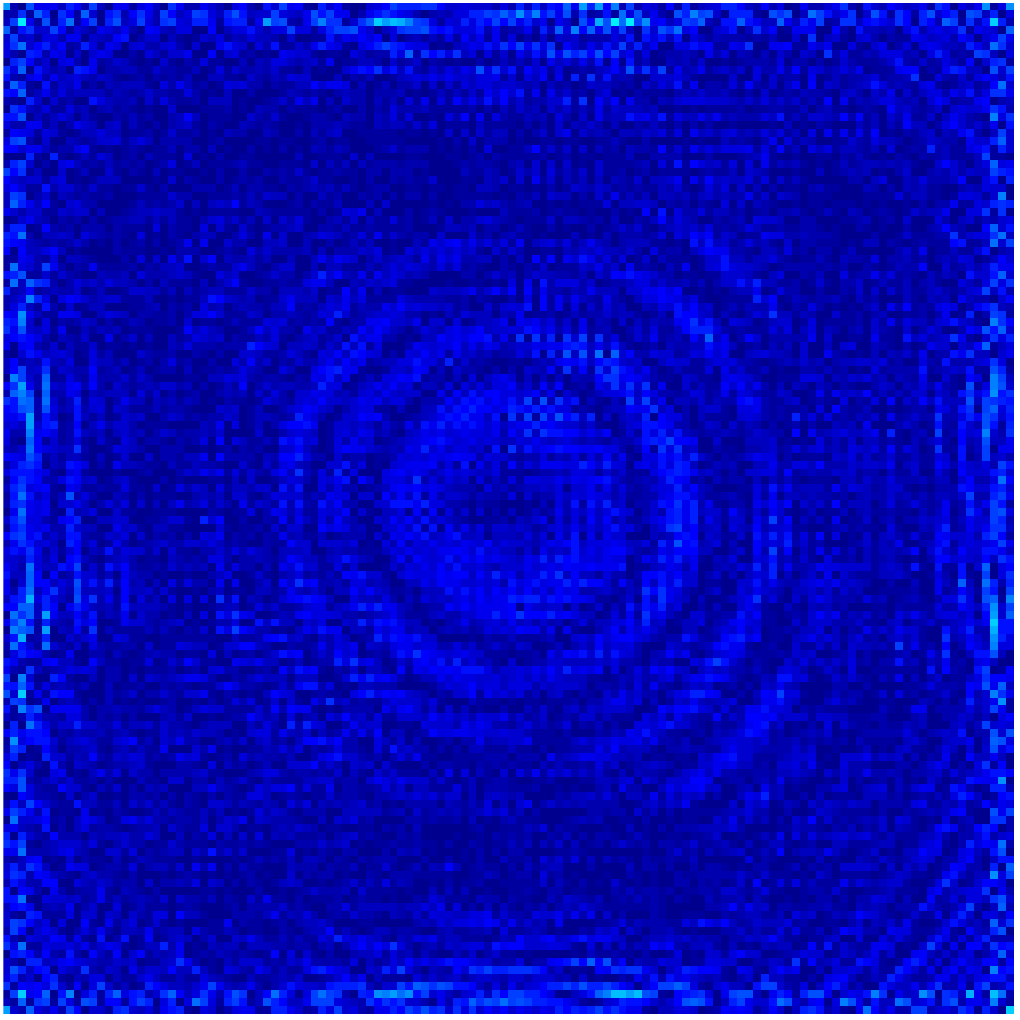}} &
{\includegraphics[height=0.18\textwidth]{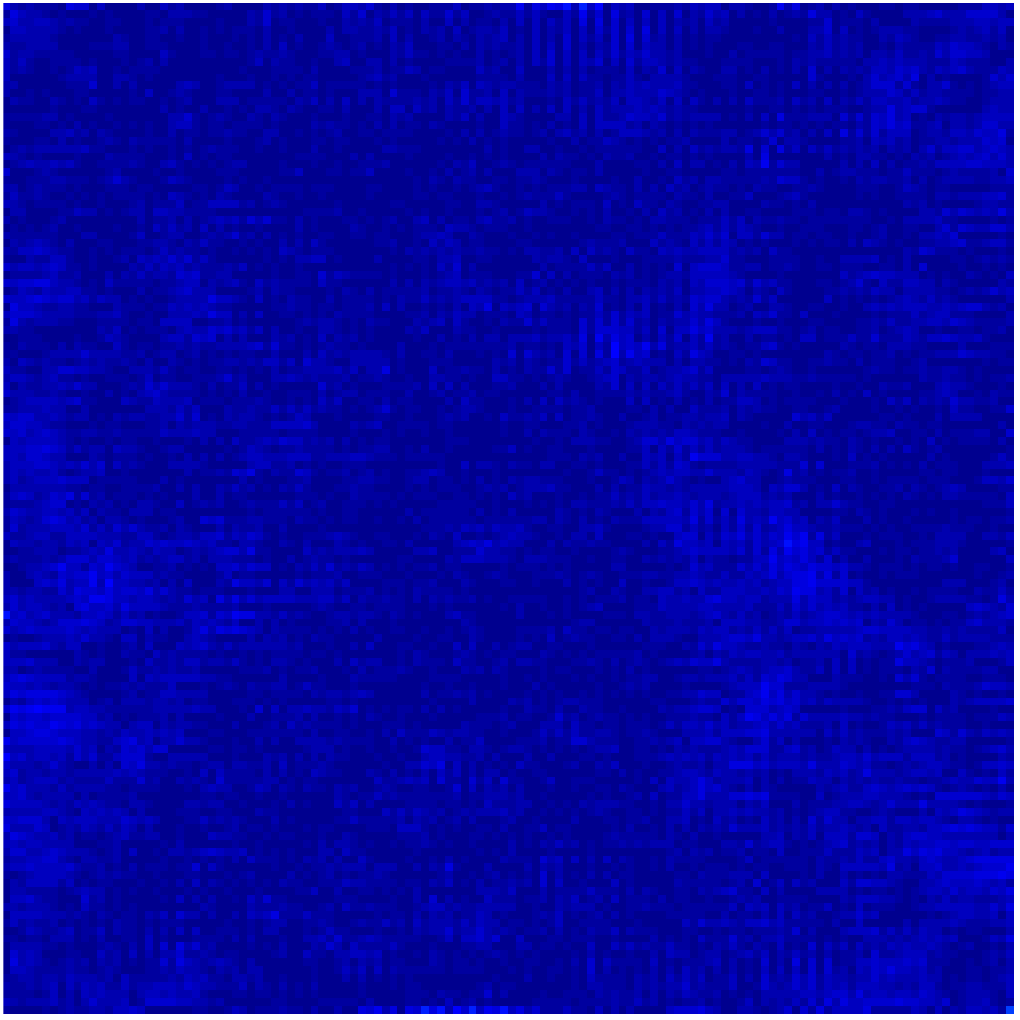}} &
{\includegraphics[height=0.18\textwidth]{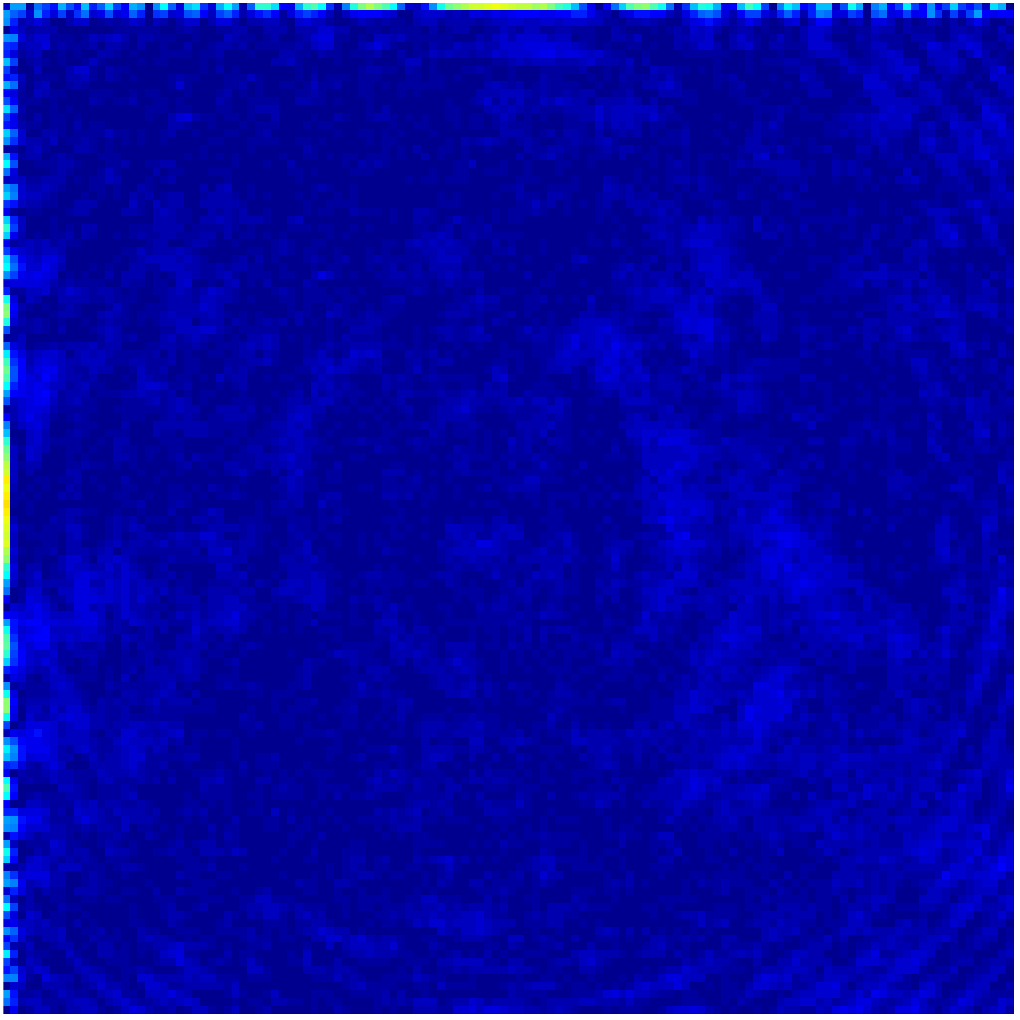}} &
{\includegraphics[height=0.18\textwidth]{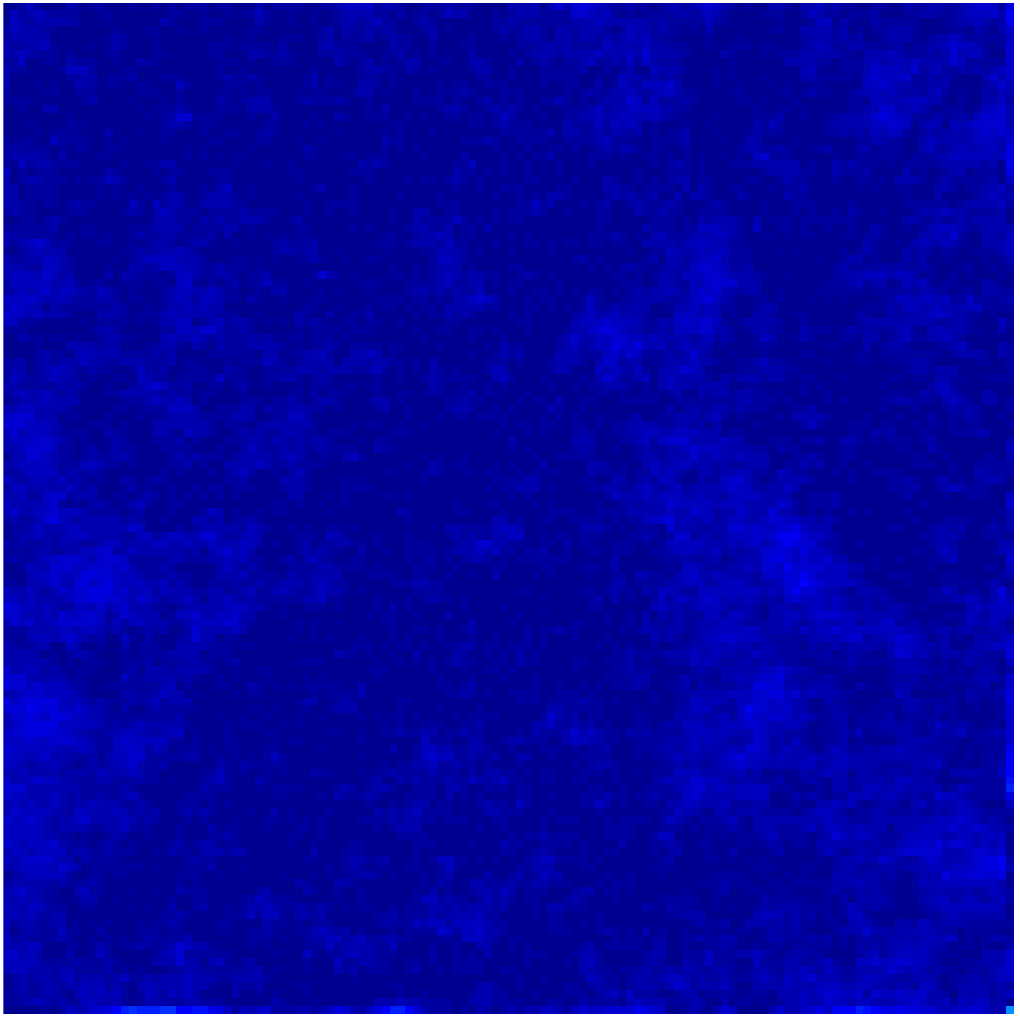}} \\ \cline{1-5}
\end{tabular}
\end{table}
With regards to the recovery performance at the boundary area, embedded by the lowest accuracy $l=1$, the centralized scheme does a better job compared to staggered scheme particularly on higher frequencies which matches with the reported results in \cite{harker2015regularized}. However, once the polynomial accuracy is increased to $l=5$, the staggered formulation overtakes the rank position across all the varying simulation parameters. This simply demonstrates the strength of case--I derivative matrix on recovering both interior and boundary images by means of high polynomial accuracies. A demo example is shown in \ref{fig_zone_plate_recovery_sigma_2} to display the residual errors obtained from both categories. We just select two tap-length polynomials $l\in{1,5}$ for visualization. The perturbed oscillations, known as the chequerboard effect \cite{gambaruto2015processing}, is evident on recovering the image associated by case-II (centralized scheme) design. Moreover, recovering the image by case--I using the lowest accuracy also contains perturbing residuals mostly around the boundaries. Whereas most of these residuals are removed using a high-accuracy staggered derivative matrix (a variant of case--I design).

\subsection{Real-world application in image stitching}\label{section_stitching}
Image stitching is the final application studied in this section to recover seamless portrait from multiple camera images captured in different illumination conditions. The term of \textit{stitching} can be found in different applications such as panoramic imaging \cite{szeliski2006image} or  image mosaicking \cite{levin2004seamless}. Provided by multiple image tiles, the task is to stitch these tiles in a seamless fashion without noticing any shade effect or transition artifacts from one border to another. The shade of grays in raw stitching can be caused by different sources such as multiple camera acquisition setup that have different parameter tuning, variation of illumination under different lighting conditions, etc. In particular here we offer a convenient stitching algorithm for preview camera imaging in digital pathology (DP) solutions known as the whole slide imaging (WSI) scanners \cite{LironPantonowitz}. DP scanners use high resolution imaging optics to acquire submicron resolution images of organ tissues on glass-slides. Prior to such scan, a low resolution preview image of the whole mount tissue slide ($\geq 50\text{mm} \times 75\text{mm}$) is required to guide the scanner to find regions-of-interest (ROI) including tissue, label, and barcode on each slide. The preview image is usually obtained by stitching together several tiles of multiple cameras within the imaging system. The reconstructed images contain artifacts caused by non-uniform illumination on the tile borders and aberrations due to the optical arrangement, which cause discontinuities in the image and degrade the image quality. This effect is shown in \ref{fig_raw_stitch_tissue} where three image tiles are stitched together to construct the whole field-of-view (FOV). The goal here is to correct the corresponding distortion/shading effects to result in a seamless high quality image.

\begin{figure}[htp]
\centerline{
\subfigure[Raw stitch]{\includegraphics[height=0.3\textwidth]{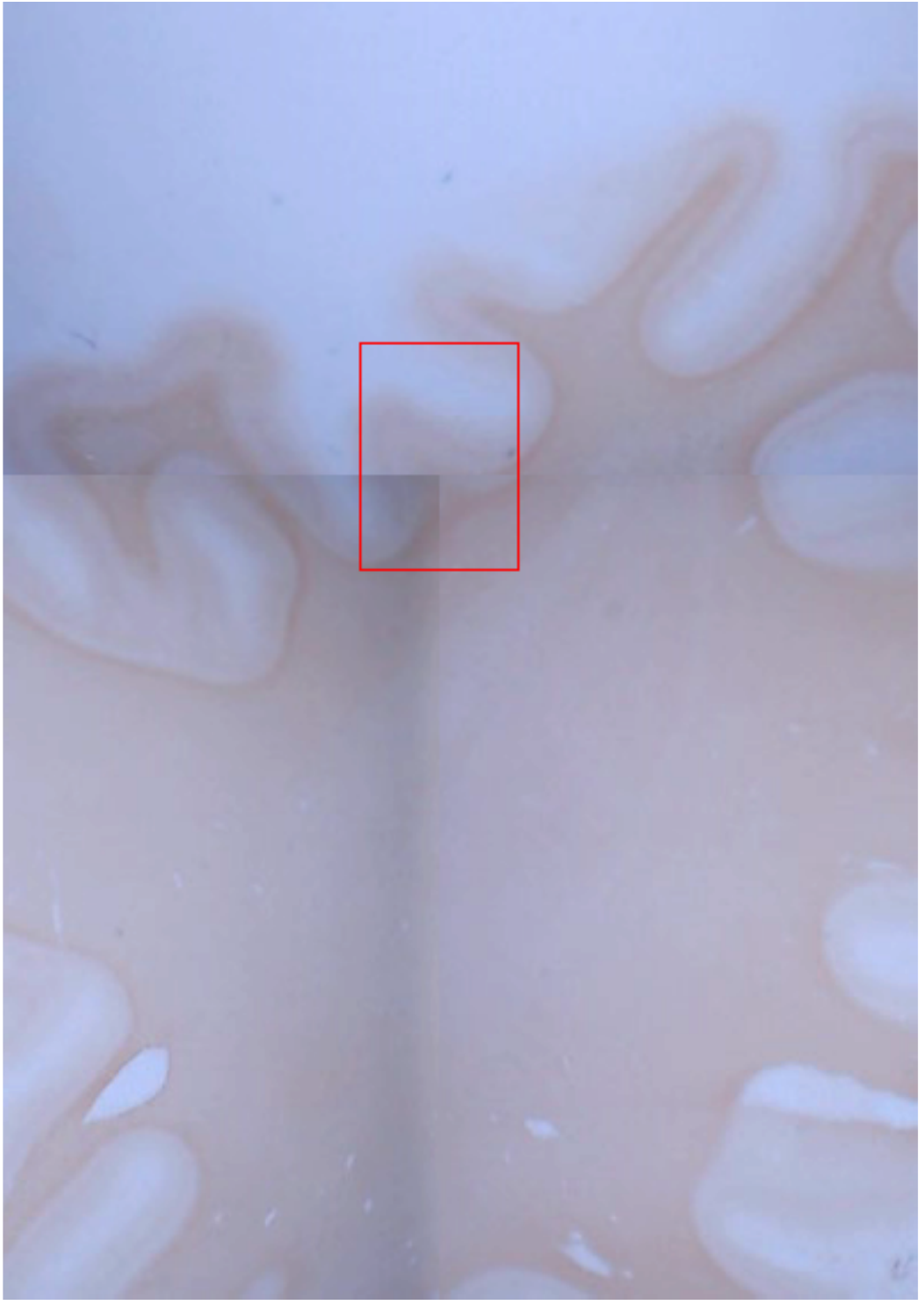}\llap{\includegraphics[width=0.1\textwidth]{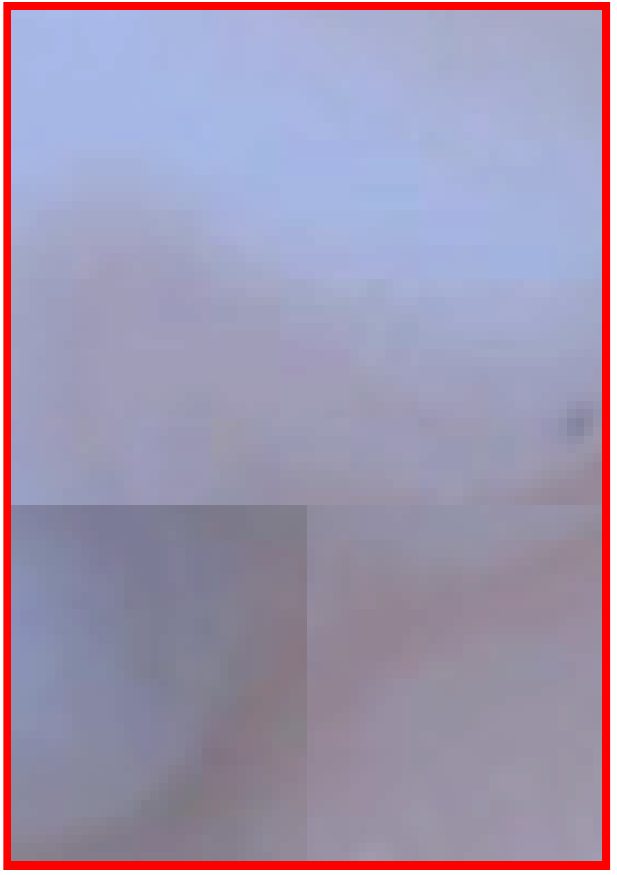}}\label{fig_raw_stitch_tissue}}
\subfigure[Gain]{\includegraphics[height=0.3\textwidth]{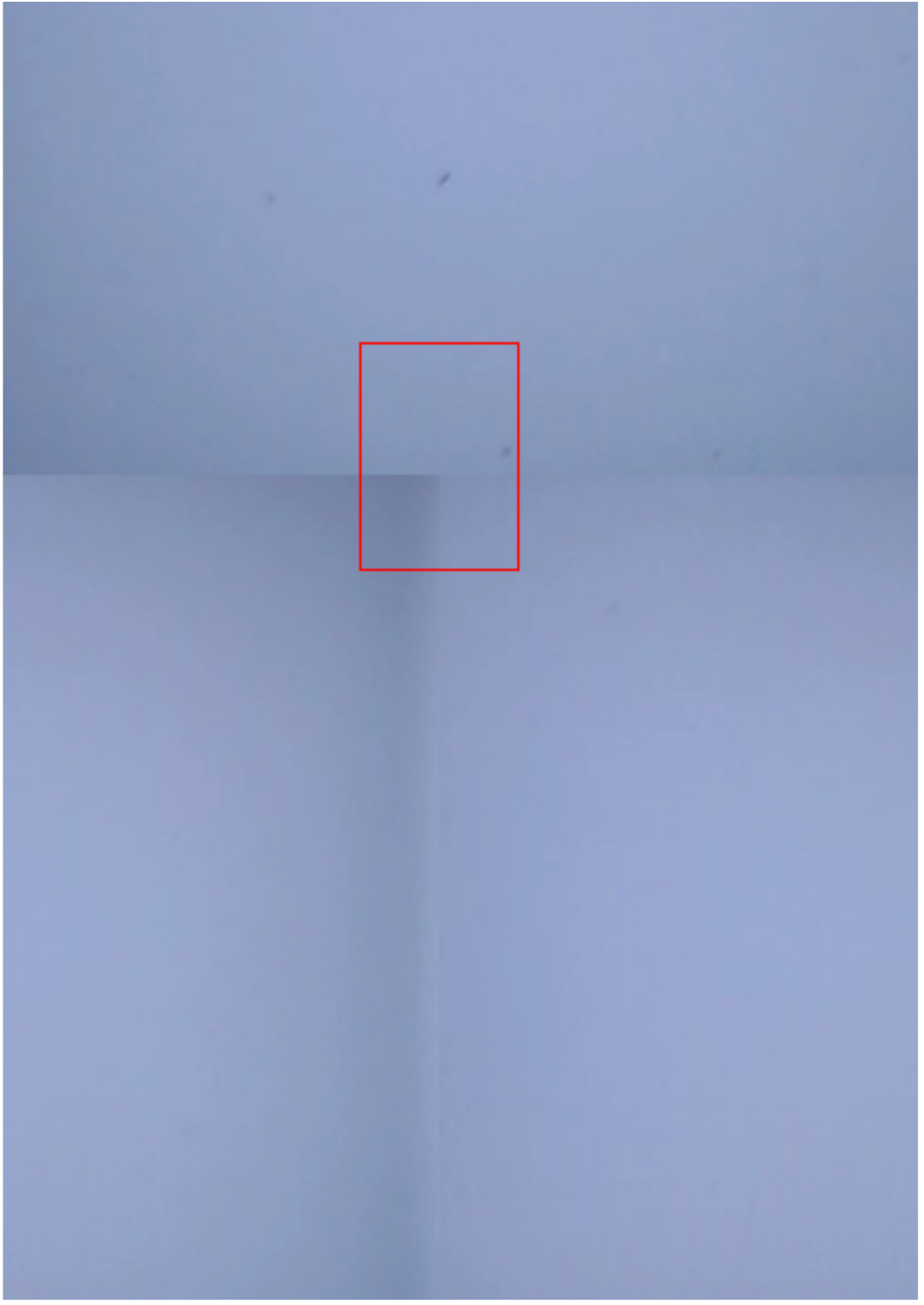}\llap{\includegraphics[width=0.1\textwidth]{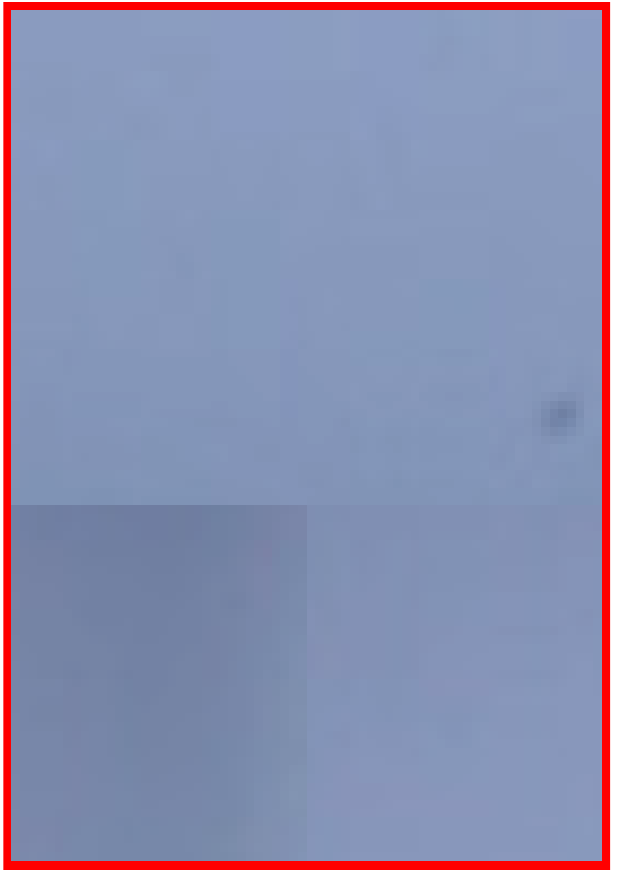}}\label{fig_ROI_gain}}
\subfigure[Gain enhance]{\includegraphics[height=0.3\textwidth]{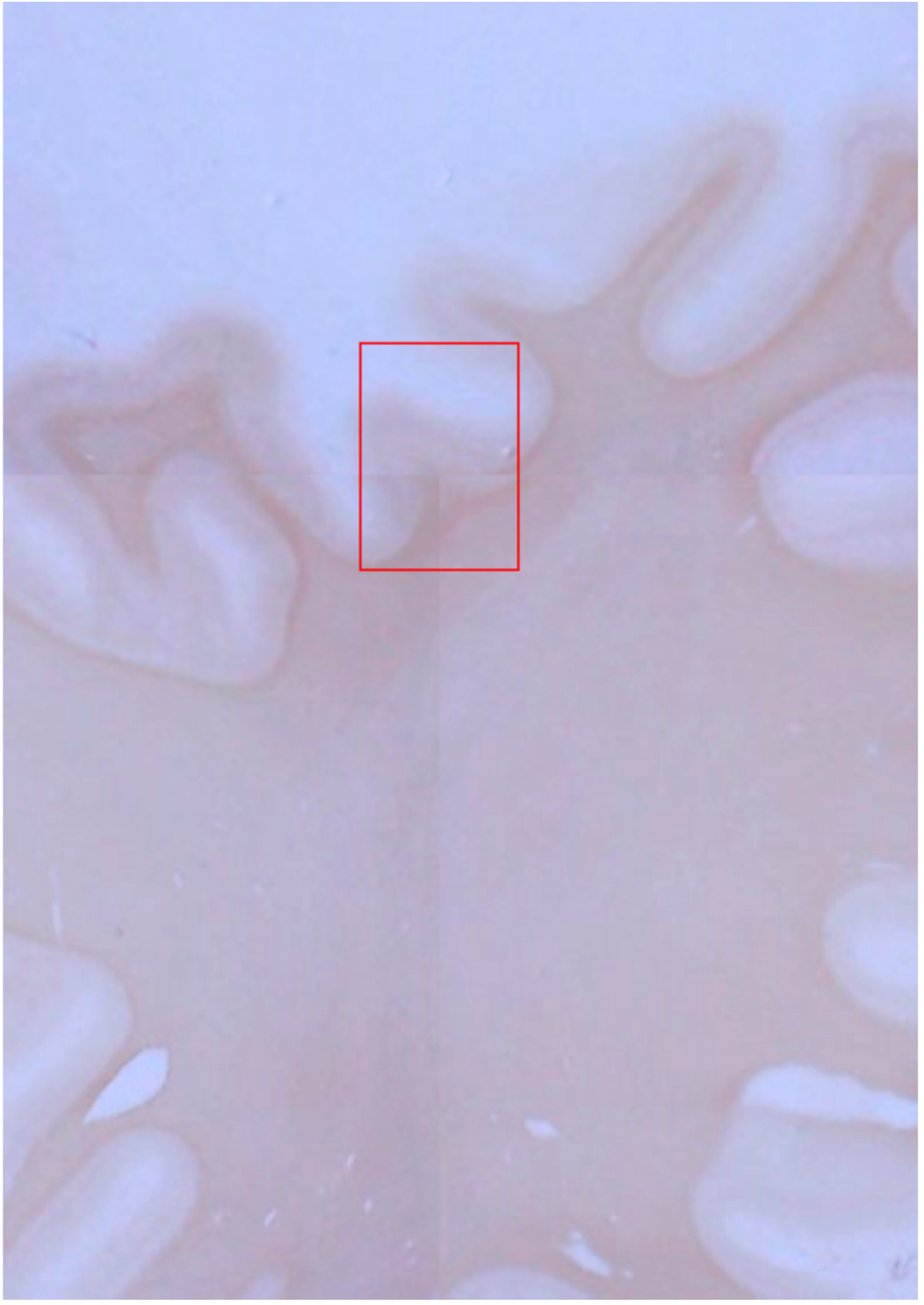}\llap{\includegraphics[width=0.1\textwidth]{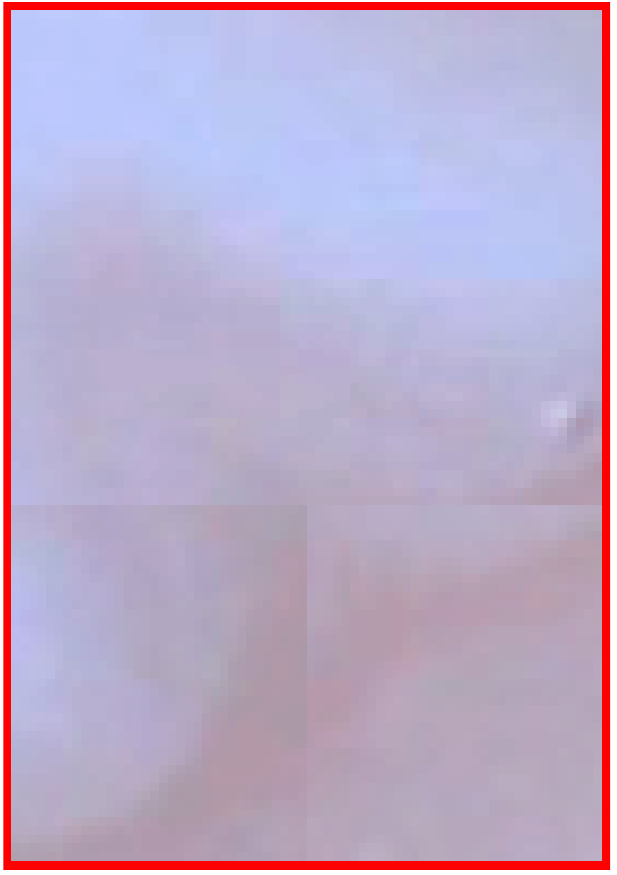}}\label{fig_raw_stitch_enhanced_tissue}}
}
\caption{Raw image stitching with shade/seam effect on the borders.}
\label{fig_stitch_tissue_problem}
\end{figure}

To formulate the problem, multiple image tiles $T_i$ are provided from different sections of each slide to cover the whole FOV. Here, the subscript $i$ indicates the section index from FOV. For instance, here three different tiles are considered to construct the whole image $I = \left[T_1;T_2, T_3\right]$. Note that the image tiles contain overlapping border area that are excluded for raw stitching. Two cameras are used to cover upper ($T_1$) and lower ($T_2$,$T_3$) tiles, respectively. Flat-field correction is performed on each tile to cancel the vignetting effect of both cameras by replacing every tile $T_i\leftarrow (T_i-C_D)\odot G$ where $C_D$ is the dark camera image and $G=1/(C_F-C_D)$ is the gain factor (\ref{fig_ROI_gain}) obtained from flat-filed camera image $C_F$, see \ref{fig_raw_stitch_enhanced_tissue} for correction. The flat-field image is collected from an empty slide with white background. The gain factor indicates the corresponding pixel magnifying ratio from dark to white background. Despite effective image quality improvement  by means of gain enhancement, still the tile borders contain seams which need to be canceled. In the next step the gradient of each tiles are stitched together to form the gradient of FOV $\nabla I = \left[\nabla T_{1};\nabla T_{2}, \nabla T_{3}\right]$. Note that the forward differentiation of the tiles can be associated with lowpass derivative kernels to decrease the noise effect introduced from the raw images. This is a useful step in stitching gradient tiles to mitigate the border miss-alignments and construct a coherent gradient field across the whole FOV. The overall rationality of approximating the gradients is the discrepant artifacts such as bias/smooth illumination will be mostly canceled in the gradient field while coherent transitions are maintained on the tile borders. The final step is applied to reconstruct the image $I$ from the stitched gradients $\nabla I$ by means of the Sylvester recovery introduced in \ref{sec_gradient_surface}. The overall pipeline of the reconstruction algorithm is demonstrated in \ref{fig_preview_stitching_diagram}. The seamless recovery is followed by stitching the input tiles in the gradient field and then reconstruct the whole image domain by means of Sylvester recovery.

\begin{figure}[htp]
\centerline{
\includegraphics[height=0.35\textwidth]{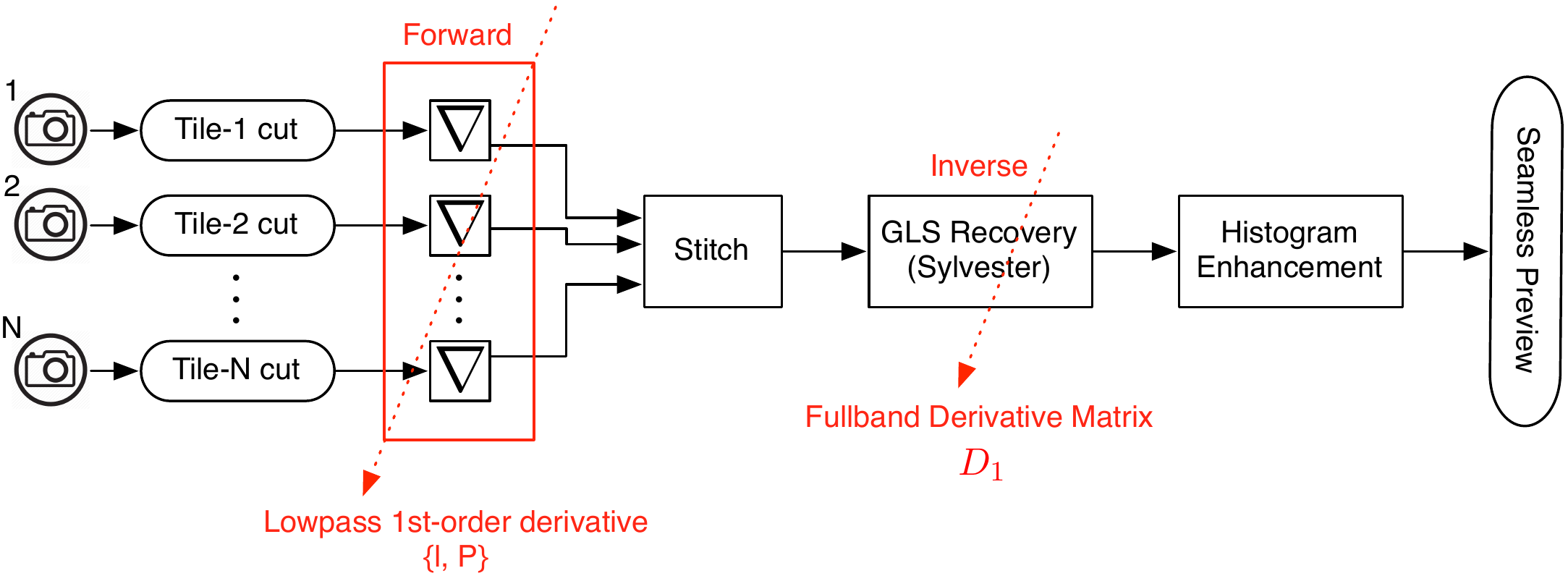}
}
\caption{Camera image stitching flow diagram to recover seamless preview. The gradient of every tile is calculated using a lowpass derivative kernel. The stitched gradient is then recovered using GLS method associated by a fullband derivative matrix.}
\label{fig_preview_stitching_diagram}
\end{figure}

The empirical analysis is done here using different configurations of derivative kernels for both forward and inverse gradient calculations. To obtain tile gradients associated with lowpass differentiation, we emphasis to use first order derivative filter with $l=5$ tap-length polynomial in staggered form. This filter offers a reasonable range of cutoff parameters $P=\{1,3,5,7,9\}$. Note that $P=1$ is the lowest cutoff level and $P=9$ is a fullband (no-cutoff) differentiator. Choosing a staggered scheme here for forward differentiation is to simply avoid deviation of filter response to mitigate the information loss at the frequency spectrum. The associated derivative matrix in Sylvester equation is also set by different parameters. Both staggered and centralized schemes are deployed by two tap-length polynomials $l=\{1,5\}$. 

\begin{table}[htp]
\renewcommand{\arraystretch}{1.3}
\caption{Preview image stitching by means of gradient surface recovery technique. Two derivative matrices of case--I and case--II are used to solve Sylvester equation.}
\label{table_preview_image_stitching}
\centering
\scriptsize
\begin{tabular}{|l|c|c|c|c|c|}
\cline{2-6}
\multicolumn{1}{c|}{} & $P=1$ & $P=3$ & $P=5$ & $P=7$ & $P=9$ \\ \cline{1-6}
{\hspace{-.05in}\begin{sideways} \hspace{.15in} centralized,~$l=1$ \end{sideways}\hspace{-.05in}}   & 
{\includegraphics[height=0.23\textwidth]{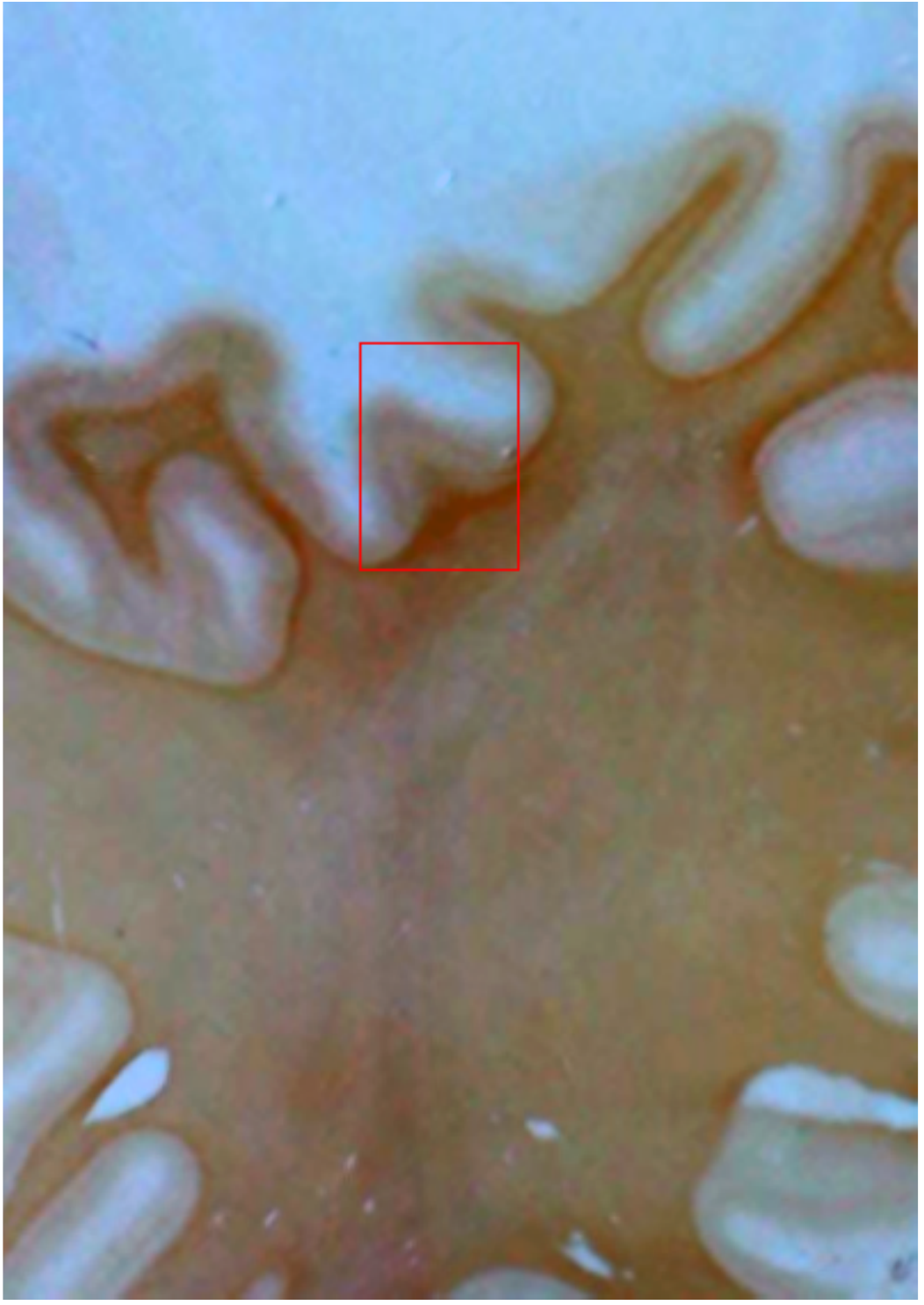}\llap{\includegraphics[width=0.08\textwidth]{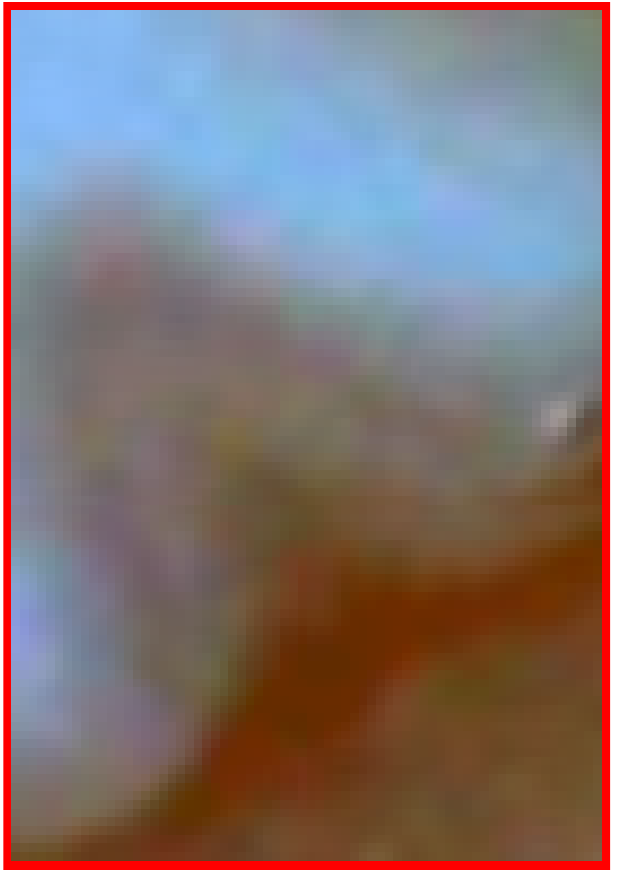}}} &
{\includegraphics[height=0.23\textwidth]{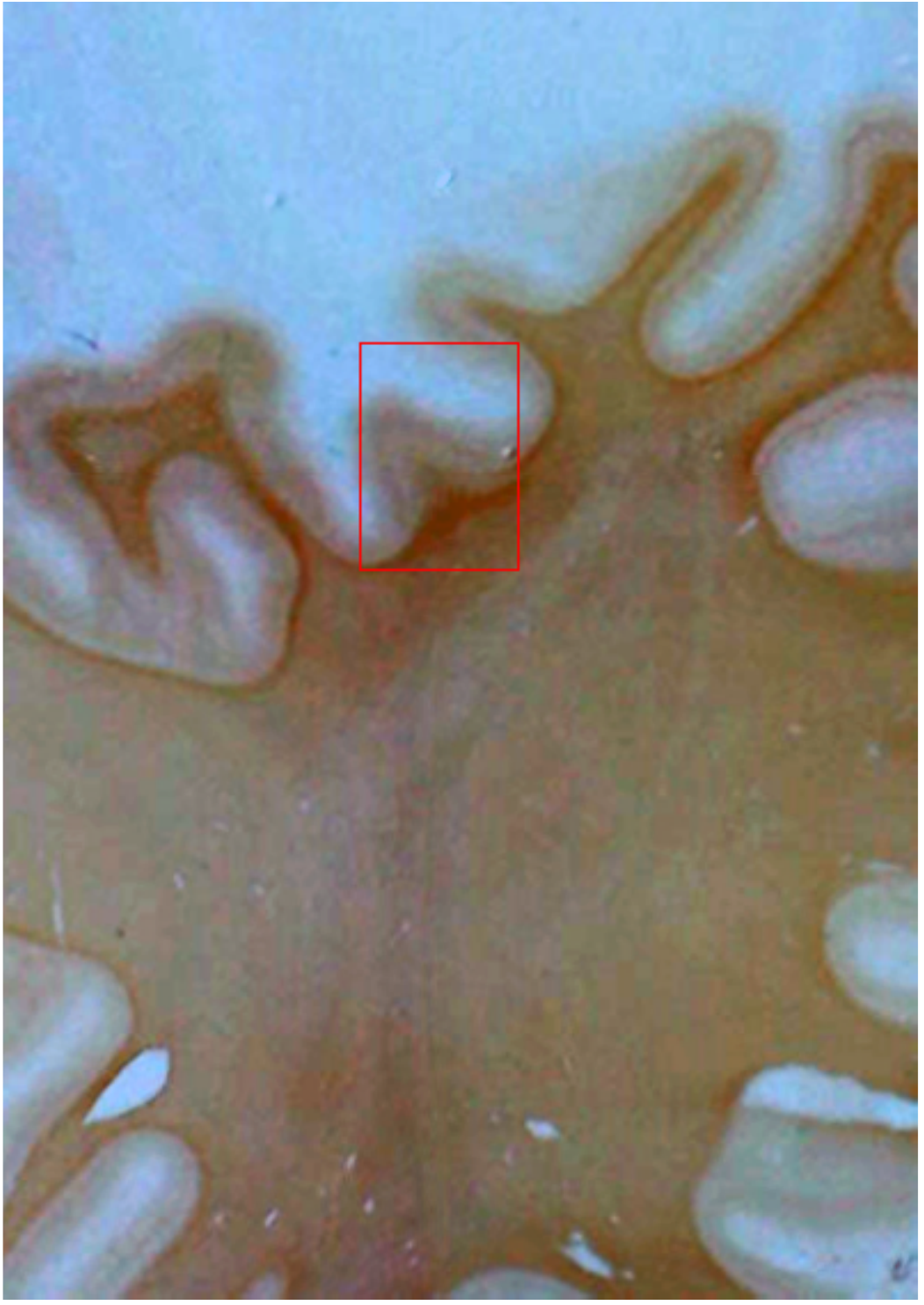}\llap{\includegraphics[width=0.08\textwidth]{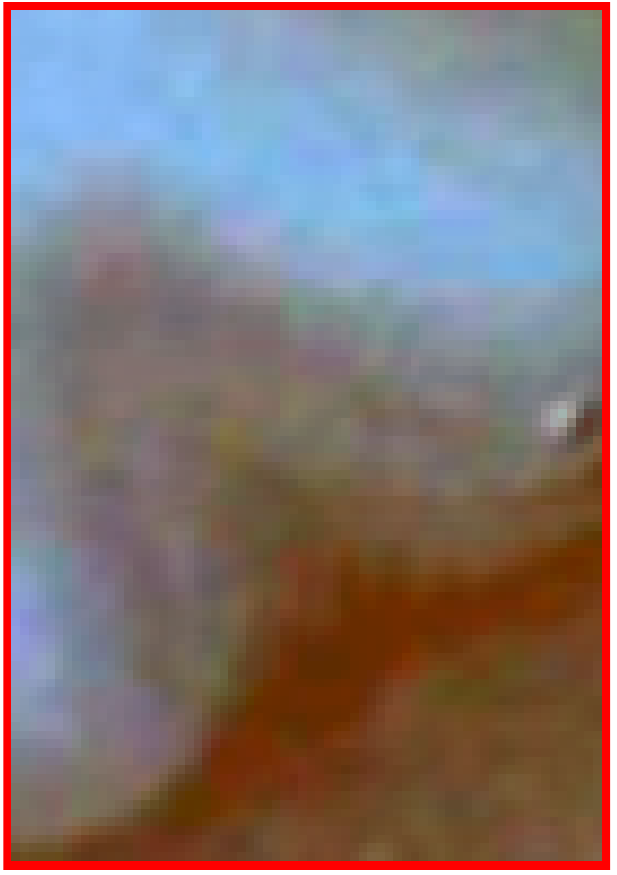}}} &
{\includegraphics[height=0.23\textwidth]{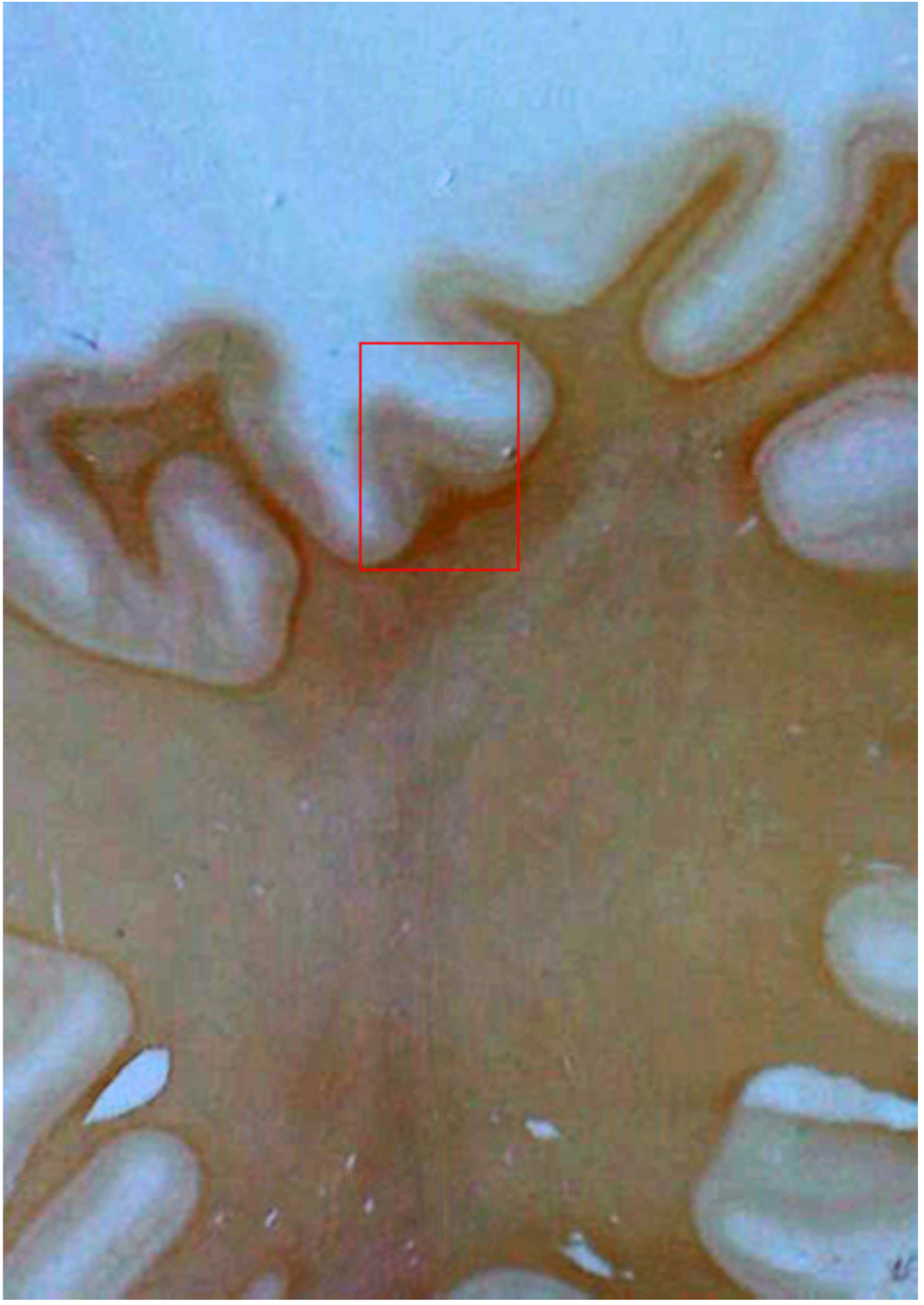}\llap{\includegraphics[width=0.08\textwidth]{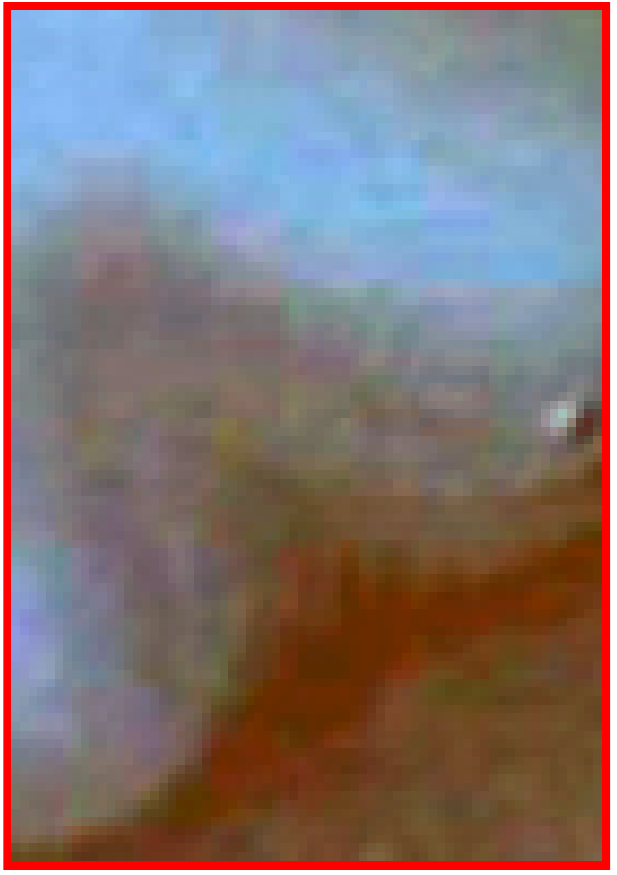}}} &
{\includegraphics[height=0.23\textwidth]{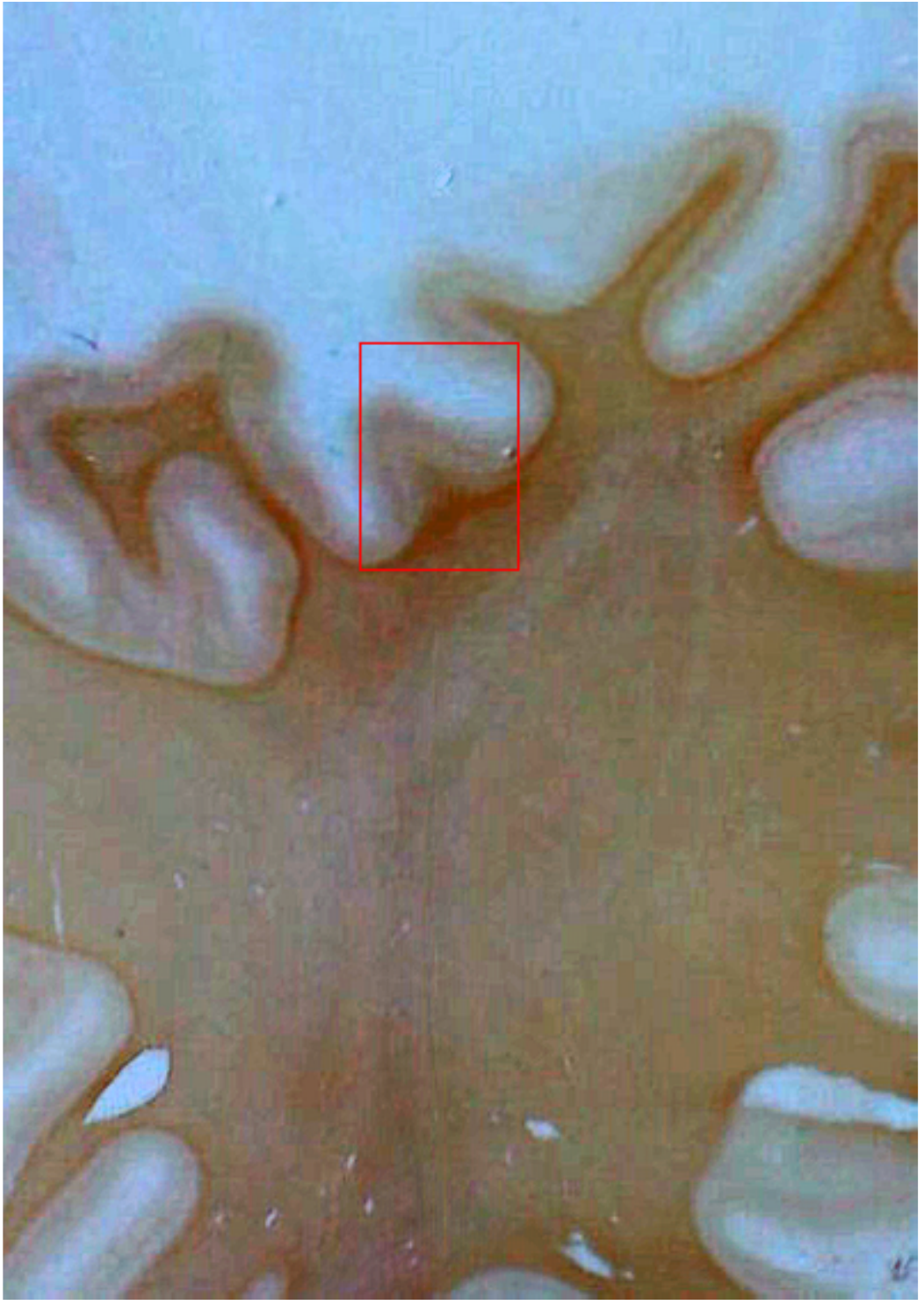}\llap{\includegraphics[width=0.08\textwidth]{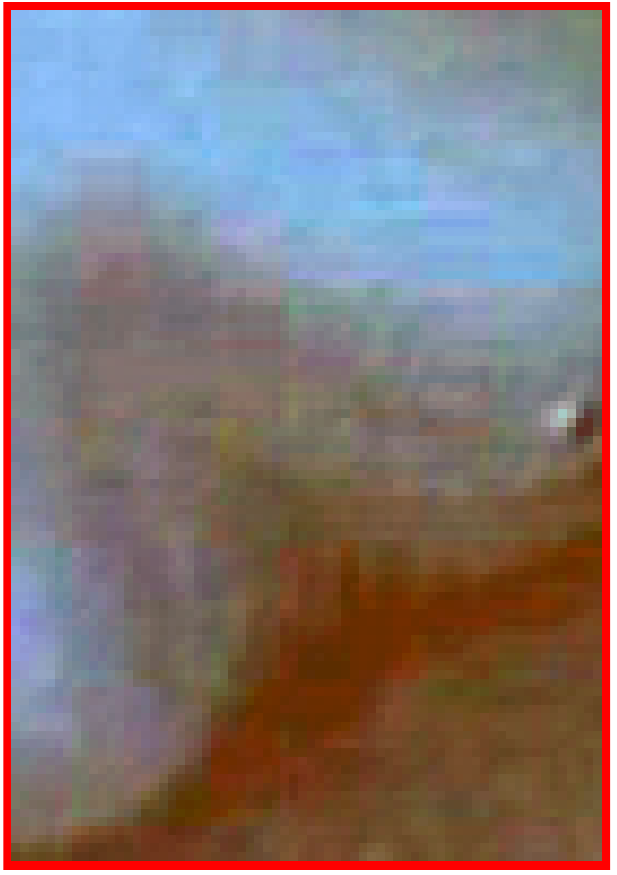}}} &
{\includegraphics[height=0.23\textwidth]{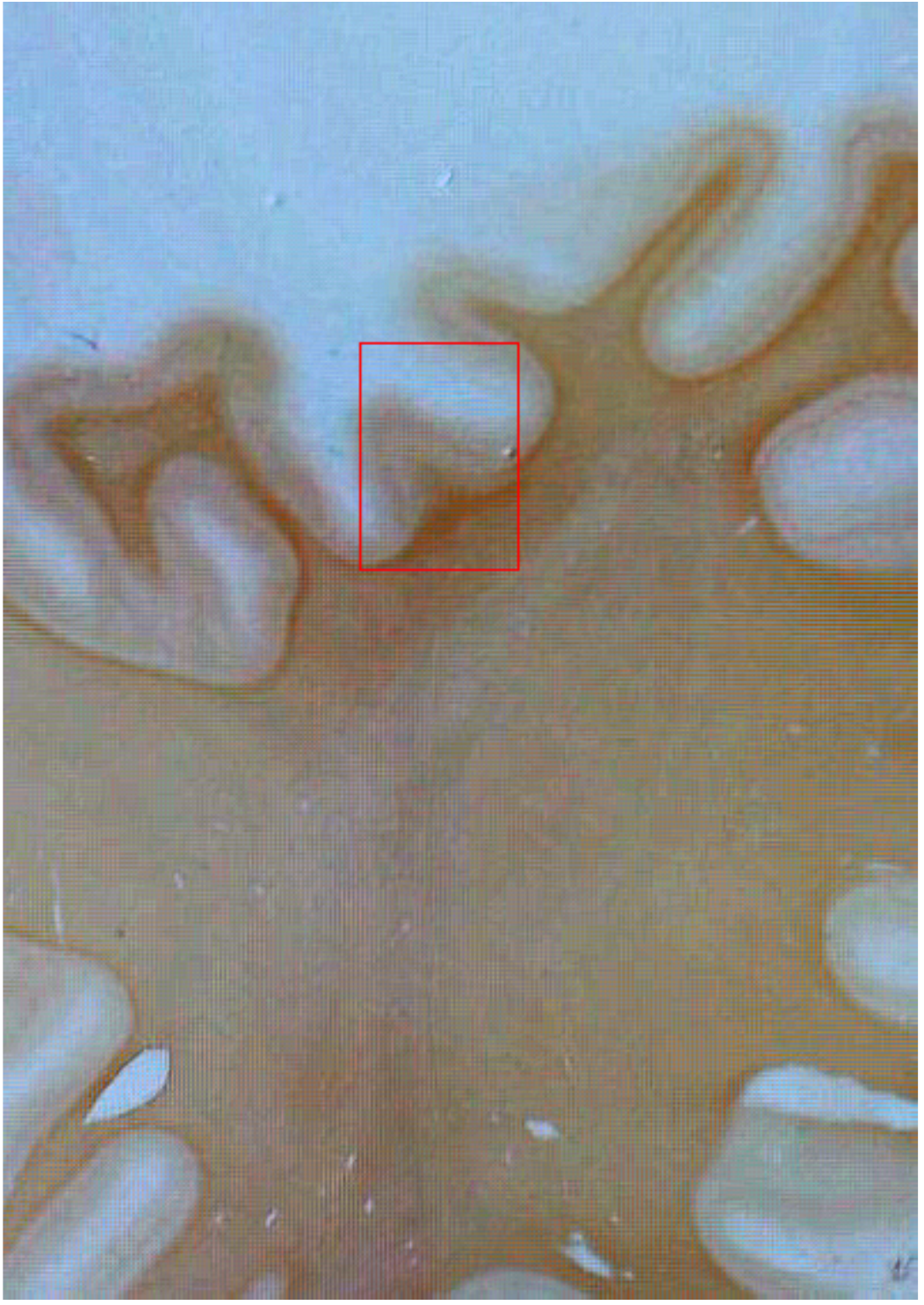}\llap{\includegraphics[width=0.08\textwidth]{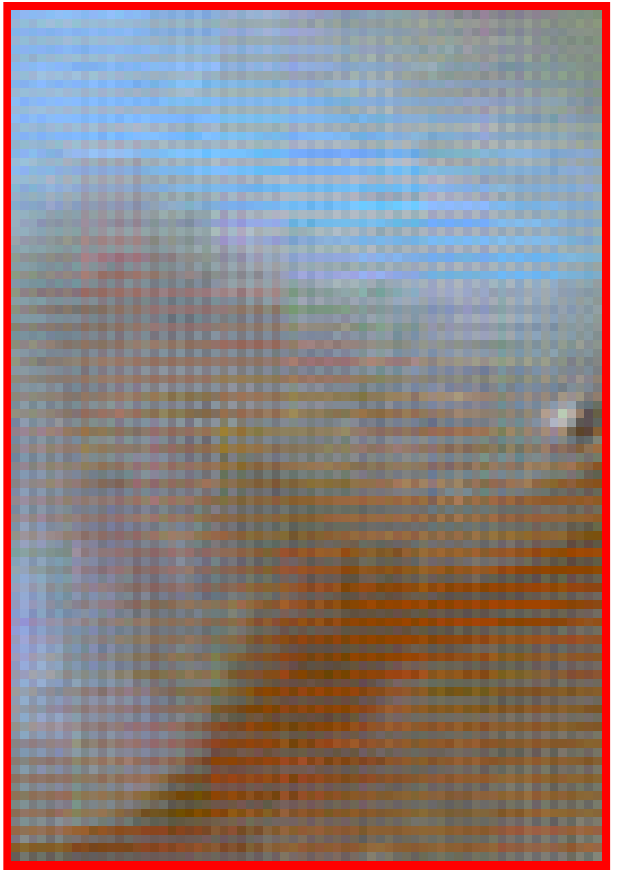}}} \\ \cline{1-6}
{\hspace{-.05in}\begin{sideways} \hspace{.15in} centralized,~$l=5$ \end{sideways}\hspace{-.05in}}   & 
{\includegraphics[height=0.23\textwidth]{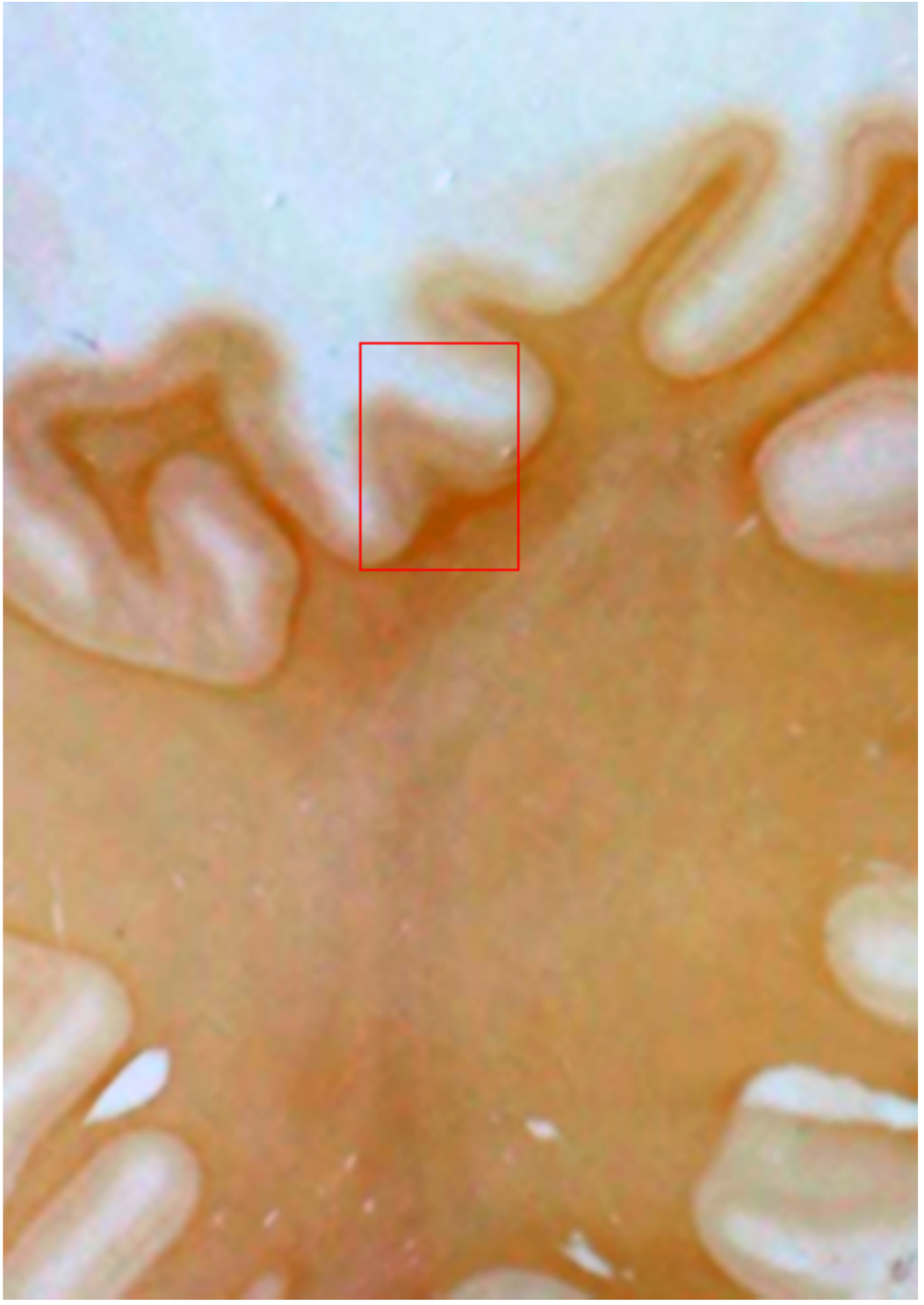}\llap{\includegraphics[width=0.08\textwidth]{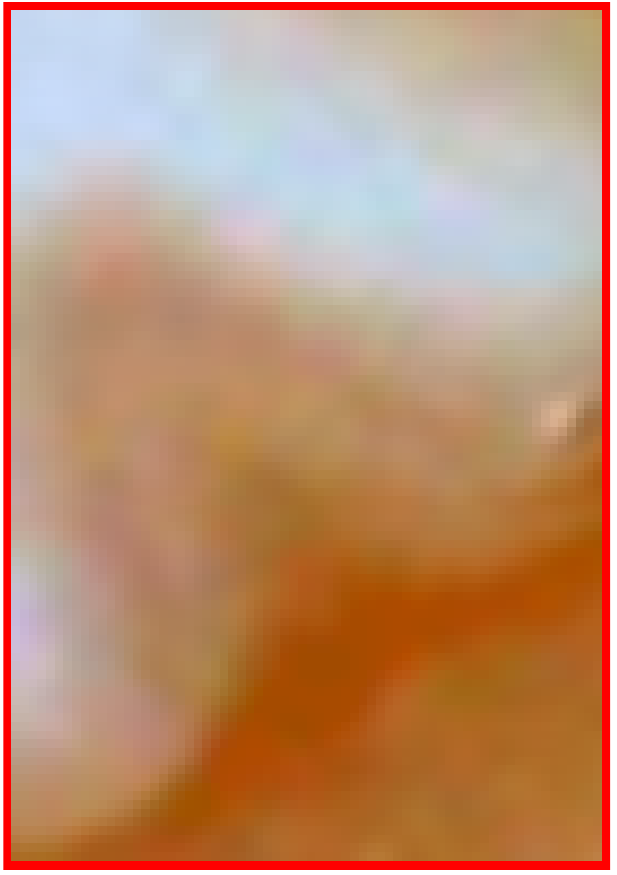}}} &
{\includegraphics[height=0.23\textwidth]{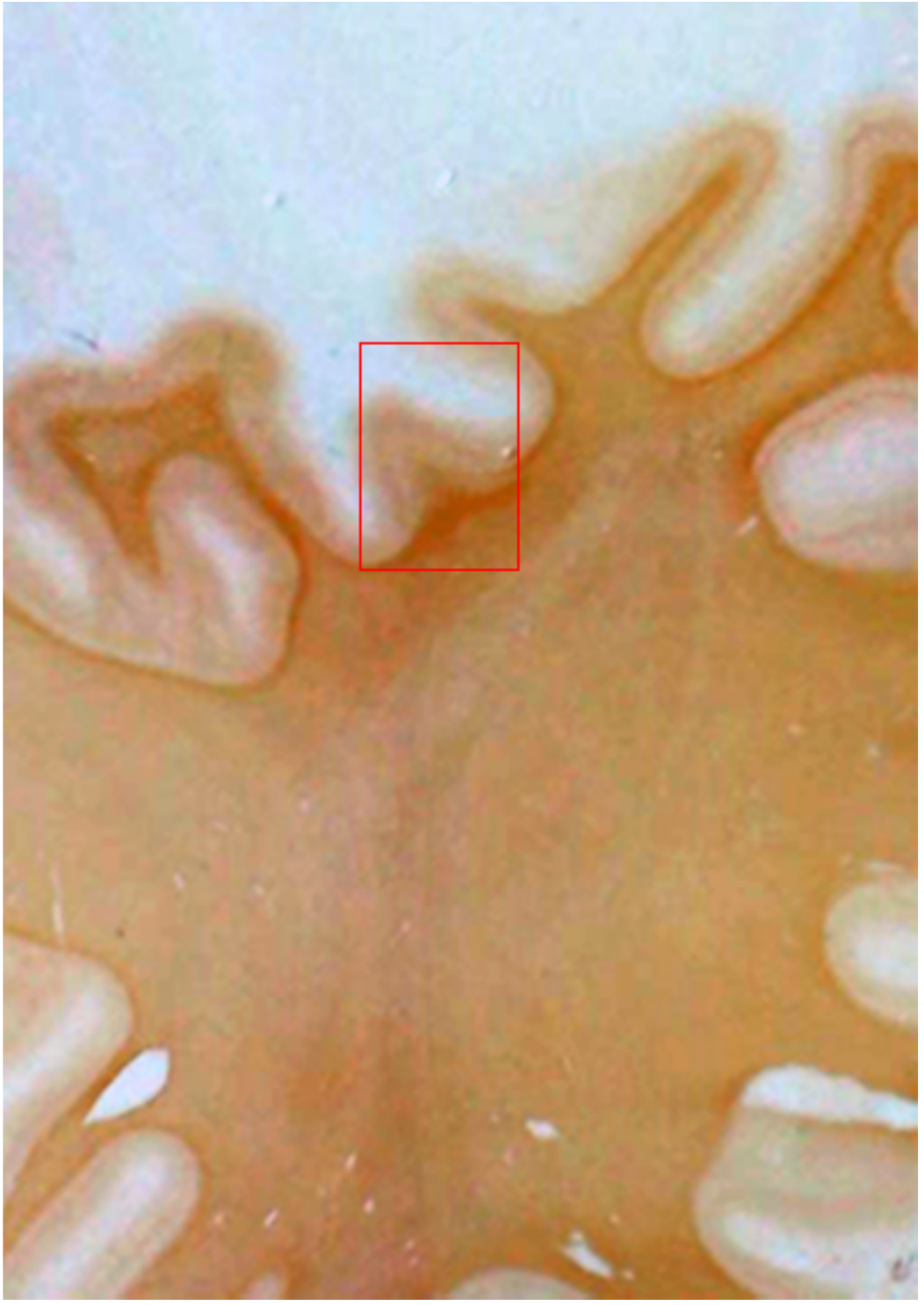}\llap{\includegraphics[width=0.08\textwidth]{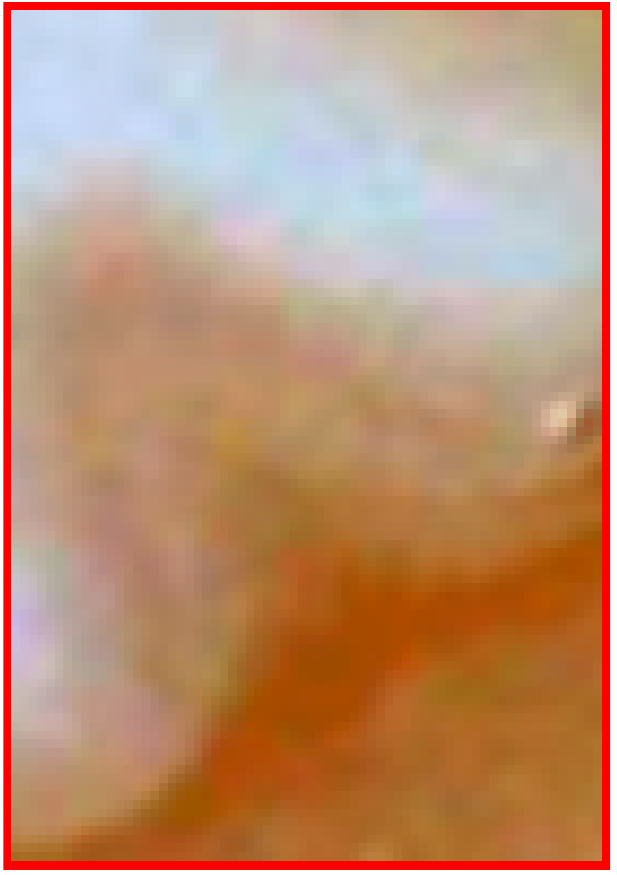}}} &
{\includegraphics[height=0.23\textwidth]{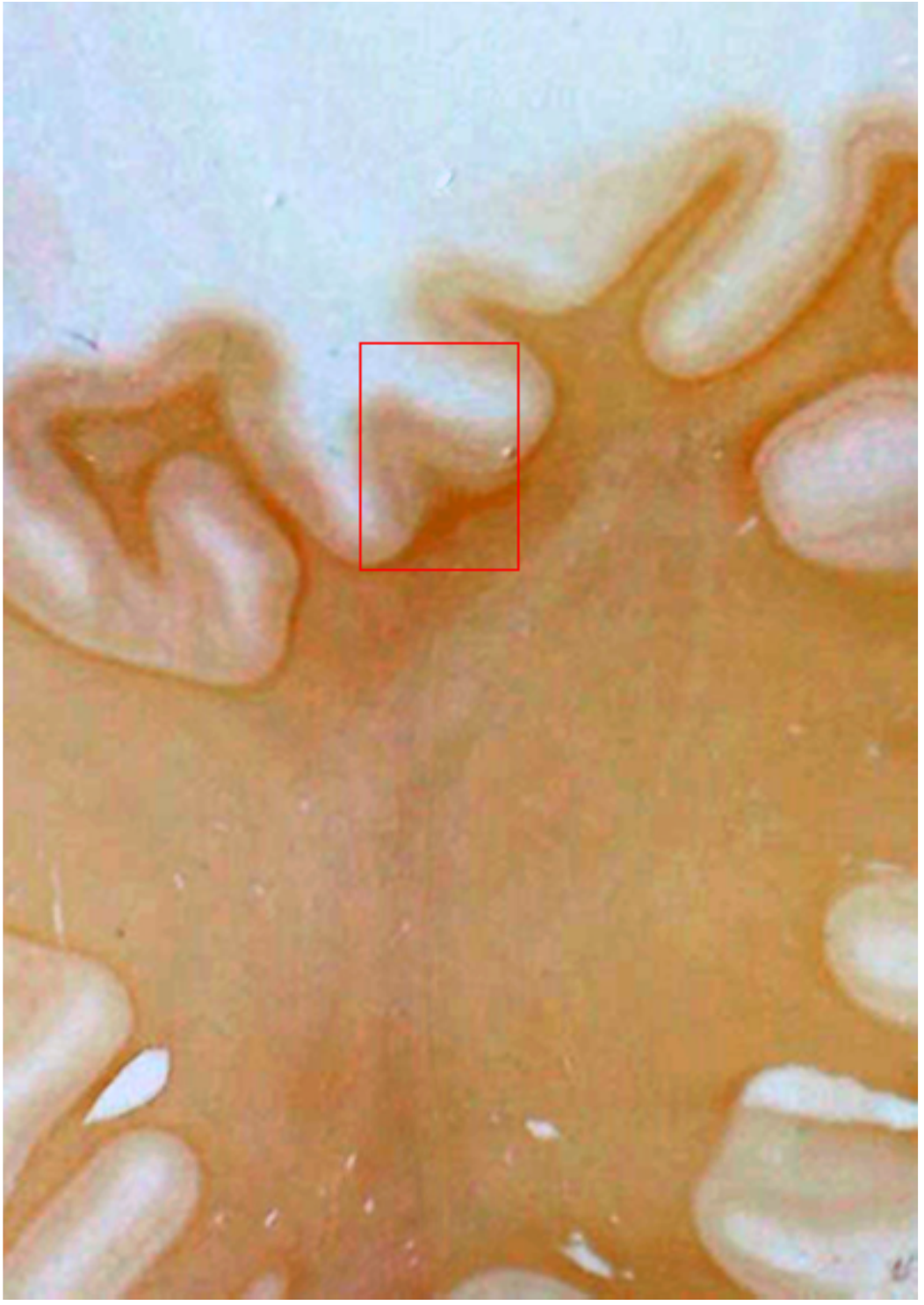}\llap{\includegraphics[width=0.08\textwidth]{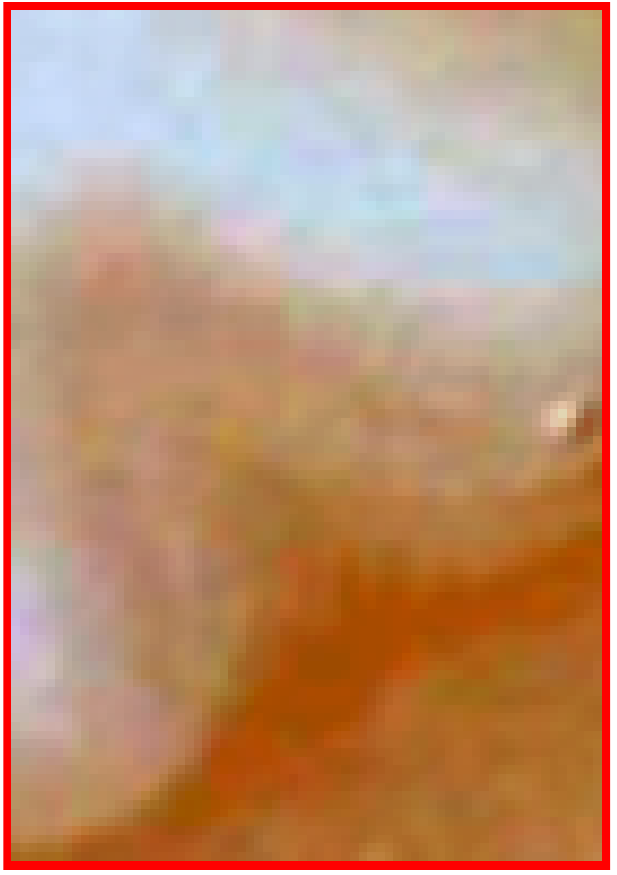}}} &
{\includegraphics[height=0.23\textwidth]{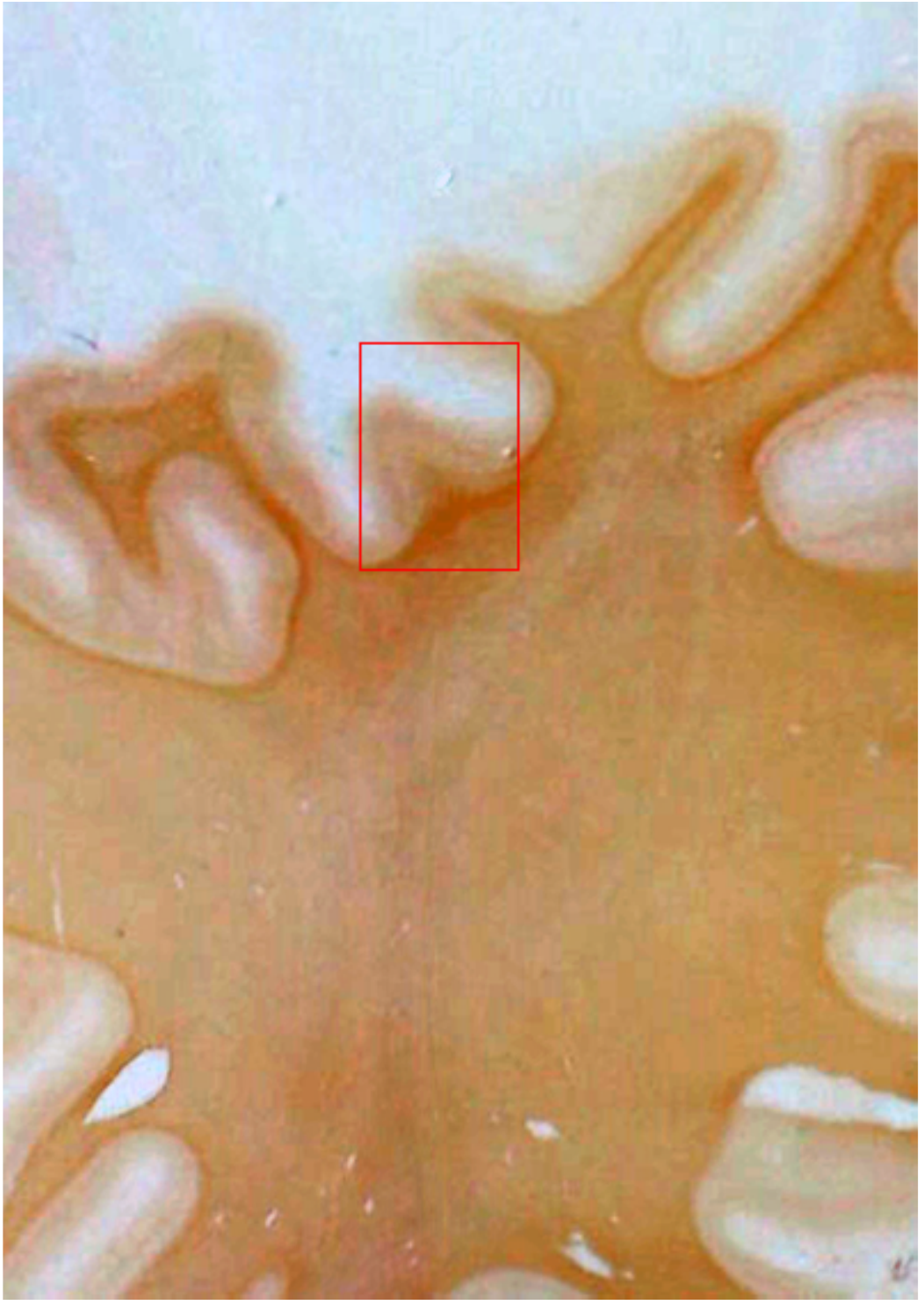}\llap{\includegraphics[width=0.08\textwidth]{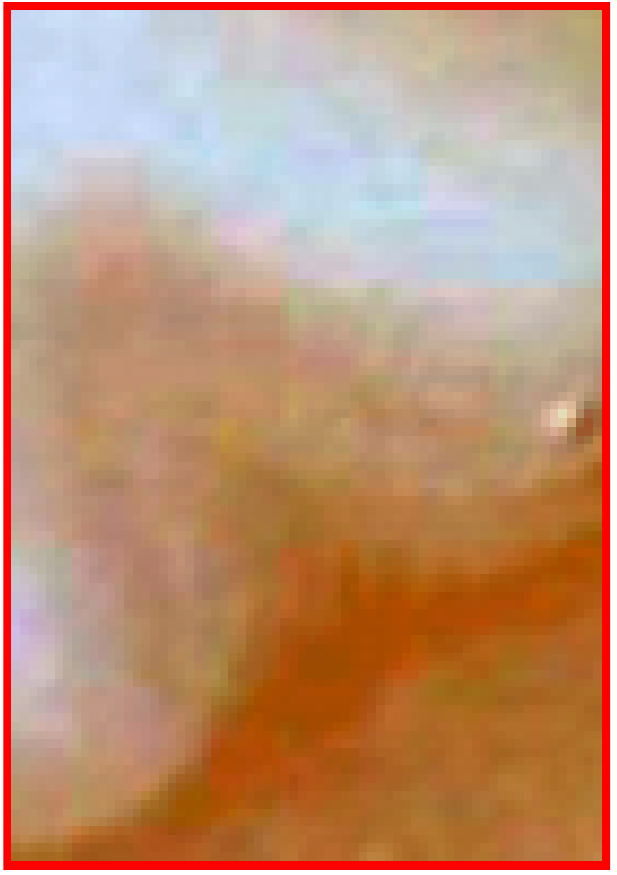}}} &
{\includegraphics[height=0.23\textwidth]{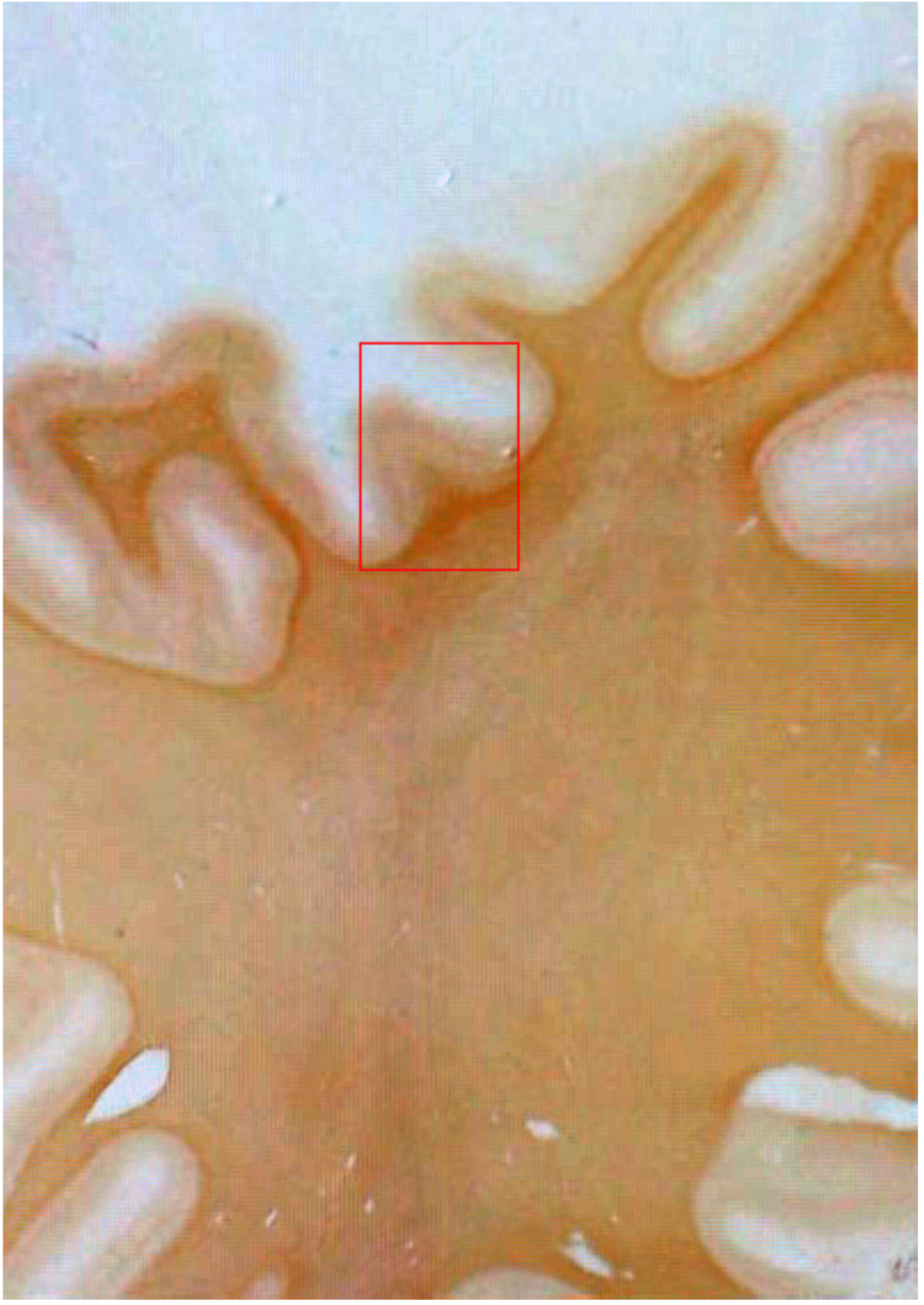}\llap{\includegraphics[width=0.08\textwidth]{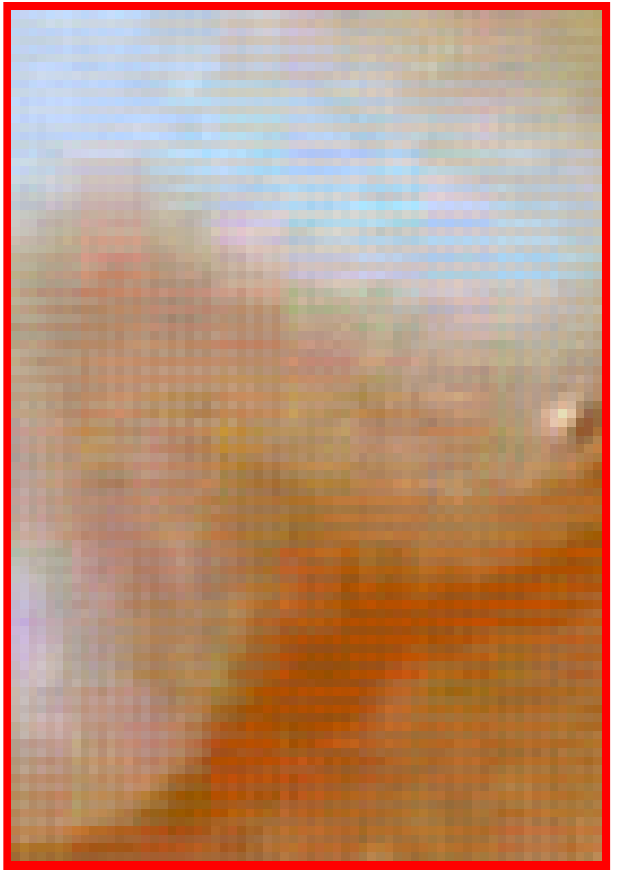}}} \\ \cline{1-6}
{\hspace{-.05in}\begin{sideways} \hspace{.15in} staggered,~$l=1$ \end{sideways}\hspace{-.05in}}   & 
{\includegraphics[height=0.23\textwidth]{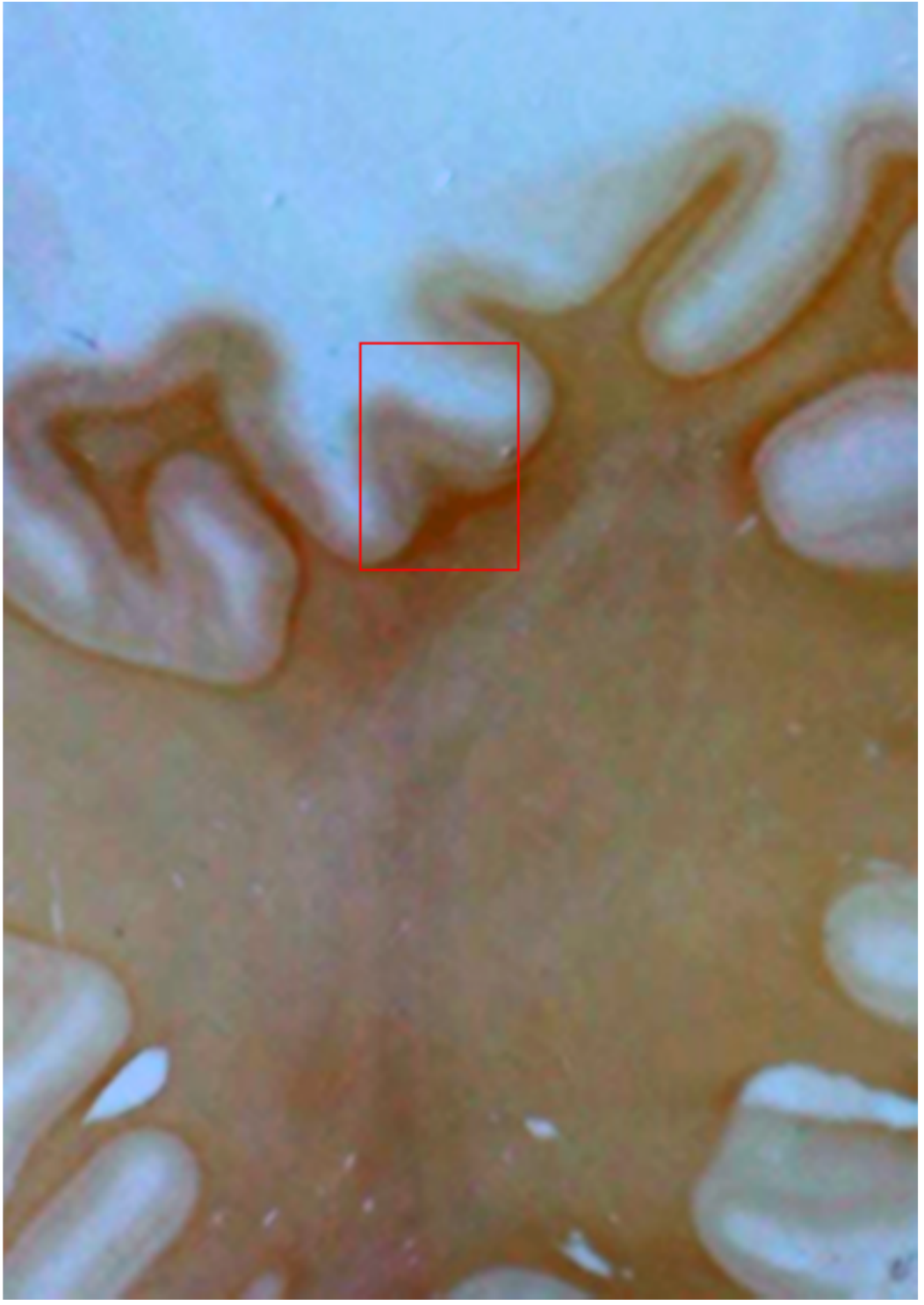}\llap{\includegraphics[width=0.08\textwidth]{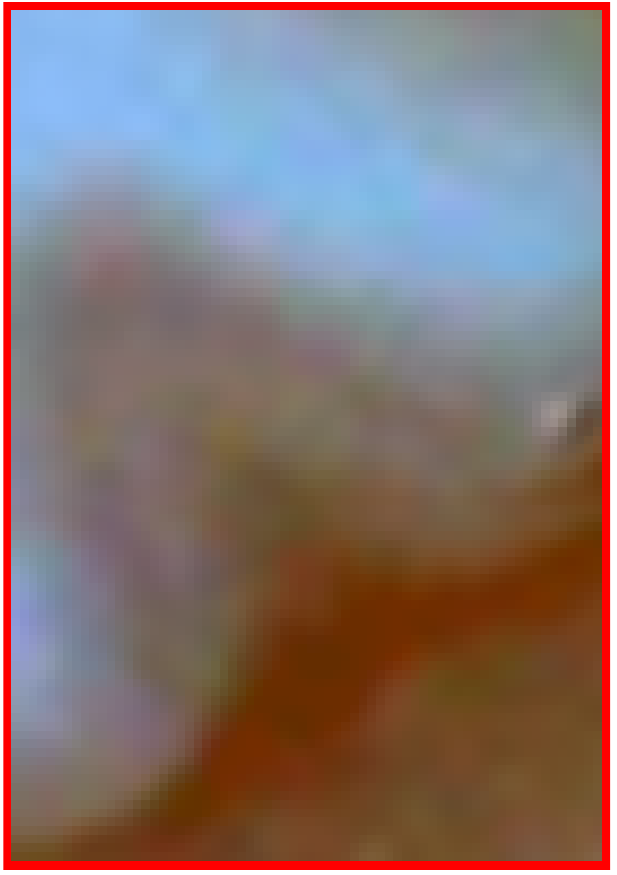}}} &
{\includegraphics[height=0.23\textwidth]{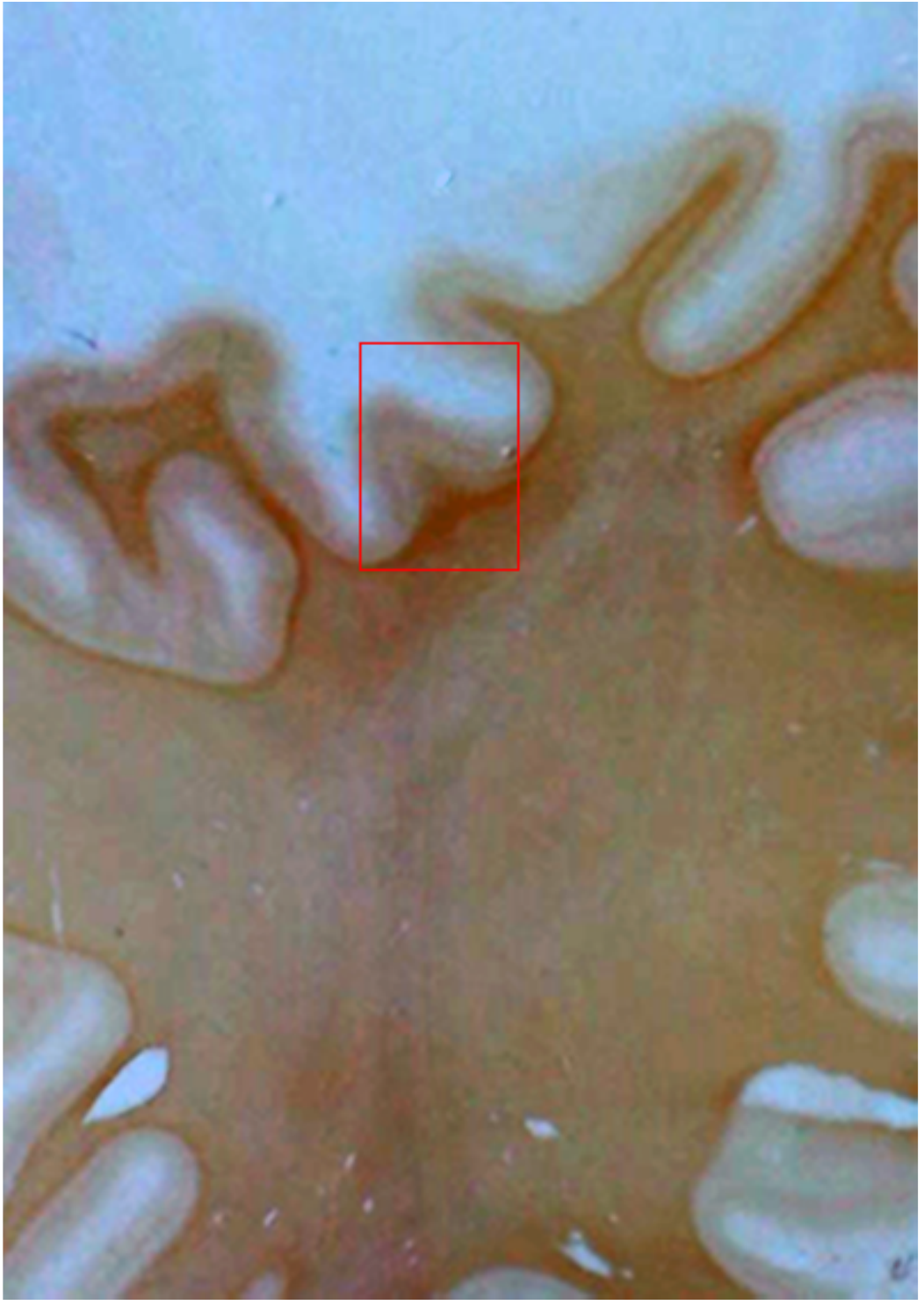}\llap{\includegraphics[width=0.08\textwidth]{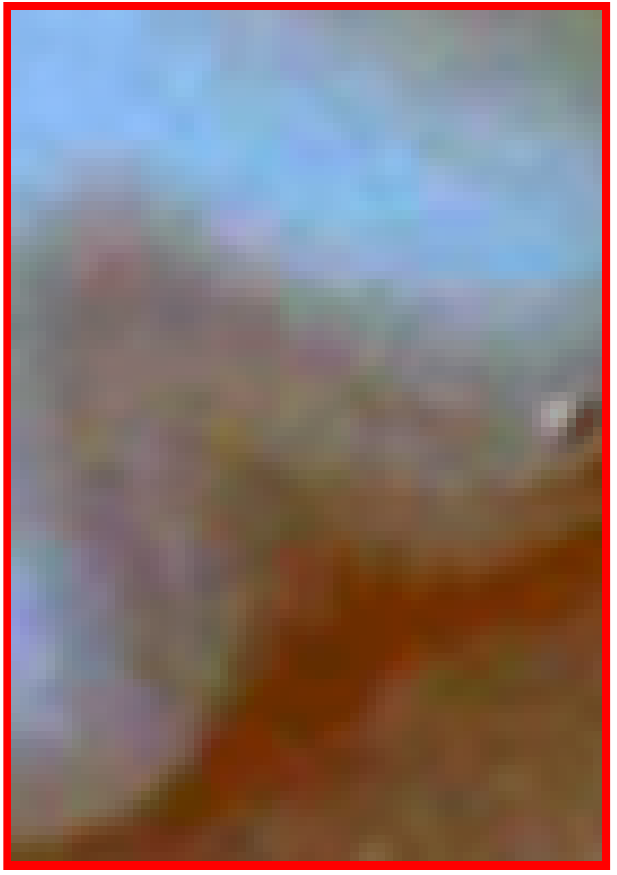}}} &
{\includegraphics[height=0.23\textwidth]{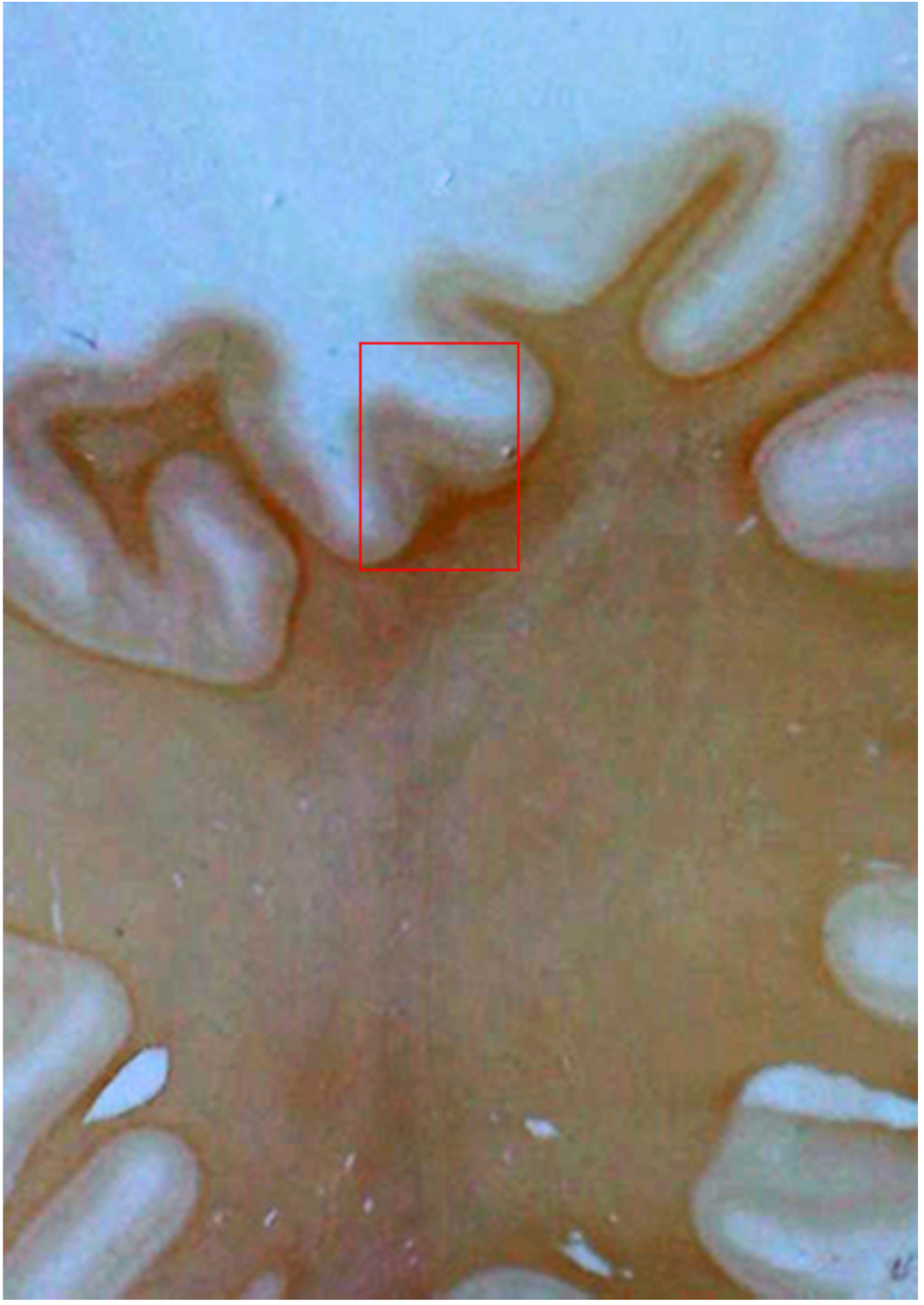}\llap{\includegraphics[width=0.08\textwidth]{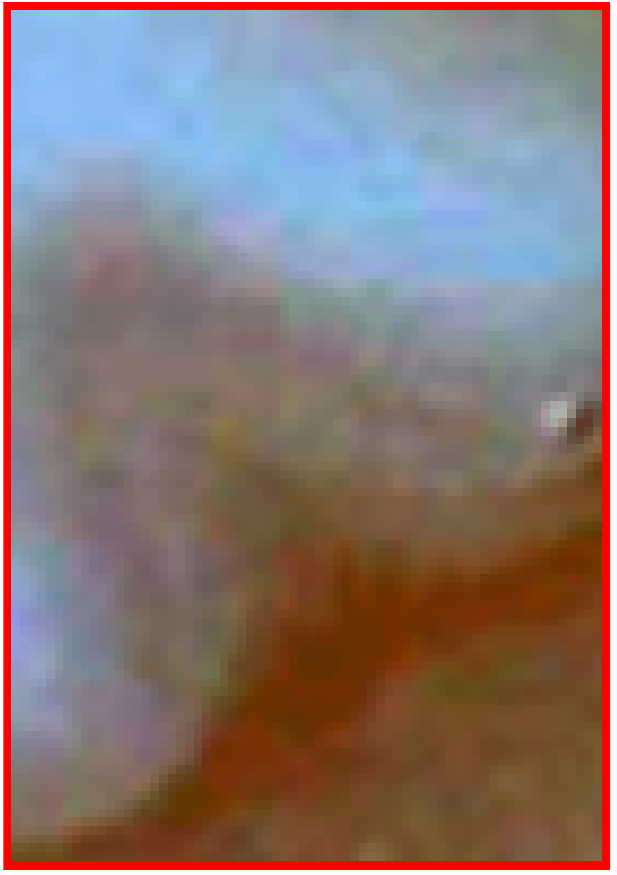}}} &
{\includegraphics[height=0.23\textwidth]{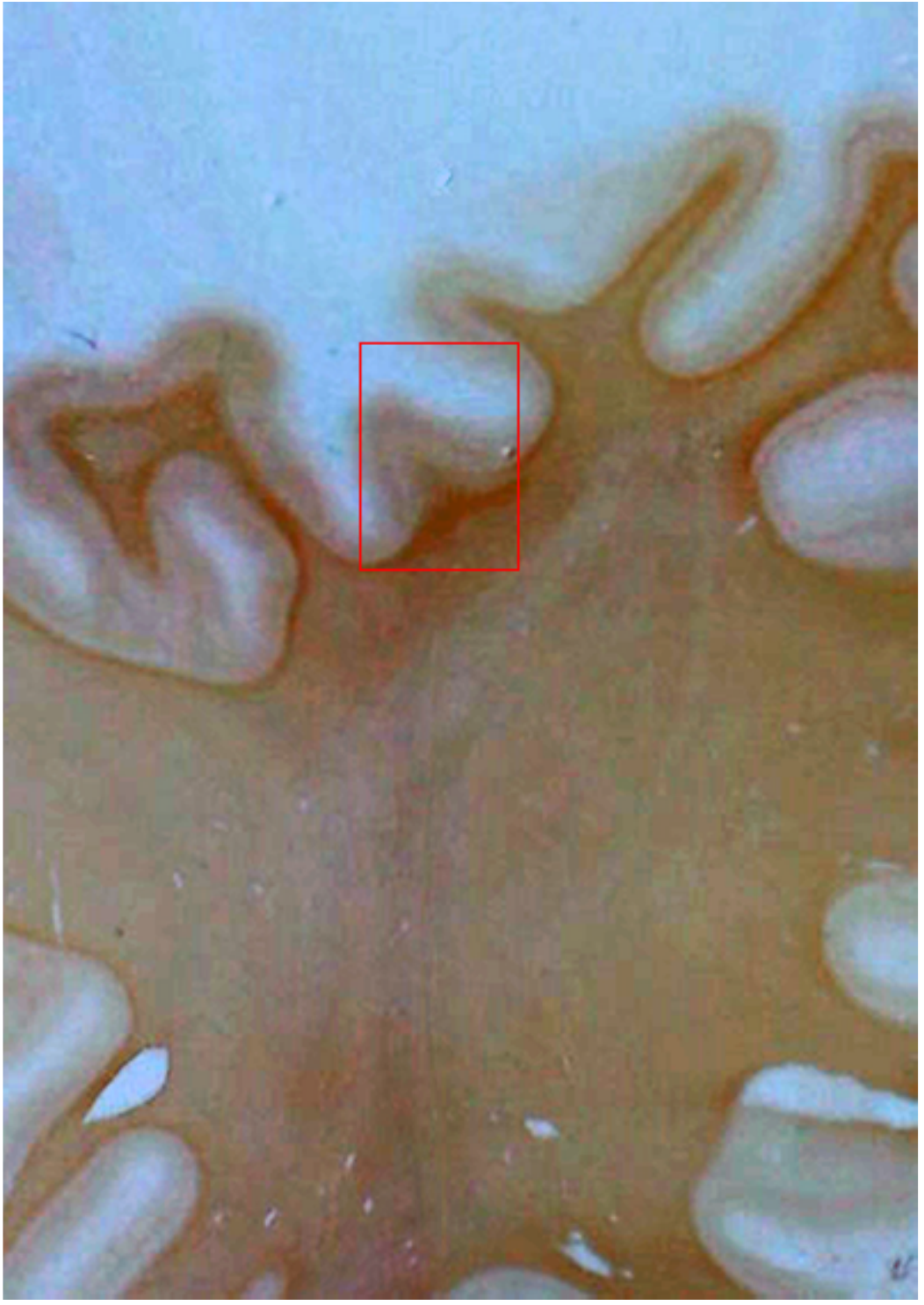}\llap{\includegraphics[width=0.08\textwidth]{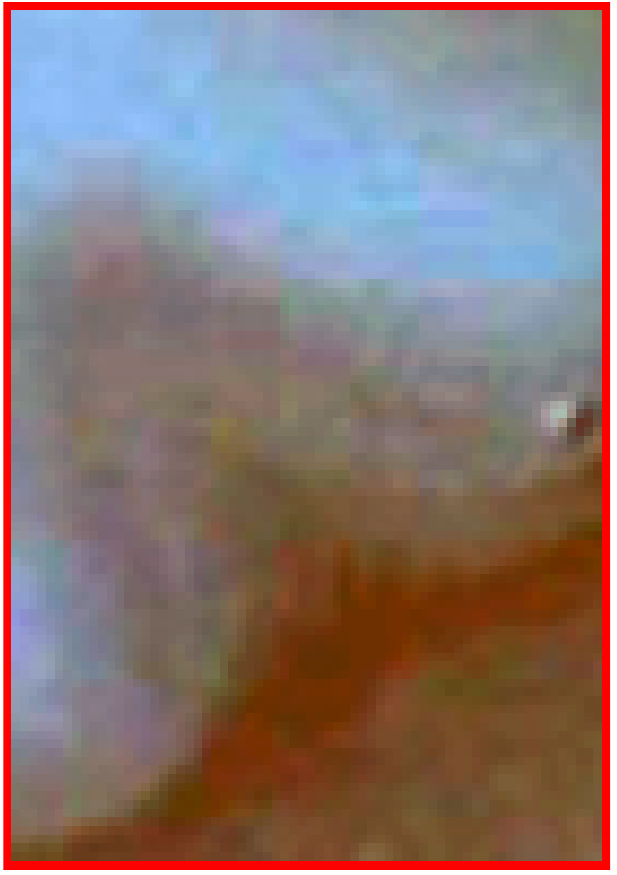}}} &
{\includegraphics[height=0.23\textwidth]{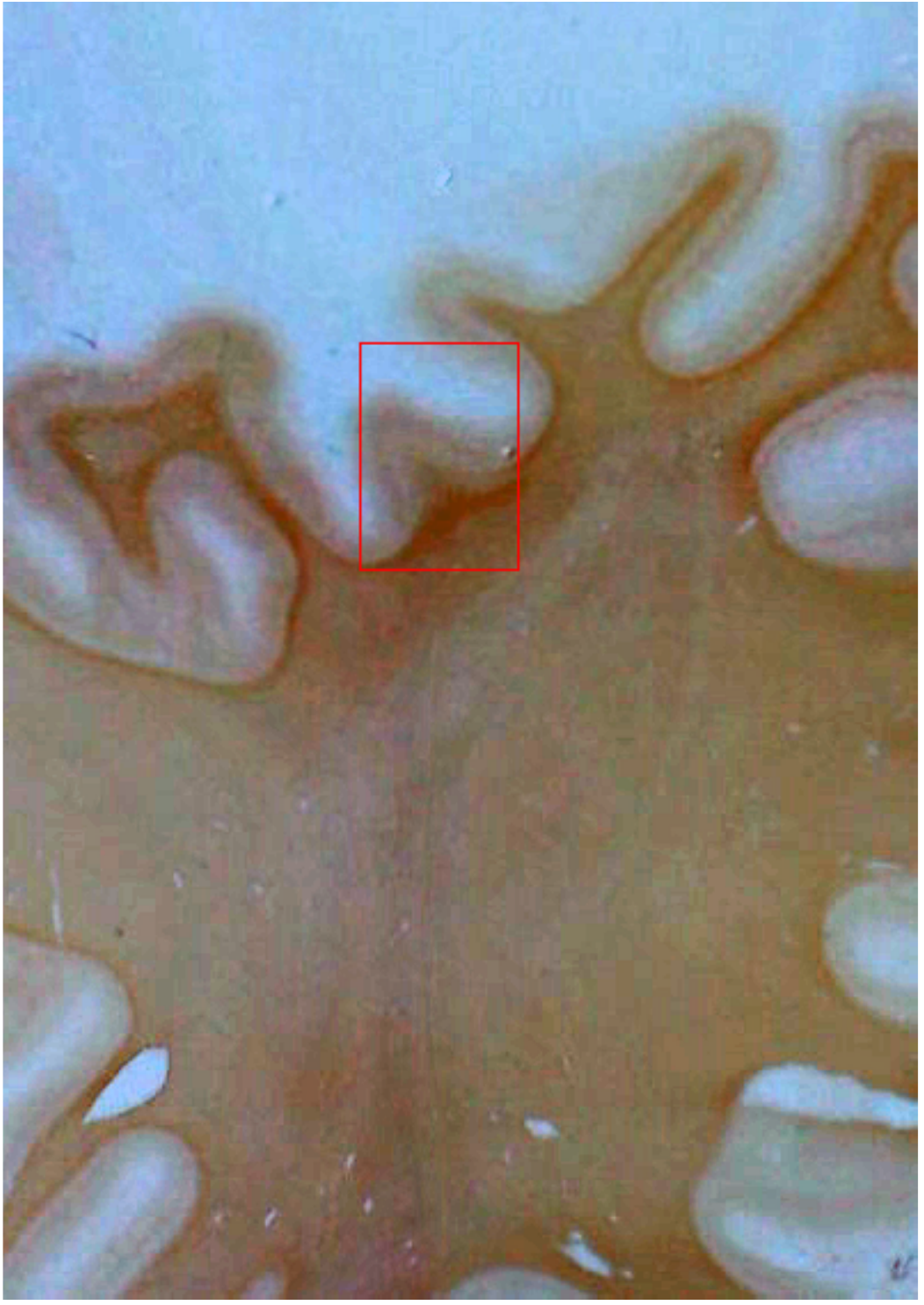}\llap{\includegraphics[width=0.08\textwidth]{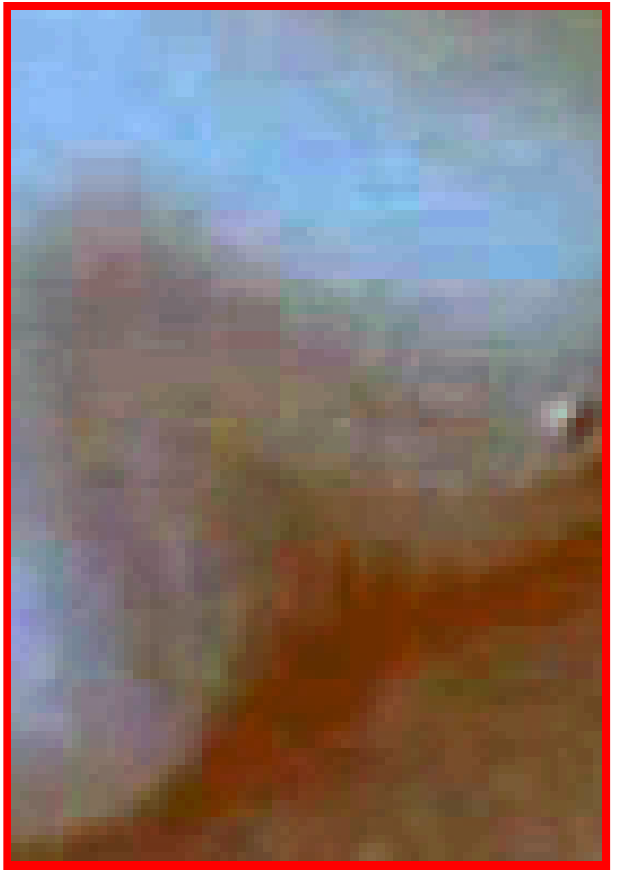}}} \\ \cline{1-6}
{\hspace{-.05in}\begin{sideways} \hspace{.15in} staggered,~$l=5$ \end{sideways}\hspace{-.05in}}   & 
{\includegraphics[height=0.23\textwidth]{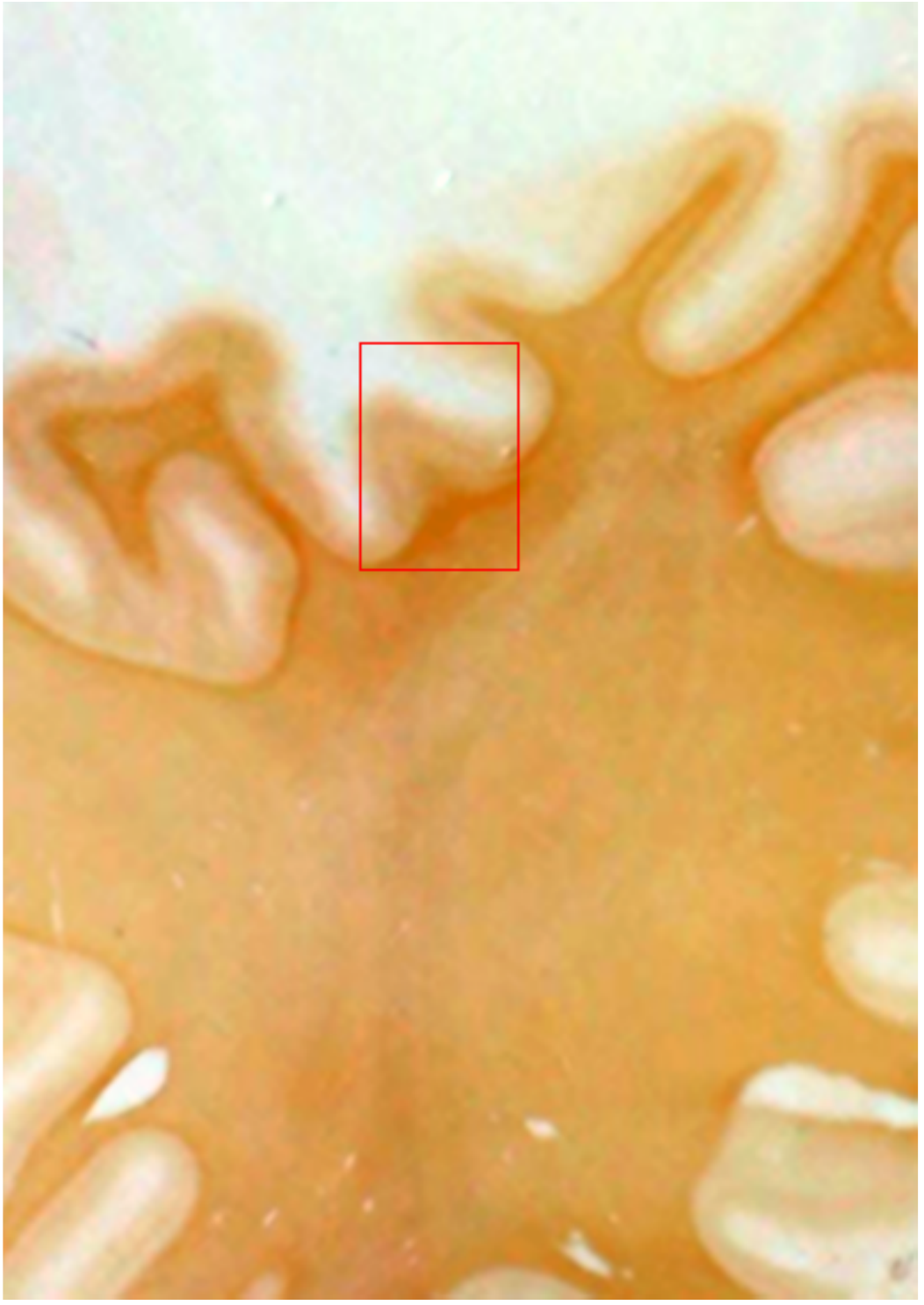}\llap{\includegraphics[width=0.08\textwidth]{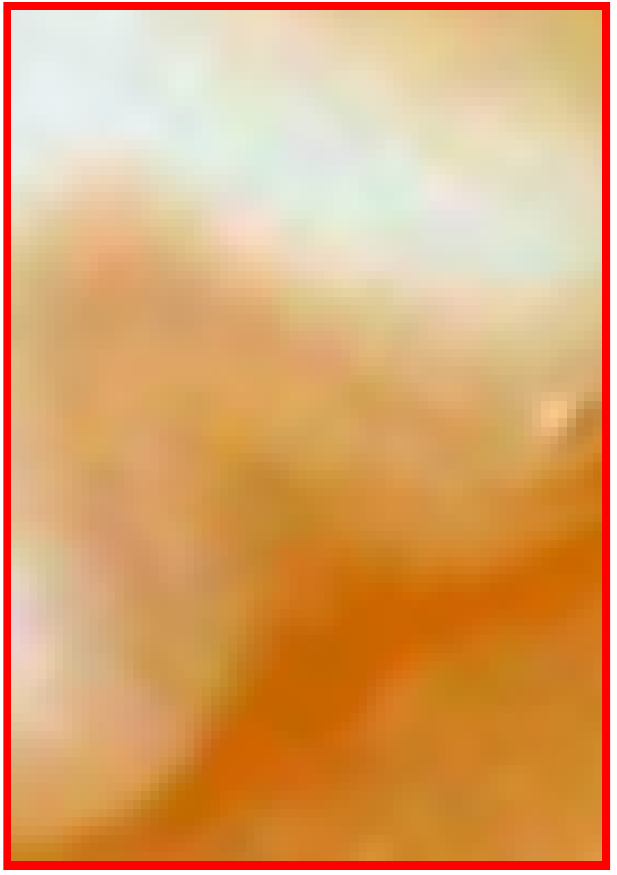}}} &
{\includegraphics[height=0.23\textwidth]{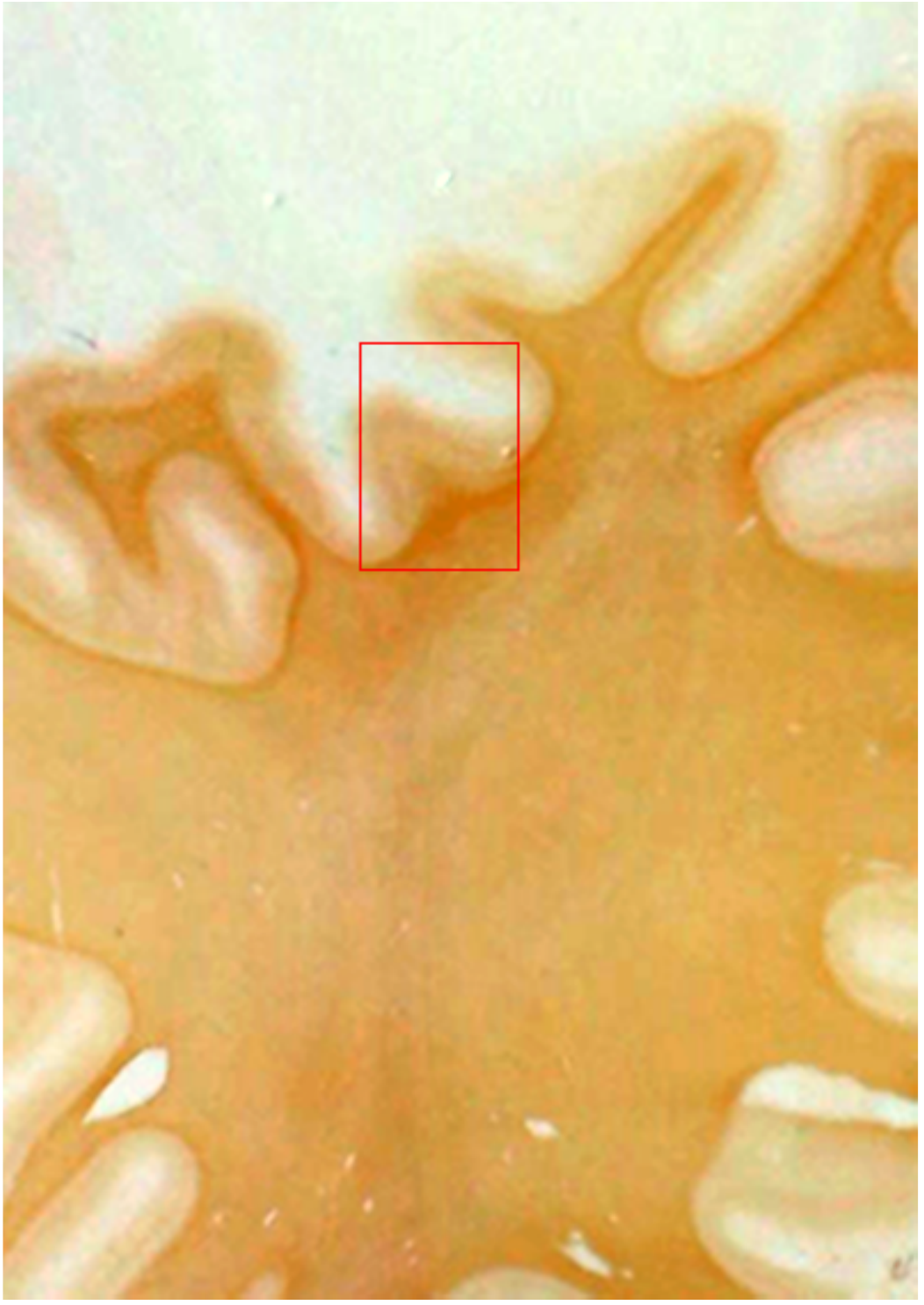}\llap{\includegraphics[width=0.08\textwidth]{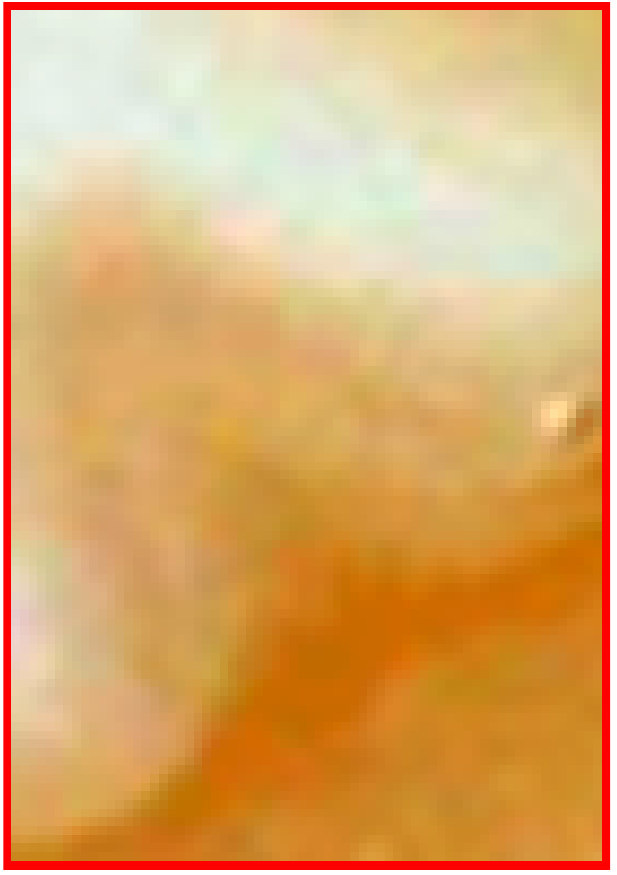}}} &
{\includegraphics[height=0.23\textwidth]{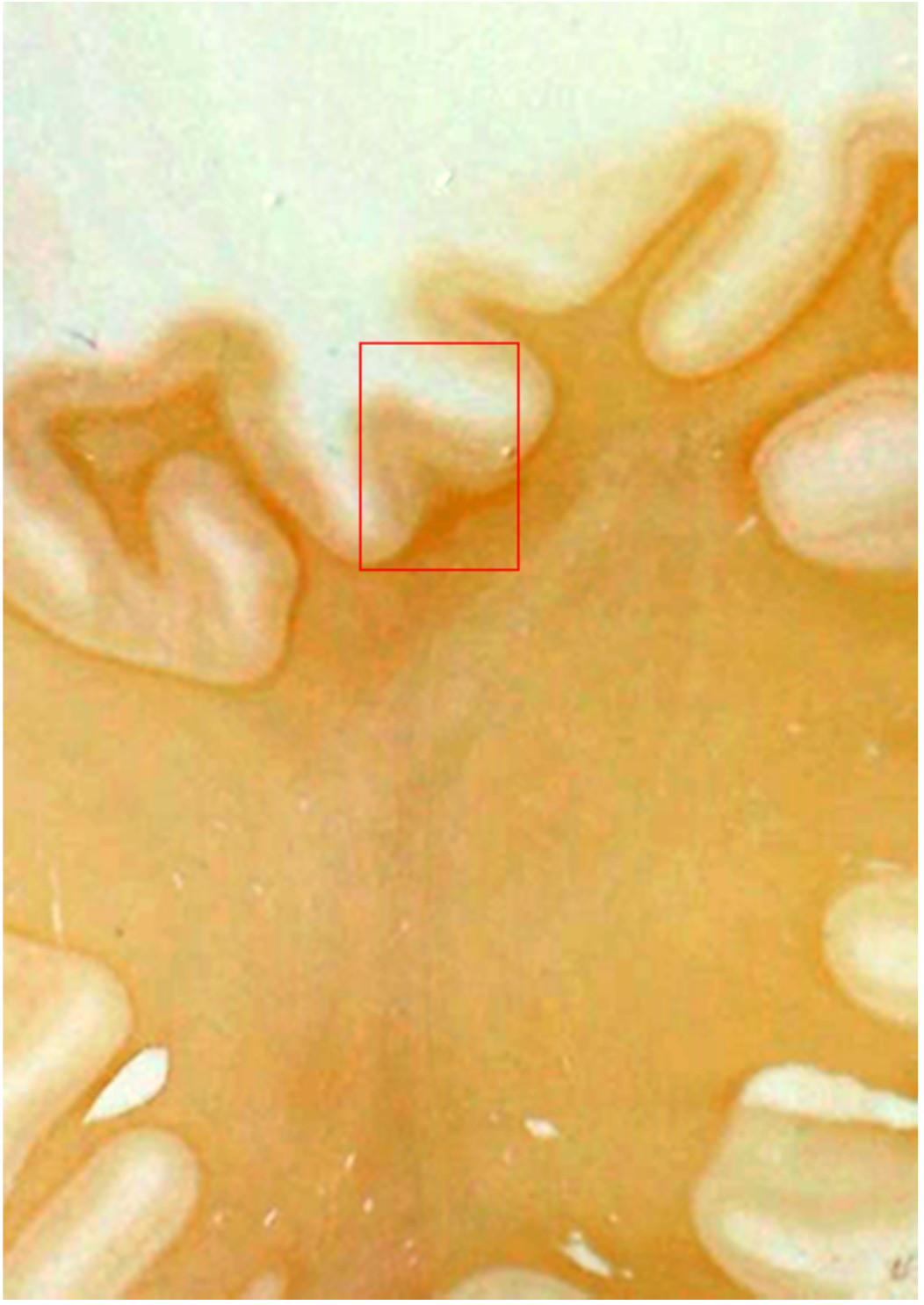}\llap{\includegraphics[width=0.08\textwidth]{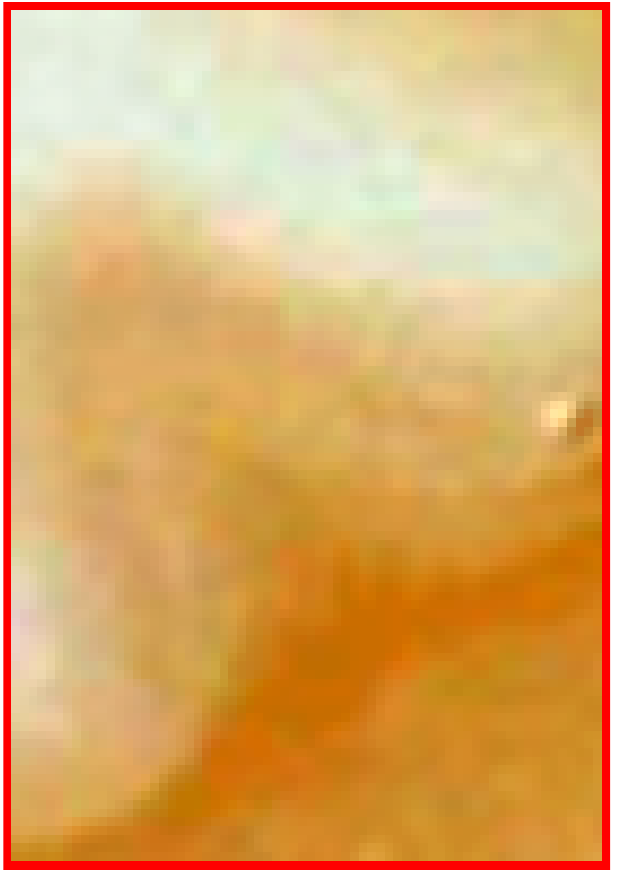}}} &
{\includegraphics[height=0.23\textwidth]{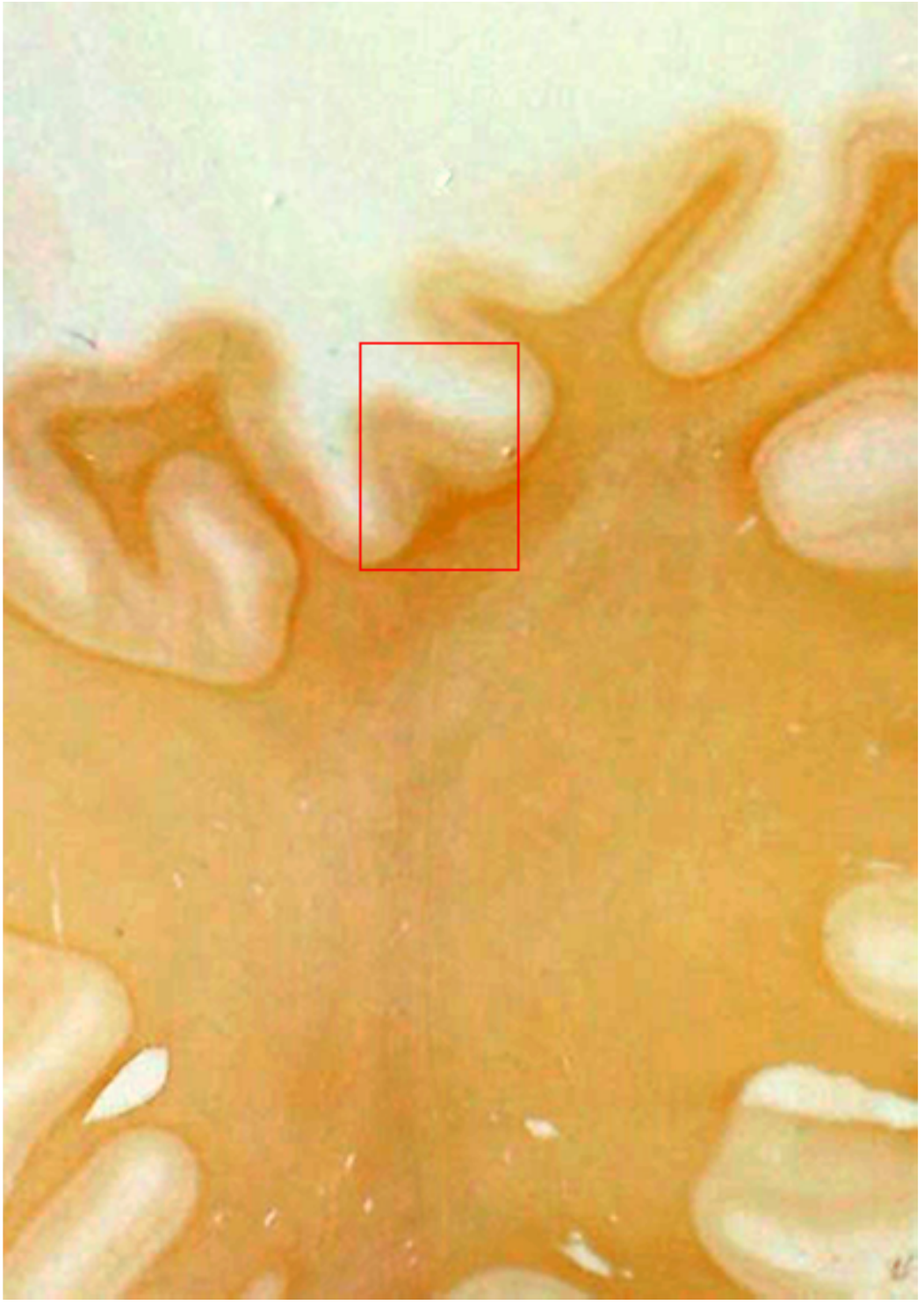}\llap{\includegraphics[width=0.08\textwidth]{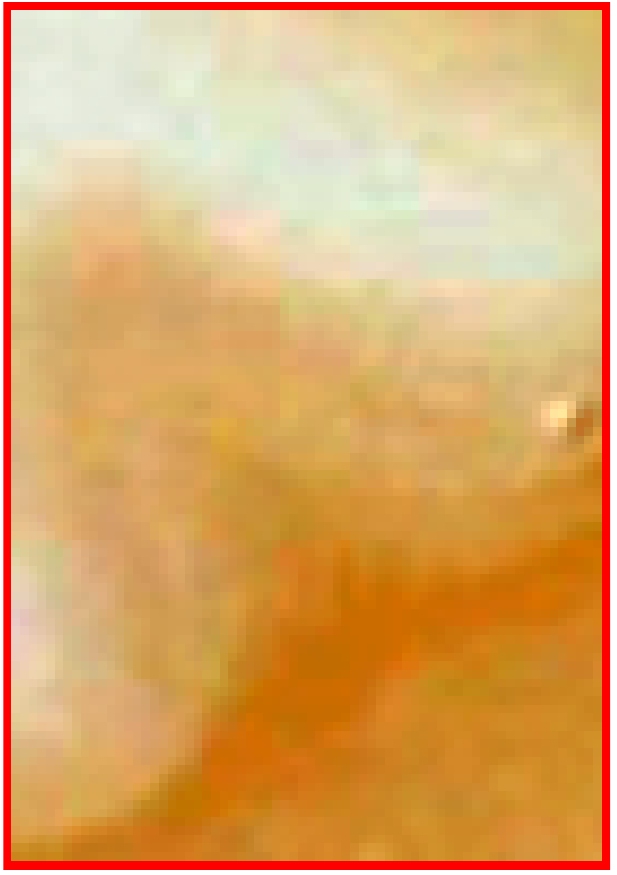}}} &
{\includegraphics[height=0.23\textwidth]{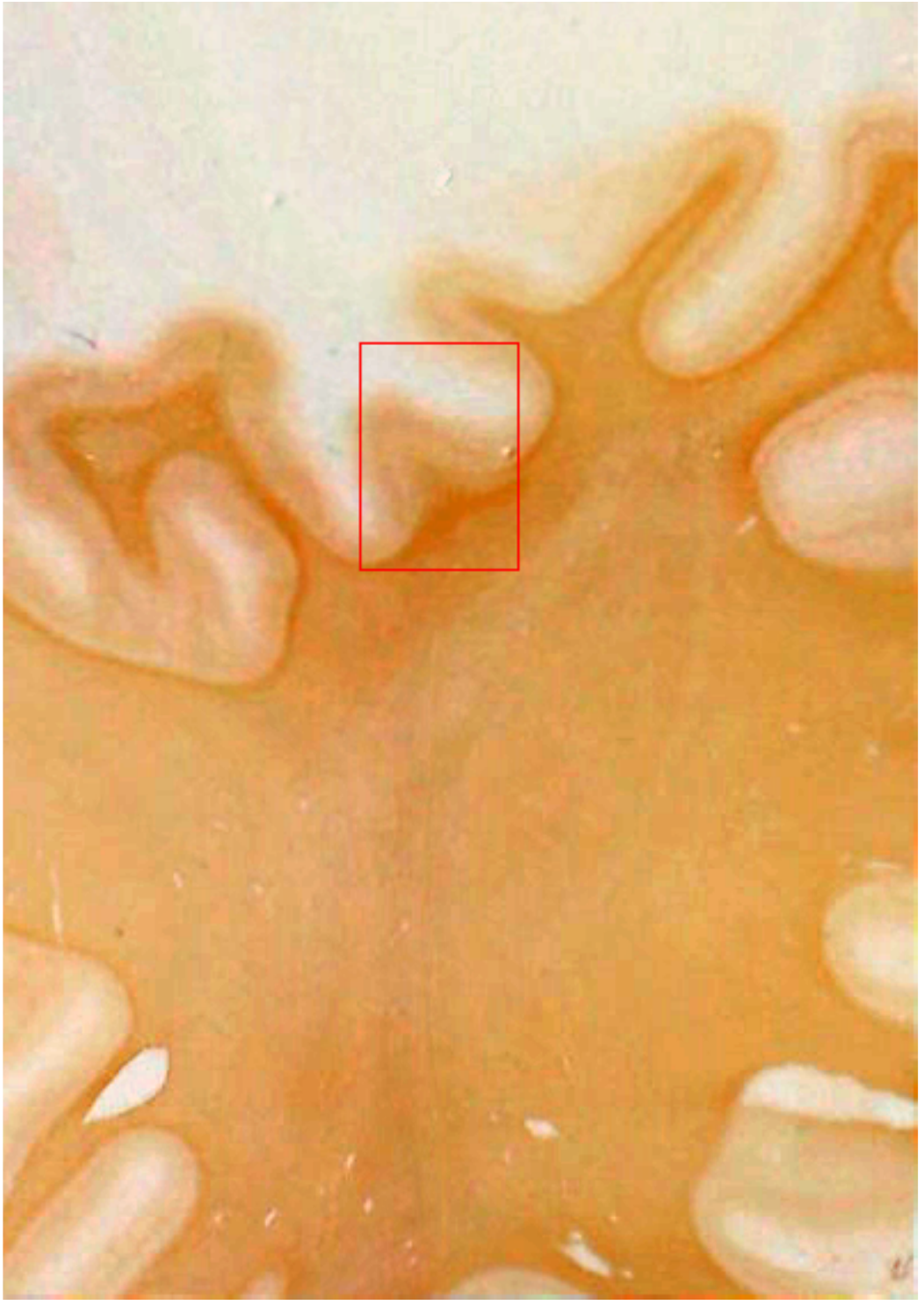}\llap{\includegraphics[width=0.08\textwidth]{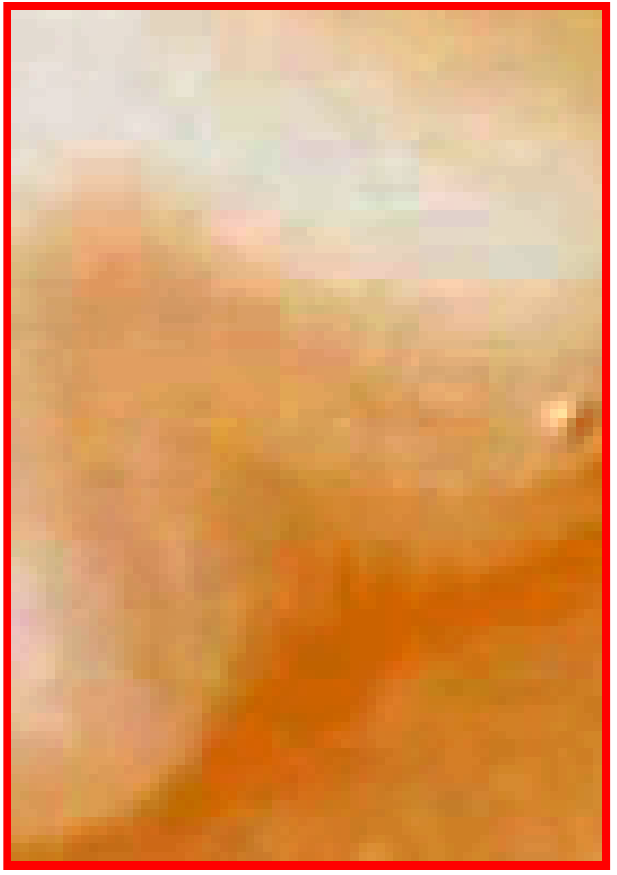}}} \\ \cline{1-6}
\end{tabular}
\end{table}

The result of reconstruction using all four possibilities are shown in \ref{table_preview_image_stitching}. In the case of centralized scheme with $l=1$ (first row of the table) the recovery contain severe artifacts such as unbalanced gray values across color channels and oscillating perturbations on the fullband recovery ($P=9$) known as the chequerboard effect \cite{gambaruto2015processing}. This comply with the discussions made in \ref{sec_stability_eigenvlaues} where case--II matrix is sensitive to such artifacts. One way to improve this result is to increase the polynomial order similar to \cite{o2008algebraic, o2012framework, harker2015regularized} where white balance issue is greatly enhanced. However, the oscillations still remain at the recovery stage. To prevent the oscillation, the staggered scheme should be used by deploying Case--I introduced in \ref{sec_stability_eigenvlaues}. The third and forth row of \ref{table_preview_image_stitching} elaborates on this case. As it shown, the oscillations are removed on fullband recovery. However, low polynomial order $l=1$ at third row contains white balance issue. Whereas both latter problems are fixed by increasing the polynomial order shown in fourth row of the \ref{table_preview_image_stitching}. The columns of this table correspond to recovering the gradient fields approximated by different cutoff levels. In fact by setting a reasonable cutoff range many perturbation artifacts are canceled in the gradient field. As a result, the oscillations in the recovery stage are mainly disappeared for centralized cases. The downside of this approach, however, is the reconstructed images are smooth and may not be pleasant in terms of sharp quality presentation.

\section{Concluding remarks}\label{sec_conclusion}
We have proposed a generalized numerical framework to derive comprehensive lowpass/fullband derivative kernels in a closed form solution based on the maximally flat design technique. We extended our design to matrix operation to decompose the derivatives of tensor data such as image and volumes. Four possible cases of derivative matrices are designed which encode high accuracy boundary formulation with arbitrary derivative order, polynomial accuracy, and lowpass/fullband design. The stability of these matrices were analyzed where significant improvement were achieved for odd derivative matrix with staggered formulation compared to the existing centralized solutions in the literature. The utility of this matrix were tested in gradient surface recovery and image stitching applications. The overall results in both applications suggest that the proposed matrix provides highly robust recoveries compared to the existing numerical solvers in the literature.

MaxPol package provides comprehensive solution for numerical differentiation that can be of interest to broad audiences in engineering and science. They may emerge as solutions to discrete operators for image diffusion and variational regularization problems for restoration purposes applied in image in-painting, enhancement/de-noising, and deconvolution problems. The package can be also utilized as a numerical solver to the PDE problems applied in fluid mechanics, acoustics, and wave equations. Another possible interest could be in dynamic control systems to estimate state variables in a direct fashion by avoiding numerical integration due to noise sensitivity issues.



\section*{Acknowledgments}
The authors would like to greatly thank Huron Digital Pathology Inc. for providing valuable discussion and database for the developing parts of the experiments conducted in this paper.
\bibliographystyle{siamplain}
\bibliography{references}

\end{document}